\documentclass[12pt]{article}

\usepackage[english]{babel}
\usepackage{amsmath,amssymb,amsthm}
\usepackage[shortlabels]{enumitem}
\setlist[enumerate]{label=\arabic*., nosep}
\usepackage{mathtools}
\usepackage{nicefrac}
\usepackage{hyperref,xcolor}
\hypersetup{
colorlinks=true,
linkcolor=red,
citecolor = blue,      
}
\usepackage{comment}
\usepackage{dsfont}
\usepackage{xspace} 
\usepackage[bottom]{footmisc}
\usepackage{authblk}
\usepackage{stmaryrd}
\usepackage{nameref}
\usepackage{titling}
\usepackage[T1]{fontenc}
\usepackage{libertinus}

\usepackage[utf8]{inputenc}
\usepackage{blindtext}

\usepackage[backend=biber]{biblatex}
\DeclareFieldFormat[article]{title}{\mkbibitalic{#1}}
\DeclareFieldFormat[article]{citetitle}{\mkbibitalic{#1}}
\theoremstyle{plain}
\newtheorem{theorem}{{\bf{Theorem}}}[section]
\newtheorem{proposition}[theorem]{{\bf Proposition}}
\newtheorem*{assumption}{{\bf Assumption}}
\newtheorem{lemma}[theorem]{{\bf Lemma}}

\theoremstyle{definition}
\newtheorem*{remark}{{\bf {Remark}}}
\newtheorem*{notation}{{\bf Notation and conventions}}
\newtheorem{definition}{{\bf Definition}}[section]
\newtheorem*{acknowledgements}{{\bf {Acknowledgements}}}

\numberwithin{equation}{section}

\newcommand{\R}{\mathbb{R}}

\newcommand{\N}{\mathbb{N}}
\newcommand{\Z}{\mathbb{Z}}
\newcommand{\im}{\operatorname{i}}
\renewcommand{\d}{\mathrm{d}}

\newcommand{\rel}{\operatorname{r}}
\newcommand{\com}{\operatorname{c}}

\DeclarePairedDelimiter\floor{\lfloor}{\rfloor}
\DeclarePairedDelimiter\epsfloor{\lfloor}{\rfloor_{\varepsilon}}

\DeclarePairedDelimiter\epsfracpart{\{}{\}_{\varepsilon}}

\newcommand{\E}{\mathbf{E}}

\newcommand{\bbE}{\mathbb{E}}

\newcommand{\cali}[1]{\mathcal{#1}}

\newcommand{\Ltwo}{\mathcal{L}^{2}}
\newcommand{\Lpee}[1]{\mathcal{L}^{#1}}

\newcommand{\ind}[1]{\mathbf{1}_{\{ #1 \}}}
\newcommand{\pair}{\mathsf{Pair}}
\newcommand{\intv}[1]{[\![#1]\!]}
\newcommand{\dgm}{\mathsf{Dgm}}

\newcommand{\tblue}{\textcolor{blue}}

\newcommand{\conj}[1]{\overbracket[1.1pt][-3.1pt]{#1}}

\newcommand{\norm}[1]{\Vert #1\Vert}
\newcommand{\normop}[1]{\Vert #1\Vert_{\operatorname{op}}}
\newcommand{\Normop}[1]{\big\Vert #1\big\Vert_{\operatorname{op}}}
\newcommand{\NOrmop}[1]{\Big\Vert #1\Big\Vert_{\operatorname{op}}}
\newcommand{\NORmop}[1]{\Bigg\Vert #1\Bigg\Vert_{\operatorname{op}}}
\newcommand{\Norm}[1]{\big\Vert #1\big\Vert}

\newcommand{\F}[1]{\widehat{#1}}

\newcommand{\transpose}[1]{#1^{\mathrm{t}}}

\newcommand{\partfunction}[1]{\mathcal{Z}^{#1}}

\newcommand{\epsshe}{\mathcal{Z}^{\theta,\varepsilon}}
\newcommand{\shflow}[1]{\mathcal{Z}^{#1}}

\newcommand{\onebrown}{\mathcal{B}}
\newcommand{\epsbrown}{\mathcal{B}^{\varepsilon}}

\newcommand{\outg}[1]{\mathcal{G}^{\ast}_{#1}}
\newcommand{\inc}[1]{\mathcal{G}_{#1}}
\newcommand{\offdiag}[1]{\mathcal{G}_{#1}}
\newcommand{\diag}[2]{\mathcal{K}^{#1}_{#2}}

\newcommand{\inctrans}[1]{\mathcal{P}_{#1}}
\newcommand{\outgtrans}[1]{\mathcal{P}^{\ast}_{#1}}
\newcommand{\offdiagtrans}[1]{\mathcal{P}_{#1}}
\newcommand{\diagtrans}[2]{\mathcal{J}^{#1}_{#2}}
\newcommand{\diagramtrans}[2]{\mathcal{R}^{#1}_{\vec{#2}}}
\newcommand{\deltatrans}[1]{\mathcal{Q}^{\intv{n}, #1}}

\newcommand{\Szero}[1]{S_{#1}}

\newcommand{\Sdualzero}[1]{\widehat{S}_{#1}}

\newcommand{\conttrans}{\mathcal{P}}
\newcommand{\twoconttrans}{\mathsf{p}}
\newcommand{\jtheta}{\mathrm{j}}
\newcommand{\corr}{\cali{Q}^{\intv{n}}}
\newcommand{\epscorr}[1]{\cali{Q}^{\intv{n}, #1}_{\varepsilon}}

\newcommand{\preoutg}[1]{\mathcal{G}^{\ast}_{\varepsilon #1}}
\newcommand{\preinc}[1]{\mathcal{G}_{\varepsilon#1}}
\newcommand{\preoffdiag}[1]{\mathcal{G}_{\varepsilon #1}}
\newcommand{\prediag}[2]{\mathcal{K}^{#1}_{\varepsilon #2}}

\newcommand{\preoutgtrans}[1]{\mathcal{P}^{\ast}_{\varepsilon#1}}
\newcommand{\preinctrans}[1]{\mathcal{P}_{\varepsilon#1}}
\newcommand{\preoffdiagtrans}[1]{\mathcal{P}_{\varepsilon#1}}
\newcommand{\prediagtrans}[2]{\mathcal{J}^{#1}_{\varepsilon#2}}
\newcommand{\prediagramtrans}[2]{\mathcal{R}^{#1}_{\vec{\varepsilon#2}}}
\newcommand{\predeltatrans}[1]{\mathcal{Q}_{\varepsilon}^{\intv{n}, #1}}
\newcommand{\modpredeltatrans}[1]{\mathcal{Q}_{\varepsilon}^{\intv{2m}, #1}}

\newcommand{\Seps}[1]{S_{\varepsilon #1}}
\newcommand{\Sdualone}[1]{\widehat{S}_{1 #1}}
\newcommand{\Sdualeps}[1]{\widehat{S}_{\varepsilon #1}}

\newcommand{\Teps}[1]{S_{\varepsilon #1}^{-1}}
\newcommand{\Tdualone}[1]{\widehat{S}_{1 #1}^{-1}}
\newcommand{\Tdualeps}[1]{\widehat{S}_{\varepsilon #1}^{-1}}

\newcommand{\green}{\mathcal{G}}
\newcommand{\twogreen}{\mathsf{g}}

\newcommand{\epsgreens}{\mathcal{G}_{\varepsilon}}

\newcommand{\epstrans}{\mathcal{P}_{\varepsilon}}
\newcommand{\epstwotrans}{\mathsf{p}_{\varepsilon}}

\newcommand{\fouriergen}{\F{\mathcal{L}}_{1,n}}
\newcommand{\epsgen}{{\mathcal{L}}_{\varepsilon,n}}
\newcommand{\fourierepsgen}{\F{\mathcal{L}}_{\varepsilon,n}}
\newcommand{\epstwogen}{\mathcal{L}_{\varepsilon}}
\newcommand{\onetwogen}{\mathcal{L}_{1}}
\newcommand{\fourierepstwogen}{\F{\mathcal{L}}_{\varepsilon}}
\newcommand{\twogen}{\mathcal{L}}

\newcommand{\epscontgreens}{\mathcal{G}_{\varepsilon}^{\operatorname{cont}}}

\newcommand{\epsconttrans}{\mathcal{P}_{\varepsilon}^{\operatorname{cont}}}

\newcommand{\diff}{\mathrm{Q}}

\hypersetup{
colorlinks=true,
linkcolor=red,
citecolor = blue,      
}

\newcommand{\sqrrange}{\sqrt{\kappa}}
\newcommand{\range}{\kappa}
\newcommand{\half}{\frac{1}{2}}
\newcommand{\quart}{\frac{1}{4}}
\newcommand{\threequart}{\frac{3}{4}}
\newcommand{\threehalf}{\frac{3}{2}}
\newcommand{\id}[1]{\mathbf{I}_{#1}}
\newcommand{\lid}[1]{\mathsf{I}_{#1}}
\newcommand{\fid}[1]{\widehat{\mathbb{I}}_{#1}}
\newcommand{\ballorigin}[1]{\mathbf{B}(0; #1)}
\newcommand{\quadrange}{\mathfrak{K}}

\newcommand{\shfparameter}{\theta_{\star}}
\newcommand{\proj}{\mathbf{P}_{\range}}
\newcommand{\projperp}{\mathbf{P}_{\range}^{\perp}}
\newcommand{\ie}{\textit{i.e.}\xspace}
\newcommand{\SHF}{\mathsf{SHF}}

\newcommand{\hattorus}[1]{\widehat{\mathbb{T}}^{#1}}
\newcommand{\epshattorus}[1]{\widehat{\mathbb{T}}_{\varepsilon}^{#1}}
\newcommand{\epsalhattorus}[1]{\widehat{\mathbb{T}}_{\varepsilon\alpha}^{#1}}

\newcommand{\metorus}[1]{\widetilde{\mathbb{T}}_{\varepsilon}^{#1}}
\newcommand{\mealtorus}[2]{\widetilde{\mathbb{T}}_{\varepsilon #1}^{#2}}

\newcommand{\mealeta}[1]{\widetilde{\eta}_{\varepsilon #1}}
\newcommand{\mzaleta}[1]{\widetilde{\eta}_{0 #1}}

\newcommand{\epsrect}[1]{\mathsf{R}_{\varepsilon}^{#1}}

\newcommand{\epsalrect}[1]{\mathsf{R}_{\varepsilon\alpha}^{#1}}

\newcommand{\measspace}[1]{\cali{M}_{+}(#1)}

\title{Continuous-time directed polymers to the Critical SHF via moments}
\author{Sudheesh Surendranath  \thanks{Department of Mathematics, Duke University \texttt{sudheesh.surendranath@duke.edu}}}
\begin{document}
	\date{}
	\maketitle
\vspace{-2cm} 
\begin{abstract}
	We consider a class of two-dimensional continuous-time directed random polymers in a correlated Brownian environment. The partition functions of these polymers form a discrete approximation of the Critical Stochastic Heat Flow (SHF), constructed by Caravenna, Sun and Zygouras in \cite{csz23stochasticheatflow}. Using a recent moment based axiomatic characterization of the SHF by Tsai~\cite{tsai25stochasticheatflow}, we prove that the point-to-point partition functions, viewed as random measures, converge to the critical SHF. The proof proceeds by first establishing convergence of all moments through the resolvent method introduced by Gu, Quastel and Tsai~\cite{gu2021moments} and based on \cite{rajeev1999condensation,dimock2004multi}. This is followed by establishing tightness and verifying the remaining axioms in the characterization.
\end{abstract}

\tableofcontents
\section{Introduction}{\label{sec:intro}}

The directed polymer model has been a central object of study in statistical physics and probability theory since its introduction by Huse and Henley~\cite{husehenley85pinning}. In this model, a discrete or continuous-time random walk moves on the lattice under the influence of a random environment, with the interaction strength governed by a coupling constant $\beta$, which plays the role of an inverse temperature. The polymer measure is then defined as an exponential tilt of the random walk measure by the energy accumulated from the environment along its path, and the degree of the tilt depends on $\beta$. Larger values of $\beta$ strengthen this tilt, favoring paths that spend more time in regions where the environment takes large values, while the underlying random walk still tends to spread out. Thus, the model naturally exhibits an energy-entropy competition. One of the main questions is to understand how the polymer path behavior deviates from that of the underlying random walk, corresponding to the case $\beta = 0$, as the strength of the disorder increases. In the \textit{weak disorder} phase, entropy dominates and the polymer retains its diffusive behavior, similar to that of the underlying random walk. In the \textit{strong disorder} phase, the effect of the environment becomes dominant, leading to localization phenomena. It is known that the critical value of $\beta$ separating these phases is $0$ in dimensions $d =1,2$ and strictly positive for $d \geqslant 3$; see \cite{carmonahu02, comets03localization}. Diffusive behavior in the weak disorder regime has been studied in \cite{comets06polymer, junk23stability, bolthausen89polymer}, while localization in the strong disorder regime was shown in \cite{carmonahu02,comets03localization,vargas07localization}. We refer to \cite{comets17directed} for a detailed account of directed polymers and to \cite{zygouras24directed} for recent developments in the field. 

The central statistic by which these notions are investigated is the partition function, which represents the weighted average of the exponential tilt over all possible up-right paths between two points or from a point to a line. It serves as a discrete or semi-discrete approximation of the Stochastic Heat Equation or SHE for short:
\begin{align}{\label{eqn:she}}
 \partial_{t}Z_{t}(x)
 = \half \Delta Z_{t}(x) + \sqrt{\beta} Z_{t}(x) \dot{W}_{t}(x) \quad t> 0, \quad x \in \R^{d} \, , 
\end{align}
where $\dot{W}_{t}(x)$ is a space-time white noise. The point-to-point partition function (defined in the next subsection) corresponds to the solution to the above equation with a delta initial condition, while the point-to-line partition function corresponds to the solution started from a constant function.

In dimension $d=1$, the above equation admits a unique non-negative solution which can be represented as a stochastic integral~\cite{walsh84,chendalang14, chendalang15} or as a chaos series expansion. The connection to the polymer partition function was made precise by Alberts--Khanin--Quastel \cite{alberts14intermediate}, wherein they showed that after a diffusive space-time scaling $(t,x) \mapsto (\varepsilon^{-2}t, \varepsilon^{-1}x)$ and $\beta_{\varepsilon} = \beta \varepsilon^{1/2}$, the rescaled partition functions converge in distribution to the solution of the one-dimensional stochastic heat equation. This provides the interpretation of the latter as the partition function of a \textit{continuum} directed polymer, constructed by the same authors in \cite{alberts14continuum}. It is important to note that this result holds for any $\beta > 0$. There is now a vast body of literature now that pursue these questions and their connection to the KPZ universality class~\cite{corwin12kpz, zygouras22kpz}. It is expected that the partition function exhibits fluctuations that are characteristic of this class. While this has been proven for select models~\cite{barraquand21loggamma, seppalainen12loggamma}, the general conjecture remains open. 

In dimension $d=2$, the situation is quite different. The equation~\eqref{eqn:she} no longer admits solutions using the classical probabilistic methods as the smoothing effect of the Laplacian equalizes the singular effect of the noise. This is termed critical in the literature and as a result, does not lend itself to the existing analytical methods for singular SPDEs such as regularity structures~\cite{hairer13kpz, hairer14regularity} or paracontrolled calculus~\cite{gubinelli15}. In the context of polymers, we say that the model is \textit{marginally relevant}, a terminology introduced in~\cite{csz17universality}. Although the polymer is in the strong disorder phase for any $\beta > 0$, the intermediate disorder regime $\beta_{\varepsilon} = \beta/|\log\varepsilon|$ exhibits a phase transition as $\beta$ is increased. In the normalization used in this paper, this critical value is $2\pi$. Below this value, the sequence of second moments of the partition functions remains bounded. Around $2\pi$, the fluctuations become strong enough that the limiting object is expected to be too rough or singular to be described by the classical solution theory. Using a coarse-graining method, Caravenna, Sun, and Zygouras~\cite{csz23stochasticheatflow} constructed this nontrivial scaling limit, now called the \textit{Critical Stochastic Heat Flow}, or SHF for short. It was first obtained as the scaling limit of critical two-dimensional directed polymers, viewed as measures on $\R^2\times\R^2$. This construction provides a natural candidate for the solution of the two-dimensional stochastic heat equation with multiplicative space-time white noise in \eqref{eqn:she}.

The moments of the critical SHF are an interesting story in themselves. By a formal application of It\^{o}'s formula, the $n$-point correlation function of the solution to equation~\eqref{eqn:she} can be seen to satisfy a parabolic PDE generated by the delta-Bose Hamiltonian
\begin{equation}{\label{eqn:deltabosegas_semigroup}}
	\cali{H}
	\coloneqq
	-\half \sum_{m=1}^{n}\Delta^{(m)}
	-
	\beta\sum_{i<j}\delta(x_i-x_j), \qquad x_1, x_2, \ldots x_n \in \R^{2} \,  \, ,
\end{equation}
where $\Delta^{(m)}$ acts on the $x_m$ coordinate. This formal notation has to be interpreted in a limiting sense, after a smooth approximation scheme. It turns out that in two dimensions, the constant $\beta$ has to be tuned logarithmically as before, in a critical window around $2\pi$.  For the $n=2$ case, based on \cite{albeverio88solvable}, an explicit expression for the associated semigroup was obtained in \cite{bertini1998two} in relative and center-of-mass coordinates. The third moment was identified in \cite{csz19moments} through the polymer partition function. Finally, using the resolvent expansion of Dimock and Rajeev~\cite{rajeev1999condensation, dimock2004multi}, Gu, Quastel, and Tsai~\cite{gu2021moments} derived an explicit formula for this semigroup, for all $n$, in terms of certain diagrams. In particular, they proved the strong convergence of the resolvents of the mollified delta-Bose gas semigroups, as operators on $\Ltwo(\R^{2\intv{n}})$, and then used the Trotter--Kato theorem to conclude convergence of the corresponding semigroups. In the SHE context, this represents the limiting moments of the noise-mollified equation started from initial conditions in $\Ltwo$. Formally, one writes $\deltatrans{\shfparameter}(t)=e^{-t\cali{H}}$, where $\shfparameter$ is a constant arising from the limiting procedure; see equation~\eqref{eqn:deltabosegas_semigroup} for a precise definition. Subsequent work in \cite{sudhtsai25deltabose} extended the result to weighted $\Ltwo$ spaces, which include exponentially growing functions. In particular, this class includes bounded and constant initial conditions.

Recently, Tsai~\cite{tsai25stochasticheatflow} gave an axiomatic characterization of the critical SHF based on its moments and used it to prove the convergence of the mollified two-dimensional stochastic heat equation to the SHF. This provides an intrinsic definition of the SHF without any reference to approximation procedures. To state the same, we need some notation. Let $\measspace{\R^2 \times \R^2}$ denote the space of locally finite Borel measures on $\R^2 \times \R^2$ equipped with the vague topology, and let $\R_{\leqslant}^{2} \coloneqq \{(s,t) \in \R^2 : 0 <s <t \}$ denote an ordered time index set. Denote by $C(\R_{\leqslant}^{2}, \measspace{\R^2 \times \R^2})$ the set of continuous processes on $\R_{\leqslant}^{2}$ with values in $\measspace{\R^2 \times \R^2}$, equipped with the uniform-on-compact topology. Then, from \cite{tsai25stochasticheatflow}, we have the following definition:

\begin{definition}{\label{defn:axiomatic_shf}}
	A process $\shflow{\shfparameter}$ is called a Stochastic Heat Flow with strength $\shfparameter$, or $\SHF(\shfparameter)$, if
    \begin{enumerate}[label=\textup{(\roman*)}, ref=\thedefinition\textup{(\roman*)}]
    	\item\label{defn:axiomatic_shf_one}
    	 \textbf{(Continuity)} $\shflow{\shfparameter} = \{\shflow{\shfparameter}_{s,t}\} \in C(\R_{\leqslant}^{2}, \measspace{\R^2 \times \R^2})$. 
    	\item\label{defn:axiomatic_shf_two}
    	\textbf{(Chapman--Kolmogorov)} For all $t_{0} < t_{1} < t_{2}$, there exists a sequence $\{u_{\ell}\}$ in $\Ltwo(\R^2)$ for which the operator $\psi \mapsto u_{\ell} \ast \psi$ converges strongly to the identity, such that the difference $\shflow{\shfparameter}_{t_{0},t_{1}} \bullet_{\ell} \shflow{\shfparameter}_{t_{1},t_{2}} - \shflow{\shfparameter}_{t_{0},t_{2}}$ is well-defined and converges to $0$ vaguely, in probability as $\ell \to \infty$.
    	\item\label{defn:axiomatic_shf_three} 
    	\textbf{(Independent increments)} For all $ t_{0}  < \cdots < t_{k}$, the random variables $\shflow{\shfparameter}_{t_{0},t_{1}},  \ldots \shflow{\shfparameter}_{t_{k-1}, t_{k}}$ are independent.
    	\item\label{defn:axiomatic_shf_four}
    	\textbf{(Delta-Bose Moments)} For $n = 1,2,3,4$, $s<t$ and $h, h' \in \Ltwo(\R^2)$, the following holds:
    	\begin{equation*}
    		\E \Big(\Big\langle h, \shflow{\shfparameter}_{s,t} h' \Big\rangle \Big)^{n} = \Big\langle h^{\otimes n}, \deltatrans{\shfparameter}(t-s) h'^{\otimes n} \Big\rangle \, .
    	\end{equation*} 
    \end{enumerate}
\end{definition}

The product $\bullet_{\ell}$ in Definition~\ref{defn:axiomatic_shf_two} was originally introduced in \cite{clark24continuum} using the heat kernel as an approximate identity, as part of the construction of the polymer measure associated to the SHF. Tsai~\cite{tsai25stochasticheatflow} later redefined it in terms of more general $\Ltwo$ functions.

The main theorem of \cite{tsai25stochasticheatflow} then asserts that if $\shflow{\shfparameter}$ and $\widetilde{\cali{Z}}^{\shfparameter}$ are both $\SHF(\shfparameter)$, then they are equal in law. This reduces the task of proving convergence to the SHF to proving convergence of the moments and verifying the other axioms, thereby bypassing the traditional route of showing convergence of finite-dimensional distributions. Using this characterization, some further properties of the SHF have been investigated in the recent works \cite{gu2025stochasticheatflowblack, clark25conditional, gu2026loglog}. 

In this paper, we use this result to show the convergence of the partition functions of a large class of continuous-time polymers in a correlated Brownian environment to the critical Stochastic Heat Flow. Our approach is first to prove convergence of the moments of all orders, show tightness, and then verify the axioms above.

Before we describe our model and the key results, let us mention some related works. In the sub-critical regime \ie $\beta < 2\pi$, the limiting one-point distribution of the partition function and the random field fluctuations in $d=2$ were proven to be log-normal and Gaussian in~\cite{csz17universality}. In fact, this result fits into the larger framework of marginally relevant disorder systems, all of which posses a limiting log-normal one-point law and Gaussian random field fluctuations as shown in the same work, thus exhibiting universality. Similar results were obtained for the two-dimensional SHE and the KPZ equation in~\cite{csz20twokpz, gu2020kpz}. 

For the nonlinear SHE, the limiting one-point statistics was characterized in~\cite{dunlap22forward} as the solution to a forward backward SDE evaluated at a terminal time, and later extended in~\cite{dunlap2026renormalizationflow2dnonlinear} to include more general nonlinearities. The corresponding fluctuations were shown to be Gaussian in~\cite{tao24gaussian, dunlap25edwards}.

In the critical regime, the $n$-th moments, as calculated in \cite{gu2021moments} predicted a growth rate with an upper bound $\exp{({e^{cn^2}})}$, while works in the physics literature~\cite{rajeev1999condensation} conjectured a rate of $\exp{(e^{n})}$. A lower bound of this form was recently shown in \cite{ganguly2025sharpmomentuppertail}.   A probabilistic analysis of the delta-Bose semigroup, including a Feynman-Kac formula, was undertaken in the series of works \cite{chen25skew, chen1stochastic, chen2stochastic, chen3stochastic, chen4stochastic}.

\subsection{The model and main results}

Let us begin by describing the model we consider. The random environment is given by a collection of Brownian motions $\{\onebrown_{t}(z)\}_{t>0, z \in \Z^2}$ satisfying the covariance relation
\begin{equation*}
	\E\Big( \onebrown_{t}(\varphi)\onebrown_{t'}(\psi)\Big) = t \wedge t'\sum_{z \in \Z^2}\sum_{z' \in \Z^2}\varphi(z) \range(z - z')\psi(z') ,
\end{equation*}
for $\varphi,\psi \in \cali{S}(\Z^2)$. Here, $\onebrown_{t}(\varphi) \coloneqq \sum_{z \in \Z^2}\varphi(z)\onebrown_{t}(z)$, with the analogous definition for $\psi$, and $\range:\Z^2 \to \R$ is assumed to be a non-negative, non-negative definite symmetric function such that there exists an integer $K> 0$ for which
\begin{equation}{\label{eqn:range_energy}}
	\quadrange \coloneqq \mathop{\sum_{z \in \Z^2}\sum_{z' \in \Z^2}}_{\norm{z - z'} \geqslant K} \range(z) \, \big(\log \norm{z - z'} \big)^{2} \, \range(z') < \infty \, \, .
\end{equation}


The underlying random walk is taken to be a two-dimensional compound Poisson process $\{S_{t}\}_{t>0}$ with jump measure $\nu$. More precisely,
\begin{equation*}
	S_{t} = \sum_{m=0}^{N_{t}}X_{m}\quad , \qquad \qquad X_{m} \, \text{i.i.d.} \sim \nu \, \, ,
\end{equation*}
where $\{N_{t}\}_{t>0}$ is a rate-one Poisson process independent of the variables $\{X_m\}$. In particular, $\{S_{t}\}_{t>0}$ is a L\'{e}vy process with characteristic exponent
\begin{equation*}
	\Psi(\xi) = \sum_{m \in \Z^2}(1 - e^{\im \xi \cdot m})\nu(m) \, , \quad \xi \in [-\pi,\pi]^2 \, .
\end{equation*}

The generator $\twogen$ of the random walk is defined through its action in Fourier space by the symbol $-\Psi$, that is,
\begin{equation*}
	\widehat{\twogen\varphi}(\xi) = -\Psi(\xi)\widehat{\varphi}(\xi) \coloneqq -\F{\twogen}(\xi)\widehat{\varphi}(\xi) \, .
\end{equation*}

Regarding the jump measure $\nu$, we make the following standing assumption.
\begin{assumption} The jump measure $\nu$ satisfies the following:
	\begin{enumerate}[label=(\arabic*)]
		\item \label{assumption1} It is symmetric, \ie, $\nu(m) = \nu(-m)$ for all $m \in \Z^{2}$, and the closure of $\{m \in \Z^2 \, : \, \nu(m) \neq 0\}$ is $\Z^2$. 
		\item \label{assumption2} There exists a $\delta \in (0,1)$ such that 
		\begin{equation*}
			\sum_{m \in \Z^{2}} \norm{m}^{2+\delta} \nu(m) < \infty .
		\end{equation*}
		\item \label{assumption3} The sum $\sum_{m \in \Z^{2}} m \, \transpose{m} \nu(m) = \lid{1}$, where $\lid{k}$ is the identity operator on $\R^{2k}$.     
	\end{enumerate}
\end{assumption}
Under Assumption~\ref{assumption1}, the jumps have mean zero, \ie,
\[
\sum_{m \in \Z^{2}} m \, \nu(m) = 0.
\]
The second part of Assumption~\ref{assumption1} is equivalent to aperiodicity of the random walk on the lattice. In terms of the characteristic exponent $\Psi$, this is equivalent to imposing
\begin{equation*}
	\Psi^{-1}(0) = \{0\}   \qquad \text{in} \quad \hattorus{2} \coloneqq [-\pi,\pi]^{2} \, \, . 
\end{equation*} 
Assumptions~\ref{assumption2} and~\ref{assumption3} ensure that, under diffusive scaling, the generator symbol converges to $\half\norm{\xi}^2$, the symbol of $\half \Delta$, uniformly on compact sets. Moreover, the three assumptions together imply that the symbol is uniformly bounded from below by the symbol of the Laplacian; see Lemma~\ref{lem:uniformgreensbound}. The point-to-point partition function for the polymer is then defined, for $x,x' \in \Z^2$, by
\begin{equation*}
	\partfunction{\beta}_{s,t}(x,x') \coloneqq \bbE \bigg[\exp\bigg(\sqrt{\beta}\int_{s}^{t}\d\onebrown_{u}(S_{u}) -\frac{\beta}{2}\range(0)\bigg) \ind{S_t=x'} \bigg| S_{0} = x\bigg] \, \, .
\end{equation*}
Here, $\bbE$ denotes the expectation with respect to the underlying random walk and the factor $\exp(-\tfrac{\beta}{2}\range(0))$ is a normalization constant intended to keep the mean of the partition function constant with respect to the environment. The coupling constant $\beta$ is tuned according to $\varepsilon$ as follows:
\begin{equation}{\label{eqn:beta_epsilon}}
	\beta_{\varepsilon} = \frac{2\pi}{|\log\varepsilon|}\Bigg(1 + \frac{\theta}{|\log\varepsilon|}\Bigg),
\end{equation}
where $\theta \in \R$ is a fine-tuning constant. The corresponding diffusively rescaled partition function is defined as:
\begin{equation}{\label{eqn:rescaled_partition_function}}
	\epsshe_{s,t}(x, x') \coloneqq \partfunction{\beta_{\varepsilon}}_{s/\varepsilon^2, t/\varepsilon^2}(x/\varepsilon, x'/\varepsilon),
\end{equation}
for $x, x' \in \Z_{\varepsilon}^2 \coloneqq (\varepsilon \Z)^{2}$.
These partition functions then solve the semi-discrete stochastic heat equation
\begin{equation*}
	\partial_{t}\epsshe = \epstwogen\epsshe +\sqrt{\beta_{\varepsilon}}\epsshe \d\epsbrown_{t} \, \, ,
\end{equation*}
where $\epstwogen$ is the generator of the diffusively scaled walk so that $\onetwogen \equiv \twogen$ and $\epsbrown_{t}(z) \coloneqq \onebrown_{t/\varepsilon^2}(z/\varepsilon)$ has the same law as $\varepsilon^{-1}\onebrown_{t}(z/\varepsilon)$ for $z \in \Z_{\varepsilon}^2$. By Lemma~\ref{lem:uniformgreensbound}, the multiplier $\fourierepstwogen(\xi)$ converges to $\half \norm{\xi}^2$ on $\epshattorus{2}\coloneqq [-\pi/\varepsilon, \pi/\varepsilon]^{2}$. This implies that the generator $\epstwogen$ converges to $\half\Delta$, and hence, in the limit, the underlying random walk converges to Brownian motion.

Regarding the noise, it converges to space-time white noise, up to a constant. To show this, note that since the discrete noise is Gaussian, it suffices to prove the convergence of the covariance functions to assert the convergence in $\cali{S}'(\R_{\geqslant 0} \times \R^2)$. Fix $\varphi, \psi \in \cali{S}(\R_{\geqslant 0} \times \R^2).$
\begin{align*}
 \mathbf{E}
 \Bigg(\int_{\R_{\geqslant0}} \varepsilon^2\sum_{x \in \Z_{\varepsilon}^{2}} \varphi(t ,x)
 \, \d \epsbrown_{t}(x),\int_{\R_{\geqslant0}} \varepsilon^2
 &\sum_{x' \in \Z_{\varepsilon}^{2}} \psi(t' ,x') \, \d \epsbrown_{t'}(x') \Bigg) \\
 & =\sum_{x \in \Z_{\varepsilon}^{2}}\range\bigg(\frac{x}{\varepsilon}\bigg)\int_{\R_{ \geqslant 0}} \varepsilon^2 \sum_{x' \in \Z_{\varepsilon}^{2}} \varphi(t,x + x')\psi(t,x') \, \d t \\
 & \xrightarrow{\varepsilon \to 0} \sum_{x \in \Z^2}\range(x) \,\int_{\R_{\geqslant 0}} \varphi(t,x')\psi(t, x') \, \d t \, \, .
\end{align*}
Consequently, $\d \epsbrown$ converges to a space-time white noise in $\cali{S}'(\R_{\geqslant 0} \times \R^2)$, with a coefficient depending on the correlation structure of the discrete Brownian motions. Henceforth, without any loss of generality, we will assume 
\begin{equation*}
	\sum_{x \in \Z^2} \range(x)  = 1 \, ,  
\end{equation*} 
so that the limiting noise is a standard white noise. Let $\intv{n}$ denote the ordered set of integers $\{1, 2, 3 \ldots n\}$ and define the space
\begin{equation*}
	\R^{2\intv{n}} \coloneqq \{x = (x_i)_{i \in \intv{n}}: x_i \in \R^2\} \, .
\end{equation*}
Define the $n$-point correlation function of the partition function for $x, x' \in \Z^{2\intv{n}}_{\varepsilon}$ as
\begin{equation*}
	\predeltatrans{\theta}(t, x, x') = \E \bigg[\prod_{m=1}^{n} \epsshe_{0,t}(x_m, x'_m)\bigg] \, .
\end{equation*}
Using It\^{o}'s formula, one sees that the correlation function satisfies the semi-discrete PDE:
\begin{equation}{\label{eqn:predeltabosegas_semigroup}}
	\partial_{t}\predeltatrans{\theta}(t) = \epsgen\predeltatrans{\theta}(t) + \beta_{\varepsilon} \sum_{i < j}\range^{\varepsilon}(x_i - x_j) \predeltatrans{\theta}(t) \, \, ,
\end{equation}
where $\range^{\varepsilon}: \Z_{\varepsilon}^{2} \to \R$ is defined by $\range^{\varepsilon}(x_i) = \varepsilon^{-2}\range(x_i/\varepsilon)$ and $\epsgen = \sum_{k=1}^{n}\epstwogen^{(k)}$ is the generator of the random walk on $\Z_{\varepsilon}^{2\intv{n}} \subset \R^{2\intv{n}}$ corresponding to $n$ copies of the two-dimensional random walk on $\Z_{\varepsilon}^2$ generated by $\epstwogen^{(k)}$ for $k = 1, 2 \ldots n$. The jump measure for $\epsgen$ is the product measure $\nu_{n}(m) \coloneqq \prod_{k=1}^{n}\nu(m_k)$ on $\Z_{\varepsilon}^{2\intv{n}}$. For $\xi =(\xi_1, \xi_2, \ldots , \xi_n) \in \epshattorus{2\intv{n}} \coloneqq [-\pi/\varepsilon, \pi/\varepsilon]^{2\intv{n}}$, its symbol is given by
\begin{equation*}
	\fourierepsgen(\xi) = \sum_{k=1}^{n} \sum_{m_k \in \Z^{2}_{\varepsilon}}(1 - \cos(\xi_k \cdot m_k))\nu(m_k) \, .
\end{equation*}

We now state the first main result of this paper.
\begin{theorem}{\label{thm:maintheorem}}
	There exists $\shfparameter \in \R$ such that the  semigroup defined in equation~\eqref{eqn:predeltabosegas_semigroup} converges strongly to the semigroup generated by the Hamiltonian in equation~\eqref{eqn:deltabosegas_semigroup}. That is, for all $f \in \Ltwo(\R^{2\intv{n}})$, $\norm{\predeltatrans{\theta}(t)f - \deltatrans{\shfparameter}(t)f } \to 0$ as $\varepsilon \to 0$. The constant $\shfparameter$ is explicitly given by:
	\begin{align*}
		\shfparameter
		= 2\theta + \frac{1}{\pi}\int_{\R^2} \int_{\R^2} \int_{\hattorus{2}} &\range(y_{\rel}) \Bigg[\frac{e^{\im \eta_{\rel} \cdot (\floor{y_{\rel}} - \floor{y'_{\rel}}) } -1} {	\fouriergen(\mzaleta{\alpha})} \Bigg] \range(y'_{\rel}) \, \d \eta_{\rel} \, \d y_{\rel} \, \d y'_{\rel} \\  
		& +  \frac{1}{\pi}\int_{\hattorus{2}} \Bigg[\frac{1}{	\fouriergen(\mzaleta{\alpha})} - \frac{1}{\norm{\eta_{\rel}}^2}\Bigg] \, \d\eta_{\rel} +\frac{\log 2}{2\pi} - \frac{G}{\pi^2} + 2\log\pi \,  ,
	\end{align*}
where $G=0.915965\ldots$ is the Catalan's constant and
\begin{equation*}
  	\fouriergen(\mzaleta{\alpha})  \coloneqq 4 \sum_{m_1 \in \Z^2} \sin^{2}\Big(\frac{\eta_{\rel} \cdot m_1}{2} \Big) \nu(m_1) \, .
\end{equation*}
\end{theorem}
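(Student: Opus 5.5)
The plan is to follow the resolvent method of Gu, Quastel and Tsai~\cite{gu2021moments}. We prove strong convergence of the resolvents $(q - \epsgen - \beta_\varepsilon \sum_{i<j}\range^{\varepsilon}(x_i-x_j))^{-1}$, for $q$ large and with discrete functions embedded into $\Ltwo(\R^{2\intv{n}})$ by piecewise-constant extension on cells, to the resolvent of $\cali{H}$ given by the Dimock--Rajeev diagram expansion. The Trotter--Kato theorem then converts this into strong convergence of the semigroups. Uniform-in-$\varepsilon$ exponential bounds on $\norm{\predeltatrans{\theta}(t)}$ will follow from the same resolvent bounds, or more simply from positivity together with a Feynman--Kac comparison.

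First, we expand the discrete resolvent as a geometric series in the pair potentials. We regroup consecutive interactions on the same pair $\{i,j\}$ into a resummed pair operator
\[
\diagtrans{\theta}{\varepsilon}(q) = \big(\mathbf{1} - \beta_\varepsilon \range^{\varepsilon} \green_\varepsilon(q)\big)^{-1},
\]
restricted to the relative coordinate. This gives a sum over diagrams that alternates between off-diagonal operators $\offdiagtrans{}$, which carry the free propagator between different pairs, and these diagonal blocks. Writing $\beta_\varepsilon \range^\varepsilon \green_\varepsilon(q)$ on the relative coordinate $x_i-x_j$ in Fourier variables, we need the asymptotics of the "bubble" quantity
\[
\beta_\varepsilon \sum_{y,y'} \range(y)\,\big[\green_\varepsilon(q)\big](y,y')\,\range(y') .
\]
It behaves like $1 - \big(\log(q) - \shfparameter'\big)/|\log\varepsilon| + o(1/|\log\varepsilon|)$. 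The $2\pi$ tuning cancels the leading $|\log\varepsilon|$ divergence coming from $\int \d\eta/(q\varepsilon^2+\fouriergen(\eta))$.

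The constant $\shfparameter$ comes out of this expansion from three sources. The $\theta$ term gives $2\theta$. Replacing $\range(y)\range(y')$ by a point mass produces the first integral, the phase-difference term, which converges by the $\log^2$ moment condition \eqref{eqn:range_energy}. Replacing $\fouriergen$ by $\norm{\eta}^2$ on $\hattorus{2}$ produces the second integral. What remains is an explicit integral of $1/(q\varepsilon^2+\norm{\eta}^2)$ over the square $[-\pi,\pi]^2$, whose asymptotics produce the Catalan constant, $\log 2$ and $\log\pi$ terms.

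Second, we show that each diagonal block converges after multiplication by $|\log\varepsilon|^{-1}$-type normalizations, namely to $4\pi/(\log q - \shfparameter)$ times the appropriate projection onto pair-coincident configurations. We also show that the off-diagonal and incoming/outgoing operators $\inctrans{}$, $\outgtrans{}$ converge strongly to their continuum analogues. Lemma~\ref{lem:uniformgreensbound} supplies the lower bound $\fourierepstwogen \gtrsim \norm{\xi}^2$ on $\epshattorus{2}$, so the continuum $\Ltwo$ operator-norm estimates of~\cite{gu2021moments} carry over uniformly in $\varepsilon$.

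The main obstacle is summability. In~\cite{gu2021moments}, the diagram series converges because $\norm{\offdiagtrans{}\,\diagtrans{}{}}$ is small for $q$ large, with a bound like $C/\log q$ on the relevant weighted norms. We must reproduce this uniformly in $\varepsilon$ despite the lattice cutoff and the smeared interaction $\range$. Our first approach is to bound the discrete off-diagonal kernels by their continuum counterparts via Lemma~\ref{lem:uniformgreensbound} and Schur tests in Fourier space, treating the $\range$-smearing as a bounded perturbation controlled by \eqref{eqn:range_energy}. With a uniform geometric bound in hand, dominated convergence term-by-term gives resolvent convergence, and Trotter--Kato finishes the proof.
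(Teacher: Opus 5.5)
Your proposal follows essentially the same route as the paper: the Gu--Quastel--Tsai diagram expansion of the resolvent with resummed same-pair blocks $\prediag{\theta}{\alpha}(\lambda)=(\beta_\varepsilon^{-1}-\preoffdiag{\alpha\alpha}(\lambda))^{-1}$, uniform-in-$\varepsilon$ bounds from Lemma~\ref{lem:uniformgreensbound} and Lemma~\ref{lem:guquasteltsailemma}, the same bookkeeping for $\shfparameter$ ($2\theta$ from the tuning, the phase term controlled by \eqref{eqn:range_energy}, the symbol replacement, and the explicit integral over $\hattorus{2}$ split into the disc of radius $\pi$ and the corners, producing $\log\pi$, $\log 2$ and $G$), term-by-term strong convergence, and Trotter--Kato. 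Two things to tighten: the bubble has to be computed at the shifted parameter $\lambda+\tfrac14\norm{\eta_{\com}}^2+\tfrac12\norm{\eta_{\intv{n}\setminus\alpha}}^2$ with bounds uniform in these transverse momenta (they range up to order $\varepsilon^{-1}$, and the limiting block is a Fourier multiplier in them, not $4\pi/(\log q-\shfparameter)$ times a projection), and the uniform semigroup bound needed for Trotter--Kato should come from the resolvent bounds together with self-adjointness, since a Feynman--Kac comparison gives no simple uniform-in-$\varepsilon$ control at criticality.
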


The proof uses the resolvent method as introduced in \cite{gu2021moments}, but adapted to the setting described here. In particular, we define the discrete delta-Bose resolvents and establish their convergence to the limiting resolvent from the same paper. As the random walk is symmetric, the generator is self-adjoint. This allows us to use the Trotter--Kato theorem to deduce the convergence of the corresponding semigroups. Once this is done, we establish certain weighted norm bounds on these semigroups, following \cite{tsai25stochasticheatflow}, which yield tightness for the family of partition functions in $C(\R_{\leqslant}^{2}, \measspace{\R^2 \times \R^2})$. Using Theorem~\ref{thm:maintheorem}, we then show that all limit points of this family satisfy the axioms in Definition~\ref{defn:axiomatic_shf}, whence, using Tsai's result, we obtain:
\begin{proposition}{\label{cor:shfconvergence}}
	The rescaled partition functions defined in equation~\eqref{eqn:rescaled_partition_function} converge in law to the $\SHF(\shfparameter)$ in the space $C(\R_{\leqslant}^{2}, \measspace{\R^2 \times \R^2})$.
\end{proposition}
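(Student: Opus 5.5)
The plan is to combine Theorem~\ref{thm:maintheorem} with Tsai's uniqueness theorem: show that the family $\{\epsshe\}_{\varepsilon}$, embedded as random measures $\epsshe_{s,t}(\d x,\d x') = \sum_{x,x'\in\Z_\varepsilon^2}\varepsilon^{-2}\epsshe_{s,t}(x,x')\,\delta_x(\d x)\,\delta_{x'}(\d x')$ (normalized so that the mean is the rescaled heat kernel), is tight in $C(\R_{\leqslant}^{2}, \measspace{\R^2 \times \R^2})$, and that every subsequential limit satisfies the four axioms of Definition~\ref{defn:axiomatic_shf}. Since Tsai's result says any two $\SHF(\shfparameter)$ processes agree in law, all limit points coincide, and the whole family converges.

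\textbf{Step 1 (tightness).} Continuity in time of the discrete process is not available directly, since the measures live on a lattice and the time increments are driven by Brownian motions. So I would first extend $\epsshe$ to a continuous-in-time process, as the $\varepsilon$-scaled partition function already is, and test against functions $h\otimes h'$ with $h,h'\in C_c(\R^2)$. Following \cite{tsai25stochasticheatflow}, I would establish weighted operator-norm bounds on $\predeltatrans{\theta}(t)$ for $n=2,4$, acting on weighted $\Ltwo$ spaces with exponential weights $e^{-a|x|}$, uniformly in $\varepsilon$ and $t$ in compacts. These should follow from the resolvent expansion used to prove Theorem~\ref{thm:maintheorem}, with the uniform Green's function bound of Lemma~\ref{lem:uniformgreensbound}. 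They give uniform fourth-moment bounds on $\langle h,\epsshe_{s,t}h'\rangle$. For time increments, I would write $\epsshe_{s,t}-\epsshe_{s',t'}$ via the Chapman--Kolmogorov identity at the discrete level, where it holds exactly as a lattice convolution. The difference then becomes a product of a short-time partition function minus its identity action, tested against a smoothed function. Bounding its fourth moment by $|t-t'|^{\gamma}$ times a constant, via the semigroup bounds and the strong continuity of $\predeltatrans{\theta}$ near $t=0$, would give Kolmogorov tightness for each test pair. Tightness in the vague topology then follows from a countable dense family of test functions.

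\textbf{Step 2 (axioms).}
\begin{itemize}
\item[(iv)] Convergence of $\E\langle h,\epsshe_{s,t}h'\rangle^n$ to $\langle h^{\otimes n},\deltatrans{\shfparameter}(t-s)h'^{\otimes n}\rangle$ follows from Theorem~\ref{thm:maintheorem}, after approximating $h,h'$ by their lattice projections. The uniform higher-moment bounds give uniform integrability, so moments pass to the limit.
\item[(iii)] Independent increments hold exactly for each $\varepsilon$, because the partition functions over disjoint time intervals use disjoint Brownian increments. Independence is preserved under weak limits.
\item[(ii)] Chapman--Kolmogorov is the delicate axiom. I would take $u_\ell$ to be a heat kernel at time $1/\ell$. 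At the discrete level the exact identity $\epsshe_{t_0,t_2}=\epsshe_{t_0,t_1}\ast\epsshe_{t_1,t_2}$ holds. The task is to show that inserting $u_\ell$ in place of the discrete identity costs an error whose second moment vanishes as $\ell\to\infty$, uniformly in $\varepsilon$. That second moment is expressible through the $n\le4$ correlation semigroups, so its $\varepsilon\to0$ limit equals the corresponding $\deltatrans{\shfparameter}$ expression, which Tsai shows tends to $0$ as $\ell\to\infty$. A uniform-in-$\varepsilon$ version then transfers the statement to limit points.
\end{itemize}

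I expect the main obstacle to be the uniform (in $\varepsilon$) weighted $\Ltwo$ bounds on the discrete semigroups for $n\le4$, which Steps 1 and (ii) both hinge on. These require redoing the resolvent diagram estimates with exponential weights, in the spirit of \cite{sudhtsai25deltabose}, on the lattice. The hardest part is controlling the lattice analogue of the logarithmic two-body kernel uniformly, using condition \eqref{eqn:range_energy} and Lemma~\ref{lem:uniformgreensbound}.
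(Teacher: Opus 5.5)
Your architecture is the paper's: tightness, then axioms (i)--(iv) for every subsequential limit, then Tsai's uniqueness theorem. Your handling of (iii), (iv) and Chapman--Kolmogorov also agrees with the paper. Lemma~\ref{lem:chapman_kolmogorov} uses the exact lattice identity to write the $2m$-th moment of the defect as $\langle h^{\otimes 2m},\modpredeltatrans{\theta}(t-s)[U_\ell-\id{1,\varepsilon}]^{\otimes 2m}\modpredeltatrans{\theta}(u-t)h'^{\otimes 2m}\rangle$. It then lets $\varepsilon\to0$ using strong convergence from Theorem~\ref{thm:maintheorem}, and then $\ell\to\infty$. So you need neither uniformity in $\varepsilon$ nor Tsai's continuum computation.

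The gap is in Step~1, in two places. \textbf{First, fourth moments cannot close Kolmogorov's criterion here.} The index set $\R^{2}_{\leqslant}$ is two-dimensional, so for $X=\langle h,(\epsshe_{s,t}-\epsshe_{s',t'})h'\rangle$ you need $\E|X|^{2m}\lesssim|(s,t)-(s',t')|^{2+\beta}$ for some $\beta>0$.
\begin{itemize}
\item The paper, following \cite{tsai25stochasticheatflow}, proves $(\E|X|^{2m})^{1/2m}\lesssim|s-s'|^{\gamma}+|t-t'|^{\gamma}$ with $\gamma=\tfrac14(1-\tfrac2p)$ for every $m$. The exponent does not depend on $m$, so the criterion closes once $2m>8/(1-\tfrac2p)$.
\item With $2m=4$ you get only $|\cdot|^{1-2/p}$. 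Even a square-root modulus would give only the borderline power $2$.
\end{itemize}
So you need the moment formulas and bounds for all $n$, not just $n\le4$. You implicitly need higher moments anyway for uniform integrability in (iv). Theorem~\ref{thm:maintheorem} does hold for every $n$.

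\textbf{Second, and more seriously, nothing in your plan produces a power of the time increment uniformly in $\varepsilon$.}
\begin{itemize}
\item Strong continuity of $\predeltatrans{\theta}$ at $t=0$ is qualitative and not uniform in $\varepsilon$.
\item In $\langle((\epsshe_{s,s'})^{*}-\mathbf{I})h,\epsshe_{s',t}h'\rangle$ only one side is smooth. The moments $\predeltatrans{\theta}(t-s')h'^{\otimes n}$ of the other factor approach $\deltatrans{\shfparameter}(t-s')h'^{\otimes n}$, which is logarithmically singular on the diagonals $x_i=x_j$.
\item Operator bounds on $\Ltwo$ spaces, weighted or not, are essentially scale invariant. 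Put $\normop{\preoutgtrans{\alpha}(\tau)}\lesssim\tau^{-1/2}$ at both ends and the diagonal bound $\tau^{-1}|\log\tau|^{-2}$ in between. A one-interaction diagram integrated over the short simplex $\Sigma(\tau)$ is then only $O(1/|\log\tau|)$, not $O(\tau^{\gamma})$.
\end{itemize}
The ingredient the paper supplies is Lemma~\ref{lem:prelimitingtransbounds}, which gives time-domain bounds on each diagram component:
\begin{itemize}
\item $\normop{\preoffdiagtrans{\alpha\alpha'}(t)}\le ct^{-1}$, with bounded Laplace transform;
\item $\normop{\prediagtrans{\theta}{\alpha}(t)}\le ct^{-1}|\log(t\wedge\tfrac12)|^{-2}e^{ct}$;
\item crucially, the weighted estimate \eqref{eqn:pretransbound_6}: $\Norm{\preoutgtrans{\alpha}(t)}_{2,0\to q,a}\le ct^{-1/p}$ with $p>2$.
\end{itemize}
The last bound comes from $\norm{\epstwotrans(t)}_{q}\le ct^{-1/p}$ (via Lemma~\ref{lem:uniformgreensbound}) and H\"older's inequality against the exponential weight. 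Its singularity $t^{-1/p}$ is integrable with room to spare, and Tsai's argument turns that into the exponent $\tfrac14(1-\tfrac2p)$.

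These are pointwise-in-time bounds in weighted $\Lpee{p}$ spaces. They do not follow from the resolvent (Laplace-transform) estimates of Section~\ref{section:convergenceofresolvent}, so your suggestion that they come out of the resolvent expansion would not work. The paper proves them directly and then assembles $\sup_{\varepsilon}\normop{\predeltatrans{\theta}(t)}\le ce^{ct}$ via Lemma~2.1 of \cite{sudhtsai25deltabose}.

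A minor slip: your embedding has the wrong power. Since $\E\,\epsshe_{s,t}(x,x')\approx\varepsilon^{2}\twoconttrans(t-s,x'-x)$ and lattice cells in $\R^2\times\R^2$ have volume $\varepsilon^{4}$, the weight on $\delta_x\otimes\delta_{x'}$ should be $\varepsilon^{2}$, not $\varepsilon^{-2}$.
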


\begin{remark}
During the completion of this work, we became aware of an upcoming work by Drillick, Hou and Parekh~\cite{drillick26universality} in which convergence to the SHF in the sense of finite-dimensional distributions is established for a large class of discrete and continuous-time models, including random walks in space-time random environments and polymer models with finite-range spatial correlations. 
\end{remark}

\begin{notation}
	For two quantities $a$ and $b$, we will use the notation $a \lesssim b$ or $a \leqslant cb$ to mean the inequality holds for some constant $c$ independent of the parameters under consideration, and only depending on universal parameters or those that are fixed throughout the paper. When $a \lesssim b$ and $b \lesssim a$, we write $a \sim b$. Set $a \vee b$ and $a \wedge b$ to denote the maximum and minimum of $a$ and $b$, respectively. For a Borel measurable set $A$, we will use $\mathbf{1}_{A}$ for the indicator function of the set. For $p \in \N$, we write $\norm{\cdot}_{p}$ for the $\cali{L}^{p}$ norm of a function. We omit the underlying space when it is clear from context. We will use $\norm{\cdot}$ and $| \cdot | $ to denote the $\ell^{2}$ and $\ell^{1}$ norms of a vector respectively, again without referencing the vector space explicitly.
	For a point $x \in \R^2$, we will use $\epsfloor{ x }$ to denote the closest point in $\Z_{\varepsilon}^{2}$ southwest of the given point. When $\varepsilon = 1$, we will just omit the subscript and simply resort to the more familiar notation $\floor{ \cdot }$. We will use $f$ and $g$ to denote generic test functions whose definitions are local and whose domains might change from place to place. 
	
	Here and throughout the paper, for a function $f$ on $\R^{2\intv{n}}$, we adopt the following convention for its Fourier transform and the corresponding inverse:
\begin{equation*}
	\F{f}(\xi) = \frac{1}{(2\pi)^{n}}\int_{\R^{2\intv{n}}} f(x) e^{\im \xi \cdot x} \, \d x \qquad f(x) = \frac{1}{(2\pi)^{n}} \int_{\widehat{\R}^{2\intv{n}}} \F{f}(\xi) e^{-\im \xi \cdot x} \, \d \xi \, ,
\end{equation*}
where $\widehat{\R}^{2\intv{n}}$ is the dual space (defined in the next section). Accordingly, Plancherel's Theorem takes the form
\begin{equation*}
	\langle f, g \rangle \coloneqq \int_{\R^{2\intv{n}}}f(x) \conj{g(x)} \, \d x = \int_{\widehat{\R}^{2\intv{n}}} \F{f}(\xi) \conj{\F{g}(\xi)} \, \d \xi = \langle \F{f}, \F{g} \rangle \, .
\end{equation*}
Finally, if $\cali{T}$ is an operator on an $\Ltwo$ space with Fourier representation $\F{\cali{T}}$, then, to avoid extra notation, we use $\langle f, \F{\cali{T}}g\rangle$ to mean the more precise expression $\langle \F{f}, \F{\cali{T}}\F{g}\rangle$.
\end{notation}

The paper is organized as follows: in Section~\ref{section:deltaboseresolvent}, we recall the delta-Bose gas semigroup and also present the corresponding discrete version. In Section~\ref{section:convergenceofresolvent}, we prove the convergence of the semigroups, Theorem~\ref{thm:maintheorem}, using the method of resolvents. Lastly, in Section~\ref{section:convergencetoshf}, we use this result to establish the convergence result of the partition functions to the SHF, namely Proposition~\ref{cor:shfconvergence}.

\begin{acknowledgements}
	The author would like to sincerely thank Li-Cheng Tsai for suggesting the problem and numerous insightful interactions, and for a careful reading of earlier drafts of this manuscript. This work is partially supported by the NSF through the grants DMS-2243112 and DMS-2245242.
\end{acknowledgements}

\section{The delta-Bose gas resolvent}{\label{section:deltaboseresolvent}}

This section is primarily devoted to describing the resolvent of the delta-Bose gas semigroup and its corresponding semi-discrete version for the problem at hand in this paper. In addition, we prove the boundedness of the resolvent. Although this result was already shown in \cite{gu2021moments}, we present it again here to keep the paper self-contained and because our setting differs from theirs slightly: in our case, the correlation is not compactly supported. Also, as mentioned in the introduction, the proof of our main theorem is based on comparing the discrete and continuous resolvents. It will therefore be useful to derive explicit expressions for the latter in the Fourier domain. Let us start by describing the semigroup, for which we need to introduce some notation. Denote by $\pair\intv{n}$ the set of unordered pairs $\alpha = ij \coloneqq \{i < j\}$ in $\intv{n}$. This indexes the pair of particles under consideration among the particles $1,2, \ldots n$, namely particles $i$ and $j$. For a fixed $\alpha \in \pair\intv{n}$, we define the center-of-mass and relative coordinate spaces of $\R^{2\intv{n}}$ with respect to $\alpha$ as follows:
\begin{equation*}
	\begin{split}
		& \R^{2\intv{n}}_{\alpha} \coloneqq \R^2  \times  \R^{2\intv{n}\setminus\rel}_{\alpha}\coloneqq \R^2  \times \R^2 \times \R^{2\intv{n}\setminus\alpha}_{\alpha} \coloneqq \{(y_{\rel}, y_{\com}, (y_k)_{k \in \intv{n} \setminus \alpha}): y_{\rel}, y_{\com}, y_k \in \R^2\}	 \, . \\
	\end{split}
\end{equation*}
The indices $\com$ and $\rel$ stand for \textit{center-of-mass} and \textit{relative}, respectively. Clearly, as vector spaces, $\R^{2\intv{n}}$ and $\R^{2\intv{n}}_{\alpha}$ are isomorphic.  We will use a hat to denote the respective Fourier duals. That is, 
\begin{equation*}
	\widehat{\R}^{2\intv{n}} \coloneqq \{(\xi_k)_{k \in \intv{n}}:  \xi_k \in \R^2\} \quad  \text{and} \quad  \widehat{\R}^{2\intv{n}}_{\alpha} \coloneqq \{(\eta_{\rel}, \eta_{\com}, (\eta_k)_{k \in \intv{n}\setminus\alpha} ): \eta_{\rel}, \eta_{\com}, \eta_k \in \R^2\}\, \, .
\end{equation*}
Likewise, for a subset $\omega \subset \intv{n}$, set 
\begin{equation*}
	\R^{2\intv{n}\setminus\omega} \coloneqq \{(x_k)_{k \in \intv{n}\setminus\omega} : x_k \in \R^2\} \quad \text{  and } \quad \widehat{\R}^{2\intv{n}\setminus\omega} \coloneqq \{ (\xi_k)_{k \in \intv{n}\setminus\omega} : \xi_k \in \R^2\} \, .
\end{equation*}
We will frequently omit the relative coordinate and use $y_{\com \cup \intv{n}\setminus\alpha}$ to denote $(y_{\com}, (y_k)_{k \in \intv{n}\setminus\alpha})$ in $\R^2 \times \R^{2\intv{n}\setminus\alpha}_{\alpha}$. The same convention, together with the notations introduced above, apply to points in $\widehat{\R}^{2\intv{n}}_{\alpha}$ and all the subsets contained in these spaces.

Define the linear map $\Szero{\alpha} :  \R^{2\intv{n}\setminus\rel}_{\alpha} \to \R^{2\intv{n}}$ by
\begin{equation}
	\big(\Szero{\alpha}y_{\com \cup \intv{n}\setminus\alpha} \big)_{k} \coloneqq  \begin{cases}
		y_{\com}   & \text{ when }  k \in \alpha \\
		y_k        & \text{ when } k \in \intv{n}\setminus\alpha \quad .
	\end{cases}
\end{equation}
This map has the interpretation of collapsing particles $i$ and $j$, so that their relative distance vanishes, placing them at their center-of-mass position, and switching to that frame of reference. Throughout the paper, we let $\conttrans(t)$ and $\twoconttrans(t)$ be the standard heat kernels on $\R^{2\intv{n}}$ and $\R^2$, respectively, with their corresponding resolvents denoted by $\green(\lambda)$ and $\twogreen(\lambda)$. For $\alpha \neq \alpha' \in \pair\intv{n}$, define a set of operators with non-negative kernels:
\begin{equation}{\label{eqn:defn_limiting_operators}}
		\begin{split}
			\outgtrans{\alpha}(t, x,y) & \coloneqq \conttrans(t, x- \Szero{\alpha}y_{\com \cup \intv{n}\setminus\alpha})\sqrrange(\floor{y_{\rel}}) \eqqcolon \inctrans{\alpha}(t, y, x) \\
			\offdiagtrans{\alpha\alpha'}(t, y, y') &\coloneqq \sqrrange(\floor{y_{\rel}})\conttrans(t, \Szero{\alpha}y_{\com\cup\intv{n}\setminus\alpha} - \Szero{\alpha'}y'_{\com\cup\intv{n}\setminus\alpha})\sqrrange(\floor{y'_{\rel}})          \\
			\diagtrans{\shfparameter}{\alpha}(t, y, y')	 & \coloneqq 4\pi\jtheta(t)\sqrrange(\floor{y_{\rel}})\twoconttrans\big(\nicefrac{t}{2}, y_{\com} - y'_{\com}\big) \cdot \prod_{k \in \intv{n}\setminus\alpha}\twoconttrans(t, y_k - y'_k) \cdot \sqrrange(\floor{y'_{\rel}}) \quad \text{ with } \\
			\jtheta(t)  & \coloneqq \int_{0}^{\infty} \frac{t^{u-1}e^{\shfparameter u}}{\Gamma(u)} \d u \, .
		\end{split}
\end{equation}
In the above definition, $x, x' \in \R^{2\intv{n}}$, while $y,y' \in \R^{2\intv{n}}_{\alpha}$.  Define the set of \textit{diagrams} on the set $\intv{n}$ as
\begin{equation*}
	\dgm\intv{n} \coloneqq \{\vec{\alpha} = (\alpha_k)_{k=1}^{m} \in \pair\intv{n}^{m} \, | \, m \eqqcolon |\vec{\alpha}|\in \Z_{>0}, \alpha_k \neq \alpha_{k+1} \text{ for } k =1, \ldots ,m-1 \} .
\end{equation*}

Finally, for $\vec{\alpha} \in \dgm\intv{n}$, let 
\begin{equation*}
	\diagramtrans{\shfparameter}{\alpha}(t) \coloneqq \int_{\Sigma(t)}  \outgtrans{\alpha_1}(\tau_{0}) \cdot \prod_{k=1}^{|\vec{\alpha}|-1} \diagtrans{\shfparameter}{\alpha_k}(\tau_{k - \half})\offdiagtrans{\alpha_k\alpha_{k+1}}(\tau_k) \cdot \diagtrans{\shfparameter}{\alpha_{|\vec{\alpha}|}}(\tau_{|\vec{\alpha}|- \half})\inctrans{\alpha_{|\vec{\alpha}|}}(\tau_{|\vec{\alpha}|}) \, \d \vec{\tau} \, ,
\end{equation*}
where $\Sigma(t)$ is the simplex $\{\vec{\tau} = (\tau_{\frac{k}{2}})_{k=0}^{2|\vec{\alpha}|} : \sum_{k} \tau_{\frac{k}{2}} = t\}$, which clearly depends on $|\vec{\alpha}|$ but we omit this for brevity.
The delta-Bose semigroup is then given by the expression:
\begin{equation}{\label{eqn:limiting_semigroup}}
	\deltatrans{\shfparameter}(t) \coloneqq \conttrans(t) + \sum_{\vec{\alpha} \in \dgm\intv{n}} \diagramtrans{\shfparameter}{\alpha}(t) \, .
\end{equation}

As mentioned in the introduction, we will prove convergence of the semigroup using the resolvent method. For this, we apply the Laplace transform to both sides of the above equation. The right-hand side above is a sum of convolutions in the time variable, which becomes a product after the transform;
\begin{align}{\label{eqn:limiting_resolvent}}
	\int_{0}^{\infty}e^{-\lambda t}\deltatrans{\shfparameter}(t) \, \d t 
	& = \green(\lambda) + \sum_{\vec{\alpha} \in \dgm\intv{n}}\int_{0}^{\infty}e^{-\lambda t} \diagramtrans{\shfparameter}{\alpha}(t) \, \d t \nonumber \\
	& = \green(\lambda) + \sum_{\vec{\alpha} \in \dgm\intv{n}} \outg{\alpha_1}(\lambda)\prod_{k=1}^{|\vec{\alpha}|-1} \diag{\shfparameter}{\alpha_k}(\lambda)\offdiag{\alpha_k\alpha_{k+1}}(\lambda) \cdot \diag{\shfparameter}{\alpha_{|\vec{\alpha}|}}(\lambda)\inc{\alpha_{|\vec{\alpha}|}}(\lambda) \, \, ,
\end{align}
where $\outg{\alpha}(\lambda),\inc{\alpha}(\lambda), \offdiag{\alpha\alpha'}(\lambda)$ and $\diag{\shfparameter}{\alpha}(\lambda)$ are the respective resolvents of the operators introduced in equation~\eqref{eqn:defn_limiting_operators}. In accordance with \cite{gu2021moments}, we label these resolvents and their corresponding semigroups as the limiting outgoing, incoming, offdiagonal and diagonal operators respectively.

\begin{remark}
	The kernels defined in equation~\eqref{eqn:defn_limiting_operators} differ slightly from those in~\cite{gu2021moments} and \cite{sudhtsai25deltabose} by the factor $\sqrrange(\floor{y_{\rel}})$. This does not affect the definition of $\deltatrans{\shfparameter}(t)$ since by our assumption that $\range$ sums to $1$ over the lattice, its piecewise constant extension $y_{\rel} \mapsto \range(\floor{y_{\rel}})$ integrates out to $1$ over $\R^2$ \ie $\norm{\range}_{1} = 1$.
\end{remark}

Before we describe the approximate delta-Bose semigroup for our problem, we need to introduce the $\varepsilon$-versions of the operators defined above. For $\alpha = ij \in \pair\intv{n}$, consider the linear map $\Seps{\alpha} : \R^{2\intv{n}}_{\alpha} \to \R^{2\intv{n}}$ defined as 
\begin{equation}{\label{defn:Seps}}
	\big(\Seps{\alpha}y \big)_{k} \coloneqq  \begin{cases}
		y_{\com}+ \frac{\varepsilon y_{\rel}}{2} &  \text{ when } k = i \\
		
		y_{\com}- \frac{\varepsilon y_{\rel}}{2} &  \text{ when } k = j \\
		
		y_k & \text{ when } k \in \intv{n}\setminus\alpha \, \, .
	\end{cases}
\end{equation} 
This transformation switches from the center-of-mass and relative coordinate frame to the usual frame, while scaling the relative distance by $\varepsilon$. The inverse $\Teps{\alpha}:\R^{2\intv{n}} \to \R^{2\intv{n}}_{\alpha}$ is then given by:
\begin{equation}{\label{defn:Teps}}
	\Teps{\alpha}x \coloneqq \Big(\frac{x_i - x_j}{\varepsilon}, \frac{x_i + x_j}{2}, x_{\intv{n}\setminus \alpha}\Big) \, .
\end{equation}
In our proofs, it will often be convenient to work in Fourier space. For this purpose, we need to know how these maps act there. To this end, define the Fourier dual of the map $\Seps{\alpha}$ as the unique linear transformation $\Sdualeps{\alpha}: \widehat{\R}^{2\intv{n}} \to \widehat{\R}^{2\intv{n}}_{\alpha}$ such that $\Sdualeps{\alpha}\xi \cdot y = \xi \cdot \Seps{\alpha}y $. That is, 
\begin{equation}{\label{defn:Sdualeps}}
	\Sdualeps{\alpha}\xi \coloneqq  \bigg(\frac{\varepsilon (\xi_i - \xi_j)}{2}, \xi_i +\xi_j,\xi_{\intv{n}\setminus\alpha} \bigg) \, .
\end{equation}
Its inverse is also easily calculated as $\Tdualeps{\alpha}: \widehat{\R}^{2\intv{n}}_{\alpha} \to \widehat{\R}^{2\intv{n}}$
\begin{equation}{\label{defn:Tdualeps}}
	\big(\Tdualeps{\alpha}\eta \big)_{k} \coloneqq  
	\begin{cases}
		\frac{\eta_{\com}}{2}+ \frac{\eta_{\rel}}{\varepsilon} &  \text{ when } k = i \\
		
		\frac{\eta_{\com}}{2}- \frac{\eta_{\rel}}{\varepsilon} &  \text{ when } k = j \\
		
		\eta_k & \text{ when } k \in \intv{n}\setminus\alpha  \,.
	\end{cases}
\end{equation} 

We will use the notation $\epstrans(t)$ and $\epstwotrans(t)$ to refer to the semigroups generated by the symbols $\epsgen$ and $\epstwogen$ respectively. Set $\Z^{2\intv{n}}_{\varepsilon\alpha} \coloneqq \Teps{\alpha}\Z^{2\intv{n}}_{\varepsilon} \subset \R^{2\intv{n}}_{\alpha}$. Using this, we can define the discrete analogs of the operators from equation~\eqref{eqn:defn_limiting_operators}, namely the \textit{prelimiting} operators: for $x, x' \in \Z^{2\intv{n}}_{\varepsilon}, y \in \Z^{2\intv{n}}_{\varepsilon\alpha}$ and $y' \in \Z^{2\intv{n}}_{\varepsilon\alpha'}$, 
\begin{equation}{\label{eqn:defn_prelimiting_operators}}
	\begin{split}
		\preoutgtrans{\alpha}(t, x, y) & \coloneqq \epstrans(t, \Seps{\alpha}y-x) \sqrrange(y_{\rel}) \eqqcolon \preinctrans{\alpha}(t, y, x) \\
		\preoffdiagtrans{\alpha\alpha'}(t, y, y') &\coloneqq \sqrrange(y_{\rel})\epstrans(t, \Seps{\alpha}y - \Seps{\alpha'}y') \sqrrange(y'_{\rel})          \\
		\prediagtrans{\theta}{\alpha}(t)	 & \coloneqq \sum_{m=0}^{\infty}\beta_{\varepsilon}^{m+1} \int_{\Sigma(t)} \preoffdiagtrans{\alpha\alpha}(\tau_1)\preoffdiagtrans{\alpha\alpha}(\tau_2) \cdots \preoffdiagtrans{\alpha\alpha}(\tau_m) \, \d \vec{\tau}  \, ;		
	\end{split}
\end{equation}
where, by convention, the $m= 0$ term is defined to $\beta_{\varepsilon} \delta_{0}(t)\id{n}$, with $\delta_{0}$ denoting the Dirac mass at $0$ and $\id{k}$ denotes the identity operator on $\Ltwo(\R^{2\intv{k}})$ or $\Ltwo(\R^{2\intv{k}}_{\alpha})$.

Now we begin to give an expression for the approximate delta-Bose semigroup. By an application of It\^{o}'s formula, the $n$-point correlation kernel, given by
\begin{equation*}
	 \epscorr{\theta}(t-s,x, x') \coloneqq \E\bigg[\prod_{k=1}^{n} \epsshe_{s,t}(x_k,x'_k)\bigg] \, \, ,
\end{equation*} 
can be seen to satisfy the approximate delta-Bose gas equation:
\begin{equation*}
	\partial_{t}\epscorr{\theta}(t,x, x') = (\epsgen\epscorr{\theta})(t,x,x') + \beta_{\varepsilon} \sum_{ij \in \pair\intv{n}}\range_{\varepsilon}(x_i -x_j)\epscorr{\theta}(t,x,x') \, \, .
\end{equation*}
For notational convenience, we also denote, for $x \in \Z^{2\intv{n}}_{\varepsilon}$ and $\alpha = ij \in \pair\intv{n}$,  $x_{\alpha} \coloneqq x_i - x_j$ and let $\range_{\varepsilon}^{\Sigma}(x) \coloneqq \sum_{\alpha \in \pair\intv{n}}\range_{\varepsilon}(x_{\alpha})$. To get an expression for the semigroup kernel $\epscorr{\theta}(t,x, x')$, we first repeatedly apply Duhamel's principle to the equation above whence we get the expression:
\begin{align*}
	\epscorr{\theta}
	(t,x, x') = \epstrans(t, x - x') + 
	& \sum_{m=1}^{\infty}
	 \varepsilon^{2nm}\sum_{\vec{\alpha} \in \pair\intv{n}^{m}} \beta_{\varepsilon}^{m}\sum_{(x^{(k)}) \in \Z_{\varepsilon}^{2\intv{n}}} \int_{\Sigma(t)}\epstrans(\tau_{0}, x-x^{(1)}) \range_{\varepsilon}\big(x_{\alpha_{1}}^{(1)}\big) \\
	& \times \prod_{k=1}^{m-1}\epstrans(\tau_k, x^{(k)} - x^{(k+1)})\range_{\varepsilon}\big(x_{\alpha_{k+1}}^{(k+1)}\big) \times \epstrans(\tau_m, x^{(m)} - x') \, \d \vec{\tau}  .
\end{align*}
In the above, express $\range_{\varepsilon}$ as $\sqrrange_{\varepsilon} \cdot \sqrrange_{\varepsilon}$ and make a change of variables $x^{(k)} = \Seps{\alpha}y^{(k)}$, where $y^{(k)} \in \Z_{\varepsilon\alpha}^{2\intv{n}} \coloneqq \Teps{\alpha}\Z_{\varepsilon}^{2\intv{n}} \subset \R^{2\intv{n}}_{\alpha}$. We also cancel the $\varepsilon^{-1}$ in all the $\sqrrange$ from the $\varepsilon^{2nm}$ factor outside the sum, which then becomes $\varepsilon^{2(n-1)m}$. Thus, for $y^{(k)} \in \Z^{2\intv{n}}_{\varepsilon\alpha_{k}}$, $ \sqrrange_{\varepsilon}(x_{\alpha_{k}}^{(k)})$ changes to $\sqrrange(y_{\rel}^{(k)})$. For $m \geqslant 1$ and $\vec{\alpha} \in \pair\intv{n}^{m}, \vec{\tau} \in \Sigma(t)$, each term in the summation then becomes
\begin{align*}
	&\varepsilon^{2(n-1)m} \beta_{\varepsilon}^{m}  \, \epstrans(\tau_{0}, x -\Seps{\alpha_{1}}y^{(1)}) \sqrrange \big(y_{\rel}^{(1)}\big) \\
	& \, \, \times \prod_{k=1}^{m-1} \sqrrange\big(y_{\rel}^{(k)}\big)
	\epstrans(\tau_k , \Seps{\alpha_{k}}y^{(k)} - \Seps{\alpha_{k+1}}y^{(k+1)}) \sqrrange\big(y_{\rel}^{(k+1)}\big) 
	 \times \sqrrange\big(y_{\rel}^{(m)}\big) \epstrans(\tau_m, \Seps{\alpha_{m}}y^{(m)} - x') \\
	&  \, \,= \varepsilon^{2(n-1)m} \, \preoutgtrans{\alpha_{1}}(\tau_{0}, x, y^{(1)}) \times  \beta_{\varepsilon}^{m}\prod_{k=1}^{m-1} \preoffdiagtrans{\alpha_{k}\alpha_{k+1}}(\tau_{k}, y^{(k)} - y^{(k+1)}) \times \preinctrans{\alpha_{m}}(\tau_{m}, y^{(m)} - x') \, .
\end{align*}
Inserting this, the summation becomes over all $\vec{\eta} = (\alpha_{1}^{k_1}, \alpha_{2}^{k_2}, \ldots ) \in \cup_{m=1}^{\infty}\pair\intv{n}^m$, where $\alpha^k \coloneqq (\alpha, \alpha, \ldots \alpha) \in \pair\intv{n}^{k}$ and $\alpha_1 \neq \alpha_2, \alpha_2 \neq \alpha_3, \ldots \, .$ Moreover, since the unit lattice box in $\Z^{2\intv{n}}_{\varepsilon}$ has volume $\varepsilon^{2n}$, and $\det\Teps{\alpha} = \varepsilon^{-2}$, the transformed unit box in $\Z^{2\intv{n}}_{\varepsilon\alpha}$ has volume $\varepsilon^{2(n-1)}$ which implies that the summation is indeed a convolution in $\Z^{2\intv{n}}_{\varepsilon\alpha}$. The resulting expression is
\begin{align*}
	&\epstrans(t) +  
	\sum_{\vec{\alpha} \in \dgm\intv{n}} 
	 \int_{\Sigma(t)} \preoutgtrans{\alpha_1}(\tau_{0}) \\
	&\times \prod_{k=1}^{|\vec{\alpha}| - 1} \Bigg(\sum_{m_k =0}^{\infty}\beta_{\varepsilon}^{m_k+1} \int_{\Sigma(\tau_{k-\half})}\preoffdiagtrans{\alpha_{k}\alpha_{k}}(\sigma_1)\preoffdiagtrans{\alpha_{k}\alpha_{k}}(\sigma_2) \cdots \preoffdiagtrans{\alpha_{k}\alpha_{k}}(\sigma_{m_k}) \, \d \vec{\sigma}\Bigg) \preoffdiagtrans{\alpha_k \alpha_{k+1}}(\tau_k) \\
	&   \times \Bigg(\sum_{m_{|\vec{\alpha}|} =0}^{\infty}\beta_{\varepsilon}^{m_{|\vec{\alpha}|}+1} \int_{\Sigma(\tau_{|\vec{\alpha}|-\half})}\preoffdiagtrans{\alpha_{|\vec{\alpha}|}\alpha_{|\vec{\alpha}|}}(\sigma_1)\preoffdiagtrans{\alpha_{|\vec{\alpha}|}\alpha_{|\vec{\alpha}|}}(\sigma_2) \cdots \preoffdiagtrans{\alpha_{|\vec{\alpha}|}\alpha_{|\vec{\alpha}|}}(\sigma_{m_{|\vec{\alpha}|}}) \, \d \vec{\sigma}\Bigg) \preinctrans{\alpha_{|\vec{\alpha}|}}(\tau_{|\vec{\alpha}|}) \, \d \vec{\tau} \, .
\end{align*}
 Since the kernel $\epscorr{\theta}(t)$ is defined on the lattice $\Z_{\varepsilon}^{2\intv{n}}$, we extend it to a piecewise constant function on $\R^{2\intv{n}}$, which allows us to study its convergence to $\corr(t)$. Henceforth, for $x, x' \in \R^{2\intv{n}}$, we set $\epscorr{\theta}(t,x,x') \coloneqq \epscorr{\theta}(t, \epsfloor{x}, \epsfloor{x'}), \epstrans(t, x ,x') \coloneqq \epstrans(t, \epsfloor{x} - \epsfloor{x'})$ and $\range(y_{\rel}) \coloneqq \range(\floor{y_{\rel}})$. As the kernels of $\preoutgtrans{\alpha}(t)$, $\preoffdiagtrans{\alpha\alpha'}(t)$ and $\prediagtrans{\theta}{\alpha}(t)$ are defined in terms of $\epstrans(t)$, they can also be extended similarly. Defining
 \begin{equation*}
 	\prediagramtrans{\theta}{\alpha}(t)  \coloneqq \int_{\Sigma(t)}  \preoutgtrans{\alpha_1}(\tau_{0}) \cdot \prod_{k=1}^{|\vec{\alpha}|-1} \prediagtrans{\theta}{\alpha_k}(\tau_{k - \half})\preoffdiagtrans{\alpha_k\alpha_{k+1}}(\tau_k) \cdot \prediagtrans{\theta}{\alpha_{|\vec{\alpha}|}}(\tau_{|\vec{\alpha}|- \half})\preinctrans{\alpha_{|\vec{\alpha}|}}(\tau_{|\vec{\alpha}|}) \, \d \vec{\tau} \, ,
 \end{equation*}
 we get the following compact expression for the prelimiting delta-Bose semigroup:
 \begin{equation*}
 	\predeltatrans{\theta}(t) = \epstrans(t) + \sum_{\vec{\alpha} \in \dgm\intv{n}} \prediagramtrans{\theta}{\alpha}(t) \, \, .
 \end{equation*}
Taking the Laplace transform on both sides, for $\lambda  > 0$,
\begin{align}{\label{eqn:prelimiting_resolvent}}
	\int_{0}^{\infty}e^{-\lambda t}\epscorr{\theta}(t) \, \d t 
	& = \epsgreens(\lambda) + \sum_{\vec{\alpha} \in \dgm\intv{n}}\int_{0}^{\infty}e^{-\lambda t} \prediagramtrans{\theta}{\alpha}(t) \, \d t \nonumber \\
	& = \epsgreens(\lambda) + \sum_{\vec{\alpha} \in \dgm\intv{n}} \preoutg{\alpha_1}(\lambda)\cdot  \prod_{k=1}^{|\vec{\alpha}|-1} \prediag{\theta}{\alpha_k}(\lambda)\preoffdiag{\alpha_k\alpha_{k+1}}(\lambda) \cdot \prediag{\theta}{\alpha_{|\vec{\alpha}|}}(\lambda)\preinc{\alpha_{|\vec{\alpha}|}}(\lambda) ,
\end{align}
where $\preoutg{\alpha}(\lambda), \preinc{\alpha}(\lambda),\preoffdiag{\alpha\alpha'}(\lambda)$ and $\prediag{\theta}{\alpha}(\lambda)$ are the respective \textit{prelimiting} resolvent operators of the extended kernels from equation~\eqref{eqn:defn_prelimiting_operators}. The interchange of the integrals and the summations is justified here as all the kernels involved are non-negative.

We end this section with a lemma that derives norm bounds on the limiting resolvents. As previously mentioned, this was first proved in \cite{gu2021moments} for the case when the correlation function is compactly supported. We include it here so that the article remains self-contained. In the course of the proof, we will derive Fourier representations for the resolvents which will be useful in the proof of Theorem~\ref{thm:maintheorem}. Recall by our chosen convention, the Fourier transform of the Green's function $\green(\lambda)$ is given by:
\begin{equation*}
	\F{\green(\lambda)}(\xi) = \frac{1}{(2\pi)^{n}} \frac{1}{\lambda + \half\norm{\xi}^2} \, .
\end{equation*}
\begin{lemma}{\label{lem:limitingresolventboundedness}}
	Fix $\alpha = ij,\alpha' = kl \in \pair\intv{n}$ such that $\alpha \neq \alpha'$. Then, for all sufficiently $\lambda \gtrsim e^{\shfparameter} + 1$, the limiting operators defined in equation~\eqref{eqn:defn_limiting_operators} satisfy the following norm bounds:
	\begin{subequations}
		\begin{align}
		\normop{\outg{\alpha}(\lambda)} 
		& \lesssim \lambda^{-\half} \label{eqn:outgoing_bound} \\
		\normop{\inc{\alpha}(\lambda)}
		& \lesssim \lambda^{-\half} \label{eqn:incoming_bound} \\
		\normop{\offdiag{\alpha\alpha'}(\lambda)}
		& \lesssim 1               \label{eqn:offdiagonal_bound} \\
		\normop{\diag{\shfparameter}{\alpha}(\lambda)}  
		& \lesssim (\log\lambda - \shfparameter)^{-1}    \, \, .\label{eqn:diagonal_bound}
		\end{align}
	\end{subequations}
	Moreover, with these norm bounds, the sum in equation~\eqref{eqn:limiting_resolvent} converges absolutely in norm.
\end{lemma}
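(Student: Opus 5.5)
The plan is to compute each resolvent kernel explicitly in Fourier (or mixed) coordinates, reduce each operator norm to a Schur test or a Cauchy--Schwarz estimate, and then sum the diagram series geometrically. Throughout, the factor $\sqrrange(\floor{y_{\rel}})$ is handled only through $\norm{\sqrrange}_2^2=\norm{\range}_1=1$. This is the one place the non-compact support of $\range$ enters, and only the $\Ltwo$ norm of $\sqrrange$ is needed.

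\textbf{Outgoing and incoming operators.} Since $\inc{\alpha}(\lambda)=\outg{\alpha}(\lambda)^{\ast}$, it suffices to treat one of them. The operator $\inc{\alpha}(\lambda)$ factors as the multiplication by $\sqrrange(\floor{y_{\rel}})$, which maps $\Ltwo(\R^{2\intv{n}\setminus\rel}_{\alpha})$ into $\Ltwo(\R^{2\intv{n}}_{\alpha})$ with norm $\norm{\sqrrange}_2=1$, composed with the trace operator $f\mapsto (\green(\lambda)f)\circ\Szero{\alpha}$.

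In Fourier variables, $\widehat{(\green(\lambda) f)\circ \Szero{\alpha}}(\eta_{\com},\eta_{\intv{n}\setminus\alpha})$ is an integral of $\F{f}(\xi)/(\lambda+\half\norm{\xi}^2)$ over the fibre $\{\xi_i+\xi_j=\eta_{\com}\}$. Parametrize the fibre by $\xi_i-\xi_j$ and apply Cauchy--Schwarz. Then
\[
\int_{\R^2}\frac{\d \zeta}{(\lambda+\tfrac14\norm{\eta_{\com}}^2+\tfrac14\norm{\zeta}^2+\cdots)^2}\lesssim \lambda^{-1}
\]
gives \eqref{eqn:incoming_bound}, and \eqref{eqn:outgoing_bound} follows by duality.

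\textbf{Off-diagonal operator.} The kernel of $\offdiag{\alpha\alpha'}(\lambda)$ is $\sqrrange\otimes\sqrrange$ times $\green(\lambda)$ evaluated at $\Szero{\alpha}y-\Szero{\alpha'}y'$. After pulling out the $\sqrrange$ factors, it is enough to bound $\Szero{\alpha}^{\ast}$-trace $\circ\,\green(\lambda)\circ\Szero{\alpha'}$-extension on the reduced spaces. In Fourier variables this is the operator with kernel $\big(\lambda+\half\norm{\xi}^2\big)^{-1}$, where $\xi$ is constrained by $\xi_i+\xi_j=\eta_{\com}$ and $\xi_k+\xi_l=\eta'_{\com}$ and the remaining coordinates are shared.

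The case $|\alpha\cap\alpha'|=1$, say $\alpha=12$, $\alpha'=23$, is the critical one: the kernel is homogeneous of degree $-2$ in two free variables. I would apply the Schur test with weight $\norm{\cdot}^{-1}$ (or a power $-a$), following \cite{gu2021moments}. This gives a bound uniform in $\lambda$, hence \eqref{eqn:offdiagonal_bound}. The disjoint case $\alpha\cap\alpha'=\emptyset$ factorizes into two one-dimensional traces and is easier. I expect this Schur-test computation, and keeping its constant independent of $\lambda$, to be the main technical obstacle.

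\textbf{Diagonal operator and the series.} The Laplace transform of $\jtheta$ is $\int_0^\infty e^{-\lambda t}\jtheta(t)\,\d t=(\log\lambda-\shfparameter)^{-1}$. Hence, in Fourier variables on the $(\com,\intv{n}\setminus\alpha)$ coordinates, $\diag{\shfparameter}{\alpha}(\lambda)$ equals the multiplication by $\sqrrange$ on both sides, sandwiching the multiplier
\[
\frac{4\pi}{\log\big(\lambda+\tfrac14\norm{\eta_{\com}}^2+\tfrac12\norm{\eta_{\intv{n}\setminus\alpha}}^2\big)-\shfparameter}.
\]
This multiplier is bounded by $4\pi(\log\lambda-\shfparameter)^{-1}$ once $\lambda>e^{\shfparameter}$, which gives \eqref{eqn:diagonal_bound}.

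For a diagram $\vec{\alpha}$ with $|\vec{\alpha}|=m$, the four bounds give a term of norm at most $C\lambda^{-1}\big(C'/(\log\lambda-\shfparameter)\big)^{m}$. The number of diagrams of length $m$ is at most $\binom n2\big(\binom n2-1\big)^{m-1}$. Choosing $\lambda\gtrsim e^{\shfparameter}+1$ with the constant large enough, depending on $n$, makes the ratio less than $1$, so the series in \eqref{eqn:limiting_resolvent} converges absolutely in operator norm.
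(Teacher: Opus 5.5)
Your proposal is correct and is essentially the paper's proof. The common ingredients are: Fourier-side Cauchy--Schwarz for $\inc{\alpha}(\lambda)$ and $\outg{\alpha}(\lambda)$ (you use the trace map composed with the isometric embedding $h\mapsto\sqrrange\otimes h$, while the paper uses product test functions $g_1g_2$); the Gu--Quastel--Tsai weighted estimate for $\offdiag{\alpha\alpha'}(\lambda)$ (Lemma~\ref{lem:guquasteltsailemma}, which the paper simply cites and you propose to prove by a Schur test); the Laplace transform of $\jtheta$, giving the multiplier $4\pi(\log(\cdot)-\shfparameter)^{-1}\proj$; and a geometric sum over diagrams.

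One correction concerns the disjoint case $\alpha\cap\alpha'=\emptyset$. It is not easier and does not reduce to a product of two trace bounds: that route gives only a logarithmically divergent estimate, just as integrating the semigroup bound $t^{-1}$ against $e^{-\lambda t}$ does. Your Schur test nevertheless covers it unchanged, because after fixing the two centre-of-mass momenta the kernel is $\lesssim(\norm{\xi_i-\xi_j}^2+\norm{\xi_k-\xi_l}^2)^{-1}$.
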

\begin{proof}
	We first prove equation~\eqref{eqn:outgoing_bound}. Fix $f\in \cali{S}(\R^{2\intv{n}})$ and $g \in \cali{S}(\R^{2\intv{n}}_{\alpha})$. Notice that the kernel of $\outgtrans{\alpha}(t)$ acts separately on the center-of-mass and relative coordinates. Because of this, it is natural to consider functions $g$ of the form $g(y'_{\rel}, y'_{\com}, y'_{\intv{n}\setminus\alpha}) = g_{1}(y'_{\rel})g_{2}(y'_{\com \cup \intv{n}\setminus\alpha})$ for $g_{1} \in \cali{S}(\R^2)$ and $g_{2} \in \cali{S}(\R^{2\intv{n}\setminus\rel}_{\alpha})$, so that $\norm{g}_{2}=\norm{g_{1}}_{2} \norm{g_{2}}_{2}$. This assumption poses no loss of generality, since finite linear combinations of such functions are dense in $\cali{L}^{2}(\R^{2\intv{n}}_{\alpha})$, and so it suffices to prove the bound for this class. From the definition of the kernel and $\Sdualzero{\alpha}$ we have, for $\xi \in \widehat{\R}^{2\intv{n}}$,
	\begin{align}{\label{eqn:outgoing_fourier_one}}
		\F{\outg{\alpha}(\lambda)g}(\xi) 
		& = \frac{1}{(2\pi)^{n}} \int_{\R^{2\intv{n}}}\int_{\R^{2\intv{n}}_{\alpha}}  e^{\im \xi \cdot x} \green(\lambda, x - \Szero{\alpha}y'_{\com\cup\intv{n}\setminus\alpha}) \sqrrange(y'_{\rel}) g_{1}(y'_{\rel}) g_{2}(y'_{\com \cup \intv{n}\setminus\alpha}) \,  \d x \, \d y'  \nonumber \\
		& =   \int_{\R^{2\intv{n}}_{\alpha}} \F{\green(\lambda)}(\xi) e^{\im \xi \cdot \Szero{\alpha}y'_{\com\cup\intv{n}\setminus\alpha}} \, \sqrrange(y'_{\rel}) \, g_{1}(y'_{\rel}) \, g_{2}(y'_{\com \cup\intv{n}\setminus\alpha}) \,  \d y' \nonumber \\
		&  = \frac{1}{(2\pi)^{n}} \int_{\R^{2\intv{n}}_{\alpha}}\frac{e^{\im  (\xi_i + \xi_j)  \cdot y'_{\com}+ \im \xi_{\intv{n}\setminus\alpha} \cdot y'_{\intv{n}\setminus\alpha}} \, \sqrrange(y'_{\rel}) \, g_{1}(y'_{\rel}) \, g_{2}(y'_{\com\cup\intv{n}\setminus\alpha}) }{\lambda + \half 
			\norm{\xi}^2} \, \d y' \nonumber \\
		& = \frac{1}{2\pi}  \frac{\F{g_{2}}(\xi_i + \xi_j, \xi_{\intv{n}\setminus\alpha})}{\lambda + \half \norm{\xi}^2}\int_{\R^2} \sqrrange(y'_{\rel})g_{1}(y'_{\rel})  \, \d y'_{\rel} \, \, .
	\end{align}
	Therefore, by Plancherel's theorem,
	\begin{align*}
		\langle f, \outg{\alpha}(\lambda)g \rangle  
		& = \frac{1}{2\pi}\int_{\widehat{\R}^{2\intv{n}}}   \frac{\F{f}(\xi)\conj{\F{g_{2}}(\xi_i + \xi_j, \xi_{\intv{n}\setminus\alpha})}}{\lambda + \half \norm{\xi}^2} \, \d \xi \int_{\R^2} \sqrrange(y'_{\rel})g_{1}(y'_{\rel})  \, \d y'_{\rel} \, \, .
	\end{align*}
	If we make a change of variables $\xi \mapsto \Tdualone{\alpha}\eta$, the domain changes from $\widehat{\R}^{2\intv{n}} \to \widehat{\R}^{2\intv{n}}_{\alpha}$ and $\xi_i + \xi_j \mapsto \eta_{\com}$.  The right-hand side of the preceding equation then becomes
	\begin{equation*}
		\frac{1}{2\pi}\int_{\widehat{\R}^{2\intv{n}}_{\alpha}}\frac{\F{f}(\Tdualone{\alpha}\eta)\conj{\F{g_{2}}(\eta_{\com\cup\intv{n}\setminus\alpha})}}{\lambda + \norm{\eta_{\rel}}^2 + \quart\norm{\eta_{\com}}^2 + \half \norm{\eta_{\intv{n}\setminus\alpha}}^2} \, \d \eta \int_{\R^2}\sqrrange(y_{\rel})g_{1}(y_{\rel}) \, \d y_{\rel} \, \, .
	\end{equation*}
	The innermost $y'_{\rel}$ integral above can be bounded by $\norm{\sqrrange}_{2} \norm{g_{1}}_{2} = \sqrt{\norm{\range}_{1}}\norm{g_{1}}_{2}$ using the Cauchy--Schwarz inequality. As for the $\eta$ integral, we use the same inequality along with the fact that $\lambda + \norm{\eta_{\rel}}^2 + \quart\norm{\eta_{\com}}^2 + \half \norm{\eta_{\intv{n}\setminus\alpha}}^2 \geqslant \lambda + \norm{\eta_{\rel}}^2$. Consequently, this yields:
	\begin{equation*}
		\Bigg|\int_{\widehat{\R}^{2\intv{n}}}\frac{\F{f}(\Tdualone{\alpha}\eta)\F{g_{2}}(\eta_{\com\cup\intv{n}\setminus\alpha})}{\lambda + \norm{\eta_{\rel}}^2 + \quart\norm{\eta_{\com}}^2 + \half \norm{\eta_{\intv{n}\setminus\alpha}}^2} \, \d \eta\Bigg| \lesssim \norm{f}_{2} \norm{g_{2}}_{2} \Big(\int_{\widehat{\R}^2} \frac{\d \eta_{\rel}}{(\lambda + \norm{\eta_{\rel}}^2)^2}\Big)^{\half} \, .
	\end{equation*}
	The $\eta_{\rel}$ integral can be explicitly computed by switching to polar coordinates:
	\begin{equation}{\label{eqn:outgoing_polar}}
		\int_{\widehat{\R}^2} \frac{\d \eta_{\rel}}{(\lambda + \norm{\eta_{\rel}}^2)^2} = \pi\int_{0}^{\infty} \frac{2r \, \d r}{(\lambda + r^2)^2} = \pi\int_{\lambda}^{\infty} \frac{\d r}{r^2} = \frac{\pi}{\lambda} \, .
	\end{equation}
	Combining the bounds, we get:
	\begin{equation*}
		\normop{\outg{\alpha}(\lambda)} = \sup_{\norm{f}_{2}, \norm{g}_{2} \leqslant 1} |\langle f, \outg{\alpha}(\lambda)g \rangle | \lesssim \lambda^{-\half} \, .
	\end{equation*}
	This proves equation~\eqref{eqn:outgoing_bound}. The result for $\inc{\alpha}(\lambda)$ follows by adjointness. Now we prove equation~\eqref{eqn:offdiagonal_bound}. For this case, assume $f \in \Ltwo(\R^{2\intv{n}}_{\alpha})$ instead, and of the form $f_{1}f_{2}$ like the function $g$. From the definition it is easy to see that $\langle f, \offdiag{\alpha\alpha'}(\lambda)g \rangle = \langle \outg{\alpha}(\lambda)f, \outg{\alpha'}(\lambda)g \rangle$. Hence, using Plancherel's isometry and equation~\eqref{eqn:outgoing_fourier_one}, we have that $\langle f, \offdiag{\alpha\alpha'}(\lambda)g \rangle$ is equal to:
	\begin{equation}{\label{eqn:offdiagonal_fourier_one}}
		\frac{1}{(2\pi)^{2}}\int_{\R^2} \int_{\R^2}  \int_{\widehat{\R}^{2\intv{n}}} f_{1}(y_{\rel})\sqrrange(y_{\rel}) \frac{\F{f_{2}}(\xi_i + \xi_j, \xi_{\intv{n}\setminus\alpha})\conj{\F{g_{2}}(\xi_k + \xi_l, \xi_{\intv{n}\setminus\alpha'})}}{\lambda + \half \norm{\xi}^{2}} g_{1}(y'_{\rel}) \sqrrange(y'_{\rel}) \, \d \xi \, \d y_{\rel}  \, \d y'_{\rel}  \ .
	\end{equation}
	To bound the integral on the right-hand side, we use Lemma 5.1 from \cite{gu2021moments}. Since we will use this lemma again in our proof of the main theorem, we state it here.
	\begin{lemma}{\label{lem:guquasteltsailemma}}
		Fix $f_{2}, g_{2} \in \Ltwo(\R^{2\intv{n}\setminus\rel}_{\alpha})$. Then for $ \alpha = ij, \alpha' =kl \in \pair\intv{n}$ such that $\alpha \neq \alpha'$, we have
		\begin{equation*}
			\sup_{\lambda > 0} \int_{\widehat{\R}^{2\intv{n}}}\frac{|\F{f_{2}}(\xi_i + \xi_j, \xi_{\intv{n}\setminus\alpha})| \, |\F{g_{2}}(\xi_k + \xi_l, \xi_{\intv{n}\setminus\alpha'})|}{\lambda + \half\norm{\xi}^2} \, \d \xi \lesssim \norm{f_{2}}_{2} \norm{g_{2}}_{2} \, \, .
		\end{equation*}
	\end{lemma}
	Using this lemma, the  $\xi$ integral in equation~\eqref{eqn:offdiagonal_fourier_one} is bounded uniformly in $\lambda > 0$ by $\norm{f_{2}}_{2}\norm{g_{2}}_{2}$ up to a constant. As for the $\d y_{\rel}$ and $\d y'_{\rel}$ integrals, a simple application of Cauchy--Schwarz bounds them by $\norm{f_{1}}_{2}$ and $\norm{g_{1}}_{2}$ respectively. Therefore,
	\begin{equation*}
		\normop{\offdiag{\alpha\alpha'}} = \sup_{\norm{f}_{2},\norm{g}_{2} \leqslant 1} |\langle f, \offdiag{\alpha\alpha'}g \rangle | \lesssim \norm{f_{1}}_{2}\norm{f_{2}}_{2} \, \norm{g_{1}}_{2}\norm{g_{2}}_{2} = \norm{f}_{2}\norm{g}_{2},
	\end{equation*}
	which proves equation~\eqref{eqn:offdiagonal_bound}. Finally, we proceed to equation~\eqref{eqn:diagonal_bound}. Just as in the previous cases,  we will bound the operator norm by passing to the Fourier space. We see from the definition of the kernel that the operator acts as a convolution in the $y_{\com}, y_{\intv{n}\setminus\alpha}$ variables, and as multiplication in the $y_{\rel}$ variable. For this reason, it will be convenient to take the Fourier transform in the former variables so that it becomes a product. That is, 
	\begin{align}{\label{eqn:diagonal_fourier_one}}
		& \langle f, \diag{\shfparameter}{\alpha}(\lambda) g \rangle =\int_{\R^{2\intv{n}}_{\alpha}} \d y \int_{\R^{2\intv{n}}_{\alpha}} \d y' f(y) \diag{\shfparameter}{\alpha}(\lambda, y, y') g(y') \nonumber  \\
		& = \int_{\R^{2}} \d y_{\rel} \int_{\R^2} \d y'_{\rel} \int_{\R^{2\intv{n}\setminus\rel}_{\alpha}} \d \eta_{\com \cup \intv{n} \setminus \alpha} \, f_{1}(y_{\rel}) \F{f_{2}}(\eta_{\com \cup\intv{n}\setminus\alpha}) \F{\diag{\shfparameter}{\alpha}(\lambda)}(y_{\rel}, y'_{\rel}, \eta_{\com \cup \intv{n}\setminus\alpha})\F{g_{2}}(\eta_{\com \cup \intv{n}\setminus\alpha}) g_{1}(y'_{\rel}) \, , 
	\end{align}
	where $\F{\diag{\shfparameter}{\alpha}(\lambda)}(y_{\rel}, y'_{\rel}, \eta_{\com \cup \intv{n} \setminus \alpha})$ denotes the \textit{partial} Fourier transform of the operator $\diag{\shfparameter}{\alpha}(\lambda)$ in the $y_{\com}, y_{\intv{n}\setminus\alpha}$ variables. We now evaluate this partial Fourier transform. From the definition of the kernel in equation~\eqref{eqn:defn_limiting_operators}, we have:
	\begin{equation}{\label{eqn:diagonal_partial_fourier_one}}
		\F{\diag{\shfparameter}{\alpha}(\lambda)}(y_{\rel}, y'_{\rel},\eta_{\com \cup \intv{n}\setminus\alpha}) =  4\pi\sqrrange(y_{\rel})\sqrrange(y'_{\rel})\int_{0}^{\infty}  \int_{0}^{\infty} \frac{e^{-\lambda t}  e^{\shfparameter u} t^{u-1}e^{-t\big(\quart\norm{\eta_{\com}}^2 + \half\norm{\eta_{\intv{n}\setminus\alpha}}^2\big)}}{\Gamma(u)} \, \d u \, \d t \, .
	\end{equation}
	After a change of variables 
	\begin{equation*}
		\big(\lambda + \quart\norm{\eta_{\com}}^2 + \half\norm{\eta_{\intv{n}\setminus\alpha}}^2\big) t \mapsto t \, \, ,
	\end{equation*} 
	the $\d t$ integral evaluates to $\Gamma(u)$ which cancels the same in the denominator. This change of variables introduces a factor of $ (\cdot ) \coloneqq \big(\lambda + \quart\norm{\eta_{\com}}^2 + \half\norm{\eta_{\intv{n}\setminus\alpha}}^2\big)^{-u}$ which we write as $e^{-\log (\cdot)}$. Hence, for large enough $\lambda >0$, we have
	\begin{align}{\label{eqn:diagonal_partial_fourier_two}}
		\F{\diag{\shfparameter}{\alpha}(\lambda)}(y_{\rel}, y'_{\rel},\eta_{\com \cup \intv{n}\setminus\alpha}) 
		& =  4\pi\sqrrange(y_{\rel})\sqrrange(y'_{\rel})\int_{0}^{\infty}  e^{-u\big[\log \big(\lambda + \quart\norm{\eta_{\com}}^2 + \half\norm{\eta_{\intv{n}\setminus\alpha}}^2\big) - \shfparameter\big]} \, \d u \nonumber \\
		& =  \frac{4\pi\sqrrange(y_{\rel})\sqrrange(y'_{\rel})}{\log \big(\lambda + \quart\norm{\eta_{\com}}^2 + \half\norm{\eta_{\intv{n}\setminus\alpha}}^2\big) - \shfparameter} \\
		& \lesssim \frac{4\pi\sqrrange(y_{\rel})\sqrrange(y'_{\rel})}{\log \lambda - \shfparameter} \, \, . \nonumber
	\end{align}
	Plugging this back into equation~\eqref{eqn:diagonal_fourier_one} and using the  Cauchy--Schwarz inequality in the $y_{\rel}, y'_{\rel}$ and $\eta_{\com \cup \intv{n} \setminus \alpha}$ variables yields:
	\begin{equation*}
		\normop{\diag{\shfparameter}{\alpha}(\lambda)} = \sup_{\norm{f}_{2}, \norm{g}_{2} \leqslant 1} |\langle f, \diag{\shfparameter}{\alpha}(\lambda)g \rangle | \lesssim (\log\lambda - \shfparameter)^{-1} \, .
	\end{equation*}
	From equation~\eqref{eqn:diagonal_partial_fourier_two}, we see that the $y_{\rel}, y'_{\rel}$ component of the kernel is just the projection onto the subspace generated by the function $\sqrrange$. That is, if we denote $\proj: \Ltwo(\R^2) \to \Ltwo(\R^2)$ to be the projection onto this subspace defined by
	\begin{equation*}
		(\proj \,  g_{1})(y_{\rel}) = \frac{1}{\norm{\range}_{1}}\sqrrange(y_{\rel}) \int_{\R^2} g_{1}(y'_{\rel}) \sqrrange(y'_{\rel}) \, \d y'_{\rel}
	\end{equation*}
	then for fixed $\eta_{\com \cup \intv{n} \setminus \alpha} \in \widehat{\R}^{2\intv{n}\setminus\rel}_{\alpha}$, the operator $\F{\diag{\shfparameter}{\alpha}(\lambda)}(\eta_{\com \cup \intv{n} \setminus \alpha}): \Ltwo(\R^2) \to \Ltwo(\R^2)$ is defined as 
	\begin{equation}{\label{eqn:diagonal_projection_form}}
		\F{\diag{\shfparameter}{\alpha}(\lambda)}(\eta_{\com \cup \intv{n} \setminus \alpha}) \coloneqq \frac{4\pi}{\log(\lambda + \quart \norm{\eta_{\com}}^2 + \half\norm{\eta_{\intv{n} \setminus \alpha}}^2) - \shfparameter} \proj \, \, \, ,
	\end{equation}
	with kernel $\F{\diag{\shfparameter}{\alpha}(\lambda)}(y_{\rel}, y'_{\rel},\eta_{\com \cup \intv{n}\setminus\alpha})$ defined above.
	
	Using these norm bounds, let us show the limiting resolvent in equation~\eqref{eqn:limiting_resolvent} converges in norm. For a fixed $m \in \Z_{>0}$, the set $\{\vec{\alpha} \in \dgm\intv{n} \, : \, |\vec{\alpha}| = m\}$ has cardinality at most $\binom{n}{2}^{m} \lesssim 2^{-m}n^{2m}$. Inserting the above bounds in equation~\eqref{eqn:limiting_resolvent}, we obtain the following bound:
	\begin{equation*}
		\NORmop{\int_{0}^{\infty}e^{-\lambda t} \diagramtrans{\shfparameter}{\alpha}(t) \, \d t \, } \lesssim \sum_{m=1}^{\infty}\frac{2^{-m}n^{2m}}{\lambda (\log\lambda -\shfparameter )^{m}} \, . 
	\end{equation*}
	For large enough $\lambda$, say of order greater than $e^{n^2}$, the above series is convergent. Moreover, the free resolvent $\green(\lambda)$ satisfies the norm bound $\normop{\green(\lambda)} \leqslant \lambda^{-1}$ for $\lambda > 0$. Combining this with the bound shown above, we get the same for the resolvent of $\deltatrans{\shfparameter}$. This completes the proof of the lemma.
\end{proof}

\section{Convergence of the resolvent}{\label{section:convergenceofresolvent}}

In this section, we will prove Theorem~\ref{thm:maintheorem}. As noted in the introduction, the self-adjointness of the generator implies that convergence of the resolvent suffices to deduce convergence of the semigroup. We will first show that the prelimiting incoming/outgoing, off-diagonal and diagonal resolvents are bounded and converge to their respective limiting operators. Together with a uniform norm bound, this will ensure the convergence of the entire prelimiting resolvent to its limiting counterpart. This is summarized in the following lemma.
\begin{lemma}{\label{lem:resolventconvergence}}
	 Fix $\alpha, \alpha' \in \pair\intv{n}$ with $\alpha \neq \alpha'$. Then, there exists a $\shfparameter \in \R$ for which the following holds: for every $\lambda \gtrsim e^{2\theta+ 4\pi\quadrange} + 1$, there exists $\varepsilon_{0}(\lambda) > 0$ such that for all $\varepsilon < \varepsilon_{0}(\lambda)$, the prelimiting resolvents satisfy the following:
	\begin{subequations}
	\begin{align}
	\normop{\preinc{\alpha}(\lambda)}  & \lesssim \lambda^{-\half} \qquad \qquad \qquad   \text{ and } \qquad \qquad \quad \normop{\preinc{\alpha}(\lambda) -\inc{\alpha}(\lambda)} \to 0, \label{eqn:preincoming_bound}   \\ 
	\normop{\preoutg{\alpha}(\lambda)} & \lesssim \lambda^{-\half} \qquad \qquad \qquad   \text{ and } \qquad \qquad \quad \normop{\preoutg{\alpha}(\lambda) -\outg{\alpha}(\lambda)} \to 0, \label{eqn:preoutgoing_bound} \\
	\normop{\preoffdiag{\alpha\alpha'}(\lambda)} & \lesssim 1 \qquad \qquad \qquad  \quad \, \text{  and } \qquad \qquad \quad \normop{\preoffdiag{\alpha\alpha'}(\lambda) -\offdiag{\alpha\alpha'}(\lambda)} \to 0, \label{eqn:preoffdiagonal_bound} \\
	\normop{\prediag{\theta}{\alpha}(\lambda)} & \lesssim (\log\lambda - \theta )^{-1}  \quad \quad \,   \text{ and } \qquad \qquad \quad \prediag{\theta}{\alpha}(\lambda) -\diag{\shfparameter}{\alpha}(\lambda) \to 0 \, \, \text{strongly}\, \, . \label{eqn:prediagonal_bound}
	\end{align}
	\end{subequations}
	Moreover, the series in equation~\eqref{eqn:prelimiting_resolvent} converges absolutely in operator norm, and the whole expression converges strongly to the limiting resolvent defined in equation~\eqref{eqn:limiting_resolvent}. 
\end{lemma}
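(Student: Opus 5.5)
The plan is to pass to Fourier space throughout, as in the proof of Lemma~\ref{lem:limitingresolventboundedness}, and to write each prelimiting resolvent as an explicit multiplier. The only changes from the continuum case are that $\lambda+\half\norm{\xi}^2$ becomes $\lambda+\fourierepsgen(\xi)$ on $\epshattorus{2\intv{n}}$, and that the relative variable is not collapsed: it enters via $\Sdualeps{\alpha}$ at scale $\varepsilon$.

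\textbf{Incoming, outgoing and off-diagonal operators.} Using $\Tdualeps{\alpha}$, the kernel of $\preoutg{\alpha}(\lambda)$ has the form $\big(\lambda+\fourierepsgen(\Tdualeps{\alpha}\eta)\big)^{-1}e^{\im\eta_{\rel}\cdot y_{\rel}}\sqrrange(y_{\rel})$. The uniform lower bound $\fourierepsgen(\xi)\gtrsim\norm{\xi}^2$ from Lemma~\ref{lem:uniformgreensbound} lets me repeat the Cauchy--Schwarz/polar-coordinate computation~\eqref{eqn:outgoing_polar} and obtain $\lambda^{-\half}$ uniformly in $\varepsilon$.

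For norm convergence I will split the frequency domain into $\norm{\xi}\leqslant R$ and its complement. On $\norm{\xi}\leqslant R$, the symbol converges uniformly to $\half\norm{\xi}^2$, and $e^{\im\varepsilon(\cdot)}\to1$ on the support of $\sqrrange$ up to a tail controlled by~\eqref{eqn:range_energy}; the piecewise-constant extension error is also handled here. On the complement, I get a tail of size $O(R^{-1})$ uniformly in $\varepsilon$. The off-diagonal operator is $\langle\preoutg{\alpha}f,\preoutg{\alpha'}g\rangle$. For it I will prove an $\varepsilon$-version of Lemma~\ref{lem:guquasteltsailemma}, again via the comparison $\fourierepsgen\gtrsim\norm{\cdot}^2$. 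Norm convergence then follows by dominated convergence in the same splitting, since the high-frequency part of that lemma's integral is uniformly small.

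\textbf{Diagonal operator.} From~\eqref{eqn:defn_prelimiting_operators}, after a partial Fourier transform in $(y_{\com},y_{\intv{n}\setminus\alpha})$, I get
\[
\prediag{\theta}{\alpha}(\lambda)=\beta_\varepsilon\big(\id{}-\beta_\varepsilon\preoffdiag{\alpha\alpha}(\lambda)\big)^{-1}.
\]
Here, for fixed $\eta_{\com\cup\intv{n}\setminus\alpha}$, the operator $\preoffdiag{\alpha\alpha}(\lambda)$ acts on $\Ltwo(\R^2_{y_{\rel}})$ with kernel
\[
\sqrrange(y)\Big(\int_{\epsalhattorus{2}}\frac{e^{\im\eta_{\rel}\cdot(\floor{y}-\floor{y'})}\,\d\eta_{\rel}}{\lambda+\fourierepsgen(\Tdualeps{\alpha}\eta)}\Big)\sqrrange(y').
\]
I will decompose this as $A_\varepsilon\proj+B_\varepsilon$. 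Here $A_\varepsilon$ is the $\eta_{\rel}$-integral with the phase removed. Rescaling $\eta_{\rel}\mapsto\varepsilon\eta_{\rel}$ turns the symbol into $\varepsilon^{-2}\fouriergen(\mzaleta{\alpha})$ plus the centre-of-mass part, and I expect
\[
A_\varepsilon=\frac{1}{4\pi}\Big(2|\log\varepsilon|-\log\big(\lambda+\quart\norm{\eta_{\com}}^2+\half\norm{\eta_{\intv{n}\setminus\alpha}}^2\big)+c_{\rm lat}\Big)+o(1).
\]
The lattice constant $c_{\rm lat}$ comes from comparing $1/\fouriergen$ with $1/\norm{\eta_{\rel}}^2$ on $\hattorus{2}$ and from integrating $1/\norm{\eta}^2$ over the square; this is the source of the Catalan, $\log2$ and $\log\pi$ terms. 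The remainder $B_\varepsilon$ carries the factor $e^{\im\eta_{\rel}\cdot(\floor{y}-\floor{y'})}-1$. It is bounded, and converges to the double-integral term, with a Hilbert--Schmidt bound using $\log\norm{y-y'}$ and~\eqref{eqn:range_energy}; this is where $\quadrange$ enters the threshold on $\lambda$.

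Combining with~\eqref{eqn:beta_epsilon}, I obtain
\[
\id{}-\beta_\varepsilon\preoffdiag{\alpha\alpha}=\projperp+\frac{\log(\cdots)-\shfparameter}{2|\log\varepsilon|}\proj+O(|\log\varepsilon|^{-1})\,(\text{terms mixing }\proj,\projperp).
\]
Inverting by a Schur complement gives $\frac{4\pi}{\log(\cdots)-\shfparameter}\proj$ plus $O(|\log\varepsilon|^{-1})$, which is the limit~\eqref{eqn:diagonal_projection_form}; the norm bound follows from the lower bound on $\log(\cdots)-\shfparameter$.

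\textbf{Main obstacle.} I expect the main difficulty to be the uniformity in $\eta_{\com\cup\intv{n}\setminus\alpha}$. When $\norm{\eta_{\com}}\gtrsim\varepsilon^{-1}$, the asymptotics of $A_\varepsilon$ fail. I will therefore only prove a uniform bound there, using positivity and the lower bound on the symbol, and then obtain strong rather than norm convergence by dominated convergence in $\eta$ for fixed $f$. This explains why the lemma claims only strong convergence in~\eqref{eqn:prediagonal_bound}.

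\textbf{Full series.} With the uniform bounds for $\lambda$ large, the series~\eqref{eqn:prelimiting_resolvent} is dominated term by term exactly as in Lemma~\ref{lem:limitingresolventboundedness}. Each term is a finite product of uniformly bounded operators converging strongly or in norm, so each term converges strongly. Dominated convergence for the series then gives strong convergence of the whole resolvent.
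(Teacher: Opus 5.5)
Your overall architecture matches the paper's:
\begin{itemize}
\item a Fourier representation controlled by Lemma~\ref{lem:uniformgreensbound};
\item a low/high-frequency splitting;
\item a decomposition of $\preoffdiag{\alpha\alpha}(\lambda)$ into a multiple of $\proj$, whose $|\log\varepsilon|/2\pi$ cancels against $\beta_\varepsilon^{-1}$, plus a remainder controlled by $\log\norm{y_{\rel}-y'_{\rel}}$ and $\quadrange$;
\item inversion fiber by fiber, where your Schur complement is equivalent to the expansion \eqref{eqn:diagonal_neumann_R2};
\item dominated convergence in $\eta_{\com\cup\intv{n}\setminus\alpha}$ and a term-by-term limit of the diagram series.
\end{itemize}

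The first gap is that you treat the prelimiting operators as exact Fourier multipliers, and $\preoffdiag{\alpha\alpha}(\lambda)$ as a direct integral over $\eta_{\com\cup\intv{n}\setminus\alpha}$ of operators on $\Ltwo(\R^2)$. They are not of this form. The kernels involve $\epsgreens(\lambda,\epsfloor{\Seps{\alpha}y}-\epsfloor{\Seps{\alpha}y'})$, so the operator commutes only with translations of $(y_{\com},y_{\intv{n}\setminus\alpha})$ by vectors in $(\varepsilon\Z^2)^{n-1}$. Moreover, the relative component of $\epsfloor{\Seps{\alpha}y}$ depends on $y_{\com}$ modulo $\varepsilon$, not only on $y_{\rel}$. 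Your kernel with phase $e^{\im\eta_{\rel}\cdot(\floor{y}-\floor{y'})}$ is therefore not the kernel of $\preoffdiag{\alpha\alpha}(\lambda)$, and the fiberwise inversion does not apply to the actual operator.

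The paper handles this by replacing $\epsgreens$ with the continuous interpolation $\epscontgreens$, which is a genuine multiplier. It then estimates the corrections $\cali{I}$, $\cali{O}$, $\cali{D}_1$ separately, using the cell decomposition $y=q+v$ and discrete Plancherel. In the diagonal case, $\cali{D}_1(\varepsilon)$ couples $y_{\rel}$ to the center-of-mass variables and is merely bounded. Its center-of-mass part is small only where $\varepsilon\norm{\eta_{\com\cup\intv{n}\setminus\alpha}}\ll1$, so it converges to $\Theta_1\otimes\fid{n-1}$ only strongly, and that limit contributes to $\shfparameter$. It must therefore be reinserted after the fiberwise analysis, via $\prediag{\theta}{\alpha}(\lambda)=R_\varepsilon[\id{n}-\cali{D}_1(\varepsilon)R_\varepsilon]^{-1}$. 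Here $R_\varepsilon=(\beta_\varepsilon^{-1}\id{n}-\sum_{r=2}^{5}\cali{D}_r(\varepsilon))^{-1}$ and $\sup_\varepsilon\normop{\cali{D}_1(\varepsilon)}\normop{R_\varepsilon}<1$, and the norm of $\cali{D}_1$ also enters the threshold on $\lambda$. Nothing in your plan plays this role. The same corrections are needed for the uniform bounds on $\preoutg{\alpha}$ and $\preoffdiag{\alpha\alpha'}$, not only in the low-frequency convergence step.

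The second gap is that your argument for \emph{norm} convergence of $\preoffdiag{\alpha\alpha'}(\lambda)$ fails. The form in Lemma~\ref{lem:guquasteltsailemma} is dilation invariant: rescaling $\F{f_2},\F{g_2}$ unitarily by $s$ amounts to replacing $\lambda$ by $s^2\lambda$. Hence its part over $\{\norm{\xi}>R\}$ is not small uniformly over $\norm{f_2}_2,\norm{g_2}_2\leqslant1$, and dominated convergence gives convergence only for fixed $f,g$.

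In fact, dilating $y_{\com\cup\intv{n}\setminus\alpha}$ by $\varepsilon^{-1}$ on both sides shows that $\normop{\preoffdiag{\alpha\alpha'}(\lambda)-\offdiag{\alpha\alpha'}(\lambda)}=\normop{\mathcal{G}_{1\alpha\alpha'}(\varepsilon^2\lambda)-\offdiag{\alpha\alpha'}(\varepsilon^2\lambda)}$, where $\mathcal{G}_{1\alpha\alpha'}$ is the $\varepsilon=1$ operator. Testing against fixed nonnegative $f,g$ and letting $\varepsilon\to0$ gives $\langle f,(\mathcal{G}_{1\alpha\alpha'}(0)-\offdiag{\alpha\alpha'}(0))g\rangle$, which is not identically zero. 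So the norm difference stays bounded away from $0$.

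The paper also asserts norm convergence here, by deferring to the outgoing case. That case, however, uses the extra factor $\lambda^{-1/2}$, which the scale-invariant off-diagonal form lacks. What you can prove, and all the final assertion needs, is strong convergence. For fixed $g$ with band-limited $\F{g_2}$, the tail over $\{\norm{\xi}>R\}$ is small uniformly in $f$; density and the uniform bound then handle general $g$.

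Finally, $\preoffdiag{\alpha\alpha'}$ is not $\langle\preoutg{\alpha}\cdot,\preoutg{\alpha'}\cdot\rangle$, which would carry $(\lambda+\fourierepsgen)^{-2}$. You need the single-resolvent form, as in \eqref{eqn:offdiagonal_fourier_one}.
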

\begin{proof}
	First, suppose the above norm bounds and convergence results hold true. Under these assumptions, the norm bounds are the same as their limiting counterparts. Hence, absolute convergence in norm follows exactly as in Lemma~\ref{lem:limitingresolventboundedness}. Strong convergence follows from the following standard fact from functional analysis: if $\{A_{\varepsilon}\}$, $\{B_{\varepsilon}\}$ are families of uniformly bounded operators the operator $A_{\varepsilon}B_{\varepsilon}$ converges to $AB$ if $\sup_{\varepsilon > 0} \normop{A_{\varepsilon}} + \normop{B_{\varepsilon}} < \infty$, and $A_{\varepsilon} \to A$, $B_{\varepsilon} \to B$ strongly.
\end{proof}

Given this lemma, our task has been reduced to proving the bounds and establishing the convergence results. Before we proceed to do the same, we establish another lemma regarding the lattice Fourier symbol $\fourierepsgen(\xi)$, which will be a key ingredient in our proofs.

\begin{lemma}{\label{lem:uniformgreensbound}}
	Recall that $\epsgen = \sum_{k=1}^{n}\epstwogen^{(k)}$. Then the following inequality holds:
	\begin{equation*}
		\frac{1}{\lambda + \fourierepsgen(\xi)} \lesssim \frac{1}{\lambda + \half \norm{\xi}^2} \, ,
	\end{equation*}
	uniformly over $\xi \in \epshattorus{2\intv{n}}$ and $\varepsilon > 0$. Moreover, we have the bound
	\begin{equation*}
     \Big|\fourierepsgen(\xi) - \half\norm{\xi}^2 \,  \Big| \lesssim \varepsilon^{\delta}\norm{\xi}^{2+\delta} \, \, ,
	\end{equation*}
	where $\delta$ is as in Assumption~\ref{assumption2}. In particular, this implies that the symbol $\fourierepsgen(\xi)$ converges to $\half \norm{\xi}^2$ uniformly on compact sets.
\end{lemma}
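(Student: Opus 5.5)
The plan is to reduce everything to a single two-dimensional coordinate and work directly with the characteristic exponent $\Psi$. Under the diffusive scaling (jumps of size $\varepsilon m$ at rate $\varepsilon^{-2}$), the symbol splits as
\begin{equation*}
	\fourierepsgen(\xi) = \sum_{k=1}^{n} \varepsilon^{-2}\,\Psi(\varepsilon \xi_k)\, , \qquad \Psi(\zeta) = \sum_{m\in\Z^2}\big(1-\cos(\zeta\cdot m)\big)\nu(m)\, ,
\end{equation*}
where $\varepsilon\xi_k \in \hattorus{2}$ whenever $\xi \in \epshattorus{2\intv{n}}$. Here $\Psi$ is real because $\nu$ is symmetric (Assumption~\ref{assumption1}). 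Also $\half\norm{\xi}^2 = \sum_k \half\norm{\xi_k}^2$. So both claims follow once I show, for $\zeta \in \hattorus{2}$,
\begin{equation*}
	\text{(a)}\quad \Psi(\zeta) \geqslant c\norm{\zeta}^2 \qquad\text{and}\qquad \text{(b)}\quad \big|\Psi(\zeta) - \half\norm{\zeta}^2\big| \lesssim \norm{\zeta}^{2+\delta}\, .
\end{equation*}
Indeed, (b) with $\zeta=\varepsilon\xi_k$, multiplied by $\varepsilon^{-2}$ and summed over $k$, gives the error bound $\varepsilon^{\delta}\sum_k\norm{\xi_k}^{2+\delta}\lesssim \varepsilon^\delta\norm{\xi}^{2+\delta}$. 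Similarly, (a) gives $\lambda+\fourierepsgen(\xi)\geqslant \lambda + c\norm{\xi}^2 \geqslant (1\wedge 2c)(\lambda+\half\norm{\xi}^2)$, which is the resolvent comparison. Uniform convergence on compacts then follows from the error bound.

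I would prove (b) first, since it also handles (a) near the origin. Write $1-\cos x = \half x^2 - r(x)$. By Taylor's theorem, $|r(x)|\leqslant x^4/24$; since also $0\leqslant 1-\cos x\leqslant \half x^2$, we have $|r(x)| \leqslant \half x^2$. Interpolating between these two bounds gives $|r(x)|\lesssim |x|^{2+\delta}$ for $\delta\in(0,1)$. Summing against $\nu$ and using Assumption~\ref{assumption3} gives $\sum_m \half(\zeta\cdot m)^2\nu(m) = \half \transpose{\zeta}\big(\sum_m m\transpose{m}\nu(m)\big)\zeta = \half\norm{\zeta}^2$. The remainder is bounded by $\norm{\zeta}^{2+\delta}\sum_m\norm{m}^{2+\delta}\nu(m)$, which is finite by Assumption~\ref{assumption2}. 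This proves (b).

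For (a), (b) yields $\Psi(\zeta)\geqslant \quart\norm{\zeta}^2$ on a small ball $\norm{\zeta}\leqslant r_0$. On the compact set $\hattorus{2}\setminus\{\norm{\zeta}<r_0\}$, $\Psi$ is continuous, since the series converges uniformly because $\nu$ is a probability measure. It is also strictly positive there, by the aperiodicity consequence of Assumption~\ref{assumption1}, namely $\Psi^{-1}(0)=\{0\}$ in $\hattorus{2}$. Hence $\Psi$ attains a positive minimum $m_0$ on that set, and since $\norm{\zeta}^2\leqslant 2\pi^2$ there, $\Psi(\zeta)\geqslant (m_0/2\pi^2)\norm{\zeta}^2$. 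Taking $c$ to be the smaller of $\quart$ and $m_0/2\pi^2$ proves (a).

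The only genuinely non-perturbative step is this lower bound away from the origin. It is where aperiodicity enters, and it is what makes the constant uniform over the whole scaled torus $\epshattorus{2\intv{n}}$ and over all $\varepsilon$, rather than only over compact sets of $\xi$. The rest is Taylor expansion with the moment assumptions.
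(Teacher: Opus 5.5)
Your proof is correct and has the same overall architecture as the paper's. Both reduce to the single-coordinate exponent via $\fourierepsgen(\xi)=\varepsilon^{-2}\fouriergen(\varepsilon\xi)$ and obtain a quadratic lower bound on a small ball around the origin. Both then use continuity and $\Psi^{-1}(0)=\{0\}$ on the compact remainder of $\hattorus{2}$, and Taylor-expand against the moment assumptions for the error estimate.

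The one genuine difference is the small-ball lower bound. You deduce $\Psi(\zeta)\geqslant\quart\norm{\zeta}^2$ for $\norm{\zeta}\leqslant r_0$ from your error estimate (b). That is economical, since a single expansion does double duty, but it draws on the $(2+\delta)$-moment of Assumption~\ref{assumption2}. The paper proves the lower bound independently:
\begin{itemize}
\item it truncates the covariance to $\diff_{R}=\sum_{\norm{m}\leqslant R}m\transpose{m}\nu(m)$ and uses $\diff_{R}\to\lid{1}$ to get $\xi\cdot\diff_{R_0}\xi\geqslant\half\norm{\xi}^2$;
\item on the ball $\norm{\xi}\leqslant\pi/(2R_0)$, it applies $1-\cos x\geqslant 2x^2/\pi^2$ for $|x|\leqslant\pi/2$ to the terms with $\norm{m}\leqslant R_0$.
\end{itemize}
This needs only finite second moments, so the paper's resolvent comparison holds even without Assumption~\ref{assumption2}. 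For (b) the two arguments are the same idea: the paper works with $\sin^2$ and the bounds on $|\sin x\mp x|$, while you work with the remainder of $1-\cos x$ and interpolate.

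Incidentally, your constant $m_0/(2\pi^2)$ on the complement is the right one. The paper's $2L/\pi^2$ would require $\norm{\xi_k}^2\leqslant\pi^2/2$, which fails near the corners of $\hattorus{2}$, though this is immaterial for a $\lesssim$ statement.
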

\begin{proof}
    Observe that $\fourierepsgen(\xi) = \varepsilon^{-2}\fouriergen(\varepsilon \xi)$. Hence, it suffices to prove the corresponding result for $\fouriergen$. Furthermore, since $\fouriergen(\xi) = \sum_{k=1}^{n}\F{\twogen^{(k)}}(\xi_k)$ for $\xi = (\xi_k)_{k \in \intv{n}}$, it is enough to show that $\F{\twogen}^{(k)}(\xi_k) \gtrsim\norm{\xi_k}^2$ for $\xi_k \in \widehat{\mathbb{T}}^{2}$ and then rescale to obtain the desired inequality. Recall
	\begin{equation*}
	    \F{\twogen}^{(k)}(\xi_k) = \sum_{m_k \in \Z^2}(1 - \cos(\xi_k \cdot m_k) )\nu(m_k) \, .
	\end{equation*}
	Fix $R \in \Z$ and let $\diff_{R} \coloneqq \sum_{\norm{m_k} \leqslant R} m_k \transpose{m_k} \, \nu(m_k)$. Then, $\forall \, \xi_k \in \hattorus{2}$,
	\begin{equation*}
		\bigg| \xi_k \cdot \bigg( \, \lid{1} - \diff_{R}\bigg) \xi_k  \bigg| \leqslant \sum_{\norm{m_k} > R} |  \xi_k \cdot  m_k \transpose{m_k} \xi_k  | \, \nu(m_k) \leqslant \norm{\xi_k}^2  \sum_{\norm{m_k} > R} \norm{m_k}^2 \nu(m_k) \, \, .
	\end{equation*}
   The right-hand side of the above equation is the tail of a summable series. Hence, we may take the supremum over the set $\{ \xi_k \in \hattorus{2} : \norm{\xi_k} = 1\}$ and let $R \to \infty$ to see that $\Normop{\lid{1} - \diff_{R}} \to 0$. Consequently, there exists an $R_{0} > 0$ sufficiently large such that $ \xi_k \cdot  \diff_{R} \xi_k \geqslant  \half \norm{\xi_k}^2$ for all $\xi_k \in \widehat{\mathbb{T}}^{2}$ and $R \geqslant R_{0}$. Now assume $\xi_k$ is such that $\norm{\xi_k} \leqslant \nicefrac{\pi}{2R_{0}}$. For $\norm{m} \leqslant R_{0}$, we have $| \xi_k \cdot m_k | \leqslant \norm{\xi_k}\norm{m_k} \leqslant \nicefrac{\pi}{2}$. Therefore, in this region, using the inequality $1-\cos x \geqslant \nicefrac{2x^2}{\pi^2}$, we have:
   \begin{align*}
   	\F{\twogen}^{(k)}(\xi) \geqslant  \sum_{\norm{m_k} \leqslant R_{0}}(1 - \cos (\xi_k \cdot m_k))  \nu(m_k)
   	&\geqslant \frac{2}{\pi^2} \sum_{\norm{m_k} \leqslant R_{0}} (\xi_k \cdot m_k)^2 \nu(m_k)\\ 
   	& = \frac{2}{\pi^2} \xi_k \cdot \diff_{R_{0}} \xi_k \geqslant \frac{1}{\pi^2} \norm{\xi_k}^2 \, \, ,
   \end{align*} 
   for all $\xi_k$ such that $\norm{\xi_k} \leqslant \nicefrac{\pi}{2R_{0}}$. In the compact complement region $\hattorus{2} \cap \{\xi_k: \norm{\xi_k} \ge \nicefrac{\pi}{2R_{0}}\}$, the continuous function $\F{\twogen}^{(k)}$ attains its minimum value. Due to the aperiodicity assumption, this minimum is strictly positive. That is, there exists $L > 0$ such that $\F{\twogen}^{(k)}(\xi_k) \geqslant L$ for all $\xi_k \in \hattorus{2} \cap \{\xi_k: \norm{\xi_k} \geqslant \nicefrac{\pi}{2R_{0}}\}$. Multiplying and dividing the right-hand side of this inequality by $\norm{\xi_k}^2$, we obtain
   \begin{equation*}
   	 \F{\twogen}^{(k)}(\xi_k) \geqslant \frac{L \norm{\xi_k}^2}{\norm{\xi_k}^2} \geqslant \frac{2L}{\pi^2} \norm{\xi_k}^2.
   \end{equation*}
   Combining the above inequalities, we can write
   \begin{equation*}
   	 \F{\twogen}^{(k)}(\xi_k) \geqslant \bigg(\frac{1}{\pi^2} \wedge \frac{2L}{\pi^2}\bigg) \norm{\xi_k}^2,  \quad \text{ for all } \xi_k \in \hattorus{2} \, .
   \end{equation*}
   Making the change of variables $\xi_k \mapsto \varepsilon \xi_k$ and summing over $k$ yields
   \begin{equation*}
   	 \fourierepsgen(\xi) = \varepsilon^{-2} \fouriergen(\varepsilon \xi) \geqslant \bigg(\frac{1}{\pi^2}\wedge \frac{2L}{\pi^2}\bigg) \norm{\xi}^2,  \quad \text{ for all } \xi \in \epshattorus{2\intv{n}} \, .
   \end{equation*}   
   Adding $\lambda$ to both sides proves the required inequality. We now turn to the second part of the lemma. First, we show the result for $\fourierepstwogen(\xi_k)$.  Using the elementary bounds $|\sin \big(\frac{\varepsilon m_k\cdot \xi_k}{2}\big) - \frac{\varepsilon m_k \cdot \xi_k}{2}| \leqslant 2|\frac{\varepsilon m_k \cdot \xi_k}{2}|^{1 + \delta}$ and $|\sin \big(\frac{\varepsilon m_k \cdot \xi_k}{2}\big) + \frac{\varepsilon m_k \cdot \xi_k}{2}| \leqslant 2|\frac{\varepsilon m_k \cdot \xi_k}{2}|$, we get 
   \begin{equation*}
   \bigg|\sin^2\bigg(\frac{\varepsilon m_k \cdot \xi_k}{2}\bigg) - \bigg|\frac{\varepsilon m_k \cdot \xi_k}{2}\bigg|^{2}\bigg| \leqslant 4\bigg|\frac{\varepsilon m_k \cdot \xi_k}{2}\bigg|^{2+\delta}.
   \end{equation*}
   Note $1-\cos(\varepsilon\xi_k\cdot m_k) = 2 \sin^2(\frac{\varepsilon\xi_k \cdot m_k}{2})$ and from Assumption~\ref{assumption3}, $\norm{\xi_k}^2 = \sum_{m_k \in \Z^{2}}|\xi_k \cdot m_k|^2\nu(m_k)$. Using this and the above inequality,
   \begin{align*}
    \Big|\fourierepstwogen(\xi_k) - \half \norm{\xi_k}^2\Big| 
    & = \bigg| 2\varepsilon^{-2}\sum_{m_k \in \Z^{2}} \sin^2\bigg(\frac{\varepsilon\xi_k\cdot m_k}{2}\bigg)\nu(m_k) - 2\varepsilon^{-2}\sum_{m_k \in \Z^{2}}\frac{|\varepsilon\xi_k \cdot m_k|^2}{4}\nu(m_k)\bigg| \\
    &\lesssim \varepsilon^{-2}\sum_{m_k \in \Z^{2}}\varepsilon^{2 + \delta}\norm{\xi_k}^{2+\delta}\norm{m_k}^{2+\delta}\nu(m_k) \\
    & \lesssim \varepsilon^{\delta}\norm{\xi_k}^{2+\delta} \, \, .
   \end{align*} 
    An application of triangle inequality and the fact that $\sum_{k=1}^{n}\norm{\xi_k}^{2+\delta} \lesssim \norm{\xi}^{2+\delta}$ completes the proof.
\end{proof}

The rest of this section is dedicated to proving the bounds in Lemma~\ref{lem:resolventconvergence}. For the convenience of the reader, we split the proof into several subsections.

\subsection{Incoming and Outgoing Operators}

In this subsection, we prove the results in equations~\eqref{eqn:preincoming_bound} and \eqref{eqn:preoutgoing_bound}. It suffices to prove the result for $\preoutg{\alpha}(\lambda)$, since the corresponding result for $\preinc{\alpha}(\lambda)$ follows immediately by adjointness. Fix $\alpha \in \pair\intv{n}, f\in \cali{S}(\R^{2\intv{n}})$ and $g \in \cali{S}(\R^{2\intv{n}}_{\alpha})$. Following the proof of Lemma~\ref{lem:limitingresolventboundedness}, we may assume that $g$ is of the form
$g(y'_{\rel}, y'_{\com}, y'_{\intv{n}\setminus\alpha}) \coloneqq g_{1}(y'_{\rel})g_{2}(y'_{\com\cup\intv{n}\setminus\alpha})$, so that $\norm{g}_{2} = \norm{g_{1}}_{2}\norm{g_{2}}_{2}$. The strategy we employ for proving the operator norm bound is similar to that used in Lemma~\ref{lem:limitingresolventboundedness}: we pass to Fourier space and estimate it there. To show the convergence in norm, we will compare the operator to its limiting counterpart by decomposing the difference into a sum of error terms and show that these vanish in the limit $\varepsilon \to 0$, uniformly in $f$ and $g$. To that end, consider the inner product
	\begin{equation*}
	\langle f, \preoutg{\alpha}(\lambda)g \rangle = \int_{\R^{2\intv{n}}}  \int_{\R^{2\intv{n}}_{\alpha}}  \, f(x) \conj{\preoutg{\alpha}(\lambda, \epsfloor{x} - \epsfloor{\Seps{\alpha}y'})}\conj{\sqrrange(y'_{\rel}) g(y')} \, \d y' \, \d x \, \, .
	\end{equation*}

    First, we make the substitution $\Seps{\alpha}y' = x'$. This maps the domain $\R^{2\intv{n}}_{\alpha}$ onto $\R^{2\intv{n}}$ and $\d y' \mapsto \varepsilon^{-2} \d x'$. Letting $G_{\varepsilon\alpha}(x') \coloneqq \varepsilon^{-2}(\sqrrange g \circ \Teps{\alpha})(x')$ for ease of notation, we find that the inner product can be equivalently expressed as:
	\begin{equation}{\label{eqn:preoutgoing_position_zero}}
	\langle f, \preoutg{\alpha}(\lambda)g \rangle =  \int_{\R^{2\intv{n}}}  \int_{\R^{2\intv{n}}} \, f(x) \conj{\epsgreens(\lambda, \epsfloor{x}-\epsfloor{x'})} \conj{G_{\varepsilon\alpha}(x')} \, \d x \, \d x' \, \, .
	\end{equation}
	Here, we would like to use Plancherel's theorem to move to Fourier space just as we did in Lemma~\ref{lem:limitingresolventboundedness}, but due to the periodicity of the kernel, the resulting expression is harder to analyze with the arguments used there. To circumvent this issue, we first replace the lattice kernel by its \textit{continuous modification} defined as
	\begin{equation*}
	\epscontgreens(\lambda, x) \coloneqq \frac{1}{(2\pi)^{2n}}\int_{\epshattorus{2\intv{n}}} \frac{e^{-\im \xi \cdot x}}{\lambda + \fourierepsgen(\xi)} \, \d \xi \, \, .
	\end{equation*}
	This kernel has the same Fourier multiplier as the lattice Green's function, but has the advantage of depending on the continuous variable $x$ rather than a discretized argument involving floor functions. Moreover, since its Fourier transform is compactly supported, it is also smooth. With this definition, we can rewrite equation~\eqref{eqn:preoutgoing_position_zero} as:
	\begin{equation}{\label{eqn:preoutgoing_position_one}}
		\int_{\R^{2\intv{n}}}  \int_{\R^{2\intv{n}}}  \, f(x) \conj{\epscontgreens(\lambda, x-x')}\conj{G_{\varepsilon\alpha}(x')} \, \d x \, \d x' + \cali{I}(\varepsilon, f, g) \, ,
	\end{equation}
	where $\cali{I}(\varepsilon, f, g)$ is the error incurred when we replaced the kernel $\epsgreens(\lambda)$ by $\epscontgreens(\lambda)$. That is, 
	\begin{equation}{\label{eqn:preoutgoingerror_position}}
	\cali{I}(\varepsilon, f, g) \coloneqq \int_{\R^{2\intv{n}}}  \int_{\R^{2\intv{n}}}  \, f(x)\big[\conj{\epsgreens(\lambda, \epsfloor{x} - \epsfloor{x'}) - \epscontgreens(\lambda, x-x')}\big] \conj{G_{\varepsilon \alpha}(x')} \, \d x \, \d x' \, .
	\end{equation}
    The first term in equation~\eqref{eqn:preoutgoing_position_zero} above can be bounded directly by passing to Fourier space, as in the proof of Lemma~\ref{lem:limitingresolventboundedness}. The error term requires a separate argument and will therefore be estimated by a more direct method. We begin with the former. Using Plancherel's theorem, we may write it as:
	\begin{equation}{\label{eqn:preoutgoingcont_fourier}}
		\int_{\R^{2\intv{n}}}  \int_{\R^{2\intv{n}}}  \, f(x) \conj{\epscontgreens(\lambda, x-x')}\conj{G_{\varepsilon\alpha}(x')} \, \d x \, \d x' = \int_{\epshattorus{2\intv{n}}} \frac{\F{f}(\xi)\conj{\F{G_{\varepsilon\alpha}}(\xi)}}{\lambda + \fourierepsgen(\xi)} \, \d \xi \, \, .
	\end{equation}
    
    To compute $\F{G_{\varepsilon\alpha}}(\xi)$, we first make the substitution $x' = \Seps{\alpha}y'$ (so that $\d x' = \varepsilon^{2}\d y'$) in its definition, and use the identity $ \xi \cdot \Seps{\alpha} x = \Sdualeps{\alpha}\xi \cdot x$ from equation~\eqref{defn:Sdualeps}. Thus,
	\begin{align*}
	\F{G_{\varepsilon\alpha}}(\xi) 
	& = \frac{\varepsilon^{-2}}{(2\pi)^{n}}\int_{\R^{2\intv{n}}} (\sqrrange g)(\Teps{\alpha}x') e^{\im \xi \cdot x'} \, \d x' \\
	& = \frac{\varepsilon^{-2} \cdot\varepsilon^2}{(2\pi)^{n}} \int_{\R^{2\intv{n}}_{\alpha}} \sqrrange(y'_{\rel}) g_{1}(y'_{\rel})g_{2}(y'_{\com\cup\intv{n}\setminus\alpha}) e^{\im \Sdualeps{\alpha}\xi \cdot y'} \, \d y' \\
	& = \frac{ \F{g_{2}}(\xi_i + \xi_j, \xi_{\intv{n}\setminus\alpha})}{2\pi} \int_{\R^2} \sqrrange(y'_{\rel}) g_{1}(y'_{\rel}) e^{\im \frac{\varepsilon}{2}(\xi_i-\xi_j)\cdot y'_{\rel}} \, \d y'_{\rel} \, \, \, .
	\end{align*}
	Insert this expression into equation~\eqref{eqn:preoutgoingcont_fourier}. Taking absolute values and applying Lemma~\ref{lem:uniformgreensbound}, we obtain
	\begin{align*}
		\Bigg|\,\int_{\R^{2\intv{n}}}  \int_{\R^{2\intv{n}}}  \, f(x) 
		& \conj{\epscontgreens(\lambda, x-x') G_{\varepsilon\alpha}(x')} \, \d x \, \d x' \, \Bigg| \\
		& \lesssim \int_{\epshattorus{2\intv{n}}} \, \int_{\R^2} \frac{|\F{f}(\xi)| \, |\F{g_{2}}(\xi_i + \xi_j, \xi_{\intv{n}\setminus\alpha})|\sqrrange(y'_{\rel}) |g_{1}(y'_{\rel})|}{\lambda + \half\norm{\xi}^2} \, \, \d y'_{\rel} \, \d \xi \, \, .
	\end{align*}
    To estimate this integral, make the change of variables $\xi = \Tdualone{\alpha}\eta$. This transforms the domain $\epshattorus{2\intv{n}}$ into $\epsalhattorus{2\intv{n}}$, and maps $(\frac{\xi_i-\xi_j}{2},\xi_i+\xi_j)$ to $(\eta_{\rel},\eta_{\com})$. Consequently,
	\begin{equation*}
		\half\norm{\xi}^2 \mapsto \norm{\eta_{\rel}}^2 + \quart\norm{\eta_{\com}}^2 + \half\norm{\eta_{\intv{n}\setminus\alpha}}^2 \, \, .
	\end{equation*}
	Then, bound the $\eta$ integrand as follows:
	\begin{equation*}
	\frac{|\F{f}(\Tdualone{\alpha}\eta)| \, |\F{g_{2}}(\eta_{\com \cup\intv{n}\setminus\alpha})|}{\lambda + \norm{\eta_{\rel}}^2 + \quart\norm{\eta_{\com}}^2 + \half\norm{\eta_{\intv{n}\setminus\alpha}}^2} \lesssim \frac{|\F{f}(\Tdualone{\alpha}\eta)| \, |\F{g_{2}}(\eta_{\com \cup\intv{n}\setminus\alpha})|}{\lambda + \norm{\eta_{\rel}}^2} \, \, .
	\end{equation*}
	Applying the Cauchy--Schwarz inequality in the variables $(\eta_{\rel},\eta_{\com},\eta_{\intv{n}\setminus\alpha})$, we find that
	\begin{align}{\label{eqn:outgoing_cs_bound_one}}
	\int_{\epsalhattorus{2\intv{n}}}
	& \frac{|\F{f}(\Tdualone{\alpha}\eta)| \, |\F{g_2}(\eta_{\com \cup\intv{n}\setminus\alpha})|}{\lambda + \norm{\eta_{\rel}}^2} 
	\d \eta  \nonumber \\
	& \lesssim \norm{f}_{2} \Bigg(\int_{\widehat{\R}^2}\frac{\d \eta_{\rel}}{(\lambda + \norm{\eta_{\rel}}^2)^2} \int_{\widehat{\R}^{2\intv{n}\setminus\rel}_{\alpha}} |\F{g_{2}}(\eta_{\com \cup\intv{n}\setminus\alpha})|^2\d \eta_{\com \cup\intv{n}\setminus\alpha}\Bigg)^{\half} .
	\end{align} 
	The $(\eta_{\com},  \eta_{\intv{n}\setminus\alpha})$ integral is just $\norm{g_{2}}_{2}$. As for the $\eta_{\rel}$ integral, it can be evaluated by a change to polar coordinates, exactly as in equation~\eqref{eqn:outgoing_polar}:
	\begin{equation}{\label{eqn:outgoing_cs_bound_two}}
	\int_{\widehat{\R}^{2}}\frac{\d \eta_{\rel}}{(\lambda + \norm{\eta_{\rel}}^2)^2} = \pi \int_{0}^{\infty} \frac{2r \, \d r}{(\lambda + r^2)^2} = \pi \int_{\lambda}^{\infty} \frac{\d r}{r^2} = \frac{\pi}{\lambda} \, .
	\end{equation}
	Using the Cauchy--Schwarz inequality once more, in the $y_{\rel}$ variable, we also get
	\begin{equation}{\label{eqn:outgoing_cs_bound_three}}
	\Bigg|\int_{\R^2} \sqrrange(y'_{\rel})g_{1}(y'_{\rel}) \, \d y'_{\rel}\Bigg| \lesssim \sqrt{\norm{\range}_{1}} \, \norm{g_{1}}_{2} \, .
	\end{equation}
	Combining these estimates yields the bound
	\begin{equation*}
		\Bigg|\int_{\R^{2\intv{n}}}  \int_{\R^{2\intv{n}}}  \, f(x) \conj{\epscontgreens(\lambda, x-x') G_{\varepsilon\alpha}(x')} \, \d x \, \d x'\Bigg| \lesssim \lambda^{-\half}\norm{f}_{2}\norm{g}_{2} \, \, .
	\end{equation*}
	  Next, we turn to the remaining term $\cali{I}(\varepsilon, f, g)$ in equation~\eqref{eqn:preoutgoing_position_one}. As before, we start with a change of variables $\Seps{\alpha}y' = x'$. Then, express the difference between the lattice and the continuous Green's function in Fourier space using the inversion formula. It then becomes:
	\begin{align*}
		\cali{I}(\varepsilon, f, g) 
		& = \frac{1}{(2\pi)^{2n}} \int_{\R^{2\intv{n}}} \int_{\R^{2\intv{n}}} \int_{\epshattorus{2\intv{n}}} f(x) \frac{e^{\im \xi \cdot (\epsfloor{x} - \epsfloor{x'})} - e^{\im \xi \cdot (x-x')}}{\lambda + \fourierepsgen(\xi)}G_{\varepsilon\alpha}(x') \, \d x \, \d x' \, \d \xi  \, \\
		& = \frac{1}{(2\pi)^{2n}}\int_{\R^{2\intv{n}}} \int_{\R^{2\intv{n}}} \int_{\epshattorus{2\intv{n}}} f(x)e^{\im \xi \cdot x} \frac{e^{-\im \xi \cdot (\epsfracpart{x} - \epsfracpart{x'})} - 1}{\lambda + \fourierepsgen(\xi)}G_{\varepsilon\alpha}(x') e^{-\im \xi \cdot x'} \, \d x \, \d x' \, \d \xi \, \, ,
	\end{align*}
	where $\epsfracpart{x} \coloneqq x - \epsfloor{x}$ is the fractional part in the lattice $\Z_{\varepsilon}^{2\intv{n}}$. Using the mean value theorem, we can write the complex exponential in the numerator as:
	\begin{equation*}
		 e^{-\im \xi \cdot (\epsfracpart{x} - \epsfracpart{x'})}-1 = -\im \xi \cdot (\epsfracpart{x} - \epsfracpart{x'}) \int_{0}^{1} e^{-\im z \xi \cdot (\epsfracpart{x} - \epsfracpart{x'})} \, \d z \, .
	\end{equation*} 
	Here, the fractional part $\epsfracpart{\cdot}$ applies to each of its $\R^2$ components separately. That is, $\epsfracpart{x} = (\epsfracpart{x_1}, \epsfracpart{x_2} , \ldots , \epsfracpart{x_n})$ so that $\xi \cdot (\epsfracpart{x} - \epsfracpart{x'}) = \sum_{k=1}^{n}\xi_k \cdot (\epsfracpart{x_k} - \epsfracpart{x'_k})$. For $l, m \in \intv{n}$ and $\xi \in \epshattorus{2\intv{n}}$, define the functions $\widetilde{F}_{l}$ and $\widetilde{G}_{\varepsilon\alpha, m}$ as
	\begin{equation}{\label{eqn:preoutgoing_fourier_frac_one}}
	\begin{split}
		\widetilde{F}_{l}(\xi) 
		&  \coloneqq \frac{1}{(2\pi)^{n}}\int_{\R^{2\intv{n}}} \int_{0}^{1}f(x) \big(\xi_l \cdot \epsfracpart{x_l}\big) e^{- \im z \xi \cdot \epsfracpart{x}+\im \xi \cdot x}\d x \, \d z \, , \\ \widetilde{G}_{\varepsilon\alpha, m}(\xi) 
		& \coloneqq \frac{1}{(2\pi)^{n}}\int_{\R^{2\intv{n}}}\int_{0}^{1} G_{\varepsilon \alpha}(x') \big(\xi_m \cdot \epsfracpart{x'_m}\big) e^{-\im z \xi \cdot \epsfracpart{x'} + \im \xi \cdot x'}\d x' \, \d z \, .
    \end{split}
	\end{equation} 
	Let $\widetilde{F}_{0}(\xi)$ and $\widetilde{G}_{\varepsilon\alpha,0}(\xi)$ denote the corresponding expressions with the factors $\xi_l \cdot \epsfracpart{x_l}$ and $\xi_m \cdot \epsfracpart{x'_m}$ omitted, respectively. With this notation, we can rewrite the expression for the error term as:
	\begin{equation}{\label{eqn:preoutgoingerror_fourier}}
		\cali{I}(\varepsilon, f, g) = -\im\sum_{l=1}^{n} \int_{\epshattorus{2\intv{n}}}\frac{\widetilde{F}_{l}(\xi)\conj{\widetilde{G}_{\varepsilon \alpha, 0}(\xi)}}{\lambda + \fourierepsgen(\xi)} \d \xi + \im \sum_{m=1}^{n} \int_{\epshattorus{2\intv{n}}}\frac{\widetilde{F}_{0}(\xi)\conj{\widetilde{G}_{\varepsilon \alpha,m}(\xi)}}{\lambda + \fourierepsgen(\xi)} \d \xi \, \, .
	\end{equation} 
    To bound this error term by the $\Ltwo$ norms of $f$ and $g$, it suffices to do the same for each of the terms in the above sum. Previously, we did this by explicitly computing the Fourier transform of $G_{\varepsilon\alpha}(x')$ in terms of $\F{g_{2}}$ and used the Cauchy--Schwarz inequality. Here, however, due to the presence of the fractional part terms, computing the Fourier transforms $\widetilde{G}_{\varepsilon\alpha,m}$ and $\widetilde{F}_{l}$ is more difficult than in the previous argument. Instead, we follow a more direct approach and decompose the integrals in their definitions in equation~\eqref{eqn:preoutgoing_fourier_frac_one} into sums over the lattice boxes. On each such box, every point can be written as a lattice point plus a remainder lying in $[0,\varepsilon)^{2\intv{n}}$, so the fractional part becomes an ordinary continuous variable ranging over this box. This allows us to treat the fractional part explicitly and estimate the Fourier transform in a cleaner way. More precisely, we write every $x \in \R^{2\intv{n}}$ as $p + u$ where $p \in \Z^{2\intv{n}}_{\varepsilon}$ and $u \in [0,\varepsilon)^{2\intv{n}}$. We begin with the term $\widetilde{G}_{\varepsilon\alpha,m}(\xi)$. In what follows, the symbol $\epsrect{2\intv{n}}$ denotes the set $[0,\varepsilon)^{2\intv{n}}$, and $\epsalrect{2\intv{n}}$ denotes $\Teps{\alpha}\big(\epsrect{2\intv{n}}\big)$. From the definition,
\begin{align*}
	\widetilde{G}_{\varepsilon\alpha, m}(\xi) 
	& = \frac{\varepsilon^{-2}}{(2\pi)^{n}} \int_{\R^{2\intv{n}}}\int_{0}^{1}(\sqrrange g)(\Teps{\alpha}x) (\xi_m \cdot \epsfracpart{x_m})\, e^{-\im z \xi \cdot \epsfracpart{x}} e^{\im \xi \cdot x} \,\d x \, \d z\\
	& = \frac{\varepsilon^{-2}}{(2\pi)^{n}} \sum_{p \in \Z^{2\intv{n}}_{\varepsilon}} \int_{\epsrect{2\intv{n}}} \int_{0}^{1} (\sqrrange g )(\Teps{\alpha}p + \Teps{\alpha}u) \big(\xi_m \cdot u_m \big) \, e^{-\im z \xi \cdot u} e^{\im \xi \cdot p}e^{\im \xi \cdot u } \, \d u \, \d z \, .
	\end{align*}
	Now, we make a change of variables $u \mapsto \Seps{\alpha}v$, $p \mapsto \Seps{\alpha}q$ and $\xi \mapsto \Tdualone{\alpha}\eta$. This makes the summation over $\Z^{2\intv{n}}_{\varepsilon\alpha}$ and changes the domains $\epsrect{2\intv{n}}$ and $\epshattorus{2\intv{n}}$ to $\epsalrect{2\intv{n}}$ and $\epsalhattorus{2\intv{n}}$ respectively, while also canceling the factor of $\varepsilon^{-2}$ outside the summation. Recall also the notation $(q_{\com\cup\intv{n}\setminus\alpha}+ v_{\com\cup\intv{n}\setminus\alpha} ) \coloneqq (q_{\com} + v_{\com}, q_{\intv{n}\setminus\alpha}+ v_{\intv{n}\setminus\alpha})$. The preceding expression for $\widetilde{G}_{\varepsilon\alpha, m}$ then becomes
	\begin{align}{\label{eqn:preoutgoing_fourier_frac_two}}
		\widetilde{G}_{\varepsilon\alpha, m}(\Tdualone{\alpha}\eta) = \frac{1}{(2\pi)^{n}}\sum_{q \in \Z^{2\intv{n}}_{\varepsilon\alpha}} \int_{\epsalrect{2\intv{n}}} \int_{0}^{1} 
		& (\sqrrange g_{1})(q_{\rel} + v_{\rel})g_{2}(q_{\com\cup\intv{n}\setminus\alpha}+ v_{\com\cup\intv{n}\setminus\alpha} ) \nonumber \\
		& \times  (\Tdualone{\alpha}\eta)_m \cdot(\Seps{\alpha}v)_m \times e^{\im (1 - z) \Tdualone{\alpha}\eta \cdot \Seps{\alpha}v} e^{\im \Tdualone{\alpha}\eta \cdot \Seps{\alpha}q} \, \d v \, \d z \, .
	\end{align} 
	From the definitions of the operators $\Tdualone{\alpha}$ and $\Seps{\alpha}$, note that 
	\begin{equation*}
		\Tdualone{\alpha}\eta \cdot \Seps{\alpha}q = \varepsilon\eta_{\rel} \cdot q_{\rel} + \eta_{\com} \cdot q_{\com} + \eta_{\intv{n} \setminus \alpha} \cdot q_{\intv{n}\setminus\alpha} \, \, .
	\end{equation*}
	and similarly
	\begin{equation}{\label{eqn:preoutgoing_fourier_frac_three}}
		\Tdualone{\alpha} \eta  \cdot \Seps{\alpha} v = \varepsilon \eta_{\rel} \cdot v_{\rel} + \eta_{\com} \cdot v_{\com} + \eta_{\intv{n}\setminus\alpha} \cdot v_{\intv{n}\setminus\alpha} \, \, .
	\end{equation}
    Moreover, since $\norm{u_k} \lesssim \varepsilon$ for all $k \in \intv{n}$, the vector $v = \Teps{\alpha}u$ satisfies $\norm{v_{\rel}} \lesssim 1$ and $\norm{v_{\com}}, \norm{v_{\intv{n}\setminus\alpha}} \lesssim \varepsilon$. As $\eta \in \epsalhattorus{2\intv{n}}$, its components satisfy the bounds $\norm{\eta_{\rel}}$, $\norm{\eta_{\com}}, \norm{\eta_{\intv{n} \setminus \alpha}} \lesssim \varepsilon^{-1}.$ Thus, from this fact, we observe that $\norm{\Tdualone{\alpha}\eta} \cdot \norm{\Seps{\alpha}v} \lesssim 1$. 
	For further ease of notation, we set:
	\begin{equation}{\label{eqn:preoutgoing_fourier_frac_four}}
		\widetilde{g}_{2}(v_{\com \cup \intv{n}\setminus\alpha}, \eta_{\com \cup \intv{n}\setminus\alpha}) \coloneqq \frac{
		1}{(2\pi)^{n-1}}\sum_{q_{\com \cup \intv{n}\setminus\alpha}}g_{2}(q_{\com \cup\intv{n}\setminus\alpha} + v_{\com\cup\intv{n}\setminus\alpha}) e^{\im \eta_{\com \cup \intv{n} \setminus \alpha} \cdot q_{\com \cup \intv{n}\setminus\alpha}} \, \, .
	\end{equation}
	Taking absolute values in equation~\eqref{eqn:preoutgoing_fourier_frac_two} and applying the Cauchy--Schwarz inequality with respect to the $v_{\com \cup \intv{n}\setminus\alpha}$ variables between $\Tdualone{\alpha}\eta \cdot \Seps{\alpha}v$ and $\widetilde{g}_{2}(v_{\com \cup \intv{n}\setminus\alpha}, \eta_{\com \cup \intv{n}\setminus\alpha})$, we obtain the bound:
	\begin{align*}
	|\widetilde{G}_{\varepsilon\alpha, m} (\Tdualone{\alpha}\eta)|^{2}
	&  \lesssim  \int \bigg|\sum_{q_{\rel}} \int  (\sqrrange g_{1})(v_{\rel} + q_{\rel}) e^{\im \varepsilon\eta_{\rel} \cdot q_{\rel}}  \d v_{\rel}\bigg|^2  \norm{(\Tdualone{\alpha}\eta )_m\cdot (\Seps{\alpha}v)_m}^2 \, \d v_{\com \cup \intv{n}\setminus\alpha} \\
	& \qquad \qquad \qquad \qquad \qquad \qquad \times \int |\widetilde{g}_{2}(v_{\com \cup \intv{n}\setminus\alpha}, \eta_{\com \cup \intv{n}\setminus\alpha})|^2 \d v_{\com \cup \intv{n}\setminus\alpha} \\
	&  \lesssim  \bigg|\sum_{q_{\rel}} \int   (\sqrrange g_{1})(v_{\rel} + q_{\rel})e^{\im \varepsilon\eta_{\rel} \cdot q_{\rel}}  \d v_{\rel}\bigg|^2\varepsilon^{2n-2} \int |\widetilde{g}_{2}(v_{\com \cup \intv{n}\setminus\alpha}, \eta_{\com \cup \intv{n}\setminus\alpha})|^2 \d v_{\com \cup \intv{n}\setminus\alpha} \, .  
	\end{align*}
	Note the $\d z $ integral has been bounded trivially. Using the Cauchy--Schwarz inequality in the $\d v_{\rel}$ variables, the term in the absolute values can be bounded as
	\begin{align*}
		\bigg|\sum_{q_{\rel}} \int   (\sqrrange g_{1})(v_{\rel} + q_{\rel})e^{\im \varepsilon\eta_{\rel} \cdot q_{\rel}}  \d v_{\rel}\bigg|^2 
		& \lesssim \Big(\sum_{q_{\rel}} \int \range(q_{\rel} + v_{\rel}) \, \d v_{\rel} \Big) \cdot \Big(\sum_{q_{\rel}} \int |g_{1}(q_{\rel} + v_{\rel})|^2 \, \d v_{\rel}\Big) \\ 
		& = \norm{\range}_{1} \, \norm{g_{1}}_{2}^{2} \, \, .
	\end{align*}
	Note that this bound is uniform in the variable $\eta_{\rel}$. 
	Thus,
	\begin{equation*}
       \int |\widetilde{G}_{\varepsilon\alpha, m} (\Tdualone{\alpha}\eta)|^{2} \d \eta_{\com\cup\intv{n}\setminus\alpha} \lesssim \norm{\range}_{1} \norm{g_{1}}_{2}^2 \iint \varepsilon^{2n-2}|\widetilde{g}_{2}(v_{\com \cup \intv{n}\setminus\alpha}, \eta_{\com \cup \intv{n}\setminus\alpha})|^2  
       \,\d v_{\com \cup \intv{n}\setminus\alpha}\,\d \eta_{\com \cup \intv{n}\setminus\alpha} \, .
	\end{equation*}
	For the term on the right-hand side, we can use Plancherel's theorem for Fourier series to see that it is bounded from above by
	\begin{equation*}
		\int\sum_{q_{\com \cup \intv{n}\setminus\alpha}} |g_{2}(q_{\com \cup \intv{n}\setminus\alpha} + v_{\com \cup \intv{n}\setminus\alpha})|^2 \, \d v_{\com \cup \intv{n}\setminus\alpha} = \norm{g_{2}}^2 \, \, .
	\end{equation*}
	Using Lemma~\ref{lem:uniformgreensbound}, we also have the bound
    \begin{equation*}
    \frac{1}{\lambda + \fourierepsgen(\Tdualone{\alpha}\eta)} \lesssim \frac{1}{\lambda + \norm{\eta_{\rel}}^2} \, \, .
    \end{equation*} 
	Therefore, using the preceding estimate for all $m = 0, 1, 2, \ldots n$, we get the bound
	\begin{align*}
		\int_{\epsalhattorus{2\intv{n}}} \frac{|\widetilde{G}_{\varepsilon\alpha, m}(\Tdualone{\alpha}\eta)|^2}{(\lambda + \fourierepsgen(\Tdualone{\alpha}\eta))^2} \, \d \eta 
		& \leqslant \int\frac{\mathbf{1}_{\epsalhattorus{2\intv{n}}}(\eta)}{(\lambda + \norm{\eta_{\rel}}^2)^2} \, \d \eta_{\rel} \, \int |\widetilde{G}_{\varepsilon\alpha, m}(\Tdualone{\alpha}\eta)|^2 \, \d \eta_{\com \cup \intv{n} \setminus \alpha} \\
		& \lesssim \frac{\norm{\range}_{1} \norm{g}_{2}^{2}}{\lambda} \, \, .
	\end{align*}
	A similar argument can be used to show that 
	\begin{equation*}
		\int_{\epsalhattorus{2\intv{n}}} |\widetilde{F}_{l}(\Tdualone{\alpha}\eta)|^2 \d \eta \lesssim \norm{f}_{2}^{2} \, \, ,
	\end{equation*} 
	for $l = 0, 1,2, \ldots n$. Thus,
	\begin{align*}
		\Bigg| \int_{\epshattorus{2\intv{n}}}\frac{\widetilde{F}_{l}(\xi)\conj{\widetilde{G}_{\varepsilon \alpha, m}(\xi)}}{\lambda + \fourierepsgen(\xi)} \d \xi \Bigg| 
		& \leqslant \Bigg(\int_{\epsalhattorus{2\intv{n}}} |\widetilde{F}_{l}(\Tdualone{\alpha}\eta)|^2 \d \eta\Bigg)^{\half} \cdot  \Bigg(\int_{\epsalhattorus{2\intv{n}}} \frac{|\widetilde{G}_{\varepsilon\alpha, m}(\Tdualone{\alpha}\eta)|^2}{(\lambda + \fourierepsgen(\Tdualone{\alpha}\eta))^2} \, \d \eta \Bigg)^{\half} \\
		& \lesssim \lambda^{-\half}\norm{f}_{2} \, \norm{g}_{2} \, \, .
	\end{align*}
	Combining the bounds for all choices of $l'$s and $m'$s, we get the error bound
	\begin{equation*}
	| \cali{I}(\varepsilon, f, g)| \lesssim \lambda^{-\half}\norm{f}_{2} \norm{g}_{2} \, \, .
	\end{equation*}
	Therefore, we have just shown that
    \begin{equation*}
    \normop{\preoutg{\alpha}(\lambda)} = \sup_{\norm{f}_{2}, \norm{g}_{2}\leqslant 1} |\langle f, \preoutg{\alpha}(\lambda)g \rangle| \lesssim \sup_{\norm{f}_{2}, \norm{g}_{2}\leqslant 1} ( \lambda^{-\half}\norm{f}_{2}\norm{g}_{2} + \lambda^{-\half}\norm{f}_{2}\norm{g}_{2}) \lesssim \lambda^{-\half} \, \, .
    \end{equation*}
    This proves the required norm bound for the operator $\preoutg{\alpha}(\lambda)$. We now turn to the norm convergence to the limiting operator $\outg{\alpha}(\lambda)$. As mentioned at the start of this subsection, we will compare it to the prelimiting one and show that the operator norm of their difference tends to $0$ uniformly with respect to $f$ and $g$. It will be convenient in our analysis to treat this difference separately in the low and high frequency regions. To this end, fix $R > 0$ such that $R \ll \varepsilon^{-1}$ and define the error terms:
    \begin{align*}
    	\cali{I}_{1}(R, \varepsilon) &\coloneqq   \frac{\ind{\norm{\xi} \leqslant R}(e^{\im \xi \cdot (\epsfloor{x} - \epsfloor{x'})} - e^{\im \xi \cdot (x-x')})}{\lambda + \fourierepsgen(\xi)} \\ 
    	\cali{I}_{2}(R,\varepsilon) &\coloneqq \frac{ \ind{R < \norm{\xi} \lesssim \pi/\varepsilon} (e^{\im \xi \cdot (\epsfloor{x} - \epsfloor{x'})} - e^{\im \xi \cdot (x-x')})}{\lambda + \fourierepsgen(\xi)} \\    
    	\cali{I}_{3}(R, \varepsilon) &\coloneqq  \ind{\norm{\xi} \leqslant R} \frac{e^{\im \frac{\varepsilon}{2}(\xi_i -\xi_j)\cdot y'_{\rel}} - 1}{\lambda + \fourierepsgen(\xi)}  & \cali{I}_{4}(R,\varepsilon) &\coloneqq \ind{R < \norm{\xi} \lesssim \pi/\varepsilon}   \frac{e^{\im \frac{\varepsilon}{2}(\xi_i -\xi_j)\cdot y'_{\rel}} - 1}{\lambda + \fourierepsgen(\xi)}      \\ 
    	\cali{I}_{5}(R, \varepsilon) & \coloneqq  \frac{\ind{\norm{\xi} \leqslant R}}{\lambda + \fourierepsgen(\xi)} - \frac{\ind{\norm{\xi} \leqslant R}}{\lambda + \half\norm{\xi}^2} & 	\cali{I}_{6}(R, \varepsilon) & \coloneqq \frac{\ind{R < \norm{\xi} \lesssim \pi/\varepsilon}}{\lambda + \fourierepsgen(\xi)} - \frac{\ind{R < \norm{\xi} \lesssim \pi/\varepsilon}}{\lambda + \half\norm{\xi}^2} \\
    	\cali{I}_{7}(R, \varepsilon) & \coloneqq  \frac{-\ind{\norm{\xi} \gtrsim \pi/\varepsilon}}{\lambda + \half\norm{\xi}^2} \, \, .
    \end{align*}
    Then, from equations~\eqref{eqn:outgoing_fourier_one} and \eqref{eqn:preoutgoing_position_one}, we get the following expression for the difference between the prelimiting and the limiting operator.
   \begin{align}{\label{eqn:outgoingerror_zero}}    
    	\langle f, [\preoutg{\alpha}(\lambda) - \outg{\alpha}(\lambda)]g \rangle 
    	& = \frac{1}{(2\pi)^{2n}}\sum_{r=1}^{2}\int_{\R^{2\intv{n}}} \int_{\R^{2\intv{n}}} f(x) \cali{I}_{r}(R, \varepsilon) G_{\varepsilon\alpha}(x') \, \d x \, \d x' \nonumber  \\
    	&  + \frac{1}{2\pi}\sum_{r=3}^{7}\int_{\R^2}\sqrrange(y'_{\rel})g_{1}(y'_{\rel}) \d y'_{\rel}  \int_{\epshattorus{2\intv{n}}} \F{f}(\xi)  \cali{I}_{r}(R, \varepsilon) \conj{ \F{g_{2}}(\xi_i + \xi_j, \xi_{\intv{n}\setminus\alpha})}  \d \xi \nonumber \\
    	& \coloneqq \sum_{r=1}^{7} \cali{I}^{f,g}_{r}(R, \varepsilon) \, \, .
    \end{align}
    We now proceed to show that each of the terms in the sum above vanish in the limit. More precisely, we will bound each term by a constant multiple of the $\Ltwo$ norms of $f$ and $g$, where the constant tends to zero as $\varepsilon \to 0$ first, followed by letting $R \to \infty$. We start with $\cali{I}^{f,g}_{1}(R,\varepsilon)$. Here the argument is similar to that used in bounding the term $\cali{I}(\varepsilon, f, g)$ defined in equation~\eqref{eqn:preoutgoingerror_position}. First, use the mean value theorem to write $e^{-\im \xi \cdot (\epsfracpart{x} - \epsfracpart{x'})}-1 = -\im \xi \cdot (\epsfracpart{x} - \epsfracpart{x'}) \int_{0}^{1}e^{-\im z \xi \cdot (\epsfracpart{x} - \epsfracpart{x'})} \, \d z$. Recalling the definitions of $\widetilde{F}_{l}$ and $\widetilde{G}_{\varepsilon\alpha,m}$, we may then rewrite $\cali{I}^{f,g}_{1}(R,\varepsilon)$ as
    \begin{equation}{\label{eqn:outgoingerror_one_a}}
    	-\im\sum_{l=1}^{n} \int_{\{\norm{\xi} \leqslant R\}}\frac{\widetilde{F}_{l}(\xi)\conj{\widetilde{G}_{\varepsilon \alpha, 0}(\xi)}}{\lambda + \fourierepsgen(\xi)} \d \xi + \im \sum_{m=1}^{n} \int_{\{\norm{\xi} \leqslant R\}}\frac{\widetilde{F}_{0}(\xi)\conj{\widetilde{G}_{\varepsilon \alpha,m}(\xi)}}{\lambda + \fourierepsgen(\xi)} \d \xi \, \, .
    \end{equation} 
     This expression is identical to that in equation~\eqref{eqn:preoutgoingerror_fourier}, except now the domain is restricted to the set $\{\norm{\xi} \leqslant R\}$. Following the same proof, we use the substitution $\xi = \Tdualone{\alpha}\eta$ and Lemma~\ref{lem:uniformgreensbound} to note that
     \begin{equation*}
     	\frac{1}{\lambda + \fourierepsgen(\Tdualone{\alpha}\eta)} \lesssim \frac{1}{\lambda + \norm{\eta_{\rel}}^2} \, \, .
     \end{equation*} 
     Using the Cauchy--Schwarz inequality, the inequality below is also evident:
     \begin{align*}
     	|\widetilde{G}_{\varepsilon\alpha, m} (\Tdualone{\alpha}\eta)|^{2}
     	&  \lesssim  \norm{\range}_{1} \, \norm{g_{1}}_{2}^{2}  \int \norm{(\Tdualone{\alpha}\eta )_m\cdot (\Seps{\alpha}v)_m}^2 \, \d v_{\com \cup \intv{n}\setminus\alpha} \nonumber \\
     	& \qquad \qquad \qquad \qquad \qquad \qquad \times \int |\widetilde{g}_{2}(v_{\com \cup \intv{n}\setminus\alpha}, \eta_{\com \cup \intv{n}\setminus\alpha})|^2 \d v_{\com \cup \intv{n}\setminus\alpha} \, \, ,
     \end{align*}
      where $\widetilde{g}_{2}(v_{\com \cup \intv{n}\setminus\alpha}, \eta_{\com \cup \intv{n}\setminus\alpha})$ is as defined in equation~\eqref{eqn:preoutgoing_fourier_frac_four}. Here, unlike what we previously did, we use the sharper bound
      \begin{equation*}
      	\norm{(\Tdualone{\alpha}\eta )_m\cdot (\Seps{\alpha}v)_m} \lesssim \varepsilon R \, \, .
      \end{equation*}
      This follows from equation~\eqref{eqn:preoutgoing_fourier_frac_three} and the fact that $\norm{\xi} \leqslant R$ implies $\norm{\eta} = \norm{\Sdualone{\alpha}\xi} \lesssim R$. Now, using Plancherel's theorem for Fourier series yields the estimate
      \begin{align*}
      	\int |\widetilde{G}_{\varepsilon\alpha, m} (\Tdualone{\alpha}\eta)|^{2} \, \d \eta_{\com \cup \intv{n} \setminus \alpha}
      	& \lesssim  \varepsilon^2 R^2 \norm{\range}_{1} \, \norm{g_{1}}_{2}^{2} \, \, \iint   \varepsilon^{2n-2} |\widetilde{g}_{2}(v_{\com \cup \intv{n}\setminus\alpha}, \eta_{\com \cup \intv{n}\setminus\alpha})|^2 \d v_{\com \cup \intv{n}\setminus\alpha} \, \, \\
      	& \lesssim \varepsilon^{2} R^2 \norm{\range}_{1} \, \norm{g}_{2}^{2} \, \, .
      \end{align*}
      Therefore,
      \begin{align*}
        \int_{\{\norm{\xi} \leqslant R\}} \frac{|\widetilde{G}_{\varepsilon\alpha, m} (\xi)|^{2}}{(\lambda + \fourierepsgen(\xi))^2} \, \d \xi 
        & \lesssim \int \frac{\ind{\norm{\eta} \lesssim R} \d \eta_{\rel}}{(\lambda + \norm{\eta_{\rel}}^2)^2} \int |\widetilde{G}_{\varepsilon\alpha, m} (\Tdualone{\alpha}\eta)|^{2} \, \d \eta_{\com \cup \intv{n} \setminus \alpha} \\
        & \lesssim \frac{\varepsilon^2 R^2 \norm{g_{1}}_{2}^{2}}{\lambda} \int \varepsilon^{2n-2} \int |\widetilde{g}_{2}(v_{\com \cup \intv{n}\setminus\alpha}, \eta_{\com \cup \intv{n}\setminus\alpha})|^2 \d v_{\com \cup \intv{n}\setminus\alpha}  \d \eta_{\com \cup \intv{n} \setminus \alpha} \\
        & = \frac{\varepsilon^2 R^2}{\lambda} \norm{g}_{2}^{2} \, \, \, .
      \end{align*}
      Similarly, we also have
      \begin{equation*}
      	\int_{\{\norm{\xi} \leq R\}}|\widetilde{F}_{l}(\xi)|^2 \, \d \xi \lesssim \int_{\{\norm{\eta} \lesssim R\}} |\widetilde{F}_{l}(\Tdualone{\alpha}\eta)|^2 \d \eta \lesssim \norm{f}_{2}^{2} \, \, \, .
      \end{equation*} 
      In all, the following bound is obtained:
      \begin{align*}
      \Bigg| \int_{\{\norm{\xi} \leqslant R\}}\frac{\widetilde{F}_{l}(\xi)\conj{\widetilde{G}_{\varepsilon \alpha,m}(\xi)}}{\lambda + \fourierepsgen(\xi)} \d \xi\Bigg| 
      & \lesssim  \Bigg(\int_{\{\norm{\xi} \leq R\}}|\widetilde{F}_{l}(\xi)|^2 \, \d \xi \Bigg)^{\half} \, \Bigg(  \int_{\{\norm{\xi} \leqslant R\}} \frac{|\widetilde{G}_{\varepsilon\alpha, m} (\xi)|^{2}}{(\lambda + \fourierepsgen(\xi))^2} \, \d \xi  \Bigg)^{\half} \, \, \\
      & \lesssim \varepsilon R \norm{f}_{2} \, \norm{g}_{2}  \, .
      \end{align*}
      Since $l$ and $m$ were arbitrarily chosen from $\{0,1, \ldots n\}$, the above bound holds for every term in equation~\eqref{eqn:outgoingerror_one_a}, so that 
     \begin{equation}{\label{eqn:outgoingerror_one}}
    	| \cali{I}^{f,g}_{1}(R,\varepsilon)|  \lesssim \varepsilon R \norm{f}_{2}\norm{g}_{2} \,\, .
    \end{equation}
    We proceed to $\cali{I}^{f,g}_{2}(R,\varepsilon)$. Here too, the proof is similar to that used in bounding $\cali{I}(\varepsilon, f, g)$. Recall we used the bound
    \begin{equation*}
    	 \frac{1}{\lambda + \fourierepsgen(\xi)} \lesssim \frac{1}{\lambda + \half \norm{\xi}^2}
    \end{equation*} 
    from Lemma~\ref{lem:uniformgreensbound}. Before we perform a change of variables $\xi \mapsto \Tdualone{\alpha}\eta$, write $(\lambda + \half \norm{\xi}^2)^{-1}$ as 
    \begin{equation}{\label{eqn:outgoingerror_onepointone}}
    	\frac{1}{(\lambda + \half \norm{\xi}^2)^{\threequart}} \times \frac{1}{(\lambda + \half \norm{\xi}^2)^{\quart}}  \, \, . 
    \end{equation}
    Then, on the set $\{R < \norm{\xi} \lesssim \pi/\varepsilon\}$, we have the bound
    \begin{equation*}
    	\frac{1}{(\lambda + \half\norm{\xi}^2)^2} \lesssim \frac{1}{(\lambda + \half\norm{\xi}^2)^{\threehalf}(\lambda + \half R^2)^{\half}} \, .
    \end{equation*}
    The power $\threequart$ is arbitrary. In fact, any power $\half < p < 1$ will work. It is chosen such that the right-hand side above is integrable with respect to $\d \eta_{\rel}$ after a change of variables and goes to $0$ as $R \to \infty$. Now moving to the $\eta$-frame and using the Cauchy--Schwarz inequality, we obtain:
    \begin{align*}
    \int_{\{R < \norm{\xi} \lesssim \pi/\varepsilon\}} \frac{|\widetilde{G}_{\varepsilon\alpha, m} (\xi)|^{2}}{(\lambda + \half\norm{\xi}^2)^2} \, \d \xi 
    & \lesssim \frac{1}{(1+\half R^2)^{\half}}\int \frac{\ind{R \lesssim \norm{\eta} \lesssim \pi/\varepsilon}}{(\lambda + \norm{\eta_{\rel}}^2)^{\threehalf}} \,  \d \eta_{\rel} \,\int |\widetilde{G}_{\varepsilon\alpha, m} (\Tdualone{\alpha}\eta)|^{2} \, \d \eta_{\com \cup \intv{n} \setminus \alpha} \\
    & \lesssim \frac{\lambda^{-\half}\norm{g_{1}}_{2}^{2}}{R} \iint\varepsilon^{2n-2} |\widetilde{g}_{2}(v_{\com \cup \intv{n}\setminus\alpha}, \eta_{\com \cup \intv{n}\setminus\alpha})|^2 \d v_{\com \cup \intv{n}\setminus\alpha}  \d \eta_{\com \cup \intv{n} \setminus \alpha} \\
    & = \frac{1}{R\sqrt{\lambda}} \norm{g}_{2}^{2} \, \, \, .
    \end{align*}
    Of course, we also have
    \begin{equation*}
        	\int_{\{R <\norm{\xi} \leqslant \pi/\varepsilon\}}|\widetilde{F}_{0}(\xi)|^2 \, \d \xi \lesssim  \norm{f}_{2}^{2} \, \, \, .
    \end{equation*}
    Putting the above two estimates together, we get the error bound:
    \begin{equation}{\label{eqn:outgoingerror_two}}
       |\cali{I}^{f,g}_{2}(R, \varepsilon)| \lesssim \frac{\norm{f}_{2}\norm{g}_{2}}{\sqrt{R}} \, .
    \end{equation}
    We next consider $\cali{I}^{f,g}_{3}(R,\varepsilon)$. By Lemma~\ref{lem:uniformgreensbound}, the following inequality holds on the set $\{ \norm{\xi} \leqslant R\}$:
    \begin{equation*}
    \Bigg|\frac{e^{\im \frac{\varepsilon}{2}(\xi_i - \xi_j)\cdot y'_{\rel}} - 1}{\lambda + \fourierepsgen(\xi)}\Bigg| \lesssim \frac{\varepsilon\norm{\xi_i - \xi_j}\norm{y'_{\rel}} \wedge 2}{\lambda + \half\norm{\xi}^2} \, \, .
    \end{equation*}
    Then, taking absolute values in the definition of $\cali{I}^{f,g}_{3}(R,\varepsilon)$ and using the above bound, we obtain:
    \begin{align*}
    	|\cali{I}^{f,g}_{3}
    	& (R, \varepsilon)| \\
    	& \lesssim \int_{\R^2}\sqrrange(y'_{\rel}) |g_{1}(y'_{\rel})| \d y'_{\rel} \int_{\epshattorus{2\intv{n}}}\ind{\norm{\xi} \leq R} \frac{|\F{f}(\xi)| \big(\varepsilon\norm{\xi_i - \xi_j} \,\norm{y'_{\rel}}\wedge 2 \big)|\F{g_{2}}(\xi_i + \xi_j, \xi_{\intv{n}\setminus\alpha})}{\lambda + \half\norm{\xi}^2} \, \d \xi \, .
    \end{align*}
    To bound this integral, we first make the change of variables $\xi \mapsto \Tdualone{\alpha}\eta$. This transforms the domain from $\epshattorus{2\intv{n}}$ to $\epsalhattorus{2\intv{n}}$. Moreover, $\xi_i + \xi_j \mapsto \eta_{\com}$, $\xi_i - \xi_j \mapsto 2\eta_{\rel}$ and $\norm{\xi}^2 \mapsto \norm{\eta_{\rel}}^2 + \quart\norm{\eta_{\com}}^2 + \half\norm{\eta_{\intv{n}\setminus\alpha}}^2 \geqslant \norm{\eta_{\rel}}^2$. Furthermore, recall that, under this transformation, $\norm{\xi} \leqslant R$ implies that $\norm{\eta} \lesssim R$. The right-hand side of the preceding equation becomes
    \begin{equation}{\label{eqn:outgoingerror_three_one}}
    	\int_{\R^2}\sqrrange(y'_{\rel}) |g_{1}(y'_{\rel})| \d y'_{\rel} \int_{ \epsalhattorus{2\intv{n}}}\ind{\norm{\eta} \lesssim R} \frac{|\F{f}(\Tdualone{\alpha}\eta)| \big(\varepsilon\norm{\eta_{\rel}} \,\norm{y'_{\rel}}\wedge 2 \big)|\F{g_{2}}(\eta_{\com\cup\intv{n}\setminus\alpha})}{\lambda + \norm{\eta_{\rel}}^2 + \quart \norm{\eta_{\com}}^2 + \half\norm{\eta_{\intv{n}\setminus\alpha}}^2} \, \d \eta \, .
    \end{equation}
    Applying the Cauchy--Schwarz inequality and noting that $\{\norm{\eta} \lesssim R\} \subseteq \{\norm{\eta_{\rel}} \lesssim R\}$, the $\d \eta$ integral above is at most
    \begin{equation*}
      \norm{f}_{2}\norm{g_{2}}_{2} \, \Bigg(\int_{\{\norm{\eta_{\rel}}  \lesssim R\}}\frac{\varepsilon^2\norm{\eta_{\rel}}^2\norm{y'_{\rel}}^2  \wedge 4}{(\lambda + \norm{\eta_{\rel}}^2)^2} \, \d \eta_{\rel} \Bigg)^{\half} \, \, .
    \end{equation*}
    Substituting this bound into equation~\eqref{eqn:outgoingerror_three_one}, the inner $\d \eta_{\rel}$ integral, which depends on $y'_{\rel}$, can be paired with the factor $\sqrrange(y'_{\rel})$ appearing there, after which the Cauchy--Schwarz inequality can be applied in the $y'_{\rel}$ variable. Consequently, we obtain the following bound for the expression in equation~\eqref{eqn:outgoingerror_three_one}
    \begin{equation*}
        \sqrt{\range^{\lambda}(R,\varepsilon)}\, \norm{g_{1}}_{2} \, \norm{g_{2}}_{2} \, \norm{f}_{2} = \sqrt{\range^{\lambda}(R,\varepsilon)} \,\norm{f}_{2} \, \norm{g}_{2} \,  \, \, ,
    \end{equation*} 
    where 
    \begin{equation*}
        \range^{\lambda}(R,\varepsilon) \coloneqq \int_{\R^2} \range(y'_{\rel}) \Bigg(\int_{\{\norm{\eta_{\rel}}  \lesssim R\}}\frac{\varepsilon^2\norm{\eta_{\rel}}^2\norm{y'_{\rel}}^2  \wedge 4}{(\lambda + \norm{\eta_{\rel}}^2)^2} \, \d \eta_{\rel} \Bigg) \, \d y'_{\rel} \, \, .
    \end{equation*}
    This quantity is finite. Indeed, since $\range \in \cali{L}^{1}(\R^2)$ and the numerator of the $\d \eta_{\rel}$ integrand is at most $4$, we may bound the integral by
    \begin{equation*}
    	\norm{\range}_{1} \, \int_{\{\norm{\eta_{\rel}} \lesssim R\}} \frac{4 \, \d \eta_{\rel}}{(\lambda + \norm{\eta_{\rel}}^2)^2} \lesssim  4\pi \, \norm{\range}_{1}  \int_{0}^{\infty} \frac{2r \, \d r}{(\lambda + r^2)^2}  = \frac{4\pi}{\lambda} \norm{\range}_{1} \, \, \, .
    \end{equation*} 
    Furthermore, since this bound is independent of $y'_{\rel}$ we can use the dominated convergence theorem to pass $\lim_{\varepsilon \to 0}$ inside the integral, whence
    \begin{equation*}
    	\lim_{\varepsilon \to 0} \range^{\lambda}(R, \varepsilon) = \int_{\R^2} \range(y'_{\rel}) \bigg(\int_{\{\norm{\eta_{\rel}} \lesssim R\}}\lim_{\varepsilon \to 0} \frac{\varepsilon^2 \norm{\eta_{\rel}}^2 \norm{y'_{\rel}}^2 \wedge 4}{(\lambda + \norm{\eta_{\rel}}^2)^2} \, \d \eta_{\rel} \bigg) = 0 \, .
    \end{equation*}
    Thus, combining the estimates, we have just shown that 
    \begin{equation}{\label{eqn:outgoingerror_three}}
    	|\cali{I}^{f,g}_{3}(R, \varepsilon)| \lesssim \sqrt{\range^{\lambda}(R,\varepsilon)} \norm{f}_{2}\norm{g}_{2} \, \, .
    \end{equation}
    Now we move towards estimating $\cali{I}^{f,g}_{4}(R,\varepsilon)$. Using Lemma~\ref{lem:uniformgreensbound} along with equation~\eqref{eqn:outgoingerror_onepointone} and the crude bound $| e^{\im \frac{\varepsilon}{2} (\xi_i - \xi_j) \cdot y'_{\rel}}  - 1| \leqslant 2$, we obtain the inequality
    \begin{align*}
    	|\cali{I}_{4}(R, \varepsilon) | \lesssim  \frac{\ind{R < \norm{\xi} \lesssim \pi/\varepsilon}}{\lambda + \half\norm{\xi}^2} 
    	& \lesssim  \frac{\ind{R < \norm{\xi} \lesssim \pi/\varepsilon}}{(\lambda + \half\norm{\xi}^2)^{\threequart}(\lambda + \half\norm{\xi}^2)^{\quart}} \\
    	& \lesssim \frac{\ind{R < \norm{\xi} \lesssim \pi/\varepsilon}}{(\lambda + \half\norm{\xi}^2)^{\threequart}} \cdot \frac{1}{(\lambda + \half R^2)^{\quart}} \, \, .
    \end{align*}
    We insert this into the definition of $\cali{I}^{f,g}_{4}(R,\varepsilon)$ and take absolute values:
     \begin{equation*}
    |\cali{I}^{f,g}_{4}(R, \varepsilon)| \lesssim \int_{\R^2}\sqrrange(y'_{\rel}) |g_{1}(y'_{\rel})| \d y'_{\rel} \int_{\{R < \norm{\xi} \lesssim \pi/\varepsilon\}} \frac{|\F{f}(\xi)| \, |\F{g_{2}}(\xi_i + \xi_j, \xi_{\intv{n}\setminus\alpha})}{\sqrt{R} \, (\lambda + \half\norm{\xi}^2)^{\threequart}} \, \d \xi \, .
    \end{equation*}
    Now, make the change of variables $\xi = \Tdualone{\alpha}\eta$ along with the Cauchy--Schwarz inequality. Subsequently, we get the bound
    \begin{equation}{\label{eqn:outgoingerror_four}}
    	|\cali{I}^{f,g}_{4}(R,\varepsilon)| \lesssim \frac{\norm{f}_{2}\norm{g}_{2}}{\sqrt{R}} \, \, .
    \end{equation}
    For $\cali{I}^{f,g}_{5}(R,\varepsilon)$, recall that on the compact set $\{\norm{\xi} \leqslant R\}$, the symbol $\fourierepsgen(\xi)$ converges to  $\half\norm{\xi}^2$ uniformly. In fact, from Lemma~\ref{lem:uniformgreensbound}, we have:
    \begin{equation*}
    	\ind{\{\norm{\xi} \leqslant R\}} \Bigg|\frac{1}{\lambda + \fourierepsgen(\xi)} - \frac{1}{\lambda + \half\norm{\xi}^2}\Bigg| \leq \frac{\varepsilon^{\delta}R^{2+\delta}}{\lambda \, (\lambda + \half\norm{\xi}^2)} \, \, .
    \end{equation*}
    Substitute this into $\cali{I}^{f,g}_{5}(R,\varepsilon)$. As in the previous cases, we then perform a change of variables $\xi \mapsto \Tdualone{\alpha}\eta$ followed by an application of the Cauchy--Schwarz inequality. Then, we obtain
    \begin{equation}{\label{eqn:outgoingerror_five}}
    	|\cali{I}^{f,g}_{5}(R, \varepsilon)| \lesssim \varepsilon^{\delta}R^{2+\delta} \norm{f}_{2} \norm{g}_{2} \, \, .
    \end{equation}
    In order to bound $\cali{I}^{f,g}_{6}(R,\varepsilon)$, we again use Lemma~\ref{lem:uniformgreensbound} in the form of equation~\eqref{eqn:outgoingerror_onepointone}. By the triangle inequality,
    \begin{equation*}
    \ind{\{R < \norm{\xi} \lesssim \pi/\varepsilon\}}\Bigg|\frac{1}{\lambda + \fourierepsgen(\xi)} - \frac{1}{\lambda + \half\norm{\xi}^2}\Bigg| \lesssim \frac{1}{\lambda + \half\norm{\xi}^2} \lesssim \frac{1}{(\lambda + \half\norm{\xi}^2)^{\threequart} (\lambda + R^2)^{\quart}} \, \, .
    \end{equation*}
    This is the same bound we used in the cases of $\cali{I}^{f,g}_{2}(R,\varepsilon)$ and $\cali{I}^{f,g}_{4}(R,\varepsilon)$ and therefore, we get the same estimate, namely:
    \begin{equation}{\label{eqn:outgoingerror_six}}
    	|\cali{I}^{f,g}_{6}(R, \varepsilon)| \lesssim \frac{\norm{f}_{2}\norm{g}_{2}}{\sqrt{R}} \, \, .
    \end{equation}
    Finally, we turn to $\cali{I}^{f,g}_{7}(R,\varepsilon)$. As before, make the change of variables $\xi \mapsto  \Tdualone{\alpha}\eta$ and write $(\lambda + \half\norm{\xi}^2)$ as $(\lambda + \half\norm{\xi}^2)^{\threequart} \cdot (\lambda + \half\norm{\xi}^2)^{\quart}$. Then, on the set $\{  \norm{\xi} > \pi/\varepsilon\}$, we have:
    \begin{equation*}
    	\Bigg|\frac{1}{\lambda + \half\norm{\xi}^2} \Bigg|\lesssim \frac{1}{(\lambda + \half\norm{\xi}^2)^{\threequart}\cdot(\lambda + \half\norm{\xi}^2)^{\quart}} \lesssim \frac{\sqrt{\varepsilon}}{(\lambda + \half\norm{\xi}^2)^{\threequart}} \, \, .
    \end{equation*}
    Insert this into the expression for $\cali{I}^{f,g}_{7}(R,\varepsilon)$ and use the Cauchy--Schwarz inequality. This yields the estimate:
    \begin{equation}{\label{eqn:outgoingerror_seven}}
    	|\cali{I}^{f,g}_{7}(R,\varepsilon)| \lesssim \sqrt{\varepsilon}\norm{f}_{2}\norm{g}_{2} \, \, .
    \end{equation}
    Combining all the estimates from equations~\eqref{eqn:outgoingerror_zero},\eqref{eqn:outgoingerror_one},\eqref{eqn:outgoingerror_two},\eqref{eqn:outgoingerror_three},\eqref{eqn:outgoingerror_four},\eqref{eqn:outgoingerror_five},\eqref{eqn:outgoingerror_six} and \eqref{eqn:outgoingerror_seven}, and taking the supremum over $f,g$ such that $\norm{f}_{2}, \norm{g}_{2} \leqslant 1$, we obtain the final error estimate:
    \begin{equation*}
    	\normop{\preoutg{\alpha}(\lambda) - \outg{\alpha}(\lambda)} \lesssim \varepsilon R + \sqrt{\range^{\lambda}(R,\varepsilon)} + \varepsilon^{\delta}R^{2+\delta} + \frac{1}{\sqrt{R}} +\sqrt{\varepsilon} \, \, .
    \end{equation*}
    Letting $\varepsilon \to 0$ first and then $R \to \infty$ proves the desired norm convergence result. 
      
\subsection{Off-diagonal operators}

In this subsection, we will show the boundedness and convergence of the off-diagonal operators mentioned in equation~\eqref{eqn:offdiagonal_bound}. Fix $\alpha = ij, \alpha' = kl$ with $\alpha \neq \alpha'$. Let $f \in \cali{S}(\R^{2\intv{n}}_{\alpha}), g \in \cali{S}(\R^{2\intv{n}}_{\alpha'})$ of the form $f = f_{1}f_{2}$ and $g=g_{1}g_{2}$ with $f_{1}, g_{1} \in \cali{S}(\R^2), f_{2} \in \cali{S}(\R^{2\intv{n}\setminus\rel}_{\alpha})$ and $g_{2} \in \cali{S}(\R^{2\intv{n}\setminus\rel}_{\alpha'})$. By the definition of the kernel,
\begin{equation*}
	\langle f, \preoffdiag{\alpha \alpha'}(\lambda)g \rangle  = \int_{\R^{2\intv{n}}_{\alpha}} \int_{\R^{2\intv{n}}_{\alpha'}} \ \, \sqrrange(y_{\rel})f(y) \conj{\epsgreens(\lambda, \epsfloor{\Seps{\alpha}y} - \epsfloor {\Seps{\alpha'}y'})} \conj{\sqrrange(y'_{\rel})g(y')} \, \d y \, \d y' . \\
\end{equation*}
The first step here is to replace the lattice Green's function by its continuous interpolation along with the error incurred.  Now, we do a change of variables with $\Seps{\alpha}y \mapsto x$ (so $\d y \mapsto \varepsilon^{-2}\d x$) and similarly for  the $y'$ variable. For notational convenience, let $F_{\varepsilon \alpha}(x) \coloneqq \varepsilon^{-2}(\sqrrange f \circ \Teps{\alpha})(x)$ and $G_{\varepsilon \alpha'}(x') \coloneqq \varepsilon^{-2}(\sqrrange g \circ \Teps{\alpha'})(x')$. With this new definition, we rewrite the above inner product as:
\begin{equation}{\label{eqn:offdiagonal_position_one}}
	\langle f, \preoffdiag{\alpha \alpha'}(\lambda) g \rangle = \int_{\R^{2\intv{n}}}  \int_{\R^{2\intv{n}}}  \, F_{\varepsilon\alpha}(x) \conj{\epscontgreens(\lambda, x-x')} G_{\varepsilon\alpha'}(x') \, \d x \, \d x'  + \cali{O}(\varepsilon, f,g) \, ,
\end{equation}
where
\begin{equation*}
	\cali{O}(\varepsilon, f, g) \coloneqq \int_{\R^{2\intv{n}}} \int_{\R^{2\intv{n}}}  \, F_{\varepsilon\alpha}(x)\big[\conj{\epsgreens(\lambda, \epsfloor{x} - \epsfloor{x'}) - \epscontgreens(\lambda, x - x')}\big]G_{\varepsilon\alpha'}(x') \, \d x \, \d x' \, .
\end{equation*}
Let us first look at the term involving $\epscontgreens(\lambda)$. Using Plancherel's theorem, write the integral as
\begin{equation}{\label{eqn:preoffdiagonalcont_fourier}}
	\int_{\epshattorus{2\intv{n}}} \frac{\F{F_{\varepsilon\alpha}}(\xi)\conj{\F{G_{\varepsilon\alpha'}}(\xi)} }{\lambda + \fourierepsgen(\xi)} \, \d \xi \, .
\end{equation}
Like we did in the previous subsection, we can calculate $\F{F_{\varepsilon\alpha}}(\xi)$ and $\F{G_{\varepsilon\alpha'}}(\xi)$ in terms of the Fourier transforms of $f$ and $g$ through a change of variables $x \mapsto \Seps{\alpha}y$ and $x' \mapsto \Seps{\alpha'}y'$ and using the definition of $\Sdualeps{\alpha}$ and $\Sdualeps{\alpha'}$. The resulting expressions are:
\begin{equation*}
	\F{F_{\varepsilon \alpha}}(\xi) = \frac{\F{f_{2}}(\xi_i + \xi_j, \xi_{\intv{n}\setminus\alpha})}{2\pi} \int_{\R^{2}} \sqrrange(y_{\rel})f_{1}(y_{\rel})e^{\im \frac{\varepsilon}{2}(\xi_i - \xi_j) \cdot y_{\rel}}\, \d y_{\rel} \, .
\end{equation*}
and
\begin{equation*}
	\F{G_{\varepsilon \alpha'}}(\xi) = \frac{\F{g_{2}}(\xi_k + \xi_l,\xi_{\intv{n}\setminus\alpha'})}{2\pi}\int_{\R^{2}}\sqrrange(y'_{\rel})g_{1}(y'_{\rel})e^{\im\frac{\varepsilon}{2}(\xi_k - \xi_l)\cdot y'_{\rel}}\, \d y'_{\rel} \, . 
\end{equation*}
Insert this into equation~\eqref{eqn:preoffdiagonalcont_fourier} and do a change of variables $\xi \mapsto \Tdualone{\alpha'}\eta$. Then, using the bound $(\lambda + \fourierepsgen(\xi))^{-1} \lesssim (\lambda + \half \norm{\xi}^2)^{-1}$ along with Lemma~\ref{lem:guquasteltsailemma}, we can bound the inner product in equation~\eqref{eqn:preoffdiagonalcont_fourier} by $\norm{f}_{2}\norm{g}_{2}$ up to a constant. Now we prove the same bound for the error term $\cali{O}(\varepsilon,f,g)$. Just like the case for the outgoing operators, we do a first order Taylor expansion $e^{-\im \xi \cdot (\epsfracpart{x} - \epsfracpart{x'})} -1 = - \im\xi \cdot (\epsfracpart{x} - \epsfracpart{x'})\int_{0}^{1}e^{-\im z \xi \cdot (\epsfracpart{x} - \epsfracpart{x'})} \, \d z$. For $l, m \in \{1, 2 \ldots n\}$ and $\xi \in \epshattorus{2\intv{n}}$, define the functions $\widetilde{F}_{\varepsilon \alpha,l}$ and $\widetilde{G}_{\varepsilon\alpha', m}$ as
\begin{equation*}
\begin{split}
\widetilde{F}_{\varepsilon\alpha ,l}(\xi) & \coloneqq \frac{1}{(2\pi)^{n}}\int_{\R^{2\intv{n}}} \int_{0}^{1} F_{\varepsilon\alpha}(x) (\xi_l \cdot \epsfracpart{x_l}) e^{-\im z \xi \cdot \epsfracpart{x}+\im \xi \cdot x}\d x \, \d z \\ \widetilde{G}_{\varepsilon\alpha', m}(\xi) & \coloneqq \frac{1}{(2\pi)^{n}}\int_{\R^{2\intv{n}}} \int_{0}^{1} G_{\varepsilon \alpha'}(x') (\xi_m \cdot \epsfracpart{x'_m} )e^{-\im z \xi \cdot \epsfracpart{x'} + \im \xi \cdot x'}\d x' \, \d z \, .
\end{split}
\end{equation*} 
Let $\widetilde{F}_{\varepsilon\alpha, 0}$ and $\widetilde{G}_{\varepsilon\alpha', 0}$ denote the above functions with $\xi_l \cdot \epsfracpart{x_l}$ and $\xi_m \cdot \epsfracpart{x'_m}$ respectively omitted. With this notation, we can rewrite the error term as
\begin{equation}{\label{eqn:preoffdiagonalerror_fourier}}
	\cali{O}(\varepsilon, f,g) =  -\im\sum_{l=1}^{n}  \int_{\epshattorus{2\intv{n}}}\frac{\widetilde{F}_{\varepsilon\alpha, l}(\xi)\conj{\widetilde{G}_{\varepsilon\alpha', 0}(\xi)}}{\lambda + \fourierepsgen(\xi)} \d \xi + \im  \sum_{m=1}^{n} \int_{\epshattorus{2\intv{n}}}\frac{\widetilde{F}_{\varepsilon\alpha, 0}(\xi)\conj{\widetilde{G}_{\varepsilon \alpha',m}(\xi)}}{\lambda + \fourierepsgen(\xi)} \d \xi \, \, .
\end{equation}
We can bound this in exactly the same way as we did for $\cali{I}(\varepsilon, f,g)$ by using $F_{\varepsilon\alpha,l}(x), G_{\varepsilon\alpha', m}(x')$ for $l,m = 1, 2, \ldots n$. In this case, we use the bounds $|\widetilde{F}_{\varepsilon\alpha, l}(\xi)| \lesssim \norm{\sqrrange}_{2}\norm{f_{1}}_{2} \, |\F{f_{2}}(\xi)|$ and $|\widetilde{G}_{\varepsilon\alpha', m}(\xi)| \lesssim \norm{\sqrrange}_{2} \norm{g_{1}}_{2} \, |\F{g_{2}}(\xi)|$ along with Lemma~\ref{lem:guquasteltsailemma} to obtain the bound
\begin{equation*}
	|\cali{O}(\varepsilon, f, g)| \lesssim \norm{\range}_{1}\norm{f}_{2}\norm{g}_{2} \, \, .
\end{equation*}
To avoid repetition, we skip the details and directly conclude
\begin{equation*}
    \normop{\preoffdiag{\alpha\alpha'}(\lambda)} = \sup_{\norm{f}_{2}, \norm{g}_{2}\leqslant 1} |\langle f, \preoffdiag{\alpha\alpha'}(\lambda)g \rangle| \lesssim \sup_{\norm{f}_{2}, \norm{g}_{2}\leqslant 1} \norm{f}_{2}\norm{g}_{2} \lesssim 1 \, \, .
\end{equation*}
This proves the norm bound on the prelimiting operator. To show norm convergence, we again proceed by comparing it to its limiting counterpart. To that end, choose $R \ll \varepsilon^{-1}$ and define the errors:
 \begin{align*}
\cali{O}_{1}(R, \varepsilon) &\coloneqq   \frac{\ind{\norm{\xi} \leqslant R}(e^{\im \xi \cdot (\epsfloor{x} - \epsfloor{x'})} - e^{\im \xi \cdot (x-x')})}{\lambda + \fourierepsgen(\xi)} \\
\cali{O}_{2}(R,\varepsilon) & \coloneqq \frac{ \ind{R < \norm{\xi} \lesssim \pi/\varepsilon} (e^{\im \xi \cdot (\epsfloor{x} - \epsfloor{x'})} - e^{\im \xi \cdot (x-x')})}{\lambda + \fourierepsgen(\xi)} \\    
\cali{O}_{3}(R, \varepsilon) &\coloneqq  \ind{\norm{\xi} \leqslant R} \frac{e^{\im \frac{\varepsilon}{2}(\xi_i -\xi_j)\cdot (y_{\rel} - y'_{\rel})} - 1}{\lambda + \fourierepsgen(\xi)}  \qquad  \, \cali{O}_{4}(R,\varepsilon) \coloneqq \ind{R < \norm{\xi} \lesssim \pi/\varepsilon}   \frac{e^{\im \frac{\varepsilon}{2}(\xi_i -\xi_j)\cdot (y_{\rel} - y'_{\rel})} - 1}{\lambda + \fourierepsgen(\xi)}   \\ 
\cali{O}_{5}(R, \varepsilon) & \coloneqq  \Bigg[\frac{\ind{\norm{\xi} \leqslant R}}{\lambda + \fourierepsgen(\xi)} - \frac{\ind{\norm{\xi} \leqslant R}}{\lambda + \half\norm{\xi}^2}\Bigg] \qquad	\cali{O}_{6}(R, \varepsilon)  \coloneqq \Bigg[\frac{\ind{R < \norm{\xi} \lesssim \pi/\varepsilon}}{\lambda + \fourierepsgen(\xi)} - \frac{\ind{R < \norm{\xi} \lesssim \pi/\varepsilon}}{\lambda + \half\norm{\xi}^2}\Bigg] \\
\cali{O}_{7}(R, \varepsilon) & \coloneqq  \frac{-\ind{\norm{\xi} \gtrsim  \pi/\varepsilon}}{\lambda + \half\norm{\xi}^2}
\end{align*}
 Then from equations~\eqref{eqn:offdiagonal_fourier_one} and \eqref{eqn:offdiagonal_position_one}, we see that
 \begin{align}{\label{eqn:offdiagonalerror_zero}}  
 & \langle f,[\preoffdiag{\alpha\alpha'}(\lambda) - \offdiag{\alpha\alpha'}(\lambda)]g \rangle  \nonumber  \\
 & = \frac{1}{(2\pi)^{2n}}\sum_{r=1}^{2}\int_{\R^{2\intv{n}}} \int_{\R^{2\intv{n}}} F_{\varepsilon\alpha}(x) \cali{O}_{r}(R, \varepsilon) G_{\varepsilon\alpha'}(x') \, \d x \, \d x'+ \frac{1}{(2\pi)^{2}}\int_{\R^2} \int_{\R^2}\sqrrange(y_{\rel})\sqrrange(y'_{\rel})   \nonumber \\
 & \qquad \qquad \qquad \times f_{1}(y_{\rel})g_{1}(y'_{\rel})\sum_{r=3}^{7}\int_{\epshattorus{2\intv{n}}} \F{f_{2}}(\xi_i + \xi_j, \xi_{\intv{n}\setminus\alpha})  \cali{O}_{r}(R, \varepsilon) \conj{\F{g_{2}}(\xi_k + \xi_l, \xi_{\intv{n}\setminus\alpha'})} \, \d \xi \, \d y_{\rel} \, \d y'_{\rel} \nonumber \\
 & \coloneqq \sum_{r=1}^{7} \cali{O}^{f,g}_{r}(R, \varepsilon) \, \, .
 \end{align}
The proof of the error bounds are exactly the same as in the case for the outgoing operator, except now instead of using Cauchy--Schwarz inequality to bound the Fourier integrals, we use Lemma~\ref{lem:guquasteltsailemma}. We skip the details.
\begin{equation*}
	\normop{\preoffdiag{\alpha\alpha'}(\lambda) - \offdiag{\alpha\alpha'}(\lambda)} \xrightarrow{\varepsilon \to 0} 0 \, \, .
\end{equation*}

\subsection{Diagonal Operators}

To finish the proof of Lemma~\ref{lem:resolventconvergence}, it remains to verify the norm boundedness and convergence of the prelimiting diagonal operators, which we do in this section. Recall from equation \eqref{eqn:defn_prelimiting_operators} that for $\alpha = ij$, the prelimiting diagonal resolvent can be expressed as an operator on $\Ltwo(\R^{2\intv{n}}_{\alpha})$:
\begin{equation}{\label{eqn:prediagonal_neumannseries}}
	\prediag{\theta}{\alpha}(\lambda) = \sum_{m = 0}^{\infty}\beta_{\varepsilon}^{m+1}\preoffdiag{\alpha\alpha}^{m}(\lambda) = \beta_{\varepsilon} \sum_{m=0}^{\infty} \big( \beta_{\varepsilon} \preoffdiag{\alpha\alpha}(\lambda) \big)^m \, \, .
\end{equation}  
To estimate the norm of $\prediag{\theta}{\alpha}(\lambda)$, we need a suitable estimate for the norm of $\preoffdiag{\alpha\alpha}(\lambda)$. However, unlike the off-diagonal operators in the previous section, here, the norm of $\preoffdiag{\alpha\alpha}(\lambda)$ diverges logarithmically as $\varepsilon \to 0$. In particular, we will show that up to leading order, the norm satisfies
\begin{equation*}
	\normop{\preoffdiag{\alpha\alpha}(\lambda)} \lesssim  |\log\varepsilon| + c_{\varepsilon}(\lambda) \qquad \text{ as  } \quad \varepsilon \to 0 \, \, ,
\end{equation*}
where $c_{\varepsilon}(\lambda)$ is a constant which depends on $\lambda$ and which converges to a finite limit as $\varepsilon \to 0$. Since $\beta_{\varepsilon} \sim |\log\varepsilon|^{-1}$, this divergence is canceled after multiplication by $\beta_{\varepsilon}$ so that the resulting norm is governed by $c_{\varepsilon}(\lambda)$. We then choose $\lambda$ sufficiently large so that $c_{\varepsilon}(\lambda) < 1$, which results in
\begin{equation*}
	\normop{\beta_{\varepsilon}\preoffdiag{\alpha\alpha}(\lambda)} < 1 \, \, ,
\end{equation*}
as $\varepsilon \to 0$, ensuring that the Neumann series above converges absolutely and at the same time yielding a norm estimate. After this, we proceed to show the convergence to its limiting counterpart. The proof here will be carried out in two steps. First, we view $\prediag{\theta}{\alpha}(\lambda)$ as an operator on $\Ltwo(\R^2 \times \widehat{\R}^{2\intv{n}\setminus\rel}_{\alpha}) \cong \Ltwo(\R^{2\intv{n}}_{\alpha})$. Fixing the $\eta_{\com \cup \intv{n} \setminus \alpha}$ variable, we prove that it converges in norm to an operator on $\Ltwo(\R^2)$, at a rate which is uniformly bounded in $\eta_{\com \cup \intv{n} \setminus \alpha}$. In the second step, we use this result to establish convergence in $\Ltwo(\widehat{\R}^{2\intv{n}\setminus\rel}_{\alpha})$ with respect to the strong operator topology. This is the method used in \cite{gu2021moments} and what we present here is almost the same barring notational differences. We divide the proof into two subsections.

\subsubsection{Boundedness of the Pre-Diagonal Operator}

Fix $f, g \in \cali{S}(\R^{2\intv{n}}_{\alpha})$ of the form $f= f_{1}f_{2}$ and $g = g_{1}g_{2}$. As in the sections before, we first replace the lattice Green's function in $\langle f, \preoffdiag{\alpha\alpha}(\lambda) g\rangle$ with its continuous modification $\epscontgreens(\lambda)$. The inner product then becomes:
\begin{equation*}
		\int_{\R^{2\intv{n}}_{\alpha}} \int_{\R^{2\intv{n}}_{\alpha}} f(y) \sqrrange(y_{\rel}) \conj{\epscontgreens(\lambda, \Seps{\alpha}y - \Seps{\alpha}y')}  \conj{\sqrrange(y'_{\rel})g(y')} \, \d y \, \d y' + \langle f, \cali{D}_{1}(\varepsilon) g \rangle \, \, ,
\end{equation*}
where $ \langle f, \cali{D}_{1}(\varepsilon) g \rangle$ is defined as:
\begin{equation}{\label{eqn:prediagonal_error_one}}
	\int_{\R^{2\intv{n}}_{\alpha}} \int_{\R^{2\intv{n}}_{\alpha}} f(y) \sqrrange(y_{\rel}) [\conj{\epsgreens(\lambda, \epsfloor{\Seps{\alpha}y} - \epsfloor{\Seps{\alpha}y'})} - \conj{\epscontgreens(\lambda, \Seps{\alpha}y - \Seps{\alpha}y')}] \sqrrange(y'_{\rel})g(y') \, \d y \d y'  .
\end{equation}
We begin with the term involving just $\epscontgreens(\lambda)$. Using Plancherel's theorem, it has the following expression:
  \begin{align}{\label{eqn:prediagonal_fourier_one}}
   	 \int_{\R^2}\int_{\R^2} 
   	 &  \sqrrange(y_{\rel}) f_{1}(y_{\rel})\sqrrange(y'_{\rel})g_{1}(y'_{\rel}) \nonumber \\
   	 & \times \frac{1}{(2\pi)^{2}}\int_{\epshattorus{2\intv{n}}} \frac{\F{f_{2}}(\xi_i + \xi_j, \xi_{\intv{n}\setminus\alpha})e^{\im \frac{\varepsilon}{2}(\xi_i - \xi_j) \cdot (y_{\rel} - y'_{\rel})} \conj{\F{g_{2}}(\xi_i + \xi_j, \xi_{\intv{n}\setminus\alpha})}}{\lambda + \fourierepsgen(\xi)} \, \d \xi \,\d y_{\rel} \, \d y'_{\rel} \, .
   \end{align}
 Switch to the center-of-mass and relative frame in Fourier space by the change of variables $\xi = \Tdualone{\alpha}\eta$. The integral above then becomes:
    \begin{equation*}
   	\frac{1}{(2\pi)^{2}}\int_{\R^2}\int_{\R^2} \int_{\epsalhattorus{2\intv{n}}} \frac{ \sqrrange(y_{\rel}) f_{1}(y_{\rel})\F{f_{2}}(\eta_{\com\cup\intv{n}\setminus\alpha})e^{\im \varepsilon\eta_{\rel} \cdot (y_{\rel} - y'_{\rel})} \sqrrange(y'_{\rel})g_{1}(y'_{\rel})\conj{\F{g_{2}}(\eta_{\com\cup\intv{n}\setminus\alpha})}}{\lambda + \fourierepsgen(\Tdualone{\alpha}\eta)}  \d y_{\rel} \d y'_{\rel}  \d \eta  \, .
   \end{equation*}
    To understand where this logarithmic divergence arises from, observe that as $\varepsilon \to 0$, the denominator approaches $\lambda + \half \norm{\Tdualone{\alpha}\eta}^2$. By Lemma~\ref{lem:uniformgreensbound}, this is approximately $ \lambda + \norm{\eta_{\rel}}^2
    + \norm{\eta_{\com \cup \intv{n} \setminus \alpha}}^2$. Apart from this term, the only dependence on $\eta_{\rel}$ appears in the
    phase factor $ e^{\im \varepsilon \eta_{\rel} \cdot (y_{\rel} - y'_{\rel})}$, which tends to $1$ in the limit. Thus, as $\varepsilon \to 0$, the $\eta_{\rel}$ integral is essentially that of $\norm{\eta_{\rel}}^{-2}$ over $\widehat{\R}^2$, which diverges in a logarithmic way. In our proof, it will be more convenient to transfer this divergence from a neighborhood of infinity to near the origin, where this singular behavior can be analyzed more explicitly. We do this by further performing a change of variables $\eta_{\rel} \mapsto \eta_{\rel}/\varepsilon$. After this rescaling, the domain $\epsalhattorus{2\intv{n}}$ becomes $\mealtorus{\alpha}{2\intv{n}} \coloneqq \metorus{4} \times \epshattorus{2\intv{n}\setminus \alpha} $ with
   \begin{equation*}
   	 \metorus{4} \coloneqq \bigg\{(\eta_{\rel}, \eta_{\com}) : \eta_{\rel} = (\eta_{\rel}^{1}, \eta_{\rel}^{2}), \eta_{\com} = (\eta_{\com}^{1}, \eta_{\com}^{2}) \text{ and }  \max_{k=1,2}\Big|\eta_{\rel}^{k} - \frac{\varepsilon\eta_{\com}^{k}}{2}\Big| \leqslant \pi, \, \max_{k=1,2}\Big|\eta_{\rel}^{k} + \frac{\varepsilon \eta_{\com}^{k}}{2}\Big| \leqslant \pi \bigg\},
   \end{equation*} 
   and $\epsalhattorus{2\intv{n}\setminus\alpha} =\{(\eta_k)_{k \in \intv{n}\setminus\alpha} : \eta_k \in \epshattorus{2}\}$. With this notation, the integral takes the form
   \begin{equation}{\label{eqn:prediagonal_fourier_two}}
   	\frac{1}{(2\pi)^{2}}\int_{\R^2}\int_{\R^2} \int_{\mealtorus{\alpha}{2\intv{n}}} \frac{ \sqrrange(y_{\rel}) f_{1}(y_{\rel})\F{f_{2}}(\eta_{\com\cup\intv{n}\setminus\alpha})e^{\im \eta_{\rel} \cdot (y_{\rel} - y'_{\rel})} \sqrrange(y'_{\rel})g_{1}(y'_{\rel})\conj{\F{g_{2}}(\eta_{\com\cup\intv{n}\setminus\alpha})}}{\varepsilon^2\lambda + \fouriergen(\mealeta{\alpha})}  \d y_{\rel} \d y'_{\rel}  \d \eta  .
   \end{equation}
   Recall that $\fourierepsgen( \cdot ) = \varepsilon^{-2}\fouriergen(\varepsilon \, \cdot ) \,$. The factor $\varepsilon^{-2}$ arising from the Jacobian of the transformation has been absorbed into the denominator of the integrand, changing $\fourierepsgen$ to $\fouriergen$. Its argument $\mealeta{\alpha} \in \mealtorus{\alpha}{2\intv{n}}$ is given by:
   \begin{equation}{\label{eqn:prediagonal_modifiedeta}}
   	\mealeta{\alpha} \coloneqq \Big\{\frac{\varepsilon \eta_{\com}}{2} + \eta_{\rel}, \frac{\varepsilon \eta_{\com}}{2}-\eta_{\rel},\varepsilon \eta_{\intv{n}\setminus\alpha}\Big\} \, .
   \end{equation}    
  From its definition, observe that as $\varepsilon \to 0$, the $\eta_{\rel}$ component restricts to $\hattorus{2}$ and $\eta_{\com}$ loses its dependence on the set, so that $\lim_{\varepsilon \to 0}\metorus{4} = \hattorus{2} \times \widehat{\R}^2$. Moreover, for $(\eta_{\rel},\eta_{\com}) \in \metorus{4}$,
  \begin{equation*}
  	| 2\eta_{\rel}^{1} | \leqslant \bigg| \eta_{\rel}^{1} + \frac{\varepsilon \eta_{\com}^{1}}{2}\bigg| + \bigg| \eta_{\rel}^{1} - \frac{\varepsilon \eta_{\com}^{1}}{2}\bigg| \lesssim \pi \, ,
  \end{equation*}
  and similarly for $\eta_{\rel}^{2}$. Thus, we may choose an $R > 0$ such that $\norm{\eta_{\rel}} \lesssim R$ uniformly in $\eta_{\com}$. Similar reasoning shows that $\varepsilon\norm{\eta_{\com}} \lesssim \pi$.
  
   To isolate the leading logarithmic term together with its dependence on $\lambda$, we will decompose the expression in equation~\eqref{eqn:prediagonal_fourier_two} into a sum of four integrals, the first three of which remain bounded as $\varepsilon \to 0$. The fourth integral contains the divergence in an explicit manner, which allows us to extract it. It will be convenient to define the slices $\metorus{4, \rel}(\eta_{\com}) \coloneqq \{\eta_{\rel} \in \hattorus{2} : (\eta_{\rel}, \eta_{\com}) \in \metorus{4}\}$ and $\metorus{4, \com} \coloneqq \{\eta_{\com} : \exists \eta_{\rel} \text{ such that } (\eta_{\rel}, \eta_{\com}) \in \metorus{4}\}$. We will also use $\ballorigin{R}$ to denote a ball of radius $R$ in $\R^2_{\alpha}$ or $\widehat{\R}^2_{\alpha}$. For $\eta_{\rel} \in \metorus{4, \rel}(\eta_{\com})$, define the kernels:
   \begin{equation*}
   	\begin{split}
   	   & \cali{D}_{2}(\varepsilon, y_{\rel} ,y'_{\rel},\eta_{\com \cup \intv{n} \setminus \alpha}) \coloneqq \sqrrange(y_{\rel})\Bigg[\int_{\metorus{4, \rel}(\eta_{\com})}\Bigg(\frac{e^{\im \eta_{\rel} \cdot (y_{\rel} - y'_{\rel})}}{\varepsilon^2 \lambda + \fouriergen(\mealeta{\alpha})} - \frac{1}{\varepsilon^2 \lambda + \fouriergen(\mealeta{\alpha})}\Bigg)\, \d \eta_{\rel}\Bigg]\sqrrange(y'_{\rel}) \\
   	   & \cali{D}_{3}(\varepsilon,y_{\rel},y'_{\rel},\eta_{\com \cup \intv{n} \setminus \alpha}) \coloneqq  \int_{\metorus{4, \rel}(\eta_{\com})}\Bigg[\frac{\sqrrange(y_{\rel})\sqrrange(y'_{\rel})}{\varepsilon^2 \lambda + \fouriergen(\mealeta{\alpha})} - \frac{\sqrrange(y_{\rel})\sqrrange(y'_{\rel})}{\varepsilon^2 \lambda + \quart \norm{\varepsilon\eta_{\com}}^2 + \half \norm{\varepsilon\eta_{\intv{n}\setminus\alpha}}^2 + \norm{\eta_{\rel}}^2} \Bigg]  \d \eta_{\rel} \\
   	   & \cali{D}_{4}(\varepsilon,y_{\rel}, y'_{\rel},\eta_{\com \cup \intv{n} \setminus \alpha}) \coloneqq \sqrrange(y_{\rel})\Bigg[\int_{\metorus{4, \rel}(\eta_{\com})}\frac{\mathbf{1}_{\metorus{4,\rel}(\eta_{\com})\setminus\ballorigin{\pi}}}{\varepsilon^2 \lambda + \quart \norm{\varepsilon\eta_{\com}}^2 + \half \norm{\varepsilon\eta_{\intv{n}\setminus\alpha}}^2 + \norm{\eta_{\rel}}^2} \, \d \eta_{\rel} \Bigg]\sqrrange(y'_{\rel}) \\
   	   & \cali{D}_{5}(\varepsilon,y_{\rel}, y'_{\rel}, \eta_{\com \cup \intv{n} \setminus \alpha}) \coloneqq  \sqrrange(y_{\rel}) \Bigg[\int_{\metorus{4, \rel}(\eta_{\com})}\frac{\mathbf{1}_{\ballorigin{\pi}}}{\varepsilon^2 \lambda + \quart \norm{\varepsilon\eta_{\com}}^2 + \half \norm{\varepsilon\eta_{\intv{n}\setminus\alpha}}^2 + \norm{\eta_{\rel}}^2} \, \d \eta_{\rel} \Bigg]\sqrrange(y'_{\rel}) \, \, .
   	\end{split}
   \end{equation*}
   In terms of these definitions, equation~\eqref{eqn:prediagonal_fourier_two} takes the form:
    \begin{align*}
   \int_{\R^2 }\int_{\R^2} 
   & \int_{\metorus{4, \com}\times \epsalhattorus{2\intv{n}\setminus\alpha}}   f_{1}(y_{\rel})\F{f_{2}}(\eta_{\com \cup\intv{n}\setminus\alpha}) \\
   & \times \Bigg[  \frac{1}{(2\pi)^{2}}\sum_{r=2}^{5}\cali{D}_{r}(\varepsilon, y_{\rel}, y'_{\rel}, \eta_{\com \cup \intv{n} \setminus \alpha})\Bigg] \times g_{1}(y'_{\rel})\conj{\F{g_{2}}(\eta_{\com \cup\intv{n}\setminus\alpha})}\d \eta \, \d y_{\rel}\d y'_{\rel} \\
   & \eqqcolon \int \F{f_{2}}(\eta_{\com \cup \intv{n} \setminus \alpha})\Big[\sum_{r=2}^{5}\langle f_{1},\cali{D}_{r}(\varepsilon, \eta_{\com \cup \intv{n} \setminus \alpha}) g_{1} \rangle\Big]\conj{\F{g_{2}}(\eta_{\com \cup \intv{n} \setminus \alpha})}  \, \d \eta_{\com \cup\intv{n} \setminus \alpha} \\
   & \eqqcolon \sum_{r=2}^{5} \langle f ,\cali{D}_{r}(\varepsilon) g \rangle \, \, .
   \end{align*}
   The functions $\cali{D}_{r}(\varepsilon,y_{\rel}, y'_{\rel}, \eta_{\com\cup\intv{n}\setminus\alpha})$ act as multipliers on $\Ltwo(\widehat{\R}^{2\intv{n}\setminus \rel}_{\alpha})$, in the variables $\eta_{\com \cup\intv{n}\setminus\alpha}$, and as genuine kernels on $\Ltwo(\R^2)$ in the $y_{\rel},y'_{\rel}$ variables. Below, we derive norm bounds for each of the above operators. For this purpose, it suffices to estimate the norm of
   $\cali{D}_{r}(\varepsilon,\eta_{\com \cup \intv{n} \setminus \alpha})$
   as an operator on $\Ltwo(\R^2)$ for each fixed
   $\eta_{\com \cup \intv{n} \setminus \alpha}$, and then show that this norm is
   bounded uniformly in $\eta_{\com \cup \intv{n} \setminus \alpha}$. In addition, we will show that for $r = 2, 3,4$ and every $\eta_{\com \cup \intv{n} \setminus \alpha}$, the kernels converge in norm to an operator on $\Ltwo(\R^2)$, which may depend on $\eta_{\com \cup \intv{n} \setminus \alpha}$. Together with the uniform bounds, this implies strong convergence of the multipliers on $\Ltwo(\widehat{\R}^{2\intv{n}\setminus\rel}_{\alpha})$. The operator $\cali{D}_{5}(\varepsilon)$ encapsulates the logarithmic divergence which will be offset by the constant $\beta_{\varepsilon}$ after the summation. Let us begin with the operator $\cali{D}_{2}(\varepsilon)$. In what follows, we fix $\eta_{\com \cup \intv{n}\setminus\alpha} \neq \mathbf{0}$. By definition,
    \begin{align*}
        \langle f_{1} ,\cali{D}_{2}(\varepsilon,\eta_{\com \cup \intv{n} \setminus \alpha}) g_{1} \rangle = \int_{\R^2}  
    	& \int_{\R^2} f_{1}(y_{\rel})\sqrrange(y_{\rel})\sqrrange(y'_{\rel}) g_{1}(y'_{\rel}) \\
    	& \times   \frac{1}{(2\pi)^2}\int_{\metorus{4,\rel}(\eta_{\com})} \Bigg[\frac{e^{\im \eta_{\rel} \cdot (y_{\rel} - y'_{\rel})}}{\varepsilon^2 \lambda + \fouriergen(\mealeta{\alpha})} - \frac{1}{\varepsilon^2 \lambda + \fouriergen(\mealeta{\alpha})}\Bigg] \, \d \eta_{\rel} \, \d y_{\rel} \, \d y'_{\rel}  \, .
    \end{align*} 
    Recall from Lemma~\ref{lem:uniformgreensbound} that the discrete symbol satisfies:
    \begin{equation}{\label{eqn:prediagonal_symbolbound}}
    	\fouriergen(\mealeta{\alpha}) = \sum_{k=1}^{n}\sum_{m_k \in \Z^{2}}\Big(1-\cos\big((\mealeta{\alpha})_k \cdot m_k\big)\Big) \nu(m_k) \gtrsim \norm{\mealeta{\alpha}}^2 \gtrsim \norm{\eta_{\rel}}^2 \, .
    \end{equation} 
Furthermore, due to the continuity of the symbol and the definition of $\mealeta{\alpha}$ in equation~\eqref{eqn:prediagonal_modifiedeta}, we have 
\begin{equation*}
	\lim_{\varepsilon \to 0} \fouriergen(\mealeta{\alpha})   = \fouriergen(\mzaleta{\alpha}) \, \, \, ,
\end{equation*}
with $\mzaleta{\alpha} \coloneqq \{\eta_{\rel}, -\eta_{\rel}, \mathbf{0}_{\intv{n}\setminus\alpha}\}$. More explicitly, 
  \begin{align*}
   	\fouriergen(\mzaleta{\alpha}) 
   	& = \sum_{m_1 \in \Z^2}\big(1-\cos(\eta_{\rel} \cdot m_1)\big)\nu(m_1) + \sum_{m_2 \in \Z^2}\big(1-\cos(\eta_{\rel} \cdot m_2)\big)\nu(m_2) + 0 \, \\
   	& = 2\sum_{m_1 \in \Z^2}\sin^2\bigg(\frac{\eta_{\rel} \cdot m_1}{2}\bigg)\nu(m_1) + 2\sum_{m_2 \in \Z^2}\sin^2\bigg(\frac{\eta_{\rel} \cdot m_2}{2}\bigg)\nu(m_2) \, \, .
  \end{align*}
  Near the origin, the limiting symbol behaves like $\norm{\eta_{\rel}}^2$. This asymptotic quadratic behavior of the denominator induces a logarithmic divergence in both the $\d \eta_{\rel}$ integrals in $\langle f_{1} ,\cali{D}_{2}(\varepsilon, \eta_{\com \cup \intv{n} \setminus \alpha}) g_{1} \rangle$, individually, in the limit. But since we are taking a difference, it ends up being canceled. To make this precise, choose a ball $\ballorigin{R}$ such that $\metorus{4,\rel}(\eta_{\com}) \subset \ballorigin{R}$ uniformly for all $\eta_{\com} \in \metorus{4,\com}$.  In this ball, using Lemma~\ref{lem:uniformgreensbound}, we have the following inequality:
   \begin{equation*}{\label{eqn:diagonal_error_two}}
   \Bigg| \frac{e^{\im \eta_{\rel} \cdot (y_{\rel} - y'_{\rel})}-1}{\varepsilon^2 \lambda + \fouriergen(\mealeta{\alpha})} \Bigg| \leqslant \frac{\norm{\eta_{\rel}}\norm{y_{\rel} - y'_{\rel}} \wedge 2}{\norm{\eta_{\rel}}^2} \, \, .
   \end{equation*} 
   The right-hand side of the above equation is locally integrable in two dimensions. We show this separately according to the value the minimum attains. First, assume $y_{\rel}, y'_{\rel}$ is such that $\norm{y_{\rel} - y'_{\rel}} \geqslant 2 R^{-1}$. In this region, if $\norm{\eta_{\rel}} \leqslant 2\norm{y_{\rel} - y'_{\rel}}^{-1}$, then the above minimum is $\norm{\eta_{\rel}} \norm{y_{\rel} - y'_{\rel}}$ and in the complementary region $\{2 \norm{y_{\rel} - y'_{\rel}}^{-1} < \norm{\eta_{\rel}} < R\}$, it equals $2$. Thus, switching to polar coordinates, the integral is
   \begin{equation*}
   	2\pi \int_{0}^{2\norm{y_{\rel} - y'_{\rel}}^{-1}} \frac{r^2 \norm{y_{\rel} - y'_{\rel}}}{r^2} \, \d r + 2\pi \int_{2\norm{y_{\rel} -y'_{\rel}}^{-1}}^{R} \frac{2r}{r^2} \, \d r 
    = 4\pi + 4\pi \log\frac{R}{2} + 4\pi \log\norm{y_{\rel} - y'_{\rel}} \, \, .
   \end{equation*}    
   In the set $\norm{y_{\rel} - y'_{\rel}} < 2R^{-1}$, we employ a similar strategy. However, in this case, the minimum cannot be attained at $2$ as $ \norm{\eta_{\rel}} < R < 2\norm{y_{\rel} - y'_{\rel}}^{-1}$ always. Hence, in this region, the integral evaluates to
   \begin{equation*}
   	2\pi \int_{0}^{R} \frac{r^2 \norm{y_{\rel} - y'_{\rel}}}{r^2} \, \d r = 2\pi \norm{y_{\rel} - y'_{\rel}} R < 4\pi \, \, .
   \end{equation*}
   Therefore, we can use dominated convergence theorem to pass the limit inside the integral. Furthermore, since the bounds obtained above do not depend on the chosen $\eta_{\com \cup \intv{n}\setminus\alpha}$, we have that
   \begin{align}{\label{eqn:prediagonal_D2_bound}}
   |\langle f, \cali{D}_{2}(\varepsilon) g\rangle| 
   & \lesssim \norm{f_{2}}_{2} \norm{g_{2}}_{2} \Bigg|\int_{\R^2} \int_{\R^2}  f_{1}(y_{\rel}) \sqrrange(y_{\rel})(1 + |\log\norm{y_{\rel} - y'_{\rel}}|) \sqrrange(y'_{\rel})g_{1}(y'_{\rel}) \, \d y_{\rel} \, \d y'_{\rel} \,\Bigg|\nonumber \\
   	& \lesssim  \norm{f_{2}}_{2} \norm{g_{2}}_{2} \,  \norm{f_{1}}_{2} \norm{g_{1}}_{2} \Bigg(\int_{\R^2} \int_{\R^2} \range(y_{\rel})\,  | \log\norm{y_{\rel} - y'_{\rel}} \, |^2 \range(y'_{\rel}) \, \d y_{\rel} \, \d y'_{\rel}\Bigg)^{\half}  \nonumber \\
   	&  \lesssim \sqrt{\quadrange} \norm{f}_{2}\norm{g}_{2} \, \, \, ,
   \end{align} 
   and the limit
   \begin{align*}
   	\lim_{\varepsilon \to 0} \langle f,\cali{D}_{2}(\varepsilon) g \rangle 
   	& = \langle f, \Theta_{2} \otimes \fid{n-1} \, g \rangle \\ 
   	& =\int_{\widehat{\R}^{2\intv{n}\setminus\rel}_{\alpha}}\int_{\R^2}\int_{\R^2} \F{f_{2}}(\eta_{\com \cup \intv{n} \setminus \alpha})f_{1}(y_{\rel})\Theta_{2}(y_{\rel}, y'_{\rel})g_{1}(y'_{\rel}) \conj{\F{g_{2}}(\eta_{\com \cup \intv{n} \setminus \alpha})}\d y_{\rel} \d y'_{\rel} \d \eta_{\com \cup \intv{n} \setminus \alpha}  ,
   \end{align*}
   where
   \begin{equation}{\label{eqn:diagonal_theta2}}
    \Theta_{2}(y_{\rel},y'_{\rel}) \coloneqq   \sqrrange(y_{\rel})\, \Bigg[\frac{1}{(2\pi)^2}\int_{\hattorus{2}}\frac{e^{\im \eta_{\rel} \cdot (y_{\rel} - y'_{\rel})}-1}{\fouriergen(\mzaleta{\alpha})} \, \d \eta_{\rel}\Bigg] \, \sqrrange(y'_{\rel}) \, \, ,
   \end{equation} 
   and $\fid{n-1}$ is the identity operator on $\Ltwo(\widehat{\R}^{2\intv{n}\setminus\rel}_{\alpha})$. Technically, this operator depends on the chosen $\alpha$ but we shall ignore this dependence as this does not play a serious role in our proof.
   
   Next, we proceed to show that the operator $\cali{D}_{2}(\varepsilon, \eta_{\com \cup \intv{n} \setminus \alpha})$ converges to $\Theta_{2}$ in norm on the space $\Ltwo(\R^{2})$. To this end, express the difference of the kernels $\cali{D}_{2}(\varepsilon, y_{\rel}, y'_{\rel},\eta_{\com \cup \intv{n} \setminus \alpha})$ and $\Theta_{2}(y_{\rel}, y'_{\rel})$ as $\sqrrange(y_{\rel})\sqrrange(y'_{\rel})$ times
   \begin{equation*}
   	\int_{\metorus{4,\rel}(\eta_{\com})}\Bigg[\frac{e^{\im \eta_{\rel} \cdot (y_{\rel} - y'_{\rel})} -1}{\varepsilon^2 \lambda + \fouriergen(\mealeta{\alpha})} -\frac{e^{\im \eta_{\rel} \cdot (y_{\rel} - y'_{\rel})}-1}{\fouriergen(\mzaleta{\alpha})}\Bigg]\d \eta_{\rel} + \int \mathbf{1}_{\hattorus{2}\setminus\metorus{4,\rel}(\eta_{\com})}\Bigg[\frac{e^{\im \eta_{\rel} \cdot (y_{\rel} - y'_{\rel})}-1}{\fouriergen(\mzaleta{\alpha})} \Bigg] \d \eta_{\rel} ,
   \end{equation*}
   and let $\langle f_{1},\cali{D}_{2,1}(\varepsilon, \eta_{\com \cup \intv{n} \setminus \alpha}) g_{1} \rangle$ and $\langle f_{1},\cali{D}_{2,2}(\varepsilon, \eta_{\com \cup \intv{n} \setminus \alpha}) g_{1} \rangle$ denote the inner products of the above kernels tested against $f_{1},g_{1}$, respectively. We need to show that these vanish uniformly with respect to $f_{1}, g_{1}$ as $\varepsilon \to 0$. Let us begin with the former. First, observe that, due to the basic identity $\cos(x+y) + \cos(x- y) = 2\cos x \cos y$, we have
   \begin{align*}
   	|\fouriergen(\mealeta{\alpha}) 
   	&- \fouriergen(\mzaleta{\alpha}) | \\
   	& \leqslant \sum_{m_1 \in \Z^2}\Big|\cos\Big(\Big(\frac{\varepsilon\eta_{\com}}{2} + \eta_{\rel}\Big) \cdot m_1\Big)  + \cos\Big(\Big(\frac{\varepsilon\eta_{\com}}{2} - \eta_{\rel}\Big) \cdot m_1\Big) - 2\cos(\eta_{\rel} \cdot m_1 ) \Big| \nu(m_1) \\
   	& \qquad \qquad \qquad \qquad \qquad+ \sum_{k=3}^{n} \sum_{m_k \in \Z^2}|1 - \cos(\eta_k \cdot m_k)|\nu(m_k)  \\
   	& \leqslant \sum_{m_1 \in \Z^2}2|\cos(\eta_{\rel})| |1 - \cos(\varepsilon \eta_{\com} \cdot m_1)| \nu(m_1) + \sum_{k=3}^{n}\sum_{m_k \in \Z^2}|1 - \cos(\eta_k \cdot m_k)|\nu(m_k) \\
   	& \lesssim \varepsilon^2 \norm{\eta_{\com \cup \intv{n} \setminus \alpha}}^2 \, .
   \end{align*}

   Thus, substituting this in, we get the bound:
   \begin{align}{\label{eqn:prediagonal_D21_bound}}
   	\Bigg|\frac{e^{\im \eta_{\rel} \cdot (y_{\rel} - y'_{\rel})} -1}{\varepsilon^2 \lambda + \fouriergen(\mealeta{\alpha})}  
    -\frac{e^{\im \eta_{\rel} \cdot (y_{\rel} - y'_{\rel})}-1}{\fouriergen(\mzaleta{\alpha})}\Bigg| 
   	 \lesssim \frac{(\norm{\eta_{\rel}}\norm{y_{\rel} - y'_{\rel}}\wedge 2)(\varepsilon^2\lambda + \varepsilon^2\norm{\eta_{\com \cup \intv{n} \setminus \alpha}}^2)}{\norm{\eta_{\rel}}^2(\norm{\eta_{\rel}}^2 + \varepsilon^2\norm{\eta_{\com \cup \intv{n} \setminus \alpha}}^2 + \varepsilon^2\lambda)}  \, .
   \end{align}
   The integral of this bound with respect to $\d\eta_{\rel}$ on the ball $\ballorigin{R}$ can be evaluated as before by splitting the integral according to which term attains the minimum. First, assume $\norm{y_{\rel} - y'_{\rel}} \geqslant 2R^{-1}$. In this region, if $\norm{\eta_{\rel}} \leqslant 2\norm{y_{\rel}-y'_{\rel}}^{-1}$, the minimum attained is $\norm{\eta_{\rel}}\norm{y_{\rel}-y'_{\rel}}$. Hence, in this case, the integral becomes
   \begin{align}{\label{eqn:prediagonal_D2_bound1}}
   	2\pi\norm{y_{\rel} - y'_{\rel}}
   	&(\varepsilon^2\lambda + \varepsilon^2\norm{\eta_{\com \cup \intv{n} \setminus \alpha}}^2)\int_{0}^{2\norm{y_{\rel}-y'_{\rel}}^{-1}}\frac{\d r}{\varepsilon^2\lambda +\varepsilon^2\norm{\eta_{\com\cup\intv{n}\setminus\alpha}}^2 + r^2} \nonumber \\
   	& = 2\pi\norm{y_{\rel} - y'_{\rel}}(\varepsilon^2\lambda + \varepsilon^2\norm{\eta_{\com \cup \intv{n} \setminus \alpha}}^2)^{\half}\tan^{-1}\Bigg(\frac{2\norm{y_{\rel}-y'_{\rel}}^{-1}}{(\varepsilon^2\lambda + \varepsilon^2\norm{\eta_{\com \cup \intv{n} \setminus \alpha}}^2)^{\half}}\Bigg) \, \, .
   \end{align} 
   Since $\tan^{-1}x \lesssim x$ and $\varepsilon\norm{\eta_{\com \cup \intv{n} \setminus \alpha}} \lesssim \pi$ in $\metorus{4,\com} \times \epsalhattorus{2\intv{n}\setminus\alpha}$, the right-hand side is uniformly bounded with respect to the $y_{\rel}, y'_{\rel}$ and $\eta_{\com \cup \intv{n} \setminus \alpha}$ variables. Unfortunately, this bound is too crude for it to vanish uniformly. For our purpose, we require a better bound, preferably something logarithmic because of our integrability condition on $\range$. To this end, consider the function $h(x) = x\tan^{-1}(1/x) - C \log(1 + x), \, x >0$, where $C$ is a positive constant yet to be determined. Clearly, $\lim_{x \to \infty}h(x) = -\infty$ and $h'(x)$ is equal to 
   	\begin{equation*}
   		\tan^{-1}\bigg(\frac{1}{x}\bigg) - \frac{x}{x^2 + 1} - \frac{C}{1+x} \, \, .
   	\end{equation*}
   	For large $x$, we can do a Taylor series expansion so that this derivative is approximated by
   	\begin{equation*}
   		\frac{1}{x} - \frac{1}{3x^3} + \frac{1}{5x^5} - \frac{1}{x} + \frac{1}{x^3} - \frac{1}{x^5} - \frac{C}{1 + x} \, \, .
   	\end{equation*}
   	As $x \to \infty$, the $1/x$ and $1/(1+x)$ terms dominate from which we can infer the existence of a large enough $C$ such that $h'(x) < 0$. This then implies $h(x) \leqslant 0$ for large values of $x$ which is equivalent to the condition $ x\tan^{-1}(1/x) \lesssim C\log(1+x)$ for $x$ in this range. We stress that this is purely heuristic and a rigorous proof of this bound is possible. Using this functional inequality, the bound ultimately becomes
   	\begin{equation}
   		\log\Big(1 + \half(\varepsilon^2\lambda + \varepsilon^2\norm{\eta_{\com \cup \intv{n} \setminus \alpha}}^2)^{\half}\norm{y_{\rel}-y'_{\rel}}\Big) \, \, ,
   	\end{equation}
   	modulo a constant. 

   Later, when we finally compile all the norm estimates to bound $\prediag{\theta}{\alpha}(\lambda)$, we will restrict our $\lambda$ to the range $\lambda  \geqslant c$ for some constant $c$ depending on $\theta$ and $\quadrange$ only, and $\varepsilon$ to $\varepsilon \lesssim 1/\sqrt{\lambda}$. In particular, these restrictions imply that $\varepsilon^2 \lambda \lesssim 1$ for all our estimates. Thus, the expression above is $\lesssim 1$, uniformly with respect to $ \varepsilon$ and $\lambda$. 
   
   Next, we consider the region $\{ 2\norm{y_{\rel} - y'_{\rel}}^{-1} < \norm{\eta_{\rel}} \leqslant R\}$, where the term $\norm{\eta_{\rel}}\norm{y_{\rel} - y'_{\rel}} \wedge 2$ equals $2$. In this case, the integral of the bound with respect to $\eta_{\rel}$ is, in polar coordinates, equal to
   	\begin{align*}
   		\int_{\norm{y_{\rel} - y'_{\rel}}^{-1}}^{R}\frac{4\pi r(\varepsilon^2 \lambda + \varepsilon^2 \norm{\eta_{\com \cup \intv{n} \setminus \alpha}}^2 )  \d r}{r^2(r^2 + \varepsilon^2\lambda+ \varepsilon^2 \norm{\eta_{\com \cup \intv{n} \setminus \alpha}}^2)}
   		& = 4\pi\int_{4\norm{y_{\rel} - y'_{\rel}}^{-2}}^{R^2}\Bigg( \frac{1}{u} - \frac{1}{u + \varepsilon^2\lambda + \varepsilon^2 \norm{\eta_{\com \cup \intv{n} \setminus \alpha}}^2}  \Bigg) \, \d u  \\
   		& = 4\pi\log\Bigg(\frac{4R^2 + (\varepsilon^2\lambda + \varepsilon^2\norm{\eta_{\com \cup \intv{n} \setminus \alpha}}^2)\norm{y_{\rel} - y_{\rel}}^2}{4R^2 + 4\varepsilon^2\lambda + 4\varepsilon^2 \norm{\eta_{\com \cup \intv{n} \setminus \alpha}}^2}  \Bigg) \\
   		& \lesssim 4\pi\log\Big(1 + \half(\varepsilon^2\lambda + \varepsilon^2\norm{\eta_{\com \cup \intv{n} \setminus \alpha}}^2)^{\half}\norm{y_{\rel}-y'_{\rel}}\Big) \, .
   	\end{align*} 
   	
   Finally, we move to  $\{\norm{y_{\rel} - y'_{\rel}} < 2R^{-1}\}$. As reasoned previously, the only relevant $\eta_{\rel}$-region here is the set $\{\norm{\eta_{\rel}} < R\}$, on which the minimum is $\norm{\eta_{\rel}}\norm{y_{\rel} - y'_{\rel}}$. A simple calculation shows that the integral then evaluates to 
   \begin{equation*}
   	2\pi\norm{y_{\rel} - y'_{\rel}}(\varepsilon^2\lambda + \varepsilon^2\norm{\eta_{\com \cup \intv{n} \setminus \alpha}}^2 )^{\half}\tan^{-1}\Bigg(\frac{R}{(\varepsilon^2\lambda + \varepsilon^2\norm{\eta_{\com \cup \intv{n} \setminus \alpha}}^2)^{\half}}\Bigg) \, .
   \end{equation*}
   Because $\norm{y_{\rel} - y'_{\rel}}R \leqslant 2$, applying the earlier derived inequality $x \tan^{-1}(1/x) \lesssim \log(1+x)$ shows that the preceding expression is bounded by
   \begin{equation*}
  4\pi\log\Big( 1 + R^{-1}(\varepsilon^2\lambda + \varepsilon^2\norm{\eta_{\com \cup \intv{n} \setminus \alpha}}^2)^{\half}\Big)   	\, .
   \end{equation*}
  
   Thus, combining these bounds and using the Cauchy--Schwarz inequality in the $y_{\rel}, y'_{\rel}$ variables, we find that the inner product $|\langle f_{1} , \cali{D}_{2,1}(\varepsilon, \eta_{\com \cup \intv{n} \setminus \alpha}) g_{1} \rangle|$ is, up to a constant, at most
   \begin{align*}
   	\Bigg(\int_{\R^2}\int_{\R^2}\range(y_{\rel})\range(y'_{\rel})  \Bigg[
   	\log 
   	&\Big(1 +\half(\varepsilon^2\lambda + \varepsilon^2\norm{\eta_{\com \cup \intv{n} \setminus \alpha}}^2)^{\half}\norm{y_{\rel}-y'_{\rel}}\Big)  \\
   	 & + \log\Big( 1 + R^{-1}(\varepsilon^2\lambda + \varepsilon^2\norm{\eta_{\com \cup \intv{n} \setminus \alpha}}^2)^{\half}\Big)  \Bigg]^{2}  \d y_{\rel}  \d y'_{\rel}\Bigg)^{\half} \norm{f_{1}}_{2} \norm{g_{1}}_{2}  .
   \end{align*}
   In lieu of our integrability assumption on the function $\range$, we can use the dominated convergence theorem to interchange the integrals and limits whence we obtain
   \begin{equation*}
   	\lim_{\varepsilon \to 0} \sup_{\norm{f_{1}}_{2}, \norm{g_{1}}_{2} \leqslant 1} |\langle f_{1} , \cali{D}_{2,1}(\varepsilon, \eta_{\com \cup \intv{n} \setminus \alpha}) g_{1} \rangle| = 0 \, \, ,
   \end{equation*}
   which proves the required norm convergence of $\cali{D}_{2,1}(\varepsilon, \eta_{\com \cup \intv{n} \setminus \alpha})$ for fixed $\eta_{\com\cup\intv{n}\setminus\alpha}$.
   
   We move to the term $\langle f_{1} , \cali{D}_{2,2}(\varepsilon, \eta_{\com \cup \intv{n} \setminus \alpha}) g_{1} \rangle $. Estimating this is much easier because the $ \eta_{\rel}$ integrand is non-singular and continuous in the region under consideration. Furthermore, it is uniformly bounded in $y_{\rel}$ and $y'_{\rel}$. We state the bound directly:
   \begin{equation}
   	 | \langle f_{1}, \cali{D}_{2,2}(\varepsilon, \eta_{\com \cup \intv{n} \setminus \alpha}) g_{1} \rangle | \lesssim \norm{f_{1}}_{2} \norm{g_{1}}_{2} \, \big|\hattorus{2}\setminus\metorus{4,\rel}(\eta_{\com})\big| \, \, .
   \end{equation}
   From this, it is clear that for fixed $\eta_{\com \cup \intv{n} \setminus \alpha}$, the inner product vanishes as $\varepsilon$ goes to $0$, uniformly with respect to $f_{1}, g_{1}$. Thus, combining the results obtained so far, we find that
   \begin{equation*}
   	 \lim_{\varepsilon \to 0}\normop{\cali{D}_{2}(\varepsilon, \eta_{\com \cup \intv{n} \setminus \alpha}) - \Theta_{2}} = \lim_{\varepsilon \to 0} \sup_{\norm{f_{1}}_{2}, \norm{g_{1}}_{2} \leqslant 1} | \langle f_{1}, \big(\cali{D}_{2}(\varepsilon, \eta_{\com \cup \intv{n} \setminus \alpha}) - \Theta_{2} \big) g_{1} \rangle | = 0 \, \, .
   \end{equation*}
   In addition, since $\norm{\varepsilon \eta_{\com \cup \intv{n} \setminus \alpha}} \lesssim \pi$ and $\varepsilon^2\lambda \lesssim 1$, all the bounds obtained above are uniformly bounded in the $\eta_{\com \cup \intv{n} \setminus \alpha}$ variable, from which we conclude that the operator $\cali{D}_{2}(\varepsilon)$ converges to $\Theta_{2} \otimes \fid{n-1}$ strongly on the space $\Ltwo(\R^2 \times \widehat{\R}^{2\intv{n}\setminus\rel}_{\alpha})$.   
  
   Let us analyze some properties of the function $\Theta_{2}(y_{\rel}, y'_{\rel})$ in equation~\eqref{eqn:diagonal_theta2}. To evaluate the integral in the middle, note that, as the domain is symmetric, only the real part contributes. Write $\hattorus{2}$ as the union of $\ballorigin{\pi}$ and $\hattorus{2}\setminus\ballorigin{\pi}$. On $\ballorigin{\pi}$, we make a switch to polar coordinates and write the integral as:
   \begin{equation*}
   \int_{\ballorigin{\pi}}\frac{\cos(\eta_{\rel}\cdot (y_{\rel}-y'_{\rel}))-1}{\norm{\eta_{\rel}}^2} \, \d \eta_{\rel}  = \int_{0}^{2\pi} \int_{0}^{\pi} \frac{\cos(r\norm{y_{\rel}-y'_{\rel}}\cos\varphi)-1}{r} \, \d r \, \d \varphi \, \, .
   \end{equation*}
   The right-hand side above can be re-expressed in terms of $J_{0}$, the Bessel function of the first kind of order $0$. In fact, it has the following integral representation~\cite{gradshteyn2007}:
   \begin{equation*}
   	 J_{0}(z) = \frac{1}{2\pi}\int_{0}^{2\pi} \cos(z \cos\varphi) \, \d \varphi \, \, .
   \end{equation*}
   Substituting this expression and then performing the change of variables $r \mapsto r \norm{y_{\rel}-y'_{\rel}}$, the integral becomes:  
   \begin{equation*}
   	2\pi \int_{0}^{\pi\norm{y_{\rel} - y'_{\rel}}}  \frac{J_{0}(r)-1}{r} \, \d r \, \, .
   \end{equation*}
   First, we consider the case when $\norm{y_{\rel} - y'_{\rel}} \ll 1$ . In this case, it is known, again from \cite{gradshteyn2007}, that $J_{0}(r) \sim 1  - \frac{r^2}{4}$ and so,
   \begin{equation*}
   2\pi \int_{0}^{\pi\norm{y_{\rel}-y'_{\rel}}} \frac{J_{0}(r)-1}{r}  \, \d r \sim -\frac{\pi}{2} \int_{0}^{\pi\norm{y_{\rel}-y'_{\rel}}}  r \, \d r = -\frac{\pi^3}{4} \norm{y_{\rel}-y'_{\rel}}^2 \, .
   \end{equation*}
   For large $\norm{y_{\rel}-y'_{\rel}}$, split the interval of integration to $(0,1)$ and $(1, \pi \norm{y_{\rel}-y'_{\rel}})$. The integral over $(0,1)$ is finite for the same reason as before. For the second one, we use the fact that for large arguments, $|J_{0}(r)| \lesssim r^{-\half}$. Thus,
   \begin{equation*}
   2\pi\int_{1}^{\pi\norm{y_{\rel}-y'_{\rel}}} \frac{J_{0}(r)-1}{r} \, \d r = 2\pi \int_{1}^{\pi\norm{y_{\rel}-y'_{\rel}}} \frac{J_{0}(r)}{r} \, \d r - 2\pi\int_{1}^{\pi\norm{y_{\rel}-y'_{\rel}}} \frac{\d r}{r} \,  \, .
   \end{equation*}
   By the aforementioned tail bound of $J_{0}(z)$, the first term above is bounded uniformly with respect to $\norm{y_{\rel}-y'_{\rel}}$. The second term just evaluates to $2\pi \log(\pi \norm{y_{\rel}-y'_{\rel}})$. In the region $\hattorus{2} \setminus\ballorigin{\pi}$, the integrand has no singularities and is uniformly bounded with respect to $\norm{y_{\rel} - y'_{\rel}}$. To summarize, we have just shown that 
   \begin{equation}{\label{eqn:diagonal_theta2_bound}}
   \Theta_{2}(y_{\rel},y'_{\rel}) \lesssim
   \begin{cases}
   1 -\norm{y_{\rel}-y'_{\rel}}^2 & \text{ if } \norm{y_{\rel}-y'_{\rel}} \text{ is small } \\
   1 + \log\norm{y_{\rel}-y'_{\rel}} & \text{ if } \norm{y_{\rel}-y'_{\rel}} \text{ is large } \qquad .
   \end{cases}
   \end{equation}
   
   Now we turn to the operator $\cali{D}_{3}(\varepsilon)$. Consider $\langle f_{1} ,\cali{D}_{3}(\varepsilon,\eta_{\com \cup \intv{n} \setminus \alpha}) g_{1} \rangle$, which is given by the integral:
   \begin{align*}
   	 \int_{\R^2} 
   	 & \int_{\R^2} f_{1}(y_{\rel})\sqrrange(y_{\rel})\sqrrange(y'_{\rel}) g_{1}(y'_{\rel})  \\
   	 & \times  \frac{1}{(2\pi)^{2}}\int_{\metorus{4,\rel}(\eta_{\com})} \Bigg[\frac{1}{\varepsilon^2 \lambda + \fouriergen(\mealeta{\alpha})} - \frac{1}{\varepsilon^2 \lambda + \quart \norm{\varepsilon\eta_{\com}}^2 + \half \norm{\varepsilon\eta_{\intv{n}\setminus\alpha}}^2 + \norm{\eta_{\rel}}^2} \Bigg] \, \d \eta_{\rel} \, \d y_{\rel} \, \d y'_{\rel} \, \, .
   \end{align*}
   Since the term in the square brackets does not depend on $y_{\rel}, y'_{\rel}$, the operator is just a constant multiple of the projection operator $\proj: \Ltwo(\R^2) \to \Ltwo(\R^2)$ onto the subspace generated by $\sqrrange$, introduced earlier.
   This implies that to prove norm boundedness and convergence of $\cali{D}_{3}(\varepsilon, \eta_{\com \cup \intv{n} \setminus \alpha})$, it suffices to do the same for its coefficient, the $\eta_{\rel}$ integral. We do this now. Similar to before, choose an $R>0$ such that $ \metorus{4,\rel}(\eta_{\com}) \subset  \ballorigin{R}$. Inside this ball, using Assumptions~\ref{assumption2},\ref{assumption3} and Lemma~\ref{lem:uniformgreensbound}, the symbol $\fouriergen(\mealeta{\alpha})$ can be expanded around the origin as:
   \begin{equation*}
   	\fouriergen(\mealeta{\alpha}) = \norm{\eta_{\rel}}^2 + \quart \norm{\varepsilon\eta_{\com}}^2 + \half \norm{\varepsilon \eta_{\intv{n} \setminus \alpha}}^2 + r_{\varepsilon}(\eta) \, ,
   \end{equation*} 
   where the remainder term satisfies $|r_{\varepsilon}(\eta)| \lesssim \norm{\eta_{\rel}}^{2+\delta} +\varepsilon^{2+\delta}\norm{\eta_{\com \cup\intv{n} \setminus \alpha}}^{2+\delta}$. Using this expansion, we can bound the integrand the following way:
   \begin{equation}\label{eqn:diagonal_error_three}
   	\Bigg|\frac{1}{\varepsilon^2\lambda + \fouriergen(\mealeta{\alpha})} - \frac{1}{\varepsilon^2\lambda + \norm{\eta_{\rel}}^2 + \quart\norm{\varepsilon\eta_{\com}}^2 + \half\norm{\varepsilon\eta_{\intv{n}\setminus\alpha}}^2 } \Bigg| 
    \lesssim \frac{\norm{\eta_{\rel}}^{2+\delta} + \varepsilon^{2+\delta}\norm{\eta_{\com \cup \intv{n}\setminus\alpha}}^{2+\delta}}{(\norm{\eta_{\rel}}^2 + \varepsilon^2\norm{\eta_{\com \cup \intv{n} \setminus \alpha}}^2)^2} \, . 
   \end{equation}
   In order to show the integrability of the right hand side, decompose it as 
   \begin{equation*}
   	\int_{\ballorigin{R}}\frac{\norm{\eta_{\rel}}^{2+\delta}}{(\varepsilon^2\norm{\eta_{\com \cup \intv{n} \setminus \alpha}}^2 + \norm{\eta_{\rel}}^2)^2}\, \d\eta_{\rel} + \int_{\ballorigin{R}} \frac{\varepsilon^{2+\delta} \norm{\eta_{\com \cup \intv{n}\setminus\alpha}}^{2+\delta}}{(\varepsilon^2\norm{\eta_{\com \cup \intv{n} \setminus \alpha}}^2 + \norm{\eta_{\rel}}^2)^2} \, \d\eta_{\rel}\, .
   \end{equation*}
   After moving to polar coordinates, the above become
   \begin{equation*}
   	2\pi \int_{0}^{R} \frac{r^{3+\delta}}{(\varepsilon^2\norm{\eta_{\com \cup \intv{n} \setminus \alpha}}^2 + r^2)^2} \, \d r \quad \text{ and } \quad  2\pi \varepsilon^{2+\delta} \norm{\eta_{\com \cup\intv{n}\setminus \alpha}}^{2+\delta}\int_{0}^{R}\frac{r}{(\varepsilon^2\norm{\eta_{\com \cup \intv{n} \setminus \alpha}}^2 + r^2)^2} \, \d r \, ,
   \end{equation*}
   respectively. For the first term, we can use the inequality $(\varepsilon^2\norm{\eta_{\com \cup \intv{n} \setminus \alpha}}^2 + r^2)^2 \geqslant r^4$ to bound the integrand by $r^{3+\delta}r^{-4} = r^{\delta -1}$, which is integrable in the interval. For the second, make a substitution with $u= \varepsilon^2 \norm{\eta_{\com \cup \intv{n} \setminus \alpha}}^2 + r^2$ and so $\half \d u = r \d r$. The limits then change from $(0,R)$ to $(\varepsilon^2\norm{\eta_{\com \cup \intv{n} \setminus \alpha}}^2, \varepsilon^2\norm{\eta_{\com \cup \intv{n} \setminus \alpha}}^2 + R^2)$. Hence,
   \begin{align*}
    \pi \varepsilon^{2+\delta} \norm{\eta_{\com \cup\intv{n}\setminus \alpha}}^{2+\delta}\int_{\varepsilon^2\norm{\eta_{\com\cup\intv{n}\setminus\alpha}}^2}^{\varepsilon^2\norm{\eta_{\com \cup \intv{n}\setminus\alpha}}^2 + R^2} \frac{\d u}{u^2} 
   	& =\frac{R^2 \varepsilon^{2+\delta}\norm{\eta_{\com \cup \intv{n} \setminus \alpha}}^{2+\delta}}{(\varepsilon^{2}\norm{\eta_{\com \cup \intv{n} \setminus \alpha}}^{2} + R^2)\varepsilon^{2}\norm{\eta_{\com \cup \intv{n} \setminus \alpha}}^{2}} \\
   	& \leqslant \varepsilon^{\delta}\norm{\eta_{\com \cup \intv{n} \setminus \alpha}}^{\delta} \, \, .
   \end{align*}
   Since $\norm{\eta_{\com \cup \intv{n} \setminus \alpha}} \lesssim \varepsilon^{-1}$, the bounds obtained above are uniform in the $\eta_{\com \cup \intv{n} \setminus \alpha}$ variable.  Therefore, using dominated convergence theorem, we get the limit
   \begin{align*}
   	\lim_{\varepsilon \to 0} \frac{1}{(2\pi)^{2}}\int_{\metorus{4,\rel}(\eta_{\com})}\Bigg[\frac{1}{\varepsilon^2\lambda + \fouriergen(\mealeta{\alpha})} - 
   	& \frac{1}{\varepsilon^2\lambda + \norm{\eta_{\rel}}^2 + \quart \norm{\varepsilon \eta_{\com}}^2 + \half \norm{\varepsilon \eta_{\intv{n}\setminus \alpha}}^2}\Bigg] \, \d\eta_{\rel} \\
   	& = \frac{1}{(2\pi)^{2}}\int_{\hattorus{2}}\Bigg[\frac{1}{\fouriergen(\mzaleta{\alpha})} -\frac{1}{\norm{\eta_{\rel}}^2}\Bigg] \, \d\eta_{\rel} < \infty \, .
   \end{align*}
   Moreover, due to the uniform bounds, we obtain the norm bound
   \begin{equation}{\label{eqn:prediagonal_D3_bound}}
   	|\langle f ,\cali{D}_{3}(\varepsilon) g \rangle| \lesssim  \norm{\range}_{1} \norm{f}_{2}\norm{g}_{2} \, \, .
   \end{equation}
   Hence, 
   \begin{equation}{\label{eqn:diagonal_theta3}}
   	\lim_{\varepsilon \to 0}\langle f ,\cali{D}_{3}(\varepsilon) g \rangle =  \langle f, \theta_{3} \proj \otimes \fid{n-1} \, g \rangle \, \, ,
   \end{equation}
   where
   \begin{equation*}
   	\theta_{3} \coloneqq \frac{1}{(2\pi)^2}\int_{\hattorus{2}} \Bigg(\frac{1}{\fouriergen(\mzaleta{\alpha})} - \frac{1}{\norm{\eta_{\rel}}^2}\Bigg) \, \d\eta_{\rel}\, \, .
   \end{equation*}
   This proves that the operator $\cali{D}_{3}(\varepsilon)$ converges strongly to $\theta_{3}\proj \otimes \fid{n-1}$ on $\Ltwo(\R^2 \times \widehat{\R}^{2\intv{n}\setminus\rel}_{\alpha})$.
   
   We next analyze $\cali{D}_{4}(\varepsilon)$. Here also, the kernel is a multiple of the projection $\proj$ and so it suffices to study its coefficient. It is clear from the definition of $\cali{D}_{4}(\varepsilon, y_{\rel}, y'_{\rel}, \eta_{\com \cup \intv{n} \setminus \alpha})$ that the kernel is bounded above in $\norm{\eta_{\rel}}^{-2}$, which is uniformly bounded in $\eta_{\com \cup \intv{n} \setminus \alpha}$ and integrable in the region $\metorus{4,\rel}(\eta_{\com})\setminus\ballorigin{\pi}$. Thus, an application of the Cauchy--Schwarz inequality yields the bound
   \begin{equation}{\label{eqn:prediagonal_D4_bound}}
  |\langle f ,\cali{D}_{4}(\varepsilon) g \rangle| \lesssim  \norm{\range}_{1} \norm{f}_{2}\norm{g}_{2} \, \, , 
   \end{equation}
   and the limit
   \begin{equation}{\label{eqn:diagonal_theta4}}
   	\lim_{\varepsilon \to 0}\langle f,\cali{D}_{4}(\varepsilon) g \rangle =  \langle f, \theta_{4}\proj \otimes \fid{n-1} \, g\rangle \, \, ,
   \end{equation}
   where
   \begin{equation*}
   	 \theta_{4} \coloneqq   \frac{1}{(2\pi)^2}\int_{\hattorus{2}\setminus\ballorigin{\pi}} \frac{1}{\norm{\eta_{\rel}}^2}\, \d\eta_{\rel}  \, .
   \end{equation*}
   Hence, $\cali{D}_{4}(\varepsilon)$ converges to $\theta_{4}\proj \otimes \fid{n-1}$ in the strong operator topology.
   
   Let us evaluate the constant $\theta_{4}$. Moving to polar coordinates $(r, \varphi)$, observe that due to symmetry, it suffices to evaluate the integral in the region $0 \leqslant \varphi \leqslant \pi/4$ and then multiply the result by $8$. In this region, we compute the integral as:
  \begin{equation*}
  \frac{1}{(2\pi)^{2}} \int_{0}^{\frac{\pi}{4}}  \int_{\pi}^{\pi\sec\varphi} \frac{1}{r} \, \d r\,  \d \varphi =  \frac{1}{(2\pi)^{2}} \int_{0}^{\frac{\pi}{4}} \log\sec\varphi \, \d \varphi = -\frac{1}{(2\pi)^{2}} \int_{0}^{\frac{\pi}{4}} \log\cos\varphi \, \d \varphi \, .
  \end{equation*}
  The integral on the right hand side above is equal to $-\tfrac{\pi}{4}\log 2 + \tfrac{1}{2}G$, where $G$ is Catalan's constant, as mentioned in \cite[p.~531]{gradshteyn2007}. Consequently, after multiplication by $8$, we obtain:
  \begin{equation*}
  \theta_{4} = \frac{1}{(2\pi)^2}\int_{\hattorus{2}\setminus\ballorigin{\pi}} \frac{1}{\norm{\eta_{\rel}}^2} \, \d\eta_{\rel} = \frac{\log 2}{2\pi} - \frac{G}{\pi^2} \, .
  \end{equation*}
   
   Finally, we come to the operator $\cali{D}_{5}(\varepsilon)$, which constitutes the divergence in the operator $\preoffdiag{\alpha\alpha}(\lambda)$. For the $\d \eta_{\rel}$ integral, first, transfer to polar coordinates and then do a substitution with $u = r^2 + \varepsilon^2\lambda + \quart\norm{\varepsilon\eta_{\com}}^2 + \half\norm{\varepsilon\eta_{\intv{n}\setminus\alpha}}^2$ so that $\d u = 2r\, \d r$. The integral then evaluates to:
   \begin{align*}
    \int_{0}^{\pi}\frac{(4\pi)^{-1}2r \, \d r}{\varepsilon^2\lambda + \quart\norm{\varepsilon\eta_{\com}}^2 + \half\norm{\varepsilon\eta_{\intv{n}\setminus\alpha}}^2 + r^2}  
   & = \frac{1}{4\pi}\log\Bigg(\frac{\varepsilon^2\lambda + \quart\norm{\varepsilon\eta_{\com}}^2 + \half\norm{\varepsilon\eta_{\intv{n}\setminus\alpha}}^2 + \pi^2}{\varepsilon^2\lambda + \quart\norm{\varepsilon\eta_{\com}}^2 + \half\norm{\varepsilon\eta_{\intv{n}\setminus\alpha}}^2 }\Bigg) \\
   & = \frac{1}{4\pi}\log\Bigg(\frac{(\lambda + \quart\norm{\eta_{\com}}^2 + \half\norm{\eta_{\intv{n} \setminus \alpha}}^2)\varepsilon^2 + \pi^2}{\lambda + \quart\norm{\eta_{\com}}^2 + \half\norm{\eta_{\intv{n} \setminus \alpha}}^2}\Bigg) - \frac{\log\varepsilon}{2\pi}  \\
   & = \frac{|\log\varepsilon|}{2\pi} + \frac{1}{4\pi}\log\Bigg(\varepsilon^2 + \frac{\pi^2}{\lambda + \quart\norm{\eta_{\com}}^2 + \half\norm{\eta_{\intv{n} \setminus \alpha}}^2}\Bigg) \, \, . 
   \end{align*}
   Thus,
   \begin{equation}{\label{eqn:prediagonal_D5_bound}}
   | \langle f ,\cali{D}_{5}(\varepsilon) g \rangle| \lesssim  \Bigg(\frac{|\log\varepsilon|}{2\pi} + \frac{1}{4\pi}\log\bigg(\varepsilon^2 + \frac{\pi^2}{\lambda}\bigg)\Bigg) \norm{f}_{2} \norm{g}_{2}\, \, .
   \end{equation}
   From the definition of $\beta_{\varepsilon}$, as $\varepsilon \to 0$, its inverse can be expressed as
   \begin{equation*}
   	\beta_{\varepsilon}^{-1} = \frac{|\log\varepsilon|}{2\pi}\Bigg(1 + \frac{\theta}{|\log\varepsilon|}\Bigg)^{-1} = \frac{|\log\varepsilon|}{2\pi} - \frac{\theta}{2\pi} + o(1) \, \, .
   \end{equation*}
   With the above, it is easily verified that for fixed $\eta_{\com \cup \intv{n} \setminus \alpha}$,
   \begin{align}{\label{eqn:diagonal_theta5}}
      \lim_{\varepsilon \to 0} \big[\beta_{\varepsilon}^{-1}
      &\langle f_{1}, \proj g_{1} \rangle
      - \langle f_{1} ,\cali{D}_{5}(\varepsilon,\eta_{\com \cup \intv{n} \setminus \alpha}) g_{1} \rangle\big] \nonumber \\
     & =  \lim_{\varepsilon \to 0} \bigg\langle f_{1},\Bigg[ \beta_{\varepsilon}^{-1}\proj - \frac{1}{(2\pi)^{2}} \int_{\ballorigin{\pi}} 
     \frac{\d\eta_{\rel}}{\varepsilon^2 \lambda + \quart \norm{\varepsilon\eta_{\com}}^2 + \half \norm{\varepsilon\eta_{\intv{n}\setminus\alpha}}^2 + \norm{\eta_{\rel}}^2} \Bigg] g_{1} \bigg\rangle \nonumber  \\  
    &  =  \bigg\langle f_{1}, \Bigg[\frac{\log\big(\lambda + \quart\norm{\eta_{\com}}^2 + \half\norm{\eta_{\intv{n}\setminus\alpha}}^2\big) - 2\log\pi - 2\theta}{4\pi}\Bigg] \proj g_{1} \bigg\rangle \, . 
   \end{align} 
   We will use this fact in our convergence result.
   
   It remains to analyze the lattice correction term $\cali{D}_{1}(\varepsilon)$ in equation~\eqref{eqn:prediagonal_error_one}. After a change of variables $x = \Seps{\alpha}y$ and $x' = \Seps{\alpha}y'$, it becomes
  \begin{equation*}
  	\int_{\R^{2\intv{n}}} \int_{\R^{2\intv{n}}} F_{\varepsilon\alpha}(x) \big[\conj{\epsgreens(\lambda, \epsfloor{x} - \epsfloor{x'})} - \conj{\epscontgreens(\lambda, x - x')}\big] G_{\varepsilon\alpha}(x') \, \d x \, \d x' \, \, ,
  \end{equation*}
  where $F_{\varepsilon\alpha}(x) \coloneqq \varepsilon^{-2}(\sqrrange f)(\Teps{\alpha}x)$ and $G_{\varepsilon\alpha}(x') \coloneqq \varepsilon^{-2}(\sqrrange g)(\Teps{\alpha}x')$. The presence of the fractional parts makes estimating this term less conducive to the Fourier methods used before. We thus follow a more direct approach, like we did for the terms $\cali{I}(\varepsilon, f, g)$ and $\cali{O}(\varepsilon, f, g)$ in the previous sections. That is, we decompose the integral into a sum over the lattice $\Z_{\varepsilon}^{2\intv{n}}$, together with an integral over the box $\epsrect{2\intv{n}} = [0,\varepsilon)^{2\intv{n}}$ around each lattice point, and then estimate these using the Fourier analysis techniques.
 	Inserting the representation
 	\begin{equation*}
 		\epsgreens(\lambda, \epsfloor{x} - \epsfloor{x'}) - \epscontgreens(\lambda, x - x')
 		= \frac{1}{(2\pi)^{2n}}\int_{\epshattorus{2\intv{n}}}\,\Bigg[ \frac{e^{-\im \xi \cdot (\epsfloor{x} - \epsfloor{x'})} - e^{-\im \xi \cdot (x - x')}}{\lambda + \fourierepsgen(\xi)}\Bigg] \,  \d \xi \, ,
 	\end{equation*}
 	into the integral, we can write it as 
 	\begin{align}{\label{eqn:diagonal_error_zero}}
 		\frac{1}{(2\pi)^{2n}}
 		& \int_{\epshattorus{2\intv{n}}}\int_{\R^{2\intv{n}}}\int_{\R^{2\intv{n}}} F_{\varepsilon\alpha}(x)\Bigg[ \frac{e^{\im \xi \cdot (\epsfloor{x} - \epsfloor{x'})} - e^{\im \xi \cdot (x - x')}}{\lambda + \fourierepsgen(\xi)}\Bigg] G_{\varepsilon\alpha}(x') \, \d x \, \d x' \, \d \xi \nonumber  \\
 		& = \frac{1}{(2\pi)^{2n}}\int_{\epshattorus{2\intv{n}}}\sum_{p \in \Z_{\varepsilon}^{2\intv{n}}}\sum_{p' \in \Z_{\varepsilon}^{2\intv{n}}} \int_{\epsrect{2\intv{n}}}\int_{\epsrect{2\intv{n}}} \nonumber  \\ 
 		& \qquad \qquad \times  F_{\varepsilon\alpha}(p+u)\Bigg[ \frac{e^{\im \xi \cdot (p - p')} - e^{\im \xi \cdot (p - p')}e^{\im \xi \cdot (u - u')}}{\lambda + \fourierepsgen(\xi)}\Bigg] G_{\varepsilon\alpha}(p'+u') \, \d u  \, \d u' \, \d \xi \, \, .
 	\end{align}
 	Now, we revert back to the center-of-mass and relative coordinate frames using the change of variables $\xi = \Tdualeps{\alpha}\eta$,  $p = \Seps{\alpha}q, \, p' = \Seps{\alpha}q', \, u = \Seps{\alpha}v, \, u' = \Seps{\alpha}v' $. This transformation cancels the $\varepsilon^{-4}$ in the definitions of $F_{\varepsilon\alpha}$ and $G_{\varepsilon\alpha}$ and changes $\lambda, \fourierepsgen(\xi)$ to $\varepsilon^2\lambda$ and $\fouriergen(\mealeta{\alpha})$ respectively. Furthermore, the summations now run over $\Z_{\varepsilon\alpha}^{2\intv{n}}$, $\epsrect{2\intv{n}}$ changes to $\epsalrect{2\intv{n}} = \Teps{\alpha}\epsrect{2\intv{n}}$ and the product $\xi \cdot (u-u')$ becomes $\eta \cdot (v- v')$.
 	Note that as $\varepsilon \to 0$, the $v_{\rel}$ region in $\epsalrect{2\intv{n}}$ approaches the box $(-1, 1)^2$. In particular, it stays bounded, independently of $\varepsilon$. For $y= q+ v$ and $y'=q'+v'$, define the kernel $	\cali{D}_{1}(\varepsilon, y, y', \eta_{\com \cup \intv{n} \setminus \alpha})$ as
 	\begin{equation*}
 	 \sqrrange(y_{\rel})\Bigg[\frac{1}{(2\pi)^2}\int_{\metorus{4,\rel}(\eta_{\com})} \frac{e^{\im \eta_{\rel}\cdot (q_{\rel}- q'_{\rel})} - e^{\im \eta_{\rel} \cdot (y_{\rel} - y'_{\rel})  + \im \eta_{\com \cup \intv{n} \setminus \alpha} \cdot (v_{\com \cup \intv{n}\setminus\alpha}-v'_{\com \cup \intv{n}\setminus\alpha})} }{\varepsilon^2 \lambda + \fouriergen(\mealeta{\alpha})} \, \d \eta_{\rel}\Bigg]\sqrrange(y'_{\rel}) \, \, .
 	\end{equation*}
 	and let 
 	\begin{align*}
 		\langle f_{1}, \cali{D}_{1}(\varepsilon, \eta_{\com \cup \intv{n} \setminus \alpha}) g_{1} \rangle \coloneqq\sum_{q_{\rel}} \sum_{q'_{\rel}} \int \int f_{1}(q_{\rel} + v_{\rel})\cali{D}_{1}(\varepsilon, y, y', \eta_{\com \cup \intv{n} \setminus \alpha})g_{1}(q'_{\rel} + v'_{\rel}) \, \d v_{\rel} \, \d v'_{\rel}  \, .
 	\end{align*}
 	This kernel of course depends on $v_{\com\cup\intv{n}\setminus\alpha}$ and $v'_{\com\cup\intv{n}\setminus\alpha}$ as well, but we omit this to reduce the notational burden. We make a further simplification by defining
 	\begin{equation*}
 		\widetilde{f_{2}}(v_{\com \cup \intv{n}\setminus\alpha}, \eta_{\com \cup \intv{n} \setminus \alpha}) \coloneqq \frac{1}{(2\pi)^{n-1}}\sum_{q_{\com\cup\intv{n}\setminus\alpha}}f_{2}(q_{\com \cup \intv{n}\setminus\alpha} + v_{\com \cup \intv{n}\setminus\alpha})e^{\im \eta_{\com \cup \intv{n} \setminus \alpha} \cdot q_{\com \cup \intv{n}\setminus\alpha}} \, ,
 	\end{equation*}
 	and similarly $\widetilde{g_{2}}(v'_{\com \cup \intv{n}\setminus\alpha}, \eta_{\com \cup \intv{n} \setminus \alpha})$. With these notations, the summation in equation~\eqref{eqn:diagonal_error_zero} can be re-expressed in a more compact form:
 	\begin{align}{\label{eqn:diagonal_D1_discreteplancherel}}
 		\int_{\metorus{4,\com} \times \epsalhattorus{2\intv{n}\setminus\alpha}} \int_{\epsalrect{2\intv{n}\setminus\rel}} 
 		 \int_{\epsalrect{2\intv{n}\setminus\rel}}  
 		& \widetilde{f_{2}}(v_{\com \cup \intv{n}\setminus\alpha}, \eta_{\com \cup \intv{n} \setminus \alpha}) 
 		\conj{\widetilde{g_{2}}(v'_{\com \cup \intv{n}\setminus\alpha}, \eta_{\com \cup \intv{n} \setminus \alpha})} \nonumber \\
 		& \times \langle f_{1},\cali{D}_{1}(\varepsilon, \eta_{\com \cup \intv{n} \setminus \alpha})  g_{1}\rangle \, \d v_{\com \cup \intv{n}\setminus\alpha} \,  \d v'_{\com \cup \intv{n}\setminus\alpha} \, \d \eta_{\com \cup \intv{n}\setminus\alpha} \, .
 	\end{align}

 	First, we bound $\langle f_{1} ,\cali{D}_{1}(\varepsilon, \eta_{\com \cup \intv{n} \setminus \alpha}) g_{1} \rangle $. It will be convenient to analyze the behavior of the Fourier integrand as $\varepsilon \to 0$ separately for $\eta_{\rel}$ and $\eta_{\com \cup \intv{n}\setminus\alpha}$. To this end, write the term as
 	\begin{equation}{\label{eqn:diagonal_D1_factorsplit}}
 	\frac{e^{\im \eta_{\rel} \cdot (q_{\rel} - q'_{\rel})}\big[1- e^{\im \eta_{\rel} \cdot (v_{\rel} - v'_{\rel})} \big]}{\varepsilon^2 \lambda + \fouriergen(\mealeta{\alpha})}
 	+ \frac{e^{\im \eta_{\rel} \cdot (q_{\rel} - q'_{\rel}) + \im \eta_{\rel} \cdot (v_{\rel} - v'_{\rel})}\big[ 1- e^{\im \eta_{\com \cup\intv{n}\setminus\alpha} \cdot (v_{\com\cup\intv{n}\setminus\alpha} - v'_{\com\cup\intv{n}\setminus\alpha}) }  \big]}{\varepsilon^2 \lambda + \fouriergen(\mealeta{\alpha})} \, \, .
 	\end{equation}
 	Using this split, decompose $\cali{D}_{1}(\varepsilon, \eta_{\com \cup \intv{n} \setminus \alpha})$ as $\cali{D}_{1, 1}(\varepsilon, \eta_{\com \cup \intv{n} \setminus \alpha}) + \cali{D}_{1, 2}(\varepsilon, \eta_{\com \cup \intv{n} \setminus \alpha})$, where the term $\cali{D}_{1,1}(\varepsilon, \eta_{\com \cup \intv{n} \setminus \alpha})$ corresponds to the first term in equation~\eqref{eqn:diagonal_D1_factorsplit}, and $\cali{D}_{1, 2}(\varepsilon, \eta_{\com \cup \intv{n} \setminus \alpha})$ to the other.    
 	Let us first look at $\cali{D}_{1, 1}(\varepsilon, \eta_{\com \cup \intv{n} \setminus \alpha})$. As $\varepsilon \to 0$, the divergence in the $\d \eta_{\rel}$ integral occurs near the origin. Similar to what we did before, we choose an $R>0$ such that $\metorus{4,\rel}(\eta_{\com}) \subset \ballorigin{R}$ to isolate this singularity. Inside this ball, we use the inequality: 
 	\begin{equation*}
 	\Bigg| \frac{e^{\im \eta_{\rel} \cdot (q_{\rel} - q'_{\rel})}\big[1- e^{\im \eta_{\rel} \cdot (v_{\rel} - v'_{\rel})} \big]}{\varepsilon^2 \lambda + \fouriergen(\mealeta{\alpha})}\Bigg| \lesssim \frac{\norm{v_{\rel} - v'_{\rel}}\norm{\eta_{\rel}}}{\varepsilon^2 \lambda + \norm{\eta_{\rel}}^2} \lesssim \frac{1}{\norm{\eta_{\rel}}} \, \, .
 	\end{equation*} 
 	The integral of the right-hand side above over the region $\ballorigin{R}$ evaluates to $2\pi R$. As for $\cali{D}_{1, 2}(\varepsilon, \eta_{\com \cup \intv{n} \setminus \alpha})$, we use a similar bound;
 	\begin{equation*}
 	\Bigg|\frac{e^{\im \eta_{\rel} \cdot (q_{\rel} - q'_{\rel}) + \im \eta_{\rel} \cdot (v_{\rel} - v'_{\rel})}\big[ 1- e^{\im \eta_{\com \cup\intv{n}\setminus\alpha} \cdot (v_{\com\cup\intv{n}\setminus\alpha} - v'_{\com\cup\intv{n}\setminus\alpha}) }  \big]}{\varepsilon^2 \lambda + \fouriergen(\mealeta{\alpha})} \Bigg| \lesssim \frac{\varepsilon \norm{\eta_{\com \cup \intv{n}\setminus\alpha}}}{\norm{\eta_{\rel}}^2 + \varepsilon^2\norm{\eta_{\com \cup \intv{n} \setminus \alpha}}^2} \, \, .
 	\end{equation*}
 	 We can compute the integral of the right-hand side over $\ballorigin{R}$ in polar coordinates as:
 	\begin{equation*}
 	\int_{0}^{R} \frac{2\pi \varepsilon\norm{\eta_{\com \cup \intv{n} \setminus \alpha}}\, r \,\d r}{r^2 + \varepsilon^2\norm{\eta_{\com \cup \intv{n} \setminus \alpha}}^2} = \pi \varepsilon\norm{\eta_{\com \cup \intv{n} \setminus \alpha}}\big[\log(R^2 + \varepsilon^2\norm{\eta_{\com \cup \intv{n} \setminus \alpha}}^2) - 2\log(\varepsilon\norm{\eta_{\com \cup \intv{n} \setminus \alpha}})\big] \, \,.
 	\end{equation*}
 	Since $\varepsilon\norm{\eta_{\com \cup \intv{n} \setminus \alpha}} \lesssim \pi$ in $\metorus{4,\com} \times \epsalhattorus{2\intv{n}\setminus\alpha}$ and the function $x \mapsto x \log x$ is bounded on compact intervals, the above obtained bounds are uniform in $\eta_{\com \cup \intv{n} \setminus \alpha}$. Thus, using the Cauchy--Schwarz inequality for sums, we get the norm bound
 	\begin{equation}{\label{eqn:diagonal_D1_bound}}
 	 \sup_{\eta_{\com \cup \intv{n} \setminus \alpha}}|\langle f_{1} ,\cali{D}_{1}(\varepsilon, \eta_{\com \cup \intv{n} \setminus \alpha})g_{1} \rangle | \lesssim \pi^2\big[\log(R^2 + \pi^2) + \log(\pi^2)\big] \norm{\range}_{1}\norm{f_{1}}_{2}\norm{g_{1}}_{2} \, .
 	\end{equation}
 	For equation~\eqref{eqn:diagonal_D1_discreteplancherel}, we can again use the Cauchy--Schwarz inequality in the $\eta_{\com \cup \intv{n} \setminus \alpha}$ variable in conjunction with the bound above. This bounds the same from above by the term
 	\begin{align*}
 	 \norm{\range}_{1} \norm{f_{1}}_{2} \, 
 	 &  \norm{g_{1}}_{2} \Bigg( \int_{\metorus{4,\com}\times \epsalhattorus{2\intv{n}\setminus\alpha}}  \int_{\epsalrect{2\intv{n}\setminus\rel}}  \varepsilon^{2n-2} \,  | \widetilde{f_{2}}(v_{\com \cup \intv{n}\setminus\alpha}, \eta_{\com \cup \intv{n} \setminus \alpha}) |^2 \, \d v_{\com\cup\intv{n}\setminus\alpha} \, \d \eta_{\com \cup \intv{n} \setminus \alpha}\Bigg)^{\half}  \\
 	 & \times \Bigg( \int_{\metorus{4,\com}\times \epsalhattorus{2\intv{n}\setminus\alpha}} \int_{\epsalrect{2\intv{n}\setminus\rel}}  \varepsilon^{2n-2} \, | \widetilde{g_{2}}(v'_{\com \cup \intv{n}\setminus\alpha}, \eta_{\com \cup \intv{n} \setminus \alpha}) |^2 \, \d v'_{\com\cup\intv{n}\setminus\alpha} \, \d \eta_{\com \cup \intv{n} \setminus \alpha}\Bigg)^{\half} .
 	\end{align*}
    By Plancherel's theorem for Fourier series, the terms in the parentheses are just $\Ltwo$ norms of $f_{2}$ and $g_{2}$ respectively, whence we obtain the bound
    \begin{equation}{\label{eqn:prediagonal_D1_bound}}
    	| \langle f, \cali{D}_{1}(\varepsilon) g \rangle | \lesssim  \norm{\range}_{1} \norm{f}_{2} \norm{g}_{2} \, \, .
    \end{equation}
 	Applying the dominated convergence theorem, we have that $\lim_{\varepsilon \to 0} \langle f , \cali{D}_{1}(\varepsilon) g \rangle$ is equal to
    \begin{equation*}
    	\int_{\R^2} \int_{\R^2} \int_{\widehat{\R}^{2\intv{n}\setminus\rel}_{\alpha}}f_{1}(y_{\rel})\F{f_{2}}(\eta_{\com \cup \intv{n} \setminus \alpha}) \Theta_{1}(y_{\rel}, y'_{\rel}) g_{1}(y'_{\rel})\conj{\F{g_{2}}(\eta_{\com \cup \intv{n} \setminus \alpha})} \, \d y_{\rel} \, \d y'_{\rel} \, \d \eta_{\com \cup \intv{n} \setminus \alpha} \, \, ,
    \end{equation*} 
 	where
 	\begin{equation}{\label{eqn:diagonal_theta1}}
 	\Theta_{1}(y_{\rel}, y'_{\rel}) \coloneqq \sqrrange(y_{\rel})\, \Bigg[\frac{1}{(2\pi)^2}\int_{\hattorus{2}} \frac{e^{\im \eta_{\rel} \cdot (\floor{y_{\rel}} - \floor{y'_{\rel}}) } -e^{\im \eta_{\rel} \cdot (y_{\rel} - y'_{\rel})} }{\fouriergen(\mzaleta{\alpha})} \, \d \eta_{\rel} \Bigg]\, \sqrrange(y'_{\rel}) \, \, .
 	\end{equation}
 	
 	Next, we show that  $\cali{D}_{1}(\varepsilon)$ converges strongly to the operator $\Theta_{1} \otimes \fid{n-1}$ on $\Ltwo(\R^2) \times \Ltwo(\widehat{\R}^{2\intv{n}\setminus\rel}_{\alpha})$. For this reason, it will be convenient to express the action of $\Theta_{1} \otimes \fid{n-1}$ in a way which resembles that in equation~\eqref{eqn:diagonal_D1_discreteplancherel}. Using the same notations defined above, the inner product $\langle f, \Theta_{1} \otimes \fid{n-1}  g \rangle$ can be equivalently expressed as
 	\begin{align*}
 		\int_{\widehat{\R}^{2\intv{n}\setminus\rel}_{\alpha}}
 		\int_{\epsalrect{2\intv{n}\setminus\rel}} &\int_{\epsalrect{2\intv{n}\setminus\rel}}
 		 \widetilde{f_{2}}(v_{\com \cup \intv{n}\setminus\alpha}, \eta_{\com \cup \intv{n} \setminus \alpha}) \\
 		& \qquad \times \langle f_{1} , \Theta_{1} g_{1}\rangle \, \conj{\widetilde{g_{2}}(v'_{\com \cup \intv{n}\setminus\alpha}, \eta_{\com \cup \intv{n} \setminus \alpha})} \, \d v_{\com\cup\intv{n}\setminus\alpha} \, \d v'_{\com\cup\intv{n}\setminus\alpha} \, \d \eta_{\com \cup \intv{n} \setminus \alpha} \, \, ,
 	\end{align*}
 	where $\langle f_{1} , \Theta_{1} \, g_{1} \rangle$ is equal to
 	\begin{align*}
 		\sum_{q_{\rel}}\sum_{q'_{\rel}}
 		 &\int_{\epsrect{2}} \int_{\epsrect{2}} \,
 		 f_{1}(q_{\rel}+v_{\rel})\sqrrange(q_{\rel} + v_{\rel})\sqrrange(q'_{\rel} + v'_{\rel})g_{1}(q'_{\rel}+v'_{\rel}) \\
 		& \times \Bigg[\frac{e^{\im \eta_{\com \cup \intv{n} \setminus \alpha} \cdot (v_{\com\cup\intv{n}\setminus\alpha} - v'_{\com\cup\intv{n}\setminus\alpha})}}{(2\pi)^2}\int_{\hattorus{2}} \frac{e^{\im \eta_{\rel} \cdot (q_{\rel} - q'_{\rel})}-e^{\im \eta_{\rel} \cdot (q_{\rel} - q'_{\rel}) + \im \eta_{\rel} \cdot (v_{\rel} - v'_{\rel})} }{\fouriergen(\mzaleta{\alpha})} \, \d \eta_{\rel}\Bigg]  \, \d v_{\rel} \, \d v'_{\rel} \, .
 	\end{align*}
 	The difference $\langle f , (\cali{D}_{1}(\varepsilon) - \Theta_{1} \otimes \fid{n-1}) \,g \rangle$ consists of terms with the kernels:
 	\begin{align*}
 		& \mathbf{1}_{\mealtorus{\alpha}{2\intv{n}}}(\eta) \, \Bigg[\frac{ e^{\im \eta_{\rel} \cdot (v_{\rel} - v'_{\rel})}\big(1 - e^{\im \eta_{\com} \cdot (v_{\com} - v'_{\com}) + \im \eta_{\intv{n}\setminus\alpha} \cdot (v_{\intv{n}\setminus\alpha} - v'_{\intv{n}\setminus\alpha}) } \, \big) }{\varepsilon^2 \lambda + \fouriergen(\mealeta{\alpha})} \Bigg] \\
 		& + \mathbf{1}_{\mealtorus{\alpha}{2\intv{n}}}(\eta) \, e^{\im \eta_{\com \cup \intv{n} \setminus \alpha} \cdot (v_{\com\cup\intv{n}\setminus\alpha} - v'_{\com\cup\intv{n}\setminus\alpha})}\Bigg[\frac{ e^{\im \eta_{\rel}\cdot (q_{\rel}- q'_{\rel})} - e^{\im \eta_{\rel} \cdot (y_{\rel} - y'_{\rel})}}{\varepsilon^2 \lambda + \fouriergen(\mealeta{\alpha})} - \frac{ e^{\im \eta_{\rel}\cdot (q_{\rel}- q'_{\rel})} - e^{\im \eta_{\rel} \cdot (y_{\rel} - y'_{\rel})}}{\fouriergen(\mzaleta{\alpha})} \Bigg] \\
 		&+ \mathbf{1}_{\metorus{4,\com} \times \epsalhattorus{2\intv{n}\setminus\alpha}}(\eta_{\com \cup \intv{n} \setminus \alpha})\mathbf{1}_{\hattorus{2}\setminus\metorus{4,\rel}(\eta_{\com})}(\eta_{\rel}) \,
 		e^{\im \eta_{\com \cup \intv{n} \setminus \alpha} \cdot (v_{\com\cup\intv{n}\setminus\alpha} - v'_{\com\cup\intv{n}\setminus\alpha})}\Bigg[\frac{e^{\im \eta_{\rel}\cdot (q_{\rel}- q'_{\rel})} - e^{\im \eta_{\rel} \cdot (y_{\rel} - y'_{\rel})}}{\fouriergen(\mzaleta{\alpha})}\Bigg] \\
 		& + [1 - \mathbf{1}_{\metorus{4,\com} \times \epsalhattorus{2\intv{n}\setminus\alpha}}(\eta_{\com \cup \intv{n} \setminus \alpha})]\Theta_{1}(y_{\rel}, y'_{\rel}) \fid{n-1}(\eta_{\com \cup \intv{n} \setminus \alpha}) \, \, .
 	\end{align*}
 	Label these terms $\langle f ,\cali{D}_{1,1}(\varepsilon) g\rangle , \langle f ,\cali{D}_{1,2}(\varepsilon) g\rangle, \langle f, \cali{D}_{1,3}(\varepsilon) g \rangle$ and $\langle f ,\cali{D}_{1,4}(\varepsilon) g\rangle $ respectively. We estimate each term individually. Let us start with $\langle f ,\cali{D}_{1,1}(\varepsilon) g\rangle$. For the  $\d \eta_{\rel}$ integral, using Lemma~\ref{lem:uniformgreensbound} and the fact that $\norm{v_{\com\cup\intv{n}\setminus\alpha} - v'_{\com\cup\intv{n}\setminus\alpha}} \lesssim \varepsilon$, we have the bound
 	\begin{equation*}
 		\Bigg|\frac{e^{\im\eta_{\rel}\cdot(v_{\rel} - v'_{\rel})}[e^{\im \eta_{\com \cup \intv{n} \setminus \alpha} \cdot (v_{\com \cup \intv{n}\setminus\alpha}-v'_{\com \cup \intv{n}\setminus\alpha})} - 1]}{\varepsilon^2 \lambda + \fouriergen(\mealeta{\alpha})} \Bigg| \lesssim \frac{\varepsilon \norm{\eta_{\com \cup \intv{n}\setminus\alpha}}}{\norm{\eta_{\rel}}^2 + \varepsilon^2\norm{\eta_{\com \cup \intv{n} \setminus \alpha}}^2} \, \, .
 	\end{equation*}
 		We evaluated the integral of this bound as part of our proof of the norm bound of $\cali{D}_{1}(\varepsilon)$. This yielded
 	\begin{equation*}
 		\int_{0}^{R} \frac{2\pi \varepsilon\norm{\eta_{\com \cup \intv{n} \setminus \alpha}}\, r \,\d r}{r^2 + \varepsilon^2\norm{\eta_{\com \cup \intv{n} \setminus \alpha}}^2} = \pi \varepsilon\norm{\eta_{\com \cup \intv{n} \setminus \alpha}}\big[\log(R^2 + \varepsilon^2\norm{\eta_{\com \cup \intv{n} \setminus \alpha}}^2) - 2\log(\varepsilon\norm{\eta_{\com \cup \intv{n} \setminus \alpha}})\big] \, \,.
 	\end{equation*}
 	Using this bound along with Cauchy--Schwarz inequality and Plancherel's theorem for Fourier series, $|\langle f, \cali{D}_{1,1}(\varepsilon) g \rangle|$ is bounded by $\norm{f}_{2}\norm{g_{1}}_{2}$ times 
 	\begin{align*}
 		\Bigg(\int_{\epsalrect{2\intv{n}\setminus\rel}} \int_{\metorus{4,\com} \times \epsalhattorus{2\intv{n}\setminus\alpha}} \varepsilon^2\norm{\eta_{\com \cup \intv{n} \setminus \alpha}}^2
 		 &\big|\log(R^2 + \varepsilon^2\norm{\eta_{\com \cup \intv{n} \setminus \alpha}}^2) + 2\log(\varepsilon\norm{\eta_{\com \cup \intv{n} \setminus \alpha}})\big|^2 \\
 		 &\times  |\widetilde{g_{2}}(v'_{\com \cup \intv{n}\setminus\alpha}, \eta_{\com \cup \intv{n} \setminus \alpha})|^2 \,  \d \eta_{\com \cup \intv{n} \setminus \alpha} \,  \d v'_{\com \cup\intv{n}\setminus\alpha}\Bigg)^{\half} \,  ,
 	\end{align*}
 	up to a constant. The function inside the integral goes to zero point-wise as $\varepsilon \to 0$ and is bounded by the $\Ltwo$ norm of $g_{2}$. This implies we can use the dominated convergence theorem to pass the limit inside. Then, taking a supremum over $f$, we obtain,
 	\begin{equation*}
 		\lim_{\varepsilon \to 0} \sup_{\norm{f}_{2} \leqslant 1}|\langle f, \cali{D}_{1,1}(\varepsilon) g \rangle|  = 0 \, \, .
 	\end{equation*} 
 	
 	We turn to $\langle f , \cali{D}_{1,2}(\varepsilon)g \rangle$. By Lemma~\ref{lem:uniformgreensbound},
 	\begin{align*}
 		\Bigg|\frac{e^{\im \eta_{\rel}\cdot (q_{\rel}- q'_{\rel})} - e^{\im \eta_{\rel} \cdot (y_{\rel} - y'_{\rel})}}{\varepsilon^2 \lambda + \fouriergen(\mealeta{\alpha})} 
 		& - \frac{e^{\im \eta_{\rel}\cdot (q_{\rel}- q'_{\rel})} - e^{\im \eta_{\rel} \cdot (y_{\rel} - y'_{\rel})}}{\fouriergen(\mzaleta{\alpha})} \Bigg| \\
 		& \lesssim \frac{(\norm{\eta_{\rel}}\norm{v_{\rel} - v'_{\rel}} \wedge 2)(\varepsilon^2\lambda +\varepsilon^2 \norm{\eta_{\com\cup\intv{n}\setminus\alpha}}^2}{(\norm{\eta_{\rel}}^2 + \varepsilon^2\lambda + \varepsilon^2\norm{\eta_{\com \cup \intv{n} \setminus \alpha}}^2) \norm{\eta_{\rel}}^2} \, .
 	\end{align*}
    This was the same bound we obtained in equation~\eqref{eqn:prediagonal_D21_bound} from our proof for $\cali{D}_{2}(\varepsilon, \eta_{\com \cup \intv{n} \setminus \alpha})$. Hence, we can bound this term in the same way. After this, we again use Cauchy--Schwarz inequality and Plancherel's theorem to conclude that $|\langle f, \cali{D}_{1,2}(\varepsilon) g \rangle|$ is $\lesssim \norm{f}_{2} \norm{g}_{2}$. Using dominated convergence theorem, we also obtain
 	\begin{equation*}
 		\lim_{\varepsilon \to 0} \sup_{\norm{f}_{2} \leqslant 1} | \langle f, \cali{D}_{1,2}(\varepsilon) g \rangle| = 0 \, \, .
 	\end{equation*}
 	
   The situation for $\langle f ,\cali{D}_{1,3}(\varepsilon) g \rangle$ is much simpler as the $\eta_{\rel}$ integral is devoid of any singularities in this region. Since it is also continuous, modulo a constant, it is bounded by $	\norm{\range}_{1}\norm{f}_{2}\norm{g}_{1}$ times 
   \begin{align*}
   \Bigg(\int_{\epsalrect{2\intv{n}\setminus\rel}} \int_{\metorus{4,\com} \times \epsalhattorus{2\intv{n}\setminus\alpha}}|\hattorus{2}\setminus\metorus{4,\rel}(\eta_{\com})| \,|\widetilde{g_{2}}(v'_{\com \cup \intv{n}\setminus\alpha}, \eta_{\com \cup \intv{n} \setminus \alpha})|^2 \,  \d \eta_{\com \cup \intv{n} \setminus \alpha} \,  \d v'_{\com \cup\intv{n}\setminus\alpha}\Bigg)^{\half} \,  .
   \end{align*}
   Again, an application of the dominated convergence theorem shows that
   \begin{equation*}
   	\lim_{\varepsilon \to 0} \sup_{\norm{f}_{2} \leqslant 1}|\langle f, \cali{D}_{1,3}(\varepsilon) g \rangle | = 0 \, \, .
   \end{equation*}
   Finally, we analyze $\langle f, \cali{D}_{1,4}(\varepsilon) g \rangle$. Since the operator $\Theta_{1} \otimes \fid{n-1}$ is bounded on $\Ltwo(\R^2) \times \Ltwo(\widehat{\R}^{2\intv{n}\setminus\rel}_{\alpha})$, a straightforward application of dominated convergence lets us conclude
   \begin{equation*}
   	\lim_{\varepsilon \to 0} \sup_{\norm{f}_{2} \leqslant 1} |\langle f, \cali{D}_{1,4}(\varepsilon) g \rangle| = 0 \, \, .
   \end{equation*}
   
   Putting together the above bounds, we obtain
   \begin{equation*}
   	\lim_{\varepsilon \to 0}\sup_{\norm{f}_{2}\leqslant 1} |\langle f, (\cali{D}_{1}(\varepsilon) - \Theta_{1} \otimes \fid{n-1} ) \,g \rangle | \lesssim \lim_{\varepsilon \to 0} \sup_{\norm{f}_{2} \leqslant 1} \sum_{r=1}^{4}|\langle f, \cali{D}_{1,r}(\varepsilon) g \rangle|= 0 \, \, ,
   \end{equation*}
   which proves that $\cali{D}_{1}(\varepsilon)$ converges strongly to $\Theta_{1} \otimes \fid{n-1}$.
   
   Now we are in a position to collect all the operator norm bounds we derived so far to obtain an estimate for $\preoffdiag{\alpha\alpha}(\lambda)$. Recall that 
   \begin{equation*}
   	\langle f, \preoffdiag{\alpha\alpha}(\lambda) g \rangle = \langle f, \cali{D}_{1}(\varepsilon)g \rangle + \langle f, \cali{D}_{2}(\varepsilon) g \rangle + \langle f, \cali{D}_{3}(\varepsilon) g \rangle + \langle f, \cali{D}_{4}(\varepsilon)g \rangle + \langle f, \cali{D}_{5}(\varepsilon) g \rangle \, \, .
   \end{equation*}
   Hence, using equations~\eqref{eqn:prediagonal_D1_bound},\eqref{eqn:prediagonal_D2_bound},\eqref{eqn:prediagonal_D3_bound},\eqref{eqn:prediagonal_D4_bound} and \eqref{eqn:prediagonal_D5_bound}, we get
   \begin{align*}
   	\normop{\beta_{\varepsilon}\preoffdiag{\alpha\alpha}(\lambda)} \lesssim \beta_{\varepsilon}\Bigg(\quadrange + 1 + \frac{|\log\varepsilon|}{2\pi} + \frac{1}{4\pi}\log\bigg(\varepsilon^2 + \frac{\pi^2}{\lambda}\bigg) \Bigg) \, \, .
   \end{align*}
   Fix $\lambda > 0$. For small enough $\varepsilon$, namely for $\varepsilon <  \pi/\sqrt{\lambda}$, the above norm is less than
   \begin{equation*}
   	\beta_{\varepsilon}\bigg(\quadrange + 1 + \frac{|\log\varepsilon|}{2\pi} + \frac{\log2\pi^2}{4\pi} - \frac{\log\lambda}{4\pi} \bigg) \, .
   \end{equation*}
   Using the definition of the $\beta_{\varepsilon}$, the right-hand side then becomes
   \begin{equation}{\label{eqn:prediagonal_normupperbound}}
   	   1 + \frac{\theta + 2\pi\quadrange + 2\pi + \log\sqrt{2}\pi - \log\sqrt{\lambda}}{|\log\varepsilon|} + \frac{\theta\big(2\pi\quadrange + 2\pi + \log\sqrt{2}\pi - \log\sqrt{\lambda}\,\big)}{|\log\varepsilon|^2} \, \, .
   \end{equation}
   As $\varepsilon \to 0$, the final term is of lower order than the second term. Consequently, if the $\lambda$ we originally chose was such that 
   \begin{equation*}
   	 \theta + 2\pi\quadrange + 2\pi + \log\sqrt{2}\pi - \log\sqrt{\lambda} < 0 ,
   \end{equation*}
   then norm bound in equation~\eqref{eqn:prediagonal_normupperbound} will be strictly less than one, uniformly for $\varepsilon < \varepsilon_{0}(\lambda)$ of order $1/\sqrt{\lambda}$. We therefore assume henceforth that $\lambda$ satisfies
   \begin{equation*}
   	 \lambda \gtrsim e^{2\theta + 4\pi\quadrange} + 1 \, \, \, .
   \end{equation*}
   Thus, for all $\lambda$ in this region, the Neumann series in equation~\eqref{eqn:prediagonal_neumannseries} converges absolutely in norm. That is,
   \begin{equation*}
   	\prediag{\theta}{\alpha}(\lambda)  = \beta_{\varepsilon} \sum_{m=0}^{\infty} \big( \beta_{\varepsilon} \preoffdiag{\alpha\alpha}(\lambda) \big)^m = \beta_{\varepsilon}(\id{n}- \beta_{\varepsilon} \preoffdiag{\alpha\alpha}(\lambda))^{-1} = (\beta_{\varepsilon}^{-1}\id{n} - \preoffdiag{\alpha\alpha}(\lambda))^{-1} \, .
   \end{equation*}
    Moreover, $\normop{\prediag{\theta}{\alpha}(\lambda)}$ is
   \begin{align}{\label{eqn:prediagonal_final_bound}}
   	&\lesssim \beta_{\varepsilon}\sum_{m =0}^{\infty} 
   	\Bigg(1 + \frac{ \theta + 2\pi(\quadrange + 1) + \log\sqrt{2}\pi - \log\sqrt{\lambda}}{|\log\varepsilon|} + \frac{\theta\big(2\pi(\quadrange + 1) + \log\sqrt{2}\pi - \log\sqrt{\lambda}\,\big)}{|\log\varepsilon|^2}\Bigg)^{m} \nonumber \\
   	& = \frac{\frac{2\pi}{|\log\varepsilon|}(1+ \frac{\theta}{|\log\varepsilon|})}{\frac{ \log\sqrt{\lambda} -\theta - 2\pi(\quadrange + 1) - \log\sqrt{2}\pi }{|\log\varepsilon|} + \frac{\theta (\log\sqrt{\lambda}-2\pi(\quadrange + 1) - \log\sqrt{2}\pi \,)}{|\log\varepsilon|^2}} \nonumber \\
   	 & \lesssim (\log\lambda - \theta)^{-1} \, \, .
   \end{align}
   This completes the proof of the norm bounds in Lemma~\ref{lem:resolventconvergence}.
   \subsubsection{Convergence to the limit}
    Now we are ready to prove the convergence of the prediagonal operator to its limiting counterpart. As we mentioned at the beginning of this section, we first fix $\eta_{\com \cup \intv{n} \setminus \alpha}$ and show the convergence in norm as an operator on $\Ltwo(\R^2)$ followed by strong convergence in the $\eta_{\com \cup \intv{n} \setminus \alpha}$ variable. However, unlike $\cali{D}_{2}(\varepsilon), \cali{D}_{3}(\varepsilon), \cali{D}_{4}(\varepsilon)$ and $\cali{D}_{5}(\varepsilon)$, the lattice correction operator $\cali{D}_{1}(\varepsilon)$ does not lend itself to this method easily. The kernels of the former operators treat the $y_{\rel}, y'_{\rel}$ and $\eta_{\com\cup\intv{n}\setminus\alpha}$ variables independently, whereas the latter has these coupled. In order to avoid this issue, we first omit $\cali{D}_{1}(\varepsilon)$ from $\preoffdiag{\alpha\alpha}(\lambda)$ and establish the convergence of the resulting operator. Then, we treat $\cali{D}_{1}(\varepsilon)$ as a perturbation of this operator, and using suitable resolvent identities, we deduce the strong convergence of the full prelimiting diagonal operator. Let us begin.  
    
    Observe $\preoffdiag{\alpha\alpha}(\lambda) - \cali{D}_{1}(\varepsilon) =\sum_{r=2}^{5}\cali{D}_{r}(\varepsilon)$. Fixing $\eta_{\com \cup \intv{n} \setminus \alpha} \in \metorus{4,\com} \times \epsalhattorus{2\intv{n}\setminus\alpha}$, we want to compute the limit of  
    \begin{equation*}
    \big(\beta_{\varepsilon}^{-1} \id{n} - \sum_{r=2}^{5}\cali{D}_{r}(\varepsilon, \eta_{\com \cup \intv{n} \setminus \alpha})\big)^{-1}
    \end{equation*} 
    as $\varepsilon \to 0$, on $\Ltwo(\R^2)$. strictly speaking, the operator $\id{n}$ should be replaced by restriction to the $y_{\rel}, y'_{\rel}$ variables since we are fixing $\eta_{\com \cup \intv{n} \setminus \alpha}$. We ignore this for now and assume it is implicit.
    Denote by $\projperp$ the orthogonal complement of the projection operator $\proj$. To pass to the limit, we use the operator identity
   \begin{equation}{\label{eqn:operator_identity}}
   	(A - B)^{-1} = \sum_{m=0}^{\infty}A^{-1} (BA^{-1})^{m} \, \, ,
   \end{equation}
   which holds whenever $\normop{BA^{-1}} < 1$. Since $\id{n} = \projperp + \proj$, we can write the operator under consideration as
   \begin{equation*}
   	\big(\, \beta_{\varepsilon}^{-1}\projperp + \beta_{\varepsilon}^{-1} \proj - \sum_{r=2}^{5}\cali{D}_{r}(\varepsilon, \eta_{\com \cup \intv{n} \setminus \alpha})\big)^{-1} \, \, .
   \end{equation*}
   Substitute this into the identity in equation \eqref{eqn:operator_identity} with 
  \begin{equation*}
  	A = \beta_{\varepsilon}^{-1}\projperp + (\beta_{\varepsilon}^{-1}\proj - \sum_{r=3}^{5}\cali{D}_{r}(\varepsilon, \eta_{\com \cup \intv{n} \setminus \alpha})) \quad \text{and} \quad 	B = \cali{D}_{2}(\varepsilon, \eta_{\com \cup \intv{n} \setminus \alpha}) \, \, .
  \end{equation*} 
  The reason for this choice is that the operators $\cali{D}_{r}(\varepsilon, \eta_{\com \cup \intv{n} \setminus \alpha})$ for $r=3,4,5$ are just multiples of the projection operator, whereas the kernel of $\cali{D}_{2}(\varepsilon, \eta_{\com \cup \intv{n} \setminus \alpha})$ depends explicitly on $y_{\rel}, y'_{\rel}$ other than through $\sqrrange$ also. This yields the series:
\begin{align}{\label{eqn:diagonal_neumann_R2}}
	\sum_{m=0}^{\infty} \bigg[\beta_{\varepsilon}^{-1}\projperp + 
	& \big(\beta_{\varepsilon}^{-1}\proj - \sum_{r=3}^{5}\cali{D}_{r}(\varepsilon, \eta_{\com \cup \intv{n} \setminus \alpha}) \, \big)\bigg]^{-1}  \nonumber \\
	& \times \bigg[ \cali{D}_{2}(\varepsilon, \eta_{\com \cup \intv{n} \setminus \alpha}) \, \big(\beta_{\varepsilon}^{-1}\projperp + \big(\beta_{\varepsilon}^{-1}\proj - \sum_{r=3}^{5}\cali{D}_{r}(\varepsilon, \eta_{\com \cup \intv{n} \setminus \alpha}) \, \big)\big)^{-1} \bigg]^{m} \, \, .
\end{align}
Since $\beta_{\varepsilon} \to 0$, from equations~\eqref{eqn:diagonal_theta3}, \eqref{eqn:diagonal_theta4} and \eqref{eqn:diagonal_theta5}, we see that
\begin{align*}
\lim_{\varepsilon \to 0}\big(\beta_{\varepsilon}^{-1}\projperp + \big(\beta_{\varepsilon}^{-1}
& \proj - \sum_{r=3}^{5}\cali{D}_{r}(\varepsilon, \eta_{\com \cup \intv{n} \setminus \alpha}) \, \big)\big)^{-1} \\ 
& =\frac{4\pi}{\log\big(\lambda + \quart\norm{\eta_{\com}}^2 + \half\norm{\eta_{\intv{n}\setminus\alpha}}^2\big) -2\log\pi - 2\theta - 4\pi\theta_{3} - 4\pi\theta_{4}} \, \proj \, ,
\end{align*}
in norm. The other terms in the series involve products of the operators $\cali{D}_{2}(\varepsilon, \eta_{\com \cup \intv{n} \setminus \alpha})$, $\proj$ and $\projperp$. The terms $\proj^m, (\projperp)^m$ and $\proj \projperp$ reduce to $\proj, \projperp$ and $0$ respectively, as they are orthogonal projections and hence idempotent. The product of $\projperp$ and $\cali{D}_{2}(\varepsilon, \eta_{\com \cup \intv{n} \setminus \alpha})$ has coefficients involving $\beta_{\varepsilon}$ and so it goes to $0$ since $\cali{D}_{2}(\varepsilon, \eta_{\com \cup \intv{n} \setminus \alpha})$ is uniformly bounded in norm as $\varepsilon \to 0$. Finally, for the term $\proj \cali{D}_{2}(\varepsilon, \eta_{\com \cup \intv{n} \setminus \alpha})\proj$, we may calculate it as 
\begin{align*}
	\langle f_{1}, \proj&\cali{D}_{2}(\varepsilon, \eta_{\com \cup \intv{n} \setminus \alpha})
	\proj \, g_{1} \rangle \\
	&= \frac{1}{(2\pi)^{2}}\int_{\R^2} \int_{\R^2}\int_{\R^2}\int_{\R^2}  \, \d y^{(1)}_{\rel} \, \d y^{(2)}_{\rel} \, \d y^{(3)}_{\rel} \, \d y^{(4)}_{\rel} \, f_{1}(y^{(1)}_{\rel})\sqrrange(y^{(1)}_{\rel})   \\
	& \qquad \qquad \qquad \qquad \times \sqrrange(y^{(2)}_{\rel})\cali{D}_{2}(\varepsilon, y^{(2)}_{\rel} ,y^{(3)}_{\rel}, \eta_{\com \cup \intv{n} \setminus \alpha})\sqrrange(y^{(3)}_{\rel}) \sqrrange(y^{(4)}_{\rel})g_{1}(y^{(4)}_{\rel}) \\
	& = \bigg(\frac{1}{(2\pi)^{2}}\int_{\R^2} \int_{\R^2} \sqrrange(y_{\rel})\cali{D}_{2}(\varepsilon, y_{\rel} ,y'_{\rel}, \eta_{\com \cup \intv{n} \setminus \alpha}) \sqrrange(y'_{\rel}) \, \d y_{\rel} \, \d y'_{\rel}\bigg) \langle f_{1}, \proj g_{1} \rangle \, .
\end{align*}
The term in the parentheses is finite due to the bounds on $\cali{D}_{2}(\varepsilon, y_{\rel}, y'_{\rel}, \eta_{\com \cup \intv{n} \setminus \alpha})$ derived earlier and our integrability assumption on $\range$. Let
\begin{align*}
	\theta_{2}  
	& \coloneqq \lim_{\varepsilon \to 0} \frac{1}{(2\pi)^{2}}\int_{\R^2} \int_{\R^2} \sqrrange(y_{\rel})\cali{D}_{2}(\varepsilon, y_{\rel} ,y'_{\rel}, \eta_{\com \cup \intv{n} \setminus \alpha}) \sqrrange(y'_{\rel}) \, \d y_{\rel} \, \d y'_{\rel} \\ 
	&= \int_{\R^2} \int_{\R^2} \sqrrange(y_{\rel}) \Theta_{2}(y_{\rel}, y'_{\rel}) \sqrrange(y'_{\rel}) \, \d y_{\rel} \, \d y'_{\rel}\, \, .
\end{align*}
The limit is of course independent of the fixed variable $\eta_{\com \cup \intv{n} \setminus \alpha}$. Thus, after taking limit $\varepsilon \to 0$ in equation~\eqref{eqn:diagonal_neumann_R2}, the operator $\big(\, \beta_{\varepsilon}^{-1}\id{n} - \sum_{r=2}^{5}\cali{D}_{r}(\varepsilon, \eta_{\com \cup \intv{n} \setminus \alpha})\big)^{-1}$ converges in norm to 
\begin{align*}
  & \frac{4\pi }{\log\big(\lambda + \quart\norm{\eta_{\com}}^2 + \half\norm{\eta_{\intv{n}\setminus\alpha}}^2\big) -2\log\pi -2\theta -4\pi\theta_{3} - 4\pi\theta_{4}}\proj \\
  & \qquad \qquad \qquad \quad \times \sum_{m=0}^{\infty}\frac{( 4\pi\theta_{2})^{m}}{\big( \log\big(\lambda + \quart\norm{\eta_{\com}}^2 + \half\norm{\eta_{\intv{n}\setminus\alpha}}^2\big) -2\log\pi -2\theta -4\pi\theta_{3} - 4\pi\theta_{4}\big)^{m}}\proj \\
 &  \qquad \qquad \qquad \quad =\frac{4\pi}{\log\big(\lambda + \quart\norm{\eta_{\com}}^2 + \half\norm{\eta_{\intv{n}\setminus\alpha}}^2\big) -2\log\pi -2\theta-4\pi(\theta_{2}+\theta_{3}+\theta_{4})}\proj \, .
\end{align*}
This is the kernel of $\F{\diag{\theta'}{\alpha}(\lambda)}(\eta_{\com \cup \intv{n} \setminus \alpha})$, with 
\begin{equation*}
	\theta' \coloneqq 2\theta + 4\pi\theta_{2} + 4\pi\theta_{3} + 4\pi\theta_{4}+2\log\pi \, \, .
\end{equation*} 
Thus, for fixed $\eta_{\com \cup \intv{n} \setminus \alpha}$, we have just shown that:
\begin{equation*}
\NOrmop{	\big(\, \beta_{\varepsilon}^{-1}\projperp + \beta_{\varepsilon}^{-1} \proj - \sum_{r=2}^{5}\cali{D}_{r}(\varepsilon, \eta_{\com \cup \intv{n} \setminus \alpha})\big)^{-1} - \F{\diag{\theta'}{\alpha}(\lambda)}(\eta_{\com \cup \intv{n} \setminus \alpha})} \to 0 \, \, .
\end{equation*}
Now let us show the strong convergence. Using Plancherel's theorem in the $y_{\com \cup \intv{n}\setminus\alpha}$ variables, the difference $\big|\langle f ,\big[	\big(\, \beta_{\varepsilon}^{-1}\projperp + \beta_{\varepsilon}^{-1} \proj - \sum_{r=2}^{5}\cali{D}_{r}(\varepsilon)\big)^{-1} - \diag{\theta'}{\alpha}(\lambda)\big] g \rangle \big| $ is
\begin{align*}
	&\lesssim
	\int_{\metorus{4,\com} \times \epsalhattorus{2\intv{n}\setminus\alpha}} \int_{\R^2} \int_{\R^2}
	|\F{f_{2}}(\eta_{\com \cup \intv{n} \setminus \alpha})|
	|\F{g_{2}}(\eta_{\com \cup \intv{n} \setminus \alpha})|
	|f_{1}(y_{\rel})|	|g_{1}(y'_{\rel})| \\
	& \times \big|\big( \beta_{\varepsilon}^{-1}\projperp + \beta_{\varepsilon}^{-1} \proj - \sum_{r=2}^{5}\cali{D}_{r}(\varepsilon,y_{\rel}, y'_{\rel}, \eta_{\com \cup \intv{n} \setminus \alpha})\big)^{-1} 
	- \F{\diag{\theta'}{\alpha}(\lambda)}(y_{\rel}, y'_{\rel}, \eta_{\com \cup \intv{n} \setminus \alpha})\big| \, \d y_{\rel}  \d y'_{\rel}  \d \eta_{\com \cup \intv{n} \setminus \alpha}
	\\
	& \qquad \qquad +\int_{\widehat{\R}^{2\intv{n}}_{\alpha}\setminus\metorus{4,\com} \times \epsalhattorus{2\intv{n}\setminus\alpha}}
	|\F{f_{2}}(\eta_{\com \cup \intv{n} \setminus \alpha})|
	|\F{g_{2}}(\eta_{\com \cup \intv{n} \setminus \alpha})|
  \\
	&\qquad \qquad \qquad \qquad \qquad \times  \int_{\R^2} \int_{\R^2}
	|f_{1}(y_{\rel})|
	|\F{\diag{\theta'}{\alpha}(\lambda)} (y_{\rel}, y'_{\rel}, \eta_{\com \cup \intv{n} \setminus \alpha})|
	|g_{1}(y'_{\rel})| \, \d y_{\rel} \, \d y'_{\rel} 	\, \d \eta_{\com \cup \intv{n} \setminus \alpha} \,  .      
\end{align*}
By the previously proven result, the $\d y_{\rel} \, \d y'_{\rel}$ integral in the first term above is bounded by $\normop{\big(\, \beta_{\varepsilon}^{-1}\projperp + \beta_{\varepsilon}^{-1} \proj - \sum_{r=2}^{5}\cali{D}_{r}(\varepsilon, \eta_{\com \cup \intv{n} \setminus \alpha})\big)^{-1} - \F{\diag{\theta'}{\alpha}(\lambda)}(\eta_{\com \cup \intv{n} \setminus \alpha})} \, \norm{f_{1}}_{2}\norm{g_{1}}_{2}$, which vanishes as $\varepsilon \to 0$ for each $\eta_{\com \cup \intv{n} \setminus \alpha}$. Therefore, we can use Cauchy--Schwarz inequality in the $\eta_{\com \cup \intv{n} \setminus \alpha}$ variables to bound the above integrals by
\begin{multline}
 \norm{f}_{2}\norm{g_{1}}_{2}
  \times \Bigg(\int_{\metorus{4,\com} \times \epsalhattorus{2\intv{n}\setminus\alpha}}\NOrmop{\big(\, \beta_{\varepsilon}^{-1}\projperp + \beta_{\varepsilon}^{-1} \proj - \sum_{r=2}^{5}\cali{D}_{r}(\varepsilon, \eta_{\com \cup \intv{n} \setminus \alpha})\big)^{-1} - \F{\diag{\theta'}{\alpha}(\lambda)}(\eta_{\com \cup \intv{n} \setminus \alpha})}^2 \\
  \qquad \qquad \qquad \qquad \qquad \qquad \times |\F{g_{2}}(\eta_{\com \cup \intv{n} \setminus \alpha})|^2 \d \eta_{\com \cup \intv{n} \setminus \alpha} \Bigg)^{\half} \\
  + \norm{f}_{2}\norm{g_{1}}_{2} \times\Bigg(\int_{\R^{2\intv{n}}\setminus\metorus{4,\com} \times \epsalhattorus{2\intv{n}\setminus\alpha}}\normop{\F{\diag{\theta'}{\alpha}(\lambda)}(\eta_{\com \cup \intv{n} \setminus \alpha})}^2 \, |\F{g_{2}}(\eta_{\com \cup \intv{n} \setminus \alpha})|^2 \Bigg)^{\half} \, \, .
\end{multline}
From equations~\eqref{eqn:prediagonal_final_bound} and \eqref{eqn:diagonal_partial_fourier_two}, the norms $\normop{\big(\, \beta_{\varepsilon}^{-1}\projperp + \beta_{\varepsilon}^{-1} \proj - \sum_{r=2}^{5}\cali{D}_{r}(\varepsilon, \eta_{\com \cup \intv{n} \setminus \alpha})\big)^{-1}}$ and $\normop{\F{\diag{\theta'}{\alpha}(\lambda)}(\eta_{\com \cup \intv{n} \setminus \alpha})}$ are uniformly bounded with respect to $\eta_{\com \cup \intv{n} \setminus \alpha}$ and hence, the $\eta_{\com \cup \intv{n} \setminus \alpha}$ integrand in both cases is bounded by $|\F{g_{2}}(\eta_{\com \cup \intv{n} \setminus \alpha})|^2$, which is integrable. Thus, by the dominated convergence theorem, 
\begin{equation*}
	\lim_{\varepsilon \to 0} \sup_{\norm{f}_{2} \leqslant 1}	\big|\langle f ,\big[	\big(\, \beta_{\varepsilon}^{-1}\projperp + \beta_{\varepsilon}^{-1} \proj - \sum_{r=2}^{5}\cali{D}_{r}(\varepsilon)\big)^{-1} - \diag{\theta'}{\alpha}(\lambda)\big] g \rangle \big|   = 0 \, \, ,
\end{equation*}
which yields the desired strong convergence result. 

It remains to show the convergence of the full prelimiting diagonal operator, which includes the lattice correction term $\cali{D}_{1}(\varepsilon)$. Since 
\[
\preoffdiag{\alpha\alpha}(\lambda) = \cali{D}_{1}(\varepsilon) + \sum_{r=2}^{5}\cali{D}_{r}(\varepsilon) \, \, ,
\]
by a resolvent identity, $	\prediag{\theta}{\alpha}(\lambda) = (\beta_{\varepsilon}^{-1}\id{n} - \preoffdiag{\alpha\alpha}(\lambda))^{-1}$ satisfies the equation
\begin{equation*}
	  \big(\beta_{\varepsilon}^{-1}\id{n} - \sum_{r=2}^{5}\cali{D}_{r}(\varepsilon)\big)^{-1} + \big(\beta_{\varepsilon}^{-1}\id{n} - \preoffdiag{\alpha\alpha}(\lambda)\big)^{-1} \, \cali{D}_{1}(\varepsilon)\, \big(\beta_{\varepsilon}^{-1}\id{n} - \sum_{r=2}^{5}\cali{D}_{r}(\varepsilon)\big)^{-1} \, \, .
\end{equation*}
Rearranging the above equation, we obtain
\begin{equation*}
	\prediag{\theta}{\alpha}(\lambda)  =\big(\beta_{\varepsilon}^{-1}\id{n} - \textstyle\sum_{r=2}^{5}\cali{D}_{r}(\varepsilon)\big)^{-1} \, \Big[ \,\id{n} - \cali{D}_{1}(\varepsilon) \big(\beta_{\varepsilon}^{-1}\id{n} - \textstyle\sum_{r=2}^{5}\cali{D}_{r}(\varepsilon) \, \, \big)^{-1} \,\Big]^{-1} \, \, .
\end{equation*}
Since 
\begin{equation*}
	\sup_{\varepsilon > 0} \normop{\cali{D}_{1}(\varepsilon)} \, \NOrmop{\big(\beta_{\varepsilon}^{-1}\id{n} - \sum_{r=2}^{5}\cali{D}_{r}(\varepsilon)\big)^{-1}} \lesssim (\log\lambda - \theta)^{-1} \ll 1 \, \, ,
\end{equation*}
the operator above is uniformly bounded in $\varepsilon > 0$. Thus, we can take a term by term limit in the strong operator topology to obtain
\begin{equation*}
	\lim_{\varepsilon \to 0}\prediag{\theta}{\alpha}(\lambda) = \diag{\theta'}{\alpha}(\lambda) \big(\id{n} - (\Theta_{1} \otimes \fid{n-1} )\, \diag{\theta'}{\alpha}(\lambda)\big)^{-1} \, \, .
\end{equation*}
To determine what this operator is, fix $\eta_{\com \cup \intv{n} \setminus \alpha} \in \widehat{\R}^{2\intv{n}\setminus\rel}_{\alpha}$. For this choice, the restriction of the limiting operator above on $\Ltwo(\R^2)$ takes the series form:
\begin{equation*}
	\F{\diag{\theta'}{\alpha}(\lambda)} 
	 (\eta_{\com \cup \intv{n} \setminus \alpha})\big(\id{n}  - \Theta_{1}	\F{\diag{\theta'}{\alpha}(\lambda)}(\eta_{\com \cup \intv{n} \setminus \alpha})\big)^{-1} = \sum_{m=0}^{\infty}	\F{\diag{\theta'}{\alpha}(\lambda)}(\eta_{\com \cup \intv{n} \setminus \alpha}) \big[ \Theta_{1}	\F{\diag{\theta'}{\alpha}(\lambda)}(\eta_{\com \cup \intv{n} \setminus \alpha}) \big]^{m}  .
\end{equation*}
From equation~\eqref{eqn:diagonal_projection_form}, recall
\begin{equation*}
	\F{\diag{\theta'}{\alpha}(\lambda)}(\eta_{\com \cup \intv{n} \setminus \alpha}) = 4\pi\big(\log\big( \lambda +\quart \norm{\eta_{\com}}^2 + \half\norm{\eta_{\intv{n}\setminus\alpha}}^2\big) - \theta'\big)^{-1}\proj. 
\end{equation*}
Inserting this into the series above, it becomes
\begin{align*}
\frac{4\pi}{\big(\log\big( \lambda +\quart \norm{\eta_{\com}}^2 + \half\norm{\eta_{\intv{n}\setminus\alpha}}^2\big) - \theta'\big)}	\sum_{m=0}^{\infty}\frac{(4\pi)^{m}\,\proj [\Theta_{1}\proj]^{m}}{ \big(\log\big( \lambda +\quart \norm{\eta_{\com}}^2 + \half\norm{\eta_{\intv{n}\setminus\alpha}}^2\big) - \theta'\big)^{m}} \, \, .
\end{align*}
Because $\proj$ is a projection, $\proj [\Theta_{1}\proj]^{m} = [\proj \Theta_{1} \proj]^{m}$ for $m \geqslant 1$. Similar to how we calculated $\theta_{2}$, we infer that 
\begin{equation*}
	\proj \Theta_{1} \proj = \theta_{1} \proj \, \, ,
\end{equation*}
where 
\begin{equation*}
	\theta_{1} \coloneqq  \int_{\R^2} \int_{\R^2} \sqrrange(y_{\rel}) \Theta_{1}(y_{\rel}, y'_{\rel}) \sqrrange(y'_{\rel}) \, \d y_{\rel} \, \d y'_{\rel} \, \, .
\end{equation*}
Plugging this into the series, we obtain
\begin{align*}
	\frac{4\pi}{\big(\log\big( \lambda +\quart \norm{\eta_{\com}}^2 + \half\norm{\eta_{\intv{n}\setminus\alpha}}^2\big) - \theta'\big)}	&\sum_{m=0}^{\infty}\frac{(4\pi \theta_{1})^{m}}{ \big(\log\big( \lambda +\quart \norm{\eta_{\com}}^2 + \half\norm{\eta_{\intv{n}\setminus\alpha}}^2\big) - \theta'\big)^{m}} \proj \\
	&= \frac{4\pi}{\log\big( \lambda +\quart \norm{\eta_{\com}}^2 + \half\norm{\eta_{\intv{n}\setminus\alpha}}^2\big) - \theta' - 4\pi\theta_{1}} \proj \, .
\end{align*}
We recognize this as the operator $\F{\diag{\shfparameter}{\alpha}(\lambda)}(\eta_{\com \cup \intv{n} \setminus \alpha})$ with $\shfparameter = \theta' + 4\pi\theta_{1}$ or equivalently,
\begin{equation*}
	\shfparameter =  2\theta + 4\pi\sum_{r=1}^{4}\theta_{r}+ 2\log\pi \, \, .
\end{equation*} 
Thus, $\prediag{\theta}{\alpha}(\lambda)$ converges to $\diag{\shfparameter}{\alpha}(\lambda)$ strongly. This completes the proof of Lemma~\ref{lem:resolventconvergence}, and hence also the proof of Theorem~\ref{thm:maintheorem}, namely that 
\begin{equation*}
  \predeltatrans{\theta}(t) \to \deltatrans{\shfparameter}(t) \qquad \text{strongly} \, ,
\end{equation*}
uniformly on compact sets.

\section{Convergence to the SHF}{\label{section:convergencetoshf}}

In the final section, we prove Proposition~\ref{cor:shfconvergence}. The first step in this direction is to prove the tightness of the piecewise constant extensions of the family $\{\epsshe_{s,t}\}_{0<s<t}$ in the space $C(\R_{\leqslant}^2,\measspace{\R^2 \times \R^2})$. The approach we employ is the same as that in~\cite{tsai25stochasticheatflow} but adapted to our discrete setting. This involves establishing weighted norm bounds on the prelimiting semigroup operators, which is the content of the first lemma below. Before that, we need to introduce more notation. For a function $f$ on $\Omega = \R^{2\intv{n}}$ or $\R^{2\intv{n}}_{\alpha}$ and $a \in \R$, we define its \textit{prelimiting weighted norm} as
\begin{equation*}
	\norm{f}_{p, a}^{p} \coloneqq
	\begin{cases}
		\int_{\R^{2\intv{n}}} \big| f(x) e^{a |x|} \big|^{p} \, \d x \quad  & \Omega = \R^{2\intv{n}} \\
		\int_{\R^{2\intv{n}}_{\alpha}} \big|f(y) \, e^{ a | y_{\com} + \frac{\varepsilon y_{\rel}}{2} | + a | y_{\com} - \frac{\varepsilon y_{\rel}}{2}| + a |y_{\intv{n}\setminus\alpha}|} \big|^{p}\, \d y \quad & \Omega = \R^{2\intv{n}}_{\alpha}	 \, ,
		\end{cases} 
\end{equation*}
 and let $\Lpee{p}_{a}(\Omega) \coloneqq \{ g: \Omega \to \R \, : \, \norm{g}_{p, a} < \infty \}$. The norm of course depends on $\varepsilon$ but we suppress it from the notation for brevity. When $a = 0$, the definition just reduces to the usual $\Lpee{p}$ norm in which case we will stick to the standard notation. For a non-negative operator $\cali{T}$ that maps between $\Lpee{p}_{a}$ and $\Lpee{p'}_{a'}$, we set its norm as
 \begin{equation*}
 	\Norm{\cali{T}}_{ p,a \to p', a'} \coloneqq \sup\{ | \langle f, \cali{T} g \rangle| \, : \, \norm{f}_{ q', a'} \leqslant 1 , \norm{g}_{ p, a} \leqslant 1\} \, ,
 \end{equation*}
 where $q'$ is such that $1/p' + 1/q' = 1$. For the case when $p=p' = 2$ and $a = a' =0$, this is the same as $\normop{ \cdot}$ that we have been using so far. We slightly abuse notation and use the same symbol for piecewise extensions of $\epsshe$.

\begin{lemma}{\label{lem:prelimitingtransbounds}}
	Fix $\alpha,  \alpha' \in \pair\intv{n}$ such that $\alpha \neq \alpha'$. For $a > 0, p>2$ and $q$ such that $p^{-1} + q^{-1} =1$ and $t > 0$, the following bounds hold true: 
	\begin{subequations}{\label{eqn:prelimitingtransbounds}}
		\begin{align}
			\normop{\epstrans(t)} & \leqslant 1 \label{eqn:pretransbound_0} \\
			\norm{\epstwotrans(t)}_{q} & \leqslant ct^{-\frac{1}{p}} \label{eqn:pretransbound_1} \\
			\normop{\preoutgtrans{\alpha}(t)} = \normop{\preinctrans{\alpha}(t)} & \leqslant ct^{-\tfrac{1}{2}} \label{eqn:pretransbound_2}\\
			\normop{\preoffdiagtrans{\alpha\alpha'}(t)} & \leqslant ct^{-1} \label{eqn:pretransbound_3} \\
			\NORmop{\int_{0}^{\infty}e^{- t}\preoffdiagtrans{\alpha \alpha'}(t) \, \d t \,} & \leqslant c \label{eqn:pretransbound_4} \\
			\normop{\prediagtrans{\theta}{\alpha}(t)} & \leqslant ct^{-1}|\log\big( t \wedge \tfrac{1}{2} \big) |^{-2}e^{ct} \label{eqn:pretransbound_5} \\
		\Norm{\preoutgtrans{\alpha}(t)}_{2,0 \to q,a}	=\Norm{\preinctrans{\alpha}(t)}_{p,a \to 2,0} & \leqslant ct^{-\frac{1}{p}} \, . \label{eqn:pretransbound_6} 
		\end{align}
	\end{subequations}
	Consequently, there exists $c > 0$ such that
	\begin{equation*}
		\sup_{0 <\varepsilon <1/c}\normop{\predeltatrans{\theta}(t)} \leqslant ce^{ct} \, .
	\end{equation*}
\end{lemma}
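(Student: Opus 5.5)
We prove the bounds one at a time, mostly in Fourier space, reusing the computations of Section~\ref{section:convergenceofresolvent} with the resolvent symbol $(\lambda+\fourierepsgen(\xi))^{-1}$ replaced by the heat symbol $e^{-t\fourierepsgen(\xi)}$. Throughout, the key input is Lemma~\ref{lem:uniformgreensbound}, which gives $e^{-t\fourierepsgen(\xi)}\leqslant e^{-ct\norm{\xi}^2}$ uniformly in $\varepsilon$.

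\textbf{Bounds \eqref{eqn:pretransbound_0} and \eqref{eqn:pretransbound_1}.} Bound \eqref{eqn:pretransbound_0} is immediate: $\epstrans(t)$ is a Markov semigroup with symbol in $[0,1]$. For \eqref{eqn:pretransbound_1}, write $\norm{\epstwotrans(t)}_q^q\leqslant\norm{\epstwotrans(t)}_\infty^{q-1}\norm{\epstwotrans(t)}_1$. The $\ell^1$ norm (with lattice normalisation) equals $1$. For the sup norm, Fourier inversion gives $\norm{\epstwotrans(t)}_\infty\lesssim\int e^{-ct\norm{\xi}^2}\d\xi\lesssim t^{-1}$. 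Together these give $t^{-(q-1)/q}=t^{-1/p}$.

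\textbf{Bounds \eqref{eqn:pretransbound_2}--\eqref{eqn:pretransbound_4}.} For \eqref{eqn:pretransbound_2}, repeat the outgoing computation: split off the lattice correction $\cali I$ exactly as before, pass to the $\eta$ frame, and apply Cauchy--Schwarz. The factor $(\lambda+\norm{\eta_{\rel}}^2)^{-2}$ is replaced by $e^{-2ct\norm{\eta_{\rel}}^2}$, whose integral is $\lesssim t^{-1}$; this yields $t^{-1/2}$. Bound \eqref{eqn:pretransbound_3} follows from the semigroup property: $\preoffdiagtrans{\alpha\alpha'}(t)=\preinctrans{\alpha}(t/2)\preoutgtrans{\alpha'}(t/2)$ up to adjoints, so applying \eqref{eqn:pretransbound_2} twice gives $t^{-1}$. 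Bound \eqref{eqn:pretransbound_4} is the $\lambda=1$ case of \eqref{eqn:preoffdiagonal_bound}, which holds uniformly in $\varepsilon$ through Lemma~\ref{lem:guquasteltsailemma}.

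\textbf{Bound \eqref{eqn:pretransbound_6}.} Duality reduces it to showing $\norm{\preoutgtrans{\alpha}(t)g}_{q,a}\lesssim t^{-1/p}\norm{g}_2$. Bound the kernel by the product heat kernel, use $e^{a|x|}\leqslant e^{a|x-x'|}e^{a|x'|}$, and factor out the exponential weight, which costs $e^{ca^2t}$ since heat kernels have Gaussian tails uniformly in $\varepsilon$ via the compound Poisson tail assumption. The collapsed pair $(i,j)$ then contributes a convolution with $\epstwotrans$ in a single $\R^2$ variable. Young's inequality combined with \eqref{eqn:pretransbound_1} gives $t^{-1/p}$. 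This step needs care because the weight in the $\alpha$ frame is exactly the $\Seps{\alpha}$ pullback of $e^{a|x|}$, which is why the prelimiting weighted norm was defined that way.

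\textbf{Bound \eqref{eqn:pretransbound_5} (main obstacle).} Here the Laplace transform is not enough, since we need pointwise-in-$t$ control. We would mimic Gu--Quastel--Tsai and Tsai. Freeze $\eta_{\com\cup\intv{n}\setminus\alpha}$, which reduces $\prediagtrans{\theta}{\alpha}$ to an essentially rank-one operator along $\proj$ plus a perturbation. The resolvent is then of order $\bigl(\log(\lambda+\cdot)-\theta+O(1)\bigr)^{-1}$ by the estimates of the diagonal subsection, uniformly for $\lambda\geqslant\lambda_0$ and in $\varepsilon$. Inverting the Laplace transform along a contour $\Re\lambda=c$, where the same bounds persist for complex $\lambda$ with $|\lambda|$ large, should give the kernel $\jtheta$-type behaviour $t^{-1}|\log t|^{-2}$ for small $t$ and growth $e^{ct}$ for large $t$. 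We would first try to dominate $\prediagtrans{\theta}{\alpha}(t)$ termwise in its series by the corresponding convolution of $\beta_\varepsilon$-weighted kernels, using positivity. The hardest point is making the contour inversion uniform in $\varepsilon$.

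\textbf{Consequence.} Expand $\predeltatrans{\theta}(t)$ in diagrams and apply the bounds above. Each factor is controlled by \eqref{eqn:pretransbound_2}, \eqref{eqn:pretransbound_3} and \eqref{eqn:pretransbound_5}, and the time integrals over the simplex are finite. Inserting the factor $e^{-\lambda t}e^{\lambda t}$ and using \eqref{eqn:pretransbound_4} together with the Laplace-side bounds from Lemma~\ref{lem:resolventconvergence} makes the series summable with geometric ratio for large $\lambda$. This yields $\normop{\predeltatrans{\theta}(t)}\leqslant ce^{ct}$.
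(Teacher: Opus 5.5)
Your treatment of \eqref{eqn:pretransbound_0}--\eqref{eqn:pretransbound_4} is sound and essentially follows the paper. For \eqref{eqn:pretransbound_3}, the exact factorization $\preoffdiagtrans{\alpha\alpha'}(t)=\preinctrans{\alpha}(t/2)\,\preoutgtrans{\alpha'}(t/2)$ holds because Chapman--Kolmogorov is exact for the piecewise-constant kernels, and it is a cleaner route than the paper's appeal to Lemma~\ref{lem:guquasteltsailemma}. The remaining two bounds, however, have genuine gaps.

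\textbf{Bound \eqref{eqn:pretransbound_6}.} Your duality step puts the weight on the wrong side.
\begin{itemize}
\item By the paper's definition, $\Norm{\preoutgtrans{\alpha}(t)}_{2,0\to q,a}$ is the supremum of $|\langle f,\preoutgtrans{\alpha}(t)g\rangle|$ over $\norm{f}_{p,a}\leqslant 1$ and $\norm{g}_{2}\leqslant 1$. The growing weight sits on the test function $f$, so the dual quantity is $\norm{\preoutgtrans{\alpha}(t)g}_{q,-a}$.
\item The estimate you set out to prove, $\norm{\preoutgtrans{\alpha}(t)g}_{q,a}\lesssim t^{-1/p}\norm{g}_{2}$, is false. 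Shift a fixed nonnegative $g$ far out in the $y_{\com}$ variable. Then $\norm{g}_2$ is unchanged, while $\preoutgtrans{\alpha}(t)g$ keeps a fixed amount of mass in a ball of fixed radius on which $e^{a|x|}$ is arbitrarily large.
\item Your mechanism shows the same defect: $e^{a|x|}\leqslant e^{a|x-x'|}e^{a|x'|}$ leaves a factor $e^{a|x'|}$ on $g$, which $\norm{g}_2$ cannot absorb.
\item The Gaussian tails you invoke are not available. Assumption~\ref{assumption2} gives only $2+\delta$ moments of $\nu$, so $\int e^{a|x|}\epstwotrans(t,x)\,\d x$ can be infinite for every $a>0$.
\end{itemize}
The paper never transports the weight through the kernel. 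It uses H\"older in $x_i$ with \eqref{eqn:pretransbound_1}, then contractivity of $\epstwotrans(t)$ in the other coordinates and Cauchy--Schwarz. Finally, H\"older with exponents $p/2$ and $p/(p-2)$ bounds the mixed norm $\big(\int(\int|f|^{p}\,\d x_i)^{2/p}\,\d x_{\intv{n}\setminus\{i\}}\big)^{1/2}$ by $c\norm{f}_{p,a}$. This step uses only $e^{a|x|}\geqslant e^{a|x_{\intv{n}\setminus\{i\}}|}$ and the integrability of $e^{-\frac{2pa}{p-2}|x_{\intv{n}\setminus\{i\}}|}$.

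\textbf{Bound \eqref{eqn:pretransbound_5}.} The contour-inversion plan is not carried out, and it would not work as stated.
\begin{itemize}
\item The available estimates hold for large real $\lambda$. An operator bound of size $1/\log|\lambda|$ on a vertical line is not absolutely integrable, so no pointwise-in-$t$ estimate follows. You would need analytic continuation into a sector and uniform-in-$\varepsilon$ control of the lattice resolvent there.
\item Your fallback, termwise domination of the series, is the right direction, but it lacks the key input: a scalar bound on $\normop{\preoffdiagtrans{\alpha\alpha}(\tau)}$ that retains the lattice cutoff.
\item The bound $c\tau^{-1}$ your factorization yields (it applies equally for $\alpha=\alpha'$) is not integrable at $\tau=0$. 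With it, every term with $m\geqslant 2$ in the series for $\prediagtrans{\theta}{\alpha}(t)$ would be infinite.
\end{itemize}
The paper uses the box decomposition and discrete Plancherel, exactly as for $\cali{D}_{1}$. Because the rescaled relative frequency ranges over a bounded set, this controls $\normop{\preoffdiagtrans{\alpha\alpha}(\tau)}$ by an integral over $\ballorigin{R}$ of $\varepsilon^{-2}$ times a Gaussian in $\sqrt{\tau}\,\eta_{\rel}/\varepsilon$. That gives $\phi_\varepsilon(\tau)=\big(4\pi\sqrt{\tau^2+c_0\varepsilon^2\tau}\big)^{-1}$, which behaves like $(4\pi\tau)^{-1}$ for $\tau\gg\varepsilon^2$ and is integrable at $0$. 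Everything then reduces to the scalar renewal series $\beta_\varepsilon\sum_{m}\int_{\Sigma(t)}\prod_{l=1}^{m}\beta_\varepsilon\phi_\varepsilon(\tau_l)\,\d\vec{\tau}$, which is summed as in \cite[eq.~(3.15)]{sudhtsai25deltabose}.

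One caution, since your stated key input is the lower bound of Lemma~\ref{lem:uniformgreensbound}. Because $\beta_\varepsilon\int_0^1\phi_\varepsilon(\tau)\,\d\tau\to 1$, this reduction is exactly critical. The prefactor $(4\pi)^{-1}$ must therefore be sharp. Near the origin the symbol has to be compared with $\norm{\eta_{\rel}}^2$ itself, as in the treatment of $\cali{D}_{3}$, not merely bounded below by $c\norm{\xi}^2$ with a lossy constant.
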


Using this lemma, for every $m \geqslant 1$, $h,h' \in C_{c}(\R^2)$ and $s < t, s' < t'$ in a compact interval $I$, the partition functions satisfy
\begin{equation*}
	\Big(\E\Big| \big\langle h , \, (\epsshe_{s,t} - \epsshe_{s',t'})\, h' \big\rangle \Big|^{2m} \Big)^{1/2m} \leqslant c \big( |s-s'|^{\quart(1- \frac{2}{p})} + |t-t'|^{\quart(1-\frac{2}{p})} \big)  \, ,
\end{equation*}
for a constant $c = c(m, \range, p, h, h', I)$. This verifies the requirements for Kolmogorov's criterion, yielding the tightness of the process $\{\langle h , \epsshe_{s,t} h' \rangle \}_{s< t}$ for every $h, h' \in C_{c}(\R^2)$. Using Mitoma's theorem~\cite{mitoma83}, this implies the tightness of the partition functions in $C(\R_{\leqslant}^{2}, \measspace{\R^2 \times \R^2})$. The proof for the above inequality is the same as in~\cite{tsai25stochasticheatflow}. We skip this part and only prove Lemma~\ref{lem:prelimitingtransbounds}.

\begin{proof}
	Assuming the bounds hold true, the bound on $\predeltatrans{\theta}(t)$ follows from Lemma~2.1 of \cite{sudhtsai25deltabose}. Therefore, it suffices to show the bounds \eqref{eqn:pretransbound_0}--\eqref{eqn:pretransbound_6}. We begin with equation~\eqref{eqn:pretransbound_0}. Fix $f, g \in \Ltwo(\R^{2\intv{n}})$. Then,
	\begin{align*}
		\langle f, \epstrans(t) g \rangle 
		&= \int_{\R^{2\intv{n}}} \int_{\R^{2\intv{n}}} f(x) \conj{\epstrans(t, x - x')} g(x') \, \d x \, \d x' \\
		& = \frac{1}{(2\pi)^{2n}} \int_{\R^{2\intv{n}}} \int_{\R^{2\intv{n}}} \int_{\epshattorus{2\intv{n}}}f(x) e^{-t\fourierepsgen(\xi)}e^{\im \xi \cdot (\epsfloor{x} - \epsfloor{x'})} g(x') \, \d x \, \d x' \, \d \xi \, .
	\end{align*}
	As done in Section~\ref{section:convergenceofresolvent}, to bound this integral, we decompose it as a sum over the lattice and estimate it there using Fourier methods. Define 
	\begin{equation*}
		\widetilde{f}(u, \xi) \coloneqq \frac{1}{(2\pi)^{n}} \sum_{p \in \Z^{2\intv{n}}_{\varepsilon}} f(p + u)e^{\im \xi \cdot p} \, \, ,
	\end{equation*}
	and $\widetilde{g}$ similarly. Using this notation, the inner product above takes the form
	\begin{equation*}
		\int_{\epshattorus{2\intv{n}}} \int_{\epsrect{2\intv{n}}} \int_{\epsrect{2\intv{n}}} \widetilde{f}(u, \xi) e^{-t \fourierepsgen(\xi)} \conj{\widetilde{g}(u',\xi)} \, \d u \, \d u' \, \d \xi \, .
	\end{equation*} 
	Taking absolute values, using the trivial bound $|e^{-t\fourierepsgen(\xi)}| \leqslant 1$, followed by the Cauchy--Schwarz inequality and the discrete Plancherel's theorem, we obtain the bound
	\begin{equation*}
		| \langle f, \epstrans(t) g \rangle | \leqslant \bigg( \int_{\epsrect{2\intv{n}}} \sum_{p \in \Z^{2\intv{n}}_{\varepsilon}} |f(p+u)|^{2} \, \d u \bigg)^{\half} \cdot \bigg( \int_{\epsrect{2\intv{n}}} \sum_{p' \in \Z^{2\intv{n}}_{\varepsilon}} |g(p'+u')|^{2} \, \d u' \bigg)^{\half} = \norm{f}_{2} \norm{g}_{2} \, \, .
	\end{equation*}   
	Taking supremum over $\norm{f}_{2} , \norm{g}_{2} \leqslant 1$ proves equation~\eqref{eqn:pretransbound_0}. Next, we proceed to proving equation~\eqref{eqn:pretransbound_1}. Clearly, $\epstwotrans(t) \in \Lpee{1}(\R^2)$ as it is just the piecewise constant extension of a probability transition function. Recall, by the Fourier inversion formula,
	\begin{equation*}
		\epstwotrans(t,x) = \frac{1}{(2\pi)^{2}} \int_{\epshattorus{2}} e^{- t \fourierepstwogen(\xi)} e^{- \im \xi \cdot x} \, \d \xi \, .
	\end{equation*}
	Using Lemma~\ref{lem:uniformgreensbound} in this expression, we obtain the bound
	\begin{equation*}
		\norm{\epstwotrans(t)}_{\infty} 
		\leqslant c \int_{\widehat{\R}^2}e^{- ct \norm{\xi}^{2}} \, \d \xi 
		\leqslant ct^{-1} \, .
	\end{equation*}
	Therefore, 
	\begin{equation*}
		\int_{\R^2}|\epstwotrans(t,x)|^{q} \, \d x = \int_{\R^2} |\epstwotrans(t,x)|^{q-1} |\epstwotrans(t,x)| \, \d x \leqslant c t^{1-q} \int_{\R^2} \epstwotrans(t,x) \, \d x \, .
	\end{equation*}
	Raising both sides to the $1/q$-th power yields the desired bound. Let us move to proving equation~\eqref{eqn:pretransbound_2}. Equality follows by adjointness, so it suffices to show the bound for $\preoutgtrans{\alpha}(t)$. Fix $f \in \Ltwo(\R^{2\intv{n}})$ and $g = g_1 g_2 \in \Ltwo(\R^{2\intv{n}}_{\alpha})$. Similar to what we did for the corresponding resolvent, replace the operator by its continuous interpolation $\epsconttrans(t)$ along with the error incurred. Then, moving to the Fourier domain, we see that $\langle f, \preoutgtrans{\alpha}(t) g \rangle$ equals 
	\begin{align*}
		&\frac{1}{2\pi}
		\int_{\epshattorus{2\intv{n}}} \F{f}(\xi)e^{-t\fourierepsgen(\xi)}\conj{\F{g_{2}}(\xi_i + \xi_j, \xi_{\intv{n}\setminus\alpha})} \, \d \xi \, \int_{\R^2}g_{1}(y'_{\rel}) \sqrrange(y'_{\rel}) \, \d y'_{\rel} \\
		& + \frac{1}{(2\pi)^{2n}}\int_{\R^{2\intv{n}}} \int_{\R^{2\intv{n}}}\int_{\epshattorus{2\intv{n}}}f(x)e^{-t\fourierepsgen(\xi)}\Big[e^{\im \xi \cdot (\epsfloor{x}-\epsfloor{x'})} - e^{\im \xi \cdot (x - x')}\Big] \conj{G_{\varepsilon\alpha}(x')}  \, \d x \, \d x' \, \d \xi \, ,
	\end{align*}
	where $G_{\varepsilon\alpha}(x') = \varepsilon^{-2}(\sqrrange g \circ \Teps{\alpha})(x')$. Using Lemma~\ref{lem:uniformgreensbound}, the first term can be bounded by $t^{-\tfrac{1}{2}}$. Methods similar to those used in the proof of the norm bound for $\preoutg{\alpha}(\lambda)$ can be used to show the same bound for the second term as well.  In fact, the bound in equation~\eqref{eqn:pretransbound_3} also follows along the same lines as its resolvent bound, using Lemma~\ref{lem:guquasteltsailemma}. 
	
	Let us move to showing~\eqref{eqn:pretransbound_4}. For this case assume $f \in \Ltwo(\R^{2\intv{n}}_{\alpha})$ and $g \in \Ltwo(\R^{2\intv{n}}_{\alpha'})$ instead. As the kernel is non-negative, by Tonelli's theorem, we have
	\begin{equation*}
		\Big| \Big\langle   f, \int_{0}^{\infty}e^{-t} \preoffdiagtrans{\alpha\alpha'}(t) \, \d t \, g  \Big\rangle  \Big| \leqslant \big\langle \, |f| , \preoffdiag{\alpha\alpha'}(1) |g| \,\big\rangle \leqslant c \norm{f}_{2} \norm{g}_{2} \, ,
	\end{equation*}
	by the results in Lemma~\ref{lem:resolventconvergence}. Although $1$ lies outside the assumed range of $\lambda$ in that Lemma, the bound for $\preoffdiag{\alpha\alpha'}(\lambda)$ holds for any $\lambda > 0$.  This proves equation~\eqref{eqn:pretransbound_4}. Next, we turn to $\prediagtrans{\theta}{\alpha}(t)$. Recall it is given by the expression:
	\begin{equation*}
		\beta_{\varepsilon} \delta_{0}(t) \id{n} + \sum_{m=1}^{\infty}\beta_{\varepsilon}^{m+1} \int_{\Sigma(t)} \preoffdiagtrans{\alpha\alpha}(\tau_1)\preoffdiagtrans{\alpha\alpha}(\tau_2) \cdots \preoffdiagtrans{\alpha\alpha}(\tau_m) \, \d \vec{\tau} \, .
	\end{equation*}
	The first term has norm $\beta_{\varepsilon}$ which vanishes as $\varepsilon \to 0.$ Thus, it suffices to show the desired bound for the second term. First let us estimate the norm of $\preoffdiagtrans{\alpha\alpha}(t)$. Using 
	\begin{align*}
		\langle f, \preoffdiagtrans{\alpha\alpha}(t) g \rangle
		& = \int_{\R^{2\intv{n}}} \int_{\R^{2\intv{n}}} F_{\varepsilon\alpha}(x) \conj{\epstrans(t,\epsfloor{x} - \epsfloor{x'})} \conj{G_{\varepsilon\alpha}(x')} \, \d x \, \d x' \\
		&  =\frac{1}{(2\pi)^{2n}}\int_{\R^{2\intv{n}}}\int_{\R^{2\intv{n}}}\int_{\epshattorus{2\intv{n}}}F_{\varepsilon\alpha}(x) e^{-t\fourierepsgen(\xi)} e^{\im \xi \cdot(\epsfloor{x} - \epsfloor{x'})}G_{\varepsilon\alpha}(x') \, \d x \, \d x' \, \d \xi \, .
	\end{align*}
	After making the necessary change of variables as in how we estimated $\langle f ,\cali{D}_{1}(\varepsilon) g \rangle$, we obtain
	\begin{equation}{\label{eqn:diagonaltrans_bound_1}}
		|\langle f, \preoffdiagtrans{\alpha\alpha}(t) g \rangle | \lesssim \Bigg(\frac{\varepsilon^{-2}}{(2\pi)^{2}} \int_{\ballorigin{R}} e^{-\frac{ct}{\varepsilon^2}\norm{\eta_{\rel}}^2} \, \d \eta_{\rel} \Bigg) \norm{f}_{2} \norm{g}_{2}  \, ,
	\end{equation}
	where $R > 0$ is chosen such that $\metorus{4,\rel}(\eta_{\com}) \subset \ballorigin{R}$, uniformly with respect to $\eta_{\com \cup \intv{n}\setminus\alpha} \in \metorus{4,\com} \times \epsalhattorus{2\intv{n}\setminus\alpha}$. Thus, to estimate the norm of $\prediagtrans{\theta}{\alpha}(t)$, it suffices to evaluate the above $\eta_{\rel}$ integral, plug it into its definition, and evaluate the sum after integrating over the simplex. Switching to polar coordinates, the $\d \eta_{\rel}$ integral becomes:
	\begin{align*}
		\varepsilon^{-2} \int_{0}^{R} e^{-\frac{ct}{\varepsilon^2}r^2} r \, \d r = \frac{1}{4\pi}\frac{1-e^{-\frac{cR^2 t}{\varepsilon^2}}}{2t} \, .
	\end{align*}
	To bound this function, first consider the product
	\begin{equation*}
		\Big(1- e^{-\frac{cR^2 t}{\varepsilon^2}}\Big) \times \sqrt{1+ \frac{\varepsilon^2}{cR^2t}}  = \Big(1 -e^{-\frac{cR^2 t}{\varepsilon^2}}\Big) \times \frac{\sqrt{ cR^2 t + \varepsilon^2}}{\sqrt{cR^2 t}}\, \, .
	\end{equation*}
	Consider the case when $cR^2 t \leqslant \varepsilon^2$. In this case, use the inequality $|1 - e^{-x}| \leqslant \sqrt{x}$ for $x > 0$, to obtain the bound
	\begin{equation*}
		\frac{\sqrt{cR^2 t}}{\sqrt{\varepsilon^2}} \times \frac{\sqrt{cR^2 t + \varepsilon^2}}{\sqrt{cR^2 t}} = \sqrt{\frac{cR^2 t}{\varepsilon^2} + 1} \leqslant 2 \, .
	\end{equation*}
	In the complementary case, when $\varepsilon^2 < R^2 t$, we use the trivial inequality $|1 - e^{-x} | \leqslant 2$ for $x > 0$, to get the upper bound
	\begin{equation*}
		2 \frac{\sqrt{cR^2 t + cR^2 t}}{\sqrt{cR^2 t}} = 2\sqrt{2} \, .
	\end{equation*}
	Combining these followed by some rearrangements, we obtain
	\begin{equation*}
		\frac{1 - e^{-\frac{cR^2 t}{\varepsilon^2}}}{2t} \leqslant \frac{1}{2t} \times \frac{1}{\sqrt{1 + \frac{\varepsilon^2}{cR^2 t}}} = \frac{1}{2\sqrt{t^2 + \varepsilon^2c^{-1}R^{-2}}t} \, . 
	\end{equation*}
    Using this in the definition of $\prediagtrans{\theta}{\alpha}(t)$ in conjunction with the bound in equation~\eqref{eqn:diagonaltrans_bound_1}, we get the estimate
    \begin{equation*}
     \normop{\diagtrans{\theta}{\alpha}(t)} \leqslant \beta_{\varepsilon}\sum_{m=1}^{\infty} \int_{\Sigma(t)} \prod_{l=1}^{m} \frac{\beta_{\varepsilon}}{4\pi\sqrt{\tau_{k}^{2} + c^{-1}R^{-2}\varepsilon^2 \tau_{l}}} \, \d \vec{\tau} \, .
    \end{equation*}
    This is the same estimate obtained in \cite[eq. (3.15)]{sudhtsai25deltabose} with $c_{0} = c^{-1}R^{-2}$ as part of the proof of diagonal operator bound in the weighted space $\Lpee{2}_{a}(\R^{2\intv{n}}_{\alpha})$. Thus, the rest of the proof can be continued along those lines, to yield the required bound in equation~\eqref{eqn:pretransbound_5}.
    
It remains to prove the weighted norm bound for $\preoutgtrans{\alpha}(t)$ and $\preinctrans{\alpha}(t)$. The fact that they are equal just follows from the definition of the operators and the weighted norm. We show the bound for the former. Fix $f \in \Ltwo(\R^{2\intv{n}}) $ and $g \in \Ltwo(\R^{2\intv{n}}_{\alpha})$. By definition, for $\alpha = ij \in \pair\intv{n}$, the inner product $\langle f, \preoutgtrans{\alpha}(t) g \rangle$ is equal to
\begin{align*}
 &\int_{\R^{2\intv{n}}} \int_{\R^{2\intv{n}}_{\alpha}} f(x) \conj{\epstrans(t, \epsfloor{x} - \epsfloor{\Seps{\alpha}y})} \sqrrange(y_{\rel}) g(y) \, \d y \, \d x \\
	& = \int_{\R^2} \int_{\R^2}f(x) \, \epstwotrans(t, \epsfloor{x_{i}} -\epsfloor{y_{\com} + \varepsilon y_{\rel}/2})\epstwotrans(t, \epsfloor{x_{j}} -\epsfloor{y_{\com} - \varepsilon y_{\rel}/2}) \\
	& \quad \quad  \times  \prod_{k \in \intv{n}\setminus\alpha}\epstwotrans(t, \epsfloor{x_{k}} - \epsfloor{y_{k}})  \times \sqrrange(y_{\rel}) \, g_{1}(y_{\rel}) g_{2}(y_{\com\cup\intv{n}\setminus\alpha}) \, \d y_{\rel} \, \d y_{\com\cup\intv{n}\setminus\alpha} \, \d x_{i} \, \d x_{j} \, \d x_{\intv{n}\setminus\{ij\}} .
\end{align*}
Using H\"{o}lder's inequality in the $\d x_{i}$ integral with indices $q$ and $p$ yields 
\begin{align*}
	\int_{\R^2}|f(x)|\epstwotrans(t, \epsfloor{x_{i}} - 
	& \epsfloor{y_{\com} + \varepsilon y_{\rel}/2}) \, \d x_{i} \\
	& \leqslant \Big(\int_{\R^2} |f(x)|^{p} \, \d x_{i}\Big)^{1/p} \cdot \Big(\int_{\R^2}|\epstwotrans(t, \epsfloor{x_{i}} - \epsfloor{y_{\com} + \varepsilon y_{\rel}/2})|^{q} \, \d x_{i}\Big)^{1/q}\\
	& \leqslant ct^{-\frac{1}{p}}\Big(\int_{\R^2}|f(x)|^{p} \, \d x_{i}\Big)^{1/p} \, ,
\end{align*}
where we used \eqref{eqn:pretransbound_1}. Insert this bound into that of $\langle f, \preoutgtrans{\alpha}(t) g \rangle$ from above, make a change of variables $y_{\com} \mapsto y_{\com} + \varepsilon y_{\rel}/2$, followed by the bound $\normop{\epstwotrans(t)} \leqslant 1$ and Cauchy--Schwarz inequality to obtain the bound
\begin{align}{\label{eqn:pretransweighted_bound_1}}
    \leqslant \Bigg(\int_{\R^{2\intv{n}\setminus\{i\}}} \bigg( 
    &\int_{\R^{2}}|f(x)|^{p} \, \d x_{i} \bigg)^{2/p} \, \d x_{\intv{n}\setminus\{i\}} \Bigg)^{1/2} \nonumber \\
    & \times \Bigg( \int_{\R^{2\intv{n}\setminus\rel}} \bigg( \int_{\R^2}\sqrrange(y_{\rel})g_{1}(y_{\rel})g_{2}(y_{\com} + \varepsilon y_{\rel}/2, y_{\intv{n}\setminus\alpha}) \, \d y_{\rel} \bigg)^{2} \, \d y_{\com\cup\intv{n}\setminus\alpha} \Bigg)^{1/2} \, . 
\end{align}
Because $a > 0$, the $f$ integral on the right hand side above can be further bounded by the $\Lpee{p}_{a}(\R^{2\intv{n}})$ norm of $f$. That is, 
\begin{align*}
	\Bigg(\int_{\R^{2\intv{n}\setminus\{i\}}}
	& \bigg(\int_{\R^2} |f(x)|^{p} \, \d x_{i} \bigg)^{2/p} \, \d x_{\intv{n}\setminus\{i\}}\Bigg)^{1/2} \\
	& \leqslant \Bigg(\int_{\R^{2\intv{n}}}e^{p a |x_{\intv{n}\setminus\{i\}}|}|f(x)|^{p} \, \d x \Bigg)^{1/p} \cdot \Bigg(\int_{\R^{2\intv{n}\setminus\{i\}}} e^{-\frac{2pa}{p-2}|x_{\intv{n}\setminus\{i\}}|} \, \d x_{\intv{n}\setminus\{i\}} \Bigg)^{1/2 - 1/p} \\
	& \leqslant c\norm{f}_{p,a} \, ,
\end{align*} 
where, in the first inequality, we used H\"{o}lder's inequality with $p/2$ and $p/(p-2)$ and then in the second, we used the fact that $1 \leqslant e^{pa |x_{i}|}$. As for the second factor in equation~\eqref{eqn:pretransweighted_bound_1}, use Cauchy--Schwarz inequality first with respect to $y_{\rel}$ and then in the $y_{\com\cup\intv{n}\setminus\alpha}$ variables to get the upper bound $\sqrt{\norm{\range}_{1}}\norm{g}$. Taking a supremum over $f$ and $g$ over the unit balls in their respective spaces proves the desired bound. This concludes the proof of the lemma.
\end{proof}

Due to tightness, the rescaled partition functions have limit points in $C(\R_{\leqslant}^{2}, \measspace{\R^2 \times \R^2})$. The next lemma will help us deduce the Chapman--Kolmogorov property of these limits. Fix a family of functions $\{u_{\ell}\} \subset \Ltwo(\R^2)$ such that the operator $\psi \mapsto u_{\ell} \ast \psi$ converges strongly to $\id{1}$ on $\Ltwo(\R^2)$.
 
\begin{lemma}{\label{lem:chapman_kolmogorov}}
	For $h, h' \in C_{c}(\R^2),m \geqslant 1$ and $s < t < u$, the following holds:
	\begin{equation*}
		\lim_{\ell \to \infty}\lim_{\varepsilon \to 0}\E\Big[\Big\langle h, \Big(\epsshe_{s,t} \bullet_{\ell} \epsshe_{t,u} - \epsshe_{s,u}\Big) h' \Big\rangle^{2m} \Big] = 0 \, .
	\end{equation*}
\end{lemma}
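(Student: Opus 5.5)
Expand the $2m$-th power and reduce everything to moments. Writing $\bullet_\ell$ as in \cite{clark24continuum,tsai25stochasticheatflow},
\[
\langle h, \epsshe_{s,t}\bullet_\ell \epsshe_{t,u} h'\rangle = \int \epsshe_{s,t}(h, y)\,u_\ell(y-y')\,\epsshe_{t,u}(y',h')\,\d y\,\d y'.
\]
Raising the difference with $\epsshe_{s,u}$ to the power $2m$ gives a finite sum of mixed moments $\E[\prod_k \langle h, \epsshe_{s,t}\bullet_\ell\epsshe_{t,u}h'\rangle^{j}\langle h,\epsshe_{s,u}h'\rangle^{2m-j}]$, $j=0,\dots,2m$.

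The partition functions satisfy an exact Chapman--Kolmogorov identity on the lattice: $\epsshe_{s,u}(x,x')=\sum_{z}\epsshe_{s,t}(x,z)\epsshe_{t,u}(z,x')$, which is the Markov property of the walk together with independence of the Brownian increments over $[s,t]$ and $[t,u]$. For piecewise-constant extensions this reads $\epsshe_{s,u}=\epsshe_{s,t}\bullet\epsshe_{t,u}$ with $u_\ell$ replaced by $\varepsilon^{-2}\mathbf 1_{\epsrect{2}}$. Independence of increments then factorizes each mixed moment into an inner product involving $\predeltatrans{\theta}(t-s)$ and $\predeltatrans{\theta}(u-t)$ on $2m$ particles. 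The first $j$ replicas have a convolution by $u_\ell^{\otimes j}$ inserted at time $t$; the remaining $2m-j$ have the exact lattice identity. Concretely, each term equals
\[
\big\langle h^{\otimes 2m},\,\predeltatrans{\theta}(t-s)\,\big(U_\ell^{\otimes j}\otimes U^{\otimes(2m-j)}_{\varepsilon}\big)\,\predeltatrans{\theta}(u-t)\,h'^{\otimes 2m}\big\rangle,
\]
where $U_\ell$ is convolution by $u_\ell$ and $U_\varepsilon$ is the lattice cell averaging, which tends strongly to the identity.

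Pass to $\varepsilon\to0$ by Theorem~\ref{thm:maintheorem}, applied with $2m$ particles. The semigroups converge strongly on $\Ltwo(\R^{2\intv{2m}})$, their norms are uniformly bounded by Lemma~\ref{lem:prelimitingtransbounds}, and $U_\ell$ and $U_\varepsilon$ are uniformly bounded. A product of uniformly bounded, strongly convergent operators converges strongly, so since $h^{\otimes 2m},h'^{\otimes 2m}\in\Ltwo$ each term tends to $\langle h^{\otimes2m},\deltatrans{\shfparameter}(t-s)(U_\ell^{\otimes j}\otimes \id{})\deltatrans{\shfparameter}(u-t)h'^{\otimes2m}\rangle$. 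Then let $\ell\to\infty$: $U_\ell^{\otimes j}\otimes\id{}\to\id{}$ strongly, uniformly bounded, so every term converges to $\langle h^{\otimes2m},\deltatrans{\shfparameter}(t-s)\deltatrans{\shfparameter}(u-t)h'^{\otimes2m}\rangle$, independently of $j$. The binomial signs sum to $(1-1)^{2m}=0$.

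The main obstacle is justifying that $U_\ell^{\otimes j}$ is uniformly bounded. If $u_\ell$ is only in $\Ltwo$, convolution need not be bounded on $\Ltwo$; strong convergence to the identity does, however, give uniform boundedness via the uniform boundedness principle. A second concern is that the moments are finite and factorize, which requires non-negativity and Tonelli. If the $h^{\otimes}$ pairing with a semigroup output is problematic, I would instead use the weighted bounds of Lemma~\ref{lem:prelimitingtransbounds} to control tails, as in \cite{tsai25stochasticheatflow}.
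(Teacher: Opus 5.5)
Your proposal is correct and follows the paper's proof. Both use the exact lattice Chapman--Kolmogorov identity with an $\varepsilon$-cell kernel, then independence of the two time increments to write the $2m$-th moment as $\langle h^{\otimes 2m}, \modpredeltatrans{\theta}(t-s)\,[\cdots]\,\modpredeltatrans{\theta}(u-t)\,h'^{\otimes 2m}\rangle$, then Theorem~\ref{thm:maintheorem} with the uniform bound of Lemma~\ref{lem:prelimitingtransbounds} for $\varepsilon\to0$ and strong convergence of $U_\ell$ for $\ell\to\infty$; the only difference is cosmetic: you expand the binomial and show all $2m+1$ terms share the same limit so the signs cancel, whereas the paper keeps the middle operator factored as $[U_\ell-\id{1,\varepsilon}]^{\otimes 2m}$ and notes that it tends strongly to $0$.
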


Together with Theorem~\ref{thm:maintheorem}, this implies that every limit point of $\{\epsshe_{s,t}\}$ satisfies the Chapman--Kolmogorov property in Definition~\ref{defn:axiomatic_shf_two}.

\begin{proof}
Define the approximate delta-function $\mathbf{1}_{\varepsilon}:\R^2 \to \R$ as $z \mapsto \varepsilon^{-2}\mathbf{1}_{[0,\varepsilon)^{2}}(z)$. Let $U_{\ell}, \id{1,\varepsilon}:\Ltwo(\R^2) \to \Ltwo(\R^2)$ denote the operators with convolution kernels given by $u_{\ell}$ and $\mathbf{1}_{\varepsilon}$ respectively. Clearly, $\id{1,\varepsilon} \to \id{1}$ strongly as $\varepsilon \to 0$. Now, observe that the partition functions, which are piecewise constant extensions of their discrete counterparts, satisfy
\begin{align*}
	\int_{\R^2}\int_{\R^2}\epsshe_{s,t}(\epsfloor{x},\epsfloor{z})
	 \mathbf{1}_{\varepsilon}(z -z') 
	&\epsshe_{t,u}(\epsfloor{z'},\epsfloor{x'}) \, \d z \, \d z' \\
	&= \varepsilon^{-2} \int_{\R^2}\epsshe_{s,t}(\epsfloor{x},\epsfloor{z}) \epsshe_{t,u}(\epsfloor{z},\epsfloor{x'}) \, \d z \\ 
	&= \varepsilon^{2} \sum_{z \in \Z^2} \epsshe_{s,t}(\epsfloor{x},z)\epsshe_{t,u}(z,\epsfloor{x'}) \\
&	= \epsshe_{s,u}(\epsfloor{x'},\epsfloor{x'}) \, .
\end{align*}
Therefore, the inner product $\big\langle h, \big(\epsshe_{s,t} \bullet_{\ell} \epsshe_{t,u} - \epsshe_{s,u}\big) h' \big\rangle$ is equal to
\begin{align*}
\int_{\R^2}\int_{\R^2}  
& 	h(x)h'(x') \\
& \times \Bigg(\int_{\R^2 \times \R^2}\epsshe_{s,t}(\epsfloor{x}, \epsfloor{z}) \big[u_{\ell}(z -z') - \mathbf{1}_{\varepsilon}(z- z')\big]\epsshe_{t,u}(\epsfloor{z'}, \epsfloor{x'}) \, \d z \, \d z' \Bigg) \, \d x \, \d x' \, \\
& = \langle h, \big(\epsshe_{s,t} \,[U_{\ell} - \id{1,\varepsilon}]\,\epsshe_{t,u}\big) \, h' \rangle \, \, .
\end{align*}
As $\epsshe_{s,t}$ and $\epsshe_{t,u}$ are independent, raising the final inner product to the power $2m$ and taking expectations gives
\begin{equation*}
\E \Big(\big\langle h, \big(\epsshe_{s,t} \bullet_{\ell} \epsshe_{t,u} - \epsshe_{s,u}\big) h' \big\rangle \Big)^{2m} = \big\langle h^{\otimes 2m} , \, \modpredeltatrans{\theta}(t-s) \,[U_{\ell} - \id{1,\varepsilon}]^{\otimes 2m} \, \modpredeltatrans{\theta}(u-t) \, h'^{\otimes 2m} \big\rangle \, \, .
\end{equation*} 
By our assumption on the family $\{u_{\ell}\}$, the operator $U_{\ell}$ converges to $\id{1}$ strongly, which implies that $U_{\ell}^{\otimes 2m} \to \id{1}^{\otimes  2m} = \id{2m}$ strongly. Thus, 
\begin{equation*}
	\lim_{\ell \to \infty} \lim_{\varepsilon \to 0} \, \big[U_{\ell} - \id{1, \varepsilon}\big]^{\otimes 2m} = \lim_{\ell \to \infty}  \, \big[U_{\ell} - \id{1} \big]^{\otimes 2m}  = 0 \, ,
\end{equation*}
in the strong operator topology. Then, using the uniform $\varepsilon-$bound, taking limits in the expectation above proves the lemma. 
\end{proof}

We now have all the requisite results to finish the proof of Proposition~\ref{cor:shfconvergence}. Due to the tightness result, the family $\{\epsshe_{s,t}\}$ admits subsequential limits, all of which satisfy Definition~\ref{defn:axiomatic_shf_one}. By Lemma~\ref{lem:chapman_kolmogorov}, every limit satisfies Definition~\ref{defn:axiomatic_shf_two}, the Chapman--Kolmogorov property. Also, since the Brownian environments over disjoint time intervals are independent, so are the partition functions and therefore their subsequential limits. This verifies Definition~\ref{defn:axiomatic_shf_three}.  Definition~\ref{defn:axiomatic_shf_four} holds due to Theorem~\ref{thm:maintheorem}. Thus, all subsequential limits fulfill the requirements of Definition~\ref{defn:axiomatic_shf}. By Tsai's characterization~\cite{tsai25stochasticheatflow}, all these limit points must have the same law as $\SHF(\shfparameter)$ which implies that the whole sequence converges to $\SHF(\shfparameter)$. This completes the proof.


\printbibliography[title=References]

@article {alberts14continuum,
	AUTHOR = {Alberts, Tom and Khanin, Konstantin and Quastel, Jeremy},
	TITLE = {The continuum directed random polymer},
	JOURNAL = {J. Stat. Phys.},
	FJOURNAL = {Journal of Statistical Physics},
	VOLUME = {154},
	YEAR = {2014},
	NUMBER = {1-2},
	PAGES = {305--326},
	ISSN = {0022-4715,1572-9613},
	MRCLASS = {82D60},
	MRNUMBER = {3162542},
	DOI = {10.1007/s10955-013-0872-z},
	URL = {https://doi.org/10.1007/s10955-013-0872-z},
}

@article {alberts14intermediate,
	AUTHOR = {Alberts, Tom and Khanin, Konstantin and Quastel, Jeremy},
	TITLE = {The intermediate disorder regime for directed polymers in
	dimension {$1+1$}},
	JOURNAL = {Ann. Probab.},
	FJOURNAL = {The Annals of Probability},
	VOLUME = {42},
	YEAR = {2014},
	NUMBER = {3},
	PAGES = {1212--1256},
	ISSN = {0091-1798,2168-894X},
	MRCLASS = {60F05 (60K37 82D10)},
	MRNUMBER = {3189070},
	MRREVIEWER = {Patr\'icia\ Gon\c calves},
	DOI = {10.1214/13-AOP858},
	URL = {https://doi.org/10.1214/13-AOP858},
}

@book {albeverio88solvable,
	AUTHOR = {Albeverio, Sergio and Gesztesy, Friedrich and H\o egh-Krohn,
	Raphael and Holden, Helge},
	TITLE = {Solvable models in quantum mechanics},
	SERIES = {Texts and Monographs in Physics},
	PUBLISHER = {Springer-Verlag, New York},
	YEAR = {1988},
	PAGES = {xiv+452},
	ISBN = {0-387-17841-4},
	MRCLASS = {81C05 (03H10 81-02 81C10 81H20)},
	MRNUMBER = {926273},
	MRREVIEWER = {Lawrence\ E.\ Thomas},
	DOI = {10.1007/978-3-642-88201-2},
	URL = {https://doi.org/10.1007/978-3-642-88201-2},
}

@article {bertini1998two,
	AUTHOR = {Bertini, Lorenzo and Cancrini, Nicoletta},
	TITLE = {The two-dimensional stochastic heat equation: renormalizing a
	multiplicative noise},
	JOURNAL = {J. Phys. A},
	FJOURNAL = {Journal of Physics. A. Mathematical and General},
	VOLUME = {31},
	YEAR = {1998},
	NUMBER = {2},
	PAGES = {615--622},
	ISSN = {0305-4470,1751-8121},
	MRCLASS = {82C31 (60H15)},
	MRNUMBER = {1629198},
	DOI = {10.1088/0305-4470/31/2/019},
	URL = {https://doi.org/10.1088/0305-4470/31/2/019},
}

@article {bolthausen89polymer,
	AUTHOR = {Bolthausen, Erwin},
	TITLE = {A note on the diffusion of directed polymers in a random
	environment},
	JOURNAL = {Comm. Math. Phys.},
	FJOURNAL = {Communications in Mathematical Physics},
	VOLUME = {123},
	YEAR = {1989},
	NUMBER = {4},
	PAGES = {529--534},
	ISSN = {0010-3616,1432-0916},
	MRCLASS = {60K35 (60F05 60G42 82D60)},
	MRNUMBER = {1006293},
	MRREVIEWER = {Cheng\ Xun\ Wu},
	URL = {http://projecteuclid.org/euclid.cmp/1104178982},
}

@article {carmonahu02,
	AUTHOR = {Carmona, Philippe and Hu, Yueyun},
	TITLE = {On the partition function of a directed polymer in a
	{G}aussian random environment},
	JOURNAL = {Probab. Theory Related Fields},
	FJOURNAL = {Probability Theory and Related Fields},
	VOLUME = {124},
	YEAR = {2002},
	NUMBER = {3},
	PAGES = {431--457},
	ISSN = {0178-8051,1432-2064},
	MRCLASS = {60K37 (82D30)},
	MRNUMBER = {1939654},
	MRREVIEWER = {Ofer\ Zeitouni},
	DOI = {10.1007/s004400200213},
	URL = {https://doi.org/10.1007/s004400200213},
}

@misc{chen1stochastic,
	title={Stochastic motions of the two-dimensional many-body delta-Bose gas, I: One-$\delta$ motions}, 
	author={Yu-Ting Chen},
	year={2025},
	eprint={2505.01703},
	archivePrefix={arXiv},
	primaryClass={math.PR},
	url={https://arxiv.org/abs/2505.01703}, 
}

@article {chen25skew,
	AUTHOR = {Chen, Yu-Ting},
	TITLE = {Two-dimensional delta-{B}ose gas: skew-product relative
	motions},
	JOURNAL = {Ann. Appl. Probab.},
	FJOURNAL = {The Annals of Applied Probability},
	VOLUME = {35},
	YEAR = {2025},
	NUMBER = {5},
	PAGES = {3150--3214},
	ISSN = {1050-5164,2168-8737},
	MRCLASS = {60J55 (60H10 60H15 60H30 60J60)},
	MRNUMBER = {4975045},
	MRREVIEWER = {Mario\ Abundo},
	DOI = {10.1214/25-aap2179},
	URL = {https://doi.org/10.1214/25-aap2179},
}

@misc{chen2stochastic,
	title={Stochastic motions of the two-dimensional many-body delta-Bose gas, II: Many-$\delta$ motions}, 
	author={Yu-Ting Chen},
	year={2025},
	eprint={2505.01704},
	archivePrefix={arXiv},
	primaryClass={math.PR},
	url={https://arxiv.org/abs/2505.01704}, 
}

@misc{chen3stochastic,
	title={Stochastic motions of the two-dimensional many-body delta-Bose gas, III: Path integrals}, 
	author={Yu-Ting Chen},
	year={2025},
	eprint={2505.03006},
	archivePrefix={arXiv},
	primaryClass={math.PR},
	url={https://arxiv.org/abs/2505.03006}, 
}

@article {chen4stochastic,
	AUTHOR = {Chen, Yu-Ting},
	TITLE = {Stochastic motions of the two-dimensional many-body
	delta-{B}ose gas, {IV}: {T}ransformations of relative motions},
	JOURNAL = {Electron. Commun. Probab.},
	FJOURNAL = {Electronic Communications in Probability},
	VOLUME = {31},
	YEAR = {2026},
	PAGES = {Paper No. 25, 9},
	ISSN = {1083-589X},
	MRCLASS = {60H10 (35Q40 35R60 60K35)},
	MRNUMBER = {5072225},
	DOI = {10.1214/26-ecp771},
	URL = {https://doi.org/10.1214/26-ecp771},
}

@article {chendalang14,
	AUTHOR = {Chen, Le and Dalang, Robert C.},
	TITLE = {H\"older-continuity for the nonlinear stochastic heat equation
	with rough initial conditions},
	JOURNAL = {Stoch. Partial Differ. Equ. Anal. Comput.},
	FJOURNAL = {Stochastics and Partial Differential Equations. Analysis and
	Computations},
	VOLUME = {2},
	YEAR = {2014},
	NUMBER = {3},
	PAGES = {316--352},
	ISSN = {2194-0401,2194-041X},
	MRCLASS = {60H15 (35B65 35K15 35R06 35R60 60G60)},
	MRNUMBER = {3255231},
	MRREVIEWER = {Anna\ Karczewska},
	DOI = {10.1007/s40072-014-0034-6},
	URL = {https://doi.org/10.1007/s40072-014-0034-6},
}

@article {chendalang15,
	AUTHOR = {Chen, Le and Dalang, Robert C.},
	TITLE = {Moments and growth indices for the nonlinear stochastic heat
	equation with rough initial conditions},
	JOURNAL = {Ann. Probab.},
	FJOURNAL = {The Annals of Probability},
	VOLUME = {43},
	YEAR = {2015},
	NUMBER = {6},
	PAGES = {3006--3051},
	ISSN = {0091-1798,2168-894X},
	MRCLASS = {60H15 (35R60 60G60)},
	MRNUMBER = {3433576},
	MRREVIEWER = {Mathew\ Joseph},
	DOI = {10.1214/14-AOP954},
	URL = {https://doi.org/10.1214/14-AOP954},
}

@misc{clark24continuum,
	title={Continuum polymer measures corresponding to the critical 2d stochastic heat flow}, 
	author={Jeremy Clark and Barkat Mian},
	year={2024},
	eprint={2409.01510},
	archivePrefix={arXiv},
	primaryClass={math.PR},
	url={https://arxiv.org/abs/2409.01510}, 
}

@misc{clark25conditional,
	title={Conditional GMC within the stochastic heat flow}, 
	author={Jeremy Clark and Li-Cheng Tsai},
	year={2025},
	eprint={2507.16056},
	archivePrefix={arXiv},
	primaryClass={math.PR},
	url={https://arxiv.org/abs/2507.16056}, 
}

@article {comets03localization,
	AUTHOR = {Comets, Francis and Shiga, Tokuzo and Yoshida, Nobuo},
	TITLE = {Directed polymers in a random environment: path localization
	and strong disorder},
	JOURNAL = {Bernoulli},
	FJOURNAL = {Bernoulli. Official Journal of the Bernoulli Society for
	Mathematical Statistics and Probability},
	VOLUME = {9},
	YEAR = {2003},
	NUMBER = {4},
	PAGES = {705--723},
	ISSN = {1350-7265,1573-9759},
	MRCLASS = {60K37 (82B41 82B44)},
	MRNUMBER = {1996276},
	MRREVIEWER = {Elena\ A.\ Zhizhina},
	DOI = {10.3150/bj/1066223275},
	URL = {https://doi.org/10.3150/bj/1066223275},
}

@article {comets06polymer,
	AUTHOR = {Comets, Francis and Yoshida, Nobuo},
	TITLE = {Directed polymers in random environment are diffusive at weak
	disorder},
	JOURNAL = {Ann. Probab.},
	FJOURNAL = {The Annals of Probability},
	VOLUME = {34},
	YEAR = {2006},
	NUMBER = {5},
	PAGES = {1746--1770},
	ISSN = {0091-1798,2168-894X},
	MRCLASS = {60K37 (60J60 60K35 82C44 82D60)},
	MRNUMBER = {2271480},
	MRREVIEWER = {Marina\ Vachkovskaia},
	DOI = {10.1214/009117905000000828},
	URL = {https://doi.org/10.1214/009117905000000828},
}

@book {comets17directed,
	AUTHOR = {Comets, Francis},
	TITLE = {Directed polymers in random environments},
	SERIES = {Lecture Notes in Mathematics},
	VOLUME = {2175},
	NOTE = {Lecture notes from the 46th Probability Summer School held in
	Saint-Flour, 2016},
	PUBLISHER = {Springer, Cham},
	YEAR = {2017},
	PAGES = {xv+199},
	ISBN = {978-3-319-50486-5; 978-3-319-50487-2},
	MRCLASS = {60K37 (60F10 60H05 60J10 82-01 82B41 82D60)},
	MRNUMBER = {3444835},
	MRREVIEWER = {Julien\ Poisat},
	DOI = {10.1007/978-3-319-50487-2},
	URL = {https://doi.org/10.1007/978-3-319-50487-2},
}

@article {corwin12kpz,
	AUTHOR = {Corwin, Ivan},
	TITLE = {The {K}ardar-{P}arisi-{Z}hang equation and universality class},
	JOURNAL = {Random Matrices Theory Appl.},
	FJOURNAL = {Random Matrices. Theory and Applications},
	VOLUME = {1},
	YEAR = {2012},
	NUMBER = {1},
	PAGES = {1130001, 76},
	ISSN = {2010-3263,2010-3271},
	MRCLASS = {82B31 (60B20 60K35 60K37)},
	MRNUMBER = {2930377},
	DOI = {10.1142/S2010326311300014},
	URL = {https://doi.org/10.1142/S2010326311300014},
}

@article {barraquand21loggamma,
	AUTHOR = {Barraquand, Guillaume and Corwin, Ivan and Dimitrov, Evgeni},
	TITLE = {Fluctuations of the log-gamma polymer free energy with general
	parameters and slopes},
	JOURNAL = {Probab. Theory Related Fields},
	FJOURNAL = {Probability Theory and Related Fields},
	VOLUME = {181},
	YEAR = {2021},
	NUMBER = {1-3},
	PAGES = {113--195},
	ISSN = {0178-8051,1432-2064},
	MRCLASS = {60K37 (82B23)},
	MRNUMBER = {4341071},
	DOI = {10.1007/s00440-021-01073-1},
	URL = {https://doi.org/10.1007/s00440-021-01073-1},
}

@article {csz17universality,
	AUTHOR = {Caravenna, Francesco and Sun, Rongfeng and Zygouras, Nikos},
	TITLE = {Universality in marginally relevant disordered systems},
	JOURNAL = {Ann. Appl. Probab.},
	FJOURNAL = {The Annals of Applied Probability},
	VOLUME = {27},
	YEAR = {2017},
	NUMBER = {5},
	PAGES = {3050--3112},
	ISSN = {1050-5164,2168-8737},
	MRCLASS = {82B44 (60K35 82D60)},
	MRNUMBER = {3719953},
	DOI = {10.1214/17-AAP1276},
	URL = {https://doi.org/10.1214/17-AAP1276},
}

@article {csz19moments,
	AUTHOR = {Caravenna, Francesco and Sun, Rongfeng and Zygouras, Nikos},
	TITLE = {On the moments of the {$(2+1)$}-dimensional directed polymer
	and stochastic heat equation in the critical window},
	JOURNAL = {Comm. Math. Phys.},
	FJOURNAL = {Communications in Mathematical Physics},
	VOLUME = {372},
	YEAR = {2019},
	NUMBER = {2},
	PAGES = {385--440},
	ISSN = {0010-3616,1432-0916},
	MRCLASS = {82D60 (60H15)},
	MRNUMBER = {4032870},
	DOI = {10.1007/s00220-019-03527-z},
	URL = {https://doi.org/10.1007/s00220-019-03527-z},
}

@article {csz20twokpz,
	AUTHOR = {Caravenna, Francesco and Sun, Rongfeng and Zygouras, Nikos},
	TITLE = {The two-dimensional {KPZ} equation in the entire subcritical
	regime},
	JOURNAL = {Ann. Probab.},
	FJOURNAL = {The Annals of Probability},
	VOLUME = {48},
	YEAR = {2020},
	NUMBER = {3},
	PAGES = {1086--1127},
	ISSN = {0091-1798,2168-894X},
	MRCLASS = {60H15 (35K59 35R60 82B44 82D60)},
	MRNUMBER = {4112709},
	MRREVIEWER = {Jiang\ Lun\ Wu},
	DOI = {10.1214/19-AOP1383},
	URL = {https://doi.org/10.1214/19-AOP1383},
}

@article {csz23stochasticheatflow,
	AUTHOR = {Caravenna, Francesco and Sun, Rongfeng and Zygouras, Nikos},
	TITLE = {The critical 2d stochastic heat flow},
	JOURNAL = {Invent. Math.},
	FJOURNAL = {Inventiones Mathematicae},
	VOLUME = {233},
	YEAR = {2023},
	NUMBER = {1},
	PAGES = {325--460},
	ISSN = {0020-9910,1432-1297},
	MRCLASS = {82B44 (35R60 60H15 82D60)},
	MRNUMBER = {4602000},
	DOI = {10.1007/s00222-023-01184-7},
	URL = {https://doi.org/10.1007/s00222-023-01184-7},
}

@article {dimock2004multi,
	AUTHOR = {Dimock, J. and Rajeev, S. G.},
	TITLE = {Multi-particle {S}chr\"odinger operators with point
	interactions in the plane},
	JOURNAL = {J. Phys. A},
	FJOURNAL = {Journal of Physics. A. Mathematical and General},
	VOLUME = {37},
	YEAR = {2004},
	NUMBER = {39},
	PAGES = {9157--9173},
	ISSN = {0305-4470,1751-8121},
	MRCLASS = {81Q05 (47N50)},
	MRNUMBER = {2090290},
	MRREVIEWER = {Igor\ Yurievich\ Popov},
	DOI = {10.1088/0305-4470/37/39/008},
	URL = {https://doi.org/10.1088/0305-4470/37/39/008},
}

@misc{drillick26universality,
	title={A universality result for the critical 2d stochastic heat flow}, 
	author={Drillick, Hindy and Hou, Jonathan and Parekh, Shalin},
	year={2026},
}

@article {dunlap22forward,
	AUTHOR = {Dunlap, Alexander and Gu, Yu},
	TITLE = {A forward-backward {SDE} from the 2{D} nonlinear stochastic
	heat equation},
	JOURNAL = {Ann. Probab.},
	FJOURNAL = {The Annals of Probability},
	VOLUME = {50},
	YEAR = {2022},
	NUMBER = {3},
	PAGES = {1204--1253},
	ISSN = {0091-1798,2168-894X},
	MRCLASS = {60H15 (35R60)},
	MRNUMBER = {4413215},
	DOI = {10.1214/21-aop1563},
	URL = {https://doi.org/10.1214/21-aop1563},
}

@article {dunlap25edwards,
	AUTHOR = {Dunlap, Alexander and Graham, Cole},
	TITLE = {Edwards-{W}ilkinson fluctuations in subcritical 2{D}
	stochastic heat equations},
	JOURNAL = {Electron. Commun. Probab.},
	FJOURNAL = {Electronic Communications in Probability},
	VOLUME = {30},
	YEAR = {2025},
	PAGES = {Paper No. 96, 11},
	ISSN = {1083-589X},
	MRCLASS = {60H15 (35R60 60F05 82B44)},
	MRNUMBER = {4999690},
	DOI = {10.1214/25-ecp735},
	URL = {https://doi.org/10.1214/25-ecp735},
}

@misc{dunlap2026renormalizationflow2dnonlinear,
	title={Renormalization flow for the 2D nonlinear stochastic heat equation: pointwise statistics and universality}, 
	author={Alexander Dunlap and Cole Graham},
	year={2026},
	eprint={2308.11850},
	archivePrefix={arXiv},
	primaryClass={math.PR},
	url={https://arxiv.org/abs/2308.11850}, 
}

@misc{ganguly2025sharpmomentuppertail,
	title={Sharp moment and upper tail asymptotics for the critical $2d$ Stochastic Heat Flow}, 
	author={Shirshendu Ganguly and Kyeongsik Nam},
	year={2025},
	eprint={2507.22029},
	archivePrefix={arXiv},
	primaryClass={math.PR},
	url={https://arxiv.org/abs/2507.22029}, 
}

@book {gradshteyn2007,
	AUTHOR = {Gradshteyn, I. S. and Ryzhik, I. M.},
	TITLE = {Table of integrals, series, and products},
	EDITION = {Seventh},
	PUBLISHER = {Elsevier/Academic Press, Amsterdam},
	YEAR = {2007},
	PAGES = {xlviii+1171},
	ISBN = {978-0-12-373637-6; 0-12-373637-4},
	MRCLASS = {00A22 (33-00 65-00 65A05)},
	MRNUMBER = {2360010},
}

@article {gu2021moments,
	AUTHOR = {Gu, Yu and Quastel, Jeremy and Tsai, Li-Cheng},
	TITLE = {Moments of the 2{D} {SHE} at criticality},
	JOURNAL = {Probab. Math. Phys.},
	FJOURNAL = {Probability and Mathematical Physics},
	VOLUME = {2},
	YEAR = {2021},
	NUMBER = {1},
	PAGES = {179--219},
	ISSN = {2690-0998,2690-1005},
	MRCLASS = {60H15 (35R60 46N30 82D60)},
	MRNUMBER = {4404819},
	DOI = {10.2140/pmp.2021.2.179},
	URL = {https://doi.org/10.2140/pmp.2021.2.179},
}

@article {gu2020kpz,
	AUTHOR = {Gu, Yu},
	TITLE = {Gaussian fluctuations from the 2{D} {KPZ} equation},
	JOURNAL = {Stoch. Partial Differ. Equ. Anal. Comput.},
	FJOURNAL = {Stochastics and Partial Differential Equations. Analysis and
	Computations},
	VOLUME = {8},
	YEAR = {2020},
	NUMBER = {1},
	PAGES = {150--185},
	ISSN = {2194-0401,2194-041X},
	MRCLASS = {35R60 (60H07 60H15)},
	MRNUMBER = {4058958},
	DOI = {10.1007/s40072-019-00144-8},
	URL = {https://doi.org/10.1007/s40072-019-00144-8},
}

@misc{gu2025stochasticheatflowblack,
	title={Stochastic heat flow is a black noise}, 
	author={Yu Gu and Li-Cheng Tsai},
	year={2025},
	eprint={2506.16484},
	archivePrefix={arXiv},
	primaryClass={math.PR},
	url={https://arxiv.org/abs/2506.16484}, 
}

@misc{gu2026loglog,
	title={Log Log Fluctuations of the Stochastic Heat Flow}, 
	author={Yu Gu and Li-Cheng Tsai},
	year={2026},
	eprint={2603.03246},
	archivePrefix={arXiv},
	primaryClass={math.PR},
	url={https://arxiv.org/abs/2603.03246}, 
}

@article {gubinelli15,
	AUTHOR = {Gubinelli, Massimiliano and Imkeller, Peter and Perkowski,
	Nicolas},
	TITLE = {Paracontrolled distributions and singular {PDE}s},
	JOURNAL = {Forum Math. Pi},
	FJOURNAL = {Forum of Mathematics. Pi},
	VOLUME = {3},
	YEAR = {2015},
	PAGES = {e6, 75},
	ISSN = {2050-5086},
	MRCLASS = {60H15 (35S50)},
	MRNUMBER = {3406823},
	DOI = {10.1017/fmp.2015.2},
	URL = {https://doi.org/10.1017/fmp.2015.2},
}

@article {hairer13kpz,
	AUTHOR = {Hairer, Martin},
	TITLE = {Solving the {KPZ} equation},
	JOURNAL = {Ann. of Math. (2)},
	FJOURNAL = {Annals of Mathematics. Second Series},
	VOLUME = {178},
	YEAR = {2013},
	NUMBER = {2},
	PAGES = {559--664},
	ISSN = {0003-486X,1939-8980},
	MRCLASS = {35K59 (35B10 35B65 35R60 60G22 60H15 60K35)},
	MRNUMBER = {3071506},
	MRREVIEWER = {Alp\ O.\ Eden},
	DOI = {10.4007/annals.2013.178.2.4},
	URL = {https://doi.org/10.4007/annals.2013.178.2.4},
}

@article {hairer14regularity,
	AUTHOR = {Hairer, M.},
	TITLE = {A theory of regularity structures},
	JOURNAL = {Invent. Math.},
	FJOURNAL = {Inventiones Mathematicae},
	VOLUME = {198},
	YEAR = {2014},
	NUMBER = {2},
	PAGES = {269--504},
	ISSN = {0020-9910,1432-1297},
	MRCLASS = {60H15 (35R60 60H40 81S20 82C28)},
	MRNUMBER = {3274562},
	MRREVIEWER = {Dora\ Sele\v si},
	DOI = {10.1007/s00222-014-0505-4},
	URL = {https://doi.org/10.1007/s00222-014-0505-4},
}

@article{husehenley85pinning,
	title = {Pinning and Roughening of Domain Walls in Ising Systems Due to Random Impurities},
	author = {Huse, David A. and Henley, Christopher L.},
	journal = {Phys. Rev. Lett.},
	volume = {54},
	issue = {25},
	pages = {2708--2711},
	numpages = {0},
	year = {1985},
	month = {Jun},
	publisher = {American Physical Society},
	doi = {10.1103/PhysRevLett.54.2708},
	url = {https://link.aps.org/doi/10.1103/PhysRevLett.54.2708}
}

@article {junk23stability,
	AUTHOR = {Junk, Stefan},
	TITLE = {Stability of weak disorder phase for directed polymer with
	applications to limit theorems},
	JOURNAL = {ALEA Lat. Am. J. Probab. Math. Stat.},
	FJOURNAL = {ALEA. Latin American Journal of Probability and Mathematical
	Statistics},
	VOLUME = {20},
	YEAR = {2023},
	NUMBER = {1},
	PAGES = {861--883},
	ISSN = {1980-0436},
	MRCLASS = {60K37 (60K35)},
	MRNUMBER = {4615656},
	MRREVIEWER = {Xinghua\ Zheng},
	DOI = {10.30757/alea.v20-31},
	URL = {https://doi.org/10.30757/alea.v20-31},
}

@article {mitoma83,
	AUTHOR = {Mitoma, Itaru},
	TITLE = {Tightness of probabilities on {$C([0,1];{\cal S}\sp{\prime}
	)$}\ and {$D([0,1];{\cal S}\sp{\prime} )$}},
	JOURNAL = {Ann. Probab.},
	FJOURNAL = {The Annals of Probability},
	VOLUME = {11},
	YEAR = {1983},
	NUMBER = {4},
	PAGES = {989--999},
	ISSN = {0091-1798,2168-894X},
	MRCLASS = {60B11 (60B12)},
	MRNUMBER = {714961},
	MRREVIEWER = {Peter\ Z.\ Daffer},
	URL =
	{http://links.jstor.org/sici?sici=0091-1798(198311)11:4<989:TOPOA>2.0.CO;2-P&origin=MSN},
}

@article{rajeev1999condensation,
	author = "Rajeev, S. G.",
	title = "{A Condensation of interacting bosons in two-dimensional space}",
	eprint = "hep-th/9905120",
	archivePrefix = "arXiv",
	month = "5",
	year = "1999"
}

@article {seppalainen12loggamma,
	AUTHOR = {Sepp\"al\"ainen, Timo},
	TITLE = {Scaling for a one-dimensional directed polymer with boundary
	conditions},
	JOURNAL = {Ann. Probab.},
	FJOURNAL = {The Annals of Probability},
	VOLUME = {40},
	YEAR = {2012},
	NUMBER = {1},
	PAGES = {19--73},
	ISSN = {0091-1798,2168-894X},
	MRCLASS = {60K35 (60K37 82B41 82D60)},
	MRNUMBER = {2917766},
	MRREVIEWER = {Jonathon\ R.\ Peterson},
	DOI = {10.1214/10-AOP617},
	URL = {https://doi.org/10.1214/10-AOP617},
}

@article {sudhtsai25deltabose,
	AUTHOR = {Surendranath, Sudheesh and Tsai, Li-Cheng},
	TITLE = {Two-dimensional delta {B}ose gas in a weighted space},
	JOURNAL = {Electron. Commun. Probab.},
	FJOURNAL = {Electronic Communications in Probability},
	VOLUME = {30},
	YEAR = {2025},
	PAGES = {Paper No. 35, 10},
	ISSN = {1083-589X},
	MRCLASS = {46N30},
	MRNUMBER = {4893420},
	DOI = {10.1214/25-ecp685},
	URL = {https://doi.org/10.1214/25-ecp685},
}

@article {tao24gaussian,
	AUTHOR = {Tao, Ran},
	TITLE = {Gaussian fluctuations of a nonlinear stochastic heat equation
	in dimension two},
	JOURNAL = {Stoch. Partial Differ. Equ. Anal. Comput.},
	FJOURNAL = {Stochastics and Partial Differential Equations. Analysis and
	Computations},
	VOLUME = {12},
	YEAR = {2024},
	NUMBER = {1},
	PAGES = {220--246},
	ISSN = {2194-0401,2194-041X},
	MRCLASS = {60H15 (35K05 35R60 60F05 60H07)},
	MRNUMBER = {4709542},
	DOI = {10.1007/s40072-022-00282-6},
	URL = {https://doi.org/10.1007/s40072-022-00282-6},
}

@misc{tsai25stochasticheatflow,
	title={Stochastic heat flow by moments}, 
	author={Li-Cheng Tsai},
	year={2025},
	eprint={2410.14657},
	archivePrefix={arXiv},
	primaryClass={math.PR},
	url={https://arxiv.org/abs/2410.14657}, 
}

@article {vargas07localization,
	AUTHOR = {Vargas, Vincent},
	TITLE = {Strong localization and macroscopic atoms for directed
	polymers},
	JOURNAL = {Probab. Theory Related Fields},
	FJOURNAL = {Probability Theory and Related Fields},
	VOLUME = {138},
	YEAR = {2007},
	NUMBER = {3-4},
	PAGES = {391--410},
	ISSN = {0178-8051,1432-2064},
	MRCLASS = {60K37},
	MRNUMBER = {2299713},
	MRREVIEWER = {Rongfeng\ Sun},
	DOI = {10.1007/s00440-006-0030-5},
	URL = {https://doi.org/10.1007/s00440-006-0030-5},
}

@incollection {walsh84,
	AUTHOR = {Walsh, John B.},
	TITLE = {An introduction to stochastic partial differential equations},
	BOOKTITLE = {\'Ecole d'\'et\'e{} de probabilit\'es de {S}aint-{F}lour,
	{XIV}---1984},
	SERIES = {Lecture Notes in Math.},
	VOLUME = {1180},
	PAGES = {265--439},
	PUBLISHER = {Springer, Berlin},
	YEAR = {1986},
	ISBN = {3-540-16441-3},
	MRCLASS = {60H15 (35R60 60G20 60J80)},
	MRNUMBER = {876085},
	MRREVIEWER = {Luis\ G.\ Gorostiza},
	DOI = {10.1007/BFb0074920},
	URL = {https://doi.org/10.1007/BFb0074920},
}

@article {zygouras22kpz,
	AUTHOR = {Zygouras, Nikos},
	TITLE = {Some algebraic structures in {KPZ} universality},
	JOURNAL = {Probab. Surv.},
	FJOURNAL = {Probability Surveys},
	VOLUME = {19},
	YEAR = {2022},
	PAGES = {590--700},
	ISSN = {1549-5787},
	MRCLASS = {05-02 (05Exx 60K35 82B23)},
	MRNUMBER = {4524523},
	DOI = {10.1214/19-ps335},
	URL = {https://doi.org/10.1214/19-ps335},
}

@article {zygouras24directed,
	AUTHOR = {Zygouras, Nikos},
	TITLE = {Directed polymers in a random environment: a review of the
	phase transitions},
	JOURNAL = {Stochastic Process. Appl.},
	FJOURNAL = {Stochastic Processes and their Applications},
	VOLUME = {177},
	YEAR = {2024},
	PAGES = {Paper No. 104431, 34},
	ISSN = {0304-4149,1879-209X},
	MRCLASS = {82B44 (82D60)},
	MRNUMBER = {4793446},
	MRREVIEWER = {Dimitri\ Petritis},
	DOI = {10.1016/j.spa.2024.104431},
	URL = {https://doi.org/10.1016/j.spa.2024.104431},
}

\end{document}